\newcommand{\mres}{%
  \,\raisebox{-.127ex}{\reflectbox{\rotatebox[origin=br]{-90}{$\lnot$}}}\,%
}
\newif\ifspace
\newcommand{\nwc}{\newcommand}
\nwc{\N}{\mathbb{N}}
\nwc{\R}{\mathbb{R}}
\nwc{\E}{\mathbb{E}}
\nwc{\D}{\partial}
\nwc{\calC}{\mathcal{C}}
\DeclareMathOperator{\co}{co}
\DeclareMathOperator{\dist}{dist}
\nwc{\one}{{\mathbbm{1}}}
\nwc{\eps}{\varepsilon}
\nwc{\inv}{^{-1}}
\nwc{\etal}{{\it et al.\ }}
\nwc{\id}{{\rm id}}
\newcommand{\vp}{\varphi}
\nwc{\intr}[1]{{\kern0pt#1^\mathrm{o}}}%
\nwc{\mdot}{\!\cdot\!}
\nwc{\ip}[2]{(#1)\cdot(#2) }
\nwc{\red}[1]{\textcolor{red}{[#1]}}
\nwc{\blue}[1]{\textcolor{blue}{#1}}
\nwc{\point}{\medskip $\bullet$\ }
\nwc{\tripl}{v_\star}
\nwc{\tstar}{t_\star}
\nwc{\Triang}{\triangle}
\nwc{\sst}{\sigma_\star}
\nwc{\calA}{{\mathcal A}}
\nwc{\calB}{{\mathcal B}}
\nwc{\calD}{{\mathcal D}}
\nwc{\calG}{{\mathcal G}}
\nwc{\calL}{{\mathcal L}}
\nwc{\calM}{{\mathcal M}}
\nwc{\calN}{{\mathcal N}}
\nwc{\calQ}{{\mathcal Q}}
\nwc{\calR}{{\mathcal R}}
\nwc{\calS}{{\mathcal S}}
\nwc{\calX}{{\mathcal X}}
\nwc{\calZ}{{\mathcal Z}}
\nwc{\leb}{{\mathcal L}^d}
\newcommand{\pts}{\psi_t^{*}}
\newcommand{\ptss}{\psi_t^{**}}
\newcommand{\fss}{f^{**}}
\newcommand{\MA}{Monge-Amp\`ere}
\newcommand{\calH}{{\mathcal H}}
\nwc{\mam}{\kappa}
\nwc{\wkto}{\xrightharpoonup{\star}}
\nwc{\dto}{\xrightharpoonup{\mathcal{D}}}
\nwc{\tX}{{\tilde X}}
\newcommand{\rhoac}{\rho_t^{\rm ac}}
\newcommand{\rhosg}{\rho_t^{\rm sg}}
\newcommand{\mamac}{\mam_t^{\rm ac}}
\newcommand{\xinv}{S_t^{\rm in}}
\newcommand{\xsg}{S_t^{\rm sg}}
\newcommand{\xndf}{S_t^{\rm nd}}
\numberwithin{equation}{section}
\newtheorem{theorem}{Theorem}[section]
\newtheorem{lemma}[theorem]{Lemma}
\newtheorem{prop}[theorem]{Proposition}
\newtheorem{cor}[theorem]{Corollary}
\theoremstyle{remark}
\newtheorem{remark}[theorem]{Remark}
\newtheorem{definition}[theorem]{Definition}
\begin{document}
\title{Analysis of the adhesion model and the reconstruction problem in cosmology} 
\author[1]{Jian-Guo Liu\footnote{Email address: Jian-Guo.Liu@duke.edu} }
\author[2]{Robert L. Pego\footnote{Email address: rpego@cmu.edu} }
\affil[1]{Departments of Mathematics and Physics, 
Duke University,\linebreak Durham, NC 27708, USA}
\affil[2]{Department of Mathematical Sciences,
Carnegie Mellon University,\linebreak Pittsburgh, PA 15213, USA}
\date{December 15, 2025}

\maketitle
\begin{abstract}
In cosmology, a basic explanation of the observed concentration of mass 
in singular structures is provided by the Zeldovich approximation, 
which takes the form of free-streaming flow
for perturbations of a uniform Einstein-de Sitter universe in co-moving coordinates.
The adhesion model suppresses multi-streaming by introducing viscosity. 
We study mass flow in this model by analysis of Lagrangian advection 
in the zero-viscosity limit. Under mild conditions, we show that a unique limiting
Lagrangian semi-flow exists. Limiting particle paths stick together after collision
and are characterized uniquely by a differential inclusion.
The absolutely continuous part of the mass measure agrees with that of
a \MA\ measure arising by convexification of the free-streaming velocity potential.
But the singular parts of these measures can differ when flows along singular
structures merge, as shown by analysis of a 2D Riemann problem.
The use of \MA\ measures and optimal transport theory for the reconstruction of
inverse Lagrangian maps in cosmology was introduced in work of Brenier \&
Frisch~\etal (Month. Not. Roy. Ast. Soc. 346, 2003).
In a neighborhood of merging singular structures in our examples, however, 
we show that reconstruction yielding a monotone Lagrangian map cannot be exact a.e.,
even off of the singularities themselves.
\end{abstract}

 \medskip
\noindent
{\it Keywords: }{semi-concave functions, one-sided Lipschitz estimates, 
sticky particle flow, optimal transport, least action}

 \smallskip
\noindent
{\it Mathematics Subject Classification:} 35Q85, 85A40, 39A60, 49Q22, 34A60

\vfil
\pagebreak
\tableofcontents

\ifspace
\onehalfspacing
\fi

\section{Introduction}

\subsection{Background} 
The Zeldovich approximation and the adhesion model are of basic importance in cosmology 
for understanding the early development of large-scale structures in the universe. 
They serve to roughly explain how mass in the cosmos may
have reached its present distribution, which is highly heterogeneous even on scales
that are huge compared to typical distances between neighboring galaxies.  
They do this by describing the growth of perturbations of a uniformly expanding universe, 
evolving approximately in accord with the theory of general relativity.

The simplicity of the equations that comprise these approximate models of mass flow 
disguises some subtlety in how they should be interpreted physically. 
For they are cast in terms of variables which make it appear that gravity is neglected, although it is not.
In the {Zeldovich approximation}, matter streams freely
along linear paths, moving with constant velocity
with respect to coordinates in a Euclidean space. 
As long as the flow remains smooth,
the velocity $v(x,t)$ satisfies the multi-dimensional inviscid Burgers system
\begin{equation}\label{e:iBurgers}
 \D_t v + v\cdot\nabla v = 0 \,, \qquad x\in\R^d,\  t>0.
\end{equation}
As argued by Shandarin and Zeldovich~\cite{shandarin1989large},
solutions of this system generally develop ``pancake'' singularities
which induce mass concentrations that resemble what one infers from observations.
One should recognize, however, that 
the time variable here is not physical time, and the velocity here
is a scaled perturbation of the velocity in a uniformly expanding
Einstein-de Sitter universe, described in a Newtonian approximation by an Euler-Poisson system.
For the convenience of readers we include a sketch of the derivation 
in Appendix~\ref{a:derivation}. For further details we refer the reader 
to the book of Peebles~\cite{peebles1980} and works of 
Brenier~\etal\,\cite[Appendix A]{brenier2003}
and Gurbatov~\etal\,\cite{gurbatov2012large}. 
As pointed out in these references, the initial velocity perturbation 
$v_0$ obtained as $t\to0^+$ is naturally the gradient of a potential.

The Lagrangian flow map induced by the initial velocity perturbation $v_0$ 
typically loses injectivity
in the Zeldovich approximation, generating ``multi-streaming''
regions in space. The {adhesion model} aims to avoid this
by introducing an artificial viscosity, requiring that the velocity $v$ 
arises as a limit of fields $v^\eps$ that satisfy
the multi-dimensional Burgers system
\begin{equation}\label{e:Burgers}
 \D_t v^\eps + v^\eps\cdot\nabla v^\eps = \tfrac12\eps \Delta v^\eps \,.
\end{equation} 
See \cite{gurbatov2012large} for an extensive discussion of the adhesion model.
We take the viscosity coefficient to be $\frac12\eps$ instead of $\eps$
to simplify formulas later.
With the natural gradient initial velocity $v_0 = \nabla \vp$, 
the solution to \eqref{e:Burgers} will be the 
gradient $v^\eps=\nabla u^\eps$ of a solution of the potential Burgers equation 
\begin{equation}\label{e:vHJ}
\D_t u^\eps + \tfrac12|\nabla u^\eps|^2 = \tfrac12\eps \Delta u^\eps \,, 
\end{equation}
with initial data $u^\eps = \vp$ at $t=0$.
Mathematically one expects that in the limit as $\eps\to0^+$, the 
limiting potential $u$ will be the viscosity solution of
the equation obtained with $\eps=0$. 
This limit is given by the well-known Hopf-Lax formula
\begin{equation}\label{e:HLOmega1}
u(x,t) = \inf_y \frac{|x-y|^2}{2t} + \vp(y).
\end{equation}

The physical mass in cosmology is transported by the adhesion-model velocity $v$
only in a Lagrangian approximation that also ignores multi-streaming.  
In this approximation, however, and as long as it remains smooth,
mass density $\rho$ is governed by the continuity equation
\begin{equation}\label{e:etat}
\D_t \rho + \nabla\cdot(\rho v) = 0, 
\end{equation}
starting from a uniform density $\rho_0$ at $t=0$.

It has long been recognized in the physical literature 
that in more than one space dimension, the physical momentum is not conserved
under the dynamics of the adhesion model \cite{Shandarin1997,brenier2003,gurbatov2012large}.
In one space dimension, however, momentum is conserved, and 
the adhesion model reduces to so-called ``sticky particle''
dynamics of the kind that has by now been studied in great detail 
mathematically --- see \cite{ERS96,BrenierGrenier1998,NatileSavare2009} and references in the 
expository article \cite{Hynd2019}.

In one-dimensional sticky particle dynamics,  delta-mass concentrations develop as particle paths collide.
The mass distribution at times $t>0$ becomes a measure that can be
determined by a variational formula, 
cf.~\cite{ERS96,BrenierGrenier1998,tadmor2011variational}.
For initially uniform mass distribution with density $\rho_0=1$,
the mass distribution measure $\rho_t$ for any $t>0$ is the second distributional derivative of 
a convex function $w(\cdot,t)$ defined by 
\begin{equation}\label{d:w1}
    w(x,t) := \sup_y \left( x\mdot y - \tfrac12 |y|^2 -t \vp(y) \right).
\end{equation}
This corresponds to a formula given by Brenier \& Grenier \cite[Eq.~(25)]{BrenierGrenier1998} 
for cases when total mass is finite.  

From the Hopf-Lax formula \eqref{e:HLOmega1} we see that 
\begin{equation}\label{e:uw_dual}
 w(x,t) = \tfrac12|x|^2 -  t u(x,t)\,.
\end{equation}
If we use this same equation to define $w$ in more than one space dimension,
then it is well-known that {\em as long as the fields remain smooth}, 
the density is related to the convex function $w$ by the \MA\ equation
\begin{equation}\label{e:MA1}
    \rho(x,t) = \det \nabla^2 w(x,t)\,,
\end{equation}
where $\nabla^2w$ denotes the Hessian matrix.  
The reason is that due to \eqref{e:iBurgers},
$v=\nabla u$ is constant along straight flow lines 
\begin{equation}
x=X(y,t)=y+t\nabla\vp(y) = \nabla\left(\tfrac12|y|^2+t\vp(y)\right),
\end{equation}
so the inverse Lagrangian map is given by 
\begin{equation}
Y(x,t) = \nabla w(x,t) = x - t\nabla u(x,t) \,,
\end{equation}
and the density $\rho$ is the Jacobian $\det\nabla Y=\det\nabla^2 w$, due to the change-of-variable formula
\begin{equation}\label{e:ch_vars}
\int_B \rho(x,t)\,dx = \int_B \det \nabla Y(x,t)\,dx = \int_{Y(B,t)}\,dy
\end{equation}
valid for any region $B$ (Borel measurable).

Determining the Lagrangian flow map and its inverse is one of the main tasks
involved in the {\em reconstruction problem} in cosmology: 
Given the present distribution of cold dark matter as inferred from observations, 
one seeks to reconstruct the position and velocity of the matter in the early universe 
which arrives at any given position $x$ at the present epoch $t$.
As the present distribution contains mass concentrated into singular structures such as
(point-like) clusters, (curve-like) filaments and sheets, the inverse Lagrangian map
is bound to be a multi-valued or set-valued map.

Frisch~\etal\,\cite{frisch2002reconstruction} and  Brenier~\etal\,\cite{brenier2003} 
have addressed the reconstruction problem by making use of 
{\em optimal transport theory} (OT) as a framework for determining
the convex potential $w$ and the inverse Lagrangian map $Y$.
For a present mass distribution with singular concentrations, 
the potential $w$ determined by OT is convex but not smooth.
In this case, the authors of \cite{brenier2003} effectively assume the inverse 
of the Lagrangian flow map $X_t:=X(\cdot,t)$
is the {\em subgradient} of $w_t:=w(\cdot,t)$, 
defined in convex analysis by the supporting plane condition:
\begin{equation}\label{d:Dwt}
\D w_t(x) = \{ y\in\R^d: w_t(z)\ge w_t(x)+y\cdot(z-x) \ \text{for all $z$}\}.
\end{equation}
This assumption, that $X_t\inv=\D w_t$,
is equivalent to a presumption made by the authors of \cite{vergassola1994burgers}
to the effect that $X_t$ 
is the same as the transport map $T_t$ defined through convexification 
of the free streaming potential $\psi_t$, via
\begin{equation}\label{e:Lagrange1}
T_t(y)=\nabla\ptss(y),
\quad 
\text{where}\quad \psi_t(y)=\frac12|y|^2+t\vp(y).
\end{equation}
Here $\ptss$ is the convexification of $\psi_t$,
its second Legendre transform.  Note that equation \eqref{d:w1}
states that the Legendre transform of $\psi_t$ is $w_t = \pts$.

The subgradient allows to generalize \eqref{e:ch_vars} by use of the measure
$\mam_t$ defined by 
\begin{equation}\label{d:MAint}
    \int_B d\mam_t(x) = \int_{\D w_t(B)}\,dy = |\D w_t(B)|,
\end{equation}
where $|S|$ denotes the Lebesgue measure of the measurable set $S$.
This measure $\mam_t$ is known as the {\em \MA\ measure} of $w_t$ (see~\cite{Figalli-MongeAmpere}),
and by analogy with \eqref{e:MA1} and \eqref{e:ch_vars}, one says that $w_t$ is an ``Alexandrov solution''
of the \MA\ equation
\[
\mam_t = \det \nabla^2 w\,.
\]
Implicitly, the authors of \cite{vergassola1994burgers} and \cite{brenier2003} 
take the present mass distribution $\rho_t$ to agree with the \MA\ measure $\mam_t$.
This is known to be correct for smooth mass distributions, and also
for one-dimensional sticky particle dynamics with uniform initial density.

Other authors, including Weinberg \& Gunn \cite{weinberg1990largescale} for example,
have regarded the mass distribution $\rho_t$ in the adhesion model as the zero-viscosity 
limit of density fields advected by the velocity field satisfying
the multi-dimensional Burgers equation \eqref{e:Burgers}. 
Our notation will conform with this choice.
In particular, for small $\eps>0$, 
one writes out the solution $v^\eps=\nabla u^\eps$ of \eqref{e:Burgers}
by use of the Cole-Hopf transformation, then solves the continuity equation
\begin{equation}\label{e:rhoeps}
    \D_t\rho^\eps + \nabla\cdot(\rho^\eps v^\eps) = 0, 
\end{equation}
with initially uniform density $\rho_0$.
In \cite{weinberg1990largescale}
this is done by solving numerically for the Lagrangian flow map determined by
\begin{equation}\label{e:Xeps_flow}
    \D_t X^\eps(y,t) = v^\eps(X^\eps(y,t),t), \quad X^\eps(y,0)=y.
\end{equation}

\subsection{Present results}
The principal aim of the present paper is to describe, with mathematical rigor,
how the limiting mass measure $\rho_t$ in the adhesion model is determined,
and characterize its structure.
This we do through study of the Lagrangian flow map and its inverse, 
the convex potential $w_t$ and its \MA\ measure $\mam_t$, and the transport map $T_t$
which is the gradient of the convexified potential $\ptss$.
We carry out our investigation with an eye towards understanding 
how these quantities relate to the reconstruction problem.

We defer precise statements of results to the various sections to follow.
For all of our main results, we assume the initial velocity potential $\vp$
is Lipschitz, meaning initial velocities are bounded.
For most results we have found it convenient to also assume $\vp$ is {\em semi-concave},
meaning $y\mapsto \vp(y)-\tfrac12\lambda|y|^2$ is concave for some $\lambda\ge0.$
Semi-concavity is a rather mild condition (true if second derivatives are bounded, say)
but has the effect of preventing the appearance of totally void regions
in the mass distribution. Thus we leave the study of pure voids aside for future work.

Under these assumptions, we show that the Lagrangian flows $X^\eps$
in the adhesion model converge as $\eps\to0$ to a well-defined 
limit $X$ that is locally Lipschitz, and $y\mapsto X(y,t)$ is surjective for each $t$.
Particle paths $t\mapsto X(y,t)$ may collide, but when they do then they 
{\em stick together afterwards}.  Thus 
the name ``adhesion model'' is justified mathematically.

Moreover, these particle paths are characterized, even after they impinge upon discontinuities
in the velocity field $\nabla u$, in terms of a 
{\em differential inclusion} of the form
\begin{equation}\label{z:Du}
\D_t z(t) \in \D u_t(z(t)) \quad\text{for a.e. $t>0$.}
\end{equation}
Here the differential $\D u_t(x)$ is a set that can be related
to the subgradient of the convex function $w_t=w(\cdot,t)$ 
from \eqref{e:uw_dual} through the formula
\begin{equation}\label{e:Dut1}
\D u_t(x) = \frac{x - \D w_t(x)}t\,.
\end{equation}
Each particle path $t\mapsto z(t)=X(y,t)$ turns out to be the {\em unique} Lipschitz solution of the
differential inclusion \eqref{z:Du} with initial data $z(0)=y$.

The smooth mass distributions $\rho^\eps_t=\rho^\eps(\cdot,t)$ converge in a weak-star sense
to the limit measure $\rho_t$ determined as the {\em pushforward} 
of uniform Lebesgue measure $\leb$ under the limiting Lagrangian map $X_t = X(\cdot,t)$.
This pushforward is denoted by $(X_t)_\sharp\leb$.
The part of $\rho_t$ that is singular with respect to Lebesgue measure is obtained
by pushforward from the set where the matrix $\nabla X_t(y)$ (defined for a.e.~$y$)
is singular.

We obtain additional information concerning the Lagrangian maps $X_t$ and mass measures 
$\rho_t$ from study of the convex functions $w_t$, their associated convexified 
transport maps $T_t$, the \MA\ measures $\mam_t$, and smoothed versions of these objects.

In general, we are able to prove that the measures $\rho_t$ and $\mam_t$
can differ {\em only} in their parts that are singular with respect to Lebesgue
measure. The absolutely continuous parts of these measures {\em must agree},
and the \MA\ equation \eqref{e:MA1} correctly determines their density a.e., 
provided  $\nabla^2 w$ is taken to be the Hessian in the sense of Alexandrov's theorem
for convex functions (see~\cite[p.~242]{EvansGariepy}).
Moreover, for any $t>0$, almost every $x\in\R^d$ is the image of 
a single $y$, depending continuously upon $x$, for which $x=X_t(y)$
and the past history of the particle path arriving at $x$ is one of free streaming, with
\begin{equation}\label{e:straight1}
X_s(y)=T_s(y)=y+s\nabla\vp(y)\,,\quad\text{for $0\le s\le t$.}
\end{equation}
This shows that indeed the adhesion model not only avoids the multi-streaming
phenomenon in kinetic theory (different particles at the same point having different
velocities), but multiple particle paths can currently coincide at the same point $x$
only on a set of zero Lebesgue measure (including inside singular concentrations of mass, e.g.).

Another way to view the \MA\ measure $\mam_t$ has to do with the well-known fact that it is
impossible to determine the past history of particle paths presently inside shocks.
For fixed $t>0$, by defining a modified initial potential $\breve\vp$ so that
\[
\ptss(y)=\tfrac12|y|^2+t\breve\vp(y),
\]
the Hopf-Lax formula provides  a ``collision-free'' velocity potential 
$\breve u_s(x)$ which is $C^1$ for $0\le s<t$ and satisfies $\breve u_t=u_t$.
The modified Lagrangian particle paths $\breve X_s(y)$ are {\em all} straight lines for 
$0\le s< t$ and can collide only for $s\ge t$. Moreover,
as we show in Section~\ref{ss:shock-free},  it turns out  that
\[
\breve X_t(y) = T_t(y) \quad\text{for all $y\in\R^d$,}
\]
and the straight-line paths described above in \eqref{e:straight1} are matched. 
But now the mass measure $\breve\rho_t$ pushed forward by the 
modified Lagrangian flow need not agree with $\rho_t$, 
but instead matches the \MA\ measure, satisfying
\begin{equation}
    \breve \rho_t = (\breve X_t)_\sharp\leb = \mam_t\,.
\end{equation}

Perhaps our most significant finding is that
the advected mass measures $\rho_t=\lim_{\eps\to0}\rho^\eps_t$
arising in the zero-viscosity limit
can indeed {\em differ} from the \MA\ measures $\mam_t$
when singular mass concentrations are present.
This is demonstrated in Section~\ref{s:example} through explicit consideration 
of cases with $d=2$ space dimensions 
when the initial velocity field $\nabla\vp$ 
is constant in each of three sectors of the plane and $\vp$ is concave.
This is a kind of Riemann problem for which 
the velocity discontinuity along each of three rays
generates a shock in the velocity field and a concentration of mass along a filament.  
The singular parts of the mass measure $\rho_t$ and the 
\MA\ measure $\mam_t$ differ if and only if a simple geometric condition
holds: Namely, the triangle with vertices at the three velocity vectors
should not contain its circumcenter. In this case, incoming mass flows along two
of the filaments merge into a flow {\em outgoing} along the third.

For the examples in which $\rho_t\ne\mam_t$, 
we show that a reconstruction strategy as proposed 
in \cite{frisch2002reconstruction} and \cite{brenier2003}
{\em need not produce correct results}, even outside the singular mass concentrations,
for determining the original position $y$ of mass at current position $x=X_t(y)$. 
For from the current distribution of mass $\rho_t$, use of 
optimal transport theory should provide a convex potential $\tilde \psi_t$
with the property that the transport map 
$\tilde T_t=\nabla\tilde\psi_t$ pushes forward
Lebesgue measure to the \MA\ measure 
\[
\tilde\mam_t =(\tilde T_t)_\sharp\leb =  \rho_t \,,
\]
at least in a local sense.
In our examples, we can show that if $\tilde T_t$
is any a.e.-defined map that has this pushforward property 
and also correctly provides unique pre-images
\[
y=X_t\inv(x) = \tilde T_t\inv(x)
\quad\text{for a.e. $x$,  }
\]
then $\tilde T_t$ {\em cannot} be a monotone map.
Hence it cannot be the gradient of a convex function. 

\subsection{Related literature}

Of the many papers in the cosmological literature on the reconstruction problem,
we mention only a few to demonstrate the role and continuing relevance of least action principles, 
the Zeldovich approximation and the adhesion model.  
In 1989, Peebles~\cite{peebles1989} used a least action principle 
for an $N$-body Newtonian approximation for perturbations of an Einstein-de Sitter cosmology
to reconstruct a discrete set of galaxy orbits.
Narayanan \& Croft \cite{NarayananCroft1999} tested the performance of 
a half-dozen variant methods for reconstruction of the  density field 
generated by $N$-body simulations of Vlasov-Poisson equations.
This included methods based on the Zeldovich approximation and a method of 
Croft \& Gazta\~naga \cite{CroftGatanaga1997} based on  finding 
a monotone map to reconstruct the Lagrangian flow map.
Frisch~\etal\,\cite{frisch2002reconstruction} and  Brenier~\etal\,\cite{brenier2003} made explicit
the connection to optimal transport theory and \MA\ equations, and tested 
optimal transport reconstructions against $N$-body simulations of a $\Lambda$CDM cosmology.
More recently, L\'evy~\etal\,\cite{Levy2021fast} performed extensive tests with 
optimal transport reconstructions generated by fast modern numerical methods.
The \MA\ equation also figures in several studies of reconstruction in which 
it serves to approximate or replace the Poisson equation for the gravitational potential,
see Brenier~\cite{Brenier2011modified} and L\'evy~\etal\,\cite{LevyBrenier2024}.

As we have indicated, the adhesion model  in one space dimension reduces to sticky particle flow, 
governed by pressureless Euler equations for conservation of mass and momentum.  
Mathematical studies of multi-dimensional sticky particle flows that conserve mass and momentum
suggest that the concept may not be a well-formulated one.
E.g., Bressan \& Nguyen \cite{BressanNguyen2014} have shown that
multi-dimensional pressureless Euler flow is ill-posed for measure-valued density fields.
Concerning existence of solutions, there is recent work by Cavalletti~\etal\,\cite{CavallettiEA2019}.
Existence and uniqueness of solutions for a restricted class of initial data is obtained by
Bianchini \& Daneri~\cite{BianchiniDaneri2023}.
But even the classic Riemann problem for these equations is 
problematic---Using convex integration, 
Huang~\etal\,\cite{HuangShiWang2025-nonuniqueRiemann} have shown
that in two space dimensions, infinitely many
solutions exist that satisfy a natural entropy/energy inequality.

The analysis of the adhesion model itself is simpler than that of the pressureless Euler system,
due to its decoupling of mass advection from the evolution of the velocity field. 
The mathematical literature on mass transport and Lagrangian flows for given 
discontinuous velocity fields is too large to review here---we will only discuss works 
that are most relevant for the present analysis of measures advected in the adhesion model.
As argued by Poupaud \& Rascle~\cite{PoupaudRascle1997}, we find it
natural to study mass measures by pushforward under Lagrangian flows generated 
from discontinuous velocity fields and satisfying differential inclusions 
in accord with the theory of Filippov~\cite{Filippov1960,Filippov1988book}.
Such flows were shown by Filippov to be unique for velocity fields satisfying 
appropriate one-sided Lipschitz estimates. 
Poupaud \& Rascle used such estimates to establish approximation results for 
Lagrangian flows and associated pushforward measures. 

One-sided Lipschitz estimates naturally arise from semi-concavity and 
have been used to study singularity propagation for viscosity solutions 
of Hamilton-Jacobi equations~\cite{CannarsaSinestrari04}.
We will exploit the fact that semi-concavity 
is propagated by solutions of the potential Burgers equation \eqref{e:vHJ} 
to obtain one-sided Lipschitz estimates that imply compactness of the smooth flows $X^\eps$.
To prove convergence as $\eps\to0$, we provide an uncomplicated argument that makes 
use of a basic principle related to large deviations, 
convergence results for subgradients in convex analysis \cite{Rockafellar,Hiriart2001},
and a convexity argument in the theory of differential inclusions~\cite{Aubin1984}.

Also directly relevant to our study are results of Bianchini \& Gloyer~\cite{BianchiniGloyer2011} for 
Lagrangian flows satisfying differential inclusions with semi-monotone velocity maps.
These authors show that these flows satisfy a differential {\em equation} for
some single-valued velocity field, and they characterize finite pushforward measures as unique 
non-negative solutions of the associated continuity equation by using results of
Ambrosio \& Crippa \cite{AmbrosioCrippa2008}.

\subsection{Discussion}
The results of the present paper explain how the
mass distribution in the adhesion model is determined in the limit $\eps\to0$, 
showing that it arises by pushforward under a limiting Lagrangian flow
whose particle paths are ``sticky'' and are determined by solving
a differential inclusion. 
Moreover,  the entire past history of the flow backward from almost every 
point in space (outside a singular concentration set)
is one of free streaming at constant velocity, 
as in the Zeldovitch approximation.

A rough explanation for how it can happen that $\rho_t\ne\mam_t$ 
in the examples of Section~\ref{s:example} is the following.
The dynamics generates one-dimensional filaments that concentrate mass
along three rays that intersect at a point.
For some choices of the velocities, the mass along all three
filaments flows {\em toward} the intersection point, and concentrates
there into a delta mass.
For other choices, however, mass along one of the filaments flows 
{\em away} from the intersection point. 

In this latter situation,  the limiting mass measure $\rho_t$ from the adhesion model 
carries mass along the incoming filaments immediately onto 
the outgoing one, and develops no delta-mass concentration  at the intersection point. 
Regardless of the configuration of velocities, however, the \MA\ measure
$\mam_t$ in \eqref{d:MAint} always concentrates positive mass into a delta mass
at the intersection point. The reason is that by explicit calculation,
the slopes in the subgradient $\D w_t(x)$ at the intersection point 
comprise a triangle of nonzero area. 
The measures $\mam_t$ and $\rho_t$ thus differ at the intersection point 
and on the filament carrying mass away from it.

In this situation,
the limiting Lagrangian flow $X_t$ retains its ``sticky'' nature,
which it always must, while the convexified transport maps $T_t$ lose
``stickiness,'' allowing paths $t\mapsto T_t(y)$ to coincide for a time
at the point of concentration, then later separate along the extruded filament.
Reconstruction based on optimal transport fails around the point of concentration 
because the monotonicity property in a neighborhood {\em outside} the
region that is mapped to the concentration set determines the map {\em inside},
for our examples.

An issue that we have not resolved is whether the singular set 
where mass is concentrated  (the ``cosmic web'' within the adhesion model) 
needs to be the same for $\rho_t$ as that for $\mam_t$ in general. 
These sets do agree in our examples, but we do not know whether this is 
necessarily always the case.

Neither measure $\rho_t$ nor $\mam_t$ provided by the adhesion model
exactly captures the physical mass  distribution generated from the 
Newtonian approximation  of a perturbed Einstein-de Sitter universe, of course. 
Whether the inability to correctly reconstruct initial position monotonically
proves to be an important drawback to using such a reconstruction procedure
for the adhesion model remains to be determined. After all, 
the adhesion model has other well-known limitations such as 
failure to conserve momentum. 

Perhaps, though, our study may provoke some interest in phenomena related to 
the collision of mass sheets and filaments that extrude singular structures,
such as we have identified in our examples.

\subsection{Plan of the paper}
In Section~\ref{s:lagrangian}, we establish stability estimates
for the Lagrangian flows $X^\eps$, prove convergence as $\eps\to0$
to the solution of the differential inclusion~\ref{z:Du}, 
We study the zero-viscosity limit and Lebesgue decomposition 
of pushforward mass measures in Section~\ref{s:mass}.
In Section~\ref{s:smoothMA} we analyze a smooth approximation 
to the \MA\ measures $\mam_t$ and the transport maps $T_t$.
Then in Section~\ref{s:MAmeasures} we develop several properties
of $\mam_t$ and $T_t$ and use them to characterize
backward particle paths $X_t(y)$ from almost every $x$, the 
agreement of the absolutely continuous parts of $\rho_t$ and $\mam_t$,
and the fact that these parts are determined a.e.\ by the \MA\ equation~\eqref{e:MA1}.

Finally in Section~\ref{s:example} we study a special case
of initial data in $d=2$ space dimensions with constant velocity in three sectors,
obtaining criteria that characterize when $\rho_t\ne\mam_t$.
Section~\ref{s:example} can be read largely independently of 
the earlier sections~\ref{s:mass}--\ref{s:MAmeasures}. 
It depends upon the definition of $\mam_t$ in terms of the subgradient of $w_t$,
and the characterization of the Lagrangian flow $X$ in Theorem~\ref{t:UNIQ} 
via the unique solution of a differential inclusion as in \eqref{z:Du} for which 
the velocity potential $u_t$ is concave.

\section{Lagrangian flow in the adhesion model}\label{s:lagrangian}

In the adhesion model, the velocity field is determined as the zero-viscosity
limit of a gradient field $v^\eps = \nabla u^\eps$, where the potential 
$u^\eps$ satisfies the potential Burgers equation 
\begin{equation}\label{e:potB}
    \D_t u^\eps + \frac12|\nabla u^\eps|^2 = \frac{\eps}{2}\Delta u^\eps\,,
    \qquad u^\eps(x,0)=\vp(x).
\end{equation}
According to the Cole-Hopf transformation, the function $f=e^{-u^\eps/\eps}$
satisfies the heat equation 
\begin{equation}\label{e:heat}
\D_t f = \frac\eps2\Delta f.
\end{equation}
Presuming $\vp$ is subquadratic at infinity (i.e., $\vp(y)=o(|y|^2)$), 
the function $u^\eps_t=u^\eps(\cdot,t)$ is given by
\begin{equation}\label{e:uepsCH}
u^\eps_t(x) = -\eps \log \frac{1}{(2\pi\eps t)^{d/2}}\int_{\R^d} 
\exp\left(-\frac{|x-y|^2}{2\eps t} - \frac{\vp(y)}\eps\right)\,dy \,,
\end{equation}
for all $x\in\R^d$ and $t\in(0,\infty)$. 
Equivalently we can write \eqref{e:uepsCH} in the form
\begin{equation}\label{e:uepsCH2}
   u^\eps_t(x) = -\eps \log \int_{\R^d} 
   G(y,\eps t) e^{-\vp(x-y)/\eps}\,dy\,,
\end{equation}
where $G$ is the heat kernel for \eqref{e:heat} with $\eps=1$: 
\begin{equation}\label{e:heatkernel}
G(y,\tau) = \frac{1}{(2\pi\tau )^{d/2}} \exp\left(\frac{-|y|^2}{2\tau}\right).
\end{equation}
Actually, if $\vp(y) = -\frac12\lambda|y|^2 + o(|y|^2)$ with $\lambda>0$
then \eqref{e:uepsCH} provides
a smooth solution for $0<t<1/\lambda$, a fact that we will make some use of.

\subsection{Symmetries}\label{ss:symmetry}
The Hopf-Lax formula~\ref{e:HLOmega1} and the potential Burgers equation 
\eqref{e:potB} admit a couple of symmetries that simplify
our analysis. 

First we have the Galilean transformations,
rotating spatial coordinates or changing variables 
to a frame moving with constant velocity $\tripl$. E.g., defining
\[
\hat x = x-t\tripl\,, \quad \hat\vp(y) = \vp(y)-\tripl\cdot y\,,
\]
the corresponding functions $\hat u_t(\hat x)$ and $\hat w_t(\hat x)$
coming from the Hopf-Lax formula \eqref{e:HLOmega1} and \eqref{e:uw_dual}
are easily verified to be given by 
\begin{align}\label{e:galilean}
 \hat u_t(\hat x) = u_t(x) - \tripl\cdot x +\tfrac12|\tripl|^2t\,,
 \qquad \hat w_t(\hat x) = w_t(x)\,,
\end{align}
so that $\hat v_t(\hat x)=v_t(x)-\tripl$.
The same formula for $\hat u$, decorated by $\eps$, provides a function satisfying 
the potential Burgers equation \eqref{e:potB}. 

A second symmetry is more particularly valid just for the 
Hopf-Lax formula and the potential Burgers equation: As one can readily check,
given $\lambda>0$ and some function 
$\hat u(x,t)$ given by the Hopf-Lax formula \eqref{e:HLOmega1} 
for all $x\in\R^d$ and $t\in(0,1/\lambda)$ with initial data $\hat \vp$,
the function  given by  
\begin{equation}\label{e:symm2}
u(x,t) = \hat u\left(\frac{x}{1+\lambda t},\frac{t}{1+\lambda t}\right) +
\frac{\lambda}{1+\lambda t}\frac{|x|^2}2 ,
\end{equation}
satisfies \eqref{e:HLOmega1} again for $x\in\R^d$ and $t\in(0,\infty)$, 
with $\vp(z) = \hat\vp(z)+\frac\lambda2|z|^2$.

%

Further, given 
$\hat u^\eps(x,t)$ satisfying the potential Burgers equation \eqref{e:potB} 
for $x\in\R^d$ and $t\in(0,1/\lambda)$ and initial data $\hat\vp$, 
the function defined by 
\begin{equation}\label{e:symm2b}
u^\eps(x,t) = \hat u^\eps\left(\frac{x}{1+\lambda t},\frac{t}{1+\lambda t}\right) +
\frac{\lambda}{1+\lambda t}\frac{|x|^2}2 - \frac{\eps d}2  \log(1+\lambda t)
\end{equation}
satisfies \eqref{e:potB} again for $x\in\R^d$ and $t\in(0,\infty)$
with initial data $\vp$.

\subsection{Semi-concavity and stability estimates for Lagrangian flows}\label{ss:stableLagrange}

The Lagrangian flows generated by the adhesion model enjoy 
some good stability properties under a simple and mild assumption 
on the velocity potential $\vp$. 
We say $\vp$  is {\em $\lambda$-concave} if $\lambda\in\R$ and the function 
$y\mapsto \vp(y)-\frac\lambda2|y|^2$ is concave.
The function $\vp$ is {\em semi-concave} if it is $\lambda$-concave for some
$\lambda\in\R$.   

For example, any function $\vp$ which is $C^2$ with globally bounded 
first and second derivatives is semi-concave.
Semi-concavity is an important concept in the analysis of
Hamilton-Jacobi equations (as evidenced by the book \cite{CannarsaSinestrari04})
but we will need only some basic facts that especially concern how
semi-concavity propagates forward under the dynamics of 
the potential Burgers equation or the Hopf-Lax formula.

\begin{lemma}\label{l:lambda}
    Assume that $\vp:\R^d\to\R$ is subquadratic at infinity and 
    is $\lambda$-concave for some $\lambda\ge0$.
    Then with $\lambda_t = \lambda/(1+\lambda t)$, for any $t>0$ we have:
    \begin{itemize}
    \item[(i)] The function $u^\eps_t(x)$ given by the Cole-Hopf formula \eqref{e:uepsCH} 
    is $\lambda_t$-concave.
      \item[(ii)] The function $u_t(x)$ given by the Hopf-Lax formula \eqref{e:HLOmega1}
        is $\lambda_t$-concave.
    \end{itemize}
\end{lemma}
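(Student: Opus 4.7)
The plan is to split each part of the lemma into an algebraic reduction and a convexity closure step. The algebraic step, in both (i) and (ii), is a change of variables---essentially the content of the symmetries \eqref{e:symm2b} and \eqref{e:symm2} stated just above---that rewrites $u^\eps_t(x) - \tfrac{\lambda_t}{2}|x|^2$ (respectively $u_t(x) - \tfrac{\lambda_t}{2}|x|^2$) as a log-integral (respectively an infimum), in which the dependence on $x$ enters only through the auxiliary function $\tilde\vp(y) := \vp(y) - \tfrac{\lambda}{2}|y|^2$, which is concave by hypothesis. The closure step then observes that for each fixed value of the auxiliary integration variable, the integrand is log-convex (respectively the summand is concave) in $x$, and that these properties pass through positive-measure integration (respectively infimum).

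For part (ii), I set $\xi := x/(1+\lambda t)$ and make the substitution $y = \xi + w$ in the Hopf-Lax formula \eqref{e:HLOmega1}; a direct completing-of-squares calculation then yields
\[
u_t(x) - \tfrac{\lambda_t}{2}|x|^2 = \inf_{w\in\R^d}\Bigl[\tfrac{1+\lambda t}{2t}|w|^2 + \tilde\vp(\xi+w)\Bigr].
\]
For each fixed $w$, the bracketed expression is concave in $\xi$: the quadratic $\tfrac{1+\lambda t}{2t}|w|^2$ is constant in $\xi$, and $\tilde\vp(\xi+w)$ is a translate of the concave function $\tilde\vp$. Since an infimum of concave functions is concave, the right-hand side is concave in $\xi$, and hence in $x$ because $\xi$ is linear in $x$.

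For part (i), the analogous shift in the Cole-Hopf formula \eqref{e:uepsCH} gives
\[
u^\eps_t(x) - \tfrac{\lambda_t}{2}|x|^2 = C_\eps(t) - \eps\log F(\xi), \qquad F(\xi) := \int_{\R^d} G(z,\eps\hat t)\, e^{-\tilde\vp(\xi-z)/\eps}\,dz,
\]
where $\hat t := t/(1+\lambda t)$ and $C_\eps(t) := \tfrac{\eps d}{2}\log(1+\lambda t)$ depends only on $t$. For each fixed $z$, the map $\xi\mapsto e^{-\tilde\vp(\xi-z)/\eps}$ is log-convex in $\xi$ because $\tilde\vp(\xi-z)$ is concave in $\xi$. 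Log-convex functions form a convex cone closed under positive linear combinations, as H\"older's inequality gives, for $f_1,f_2>0$ log-convex and $\theta\in(0,1)$,
\[
(f_1+f_2)(\theta\xi_0+(1-\theta)\xi_1)\le (f_1+f_2)(\xi_0)^\theta\, (f_1+f_2)(\xi_1)^{1-\theta}.
\]
Approximating the Gaussian integral defining $F$ by positive Riemann sums and passing to the pointwise limit (which preserves log-convexity of positive functions, since $\log$ is continuous on $(0,\infty)$ and pointwise limits of convex functions are convex), I conclude that $F$ is log-convex in $\xi$. Hence $-\eps\log F(\xi)$ is concave in $\xi$, and therefore in $x$.

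The only genuine subtlety I expect is the closure of log-convexity under an improper integral: one must ensure convergence of $F(\xi)$ for every $\xi$ and then that the pointwise limit of positive log-convex Riemann sums remains log-convex. The former follows from subquadraticity of $\tilde\vp$ combined with the Gaussian tails of $G(\cdot,\eps\hat t)$, and the latter from the general closure principle just noted. The completing-of-squares algebra that produces exactly the factor $\lambda_t = \lambda/(1+\lambda t)$ is routine but requires care.
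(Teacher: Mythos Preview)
Your proof is correct and follows essentially the same route as the paper: the paper first treats $\lambda=0$ directly (log-convexity closure for (i), an infimum-of-affines representation for (ii)) and then reduces the general case to it by invoking the symmetries \eqref{e:symm2}--\eqref{e:symm2b}; your one-step substitution $\xi=x/(1+\lambda t)$ is exactly those symmetries unpacked inline. One small correction to your convergence remark: $\tilde\vp=\vp-\tfrac{\lambda}{2}|\cdot|^2$ is \emph{not} subquadratic when $\lambda>0$ (indeed $-\tilde\vp(y)\sim\tfrac{\lambda}{2}|y|^2$), but $F(\xi)$ converges nonetheless because $\hat t=t/(1+\lambda t)<1/\lambda$, so the Gaussian decay rate $1/(2\eps\hat t)$ strictly dominates the growth rate $\lambda/(2\eps)$ in the exponent.
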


\begin{proof}
    Treating (i) first, consider the case $\lambda=0$. 
    Supposing $\vp$ is simply concave,
    the function $x\mapsto e^{-\vp(x-y)/\eps}$ is log-convex. As sums and  positive multiples 
    and  limits  of log-convex functions are log-convex, 
    it follows from \eqref{e:uepsCH2} that $-u^\eps_t$ is convex,
    i.e., $u^\eps_t$ is concave. 

   Now, suppose $\lambda>0$ and $\vp$ is $\lambda$-concave.
   Setting $\hat\vp(z)=\vp(z)-\frac\lambda2|z|^2$, we see $\hat\vp$ is concave.
   Then $\hat u_t^\eps(x)$ can be defined as in \eqref{e:uepsCH} for all $x\in\R^d$
   and $t\in(0,1/\lambda)$, and from the first part of the proof 
   it follows $\hat u_t^\eps$ is concave. Using the symmetry \eqref{e:symm2b} 
   from subsection~\ref{ss:symmetry} to define $u^\eps(x,t)$, it follows $u_t^\eps$ is 
   defined for all $t>0$ and is $\lambda_t$-concave.
   This establishes part (i).

    For part (ii) in case $\lambda=0$, we note that any concave $\vp$  is the infimum of 
    some family of affine functions $\{v_\alpha\mdot z+h_\alpha\}_\alpha$.
    Then in the Hopf-Lax formula we can interchange the inf over $z$
    and the inf over $\alpha$, calculate to find a min at $z=x-tv_\alpha$,
    and get
   \begin{align*}
        u_t(x) &= 
       \inf_\alpha \inf_z \left( \frac{|x-z|^2}{2t} + v_\alpha\mdot z+h_\alpha\right)
      = \inf_\alpha \left(v_\alpha\mdot x -\frac{|tv_\alpha|^2}{2t} + h_\alpha\right).
   \end{align*} 
   This shows that $u_t$ is concave.
   In case $\lambda>0$, the proof that $u_t$ is $\lambda_t$-concave is similar using \eqref{e:symm2}. 
   Or, one can apply the proof of Proposition~\ref{p:fprop}(ii) to the 
   $(1+\lambda t)$-concave function $f=\psi_t$
   in \eqref{e:Lagrange1} to show $w_t=\pts$ is strictly convex, with 
   \begin{equation}\label{e:wt-betat}
w_t(x)= \frac1{1+\lambda t} \frac{|x|^2}2 + g_t(x) \,,
\end{equation}
   where $g_t$ is convex. Then from \eqref{e:uw_dual} we infer $u_t$ is $\lambda_t$-concave.
\end{proof}

Any semi-concave function is locally Lipschitz on its domain, since the same
is true for convex functions. The gradient, defined almost everywhere,
satisfies a one-sided Lipschitz condition:

\begin{lemma}\label{lem:oneside}
    Suppose $f:\R^d\to\R$ is $\lambda$-concave for some $\lambda\ge0$.
    Then
    \begin{itemize}
        \item[(i)]
    $(\nabla f(y)-\nabla f(z))\mdot(y-z) \le \lambda|y-z|^2$
    if $f$ is differentiable at $y,z\in\R^d$. 
        \item[(ii)] No eigenvalue of the Hessian
        $\nabla^2 f(z)$ (when it exists) is greater than $\lambda$.
    \end{itemize}
\end{lemma}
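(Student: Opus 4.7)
The plan is to reduce both parts to the standard convex-function case by writing $f(y) = g(y) + \tfrac12\lambda|y|^2$, where $g:\R^d\to\R$ is concave by the definition of $\lambda$-concavity. Then $-g$ is convex, and at any point of differentiability of $f$ we have $\nabla g(y) = \nabla f(y) - \lambda y$, while the Hessian (when it exists) satisfies $\nabla^2 f(z) = \nabla^2 g(z) + \lambda I$.

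For part (i), I would invoke the well-known monotonicity of the subgradient of a convex function applied to $-g$: for any $y,z$ where $g$ is differentiable,
\begin{equation}
\bigl(\nabla(-g)(y) - \nabla(-g)(z)\bigr)\cdot(y-z) \ge 0,
\end{equation}
i.e., $(\nabla g(y) - \nabla g(z))\cdot(y-z) \le 0$. Substituting $\nabla g = \nabla f - \lambda\,\id$ and rearranging yields the stated one-sided Lipschitz estimate
\begin{equation}
(\nabla f(y) - \nabla f(z))\cdot(y-z) \le \lambda|y-z|^2.
\end{equation}

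For part (ii), I would use the fact that a concave function $g$ has a negative semi-definite Hessian wherever its Hessian exists in the sense of Alexandrov (or classical sense). This follows, e.g., from a second-order Taylor expansion together with concavity: if $\nabla^2 g(z)$ had a positive eigenvalue with eigenvector $v$, then $g(z+hv)+g(z-hv)-2g(z) = h^2 v\cdot\nabla^2 g(z) v + o(h^2) > 0$ for small $h\ne 0$, contradicting concavity. Hence all eigenvalues of $\nabla^2 g(z)$ are $\le 0$, and since $\nabla^2 f(z) = \nabla^2 g(z) + \lambda I$, every eigenvalue of $\nabla^2 f(z)$ is bounded above by $\lambda$.

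The only mild subtlety is the bookkeeping regarding where derivatives exist: semi-concavity guarantees that $f$ is locally Lipschitz, and by Rademacher's theorem $\nabla f$ exists a.e., while Alexandrov's theorem ensures $\nabla^2 f$ exists a.e.; both parts of the lemma are asserted only at points where the relevant derivatives exist, so no further regularity argument is needed. I do not expect any real obstacle here — this is essentially a reformulation of the standard monotonicity and negative-semidefiniteness properties of concave functions.
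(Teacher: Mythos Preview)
Your proposal is correct and essentially identical to the paper's proof: both define the concave function $g(y)=f(y)-\tfrac12\lambda|y|^2$ (the paper calls it $\hat f$), invoke the monotonicity of its gradient for part~(i), and then pass to the Hessian for part~(ii). The only cosmetic difference is that the paper disposes of~(ii) with the one-liner ``Part~(ii) follows from~(i)'' (implicitly via a directional limit $y=z+hv$, $h\to0$), whereas you spell out a direct second-order Taylor/midpoint-concavity argument; both are standard and equally valid.
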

\begin{proof}
    The function given by $\hat f(y)=f(y)-\frac\lambda2|y|^2$ is concave
    and thus its gradient has the well-known monotonicity property (easily
    proved using supporting planes)
    \[
    (\nabla \hat f(y)-\nabla \hat f(z))\mdot(y-z) \le 0.
    \]
    This implies (i) by a simple substitution.
    Part (ii) follows from (i).
\end{proof}

This leads directly to the following propagating stability estimates for
the Lagrangian flow determined by the smooth velocity field 
$v^\eps=\nabla u^\eps$ with $\eps>0$:

\begin{prop}\label{p:Xepslip}
    Let $\vp$ be $\lambda$-concave with $\lambda\ge0$ and subquadratic at infinity,
    and let  $u^\eps$ be the solution of \eqref{e:potB} from \eqref{e:uepsCH}.
    Let $X^\eps_t(y)=X^\eps(y,t)$ be the Lagrangian flow map satisfying
    \eqref{e:Xeps_flow}.  Then for all $y,z\in\R^d$ and whenever $0<s<t$,
\begin{equation}\label{e:lambda-contraction}
\frac{|X^\eps_t(y)-X^\eps_t(z)|}{1+\lambda t} \le 
\frac{|X^\eps_s(y)-X^\eps_s(z)|}{1+\lambda s} \le 
|y-z| \,.
\end{equation}
\end{prop}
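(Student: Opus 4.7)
The plan is to control the evolution of the squared distance between two trajectories using the one-sided Lipschitz estimate on the gradient of $u^\eps_t$ that follows from Lemma~\ref{l:lambda} and Lemma~\ref{lem:oneside}.

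Fix $y,z\in\R^d$ and set $D(t) = |X^\eps_t(y)-X^\eps_t(z)|^2$. Since $\vp$ is subquadratic at infinity and $\lambda$-concave with $\lambda\ge0$, the Cole-Hopf potential $u^\eps_t$ is smooth (for $\eps>0$) and $\lambda_t$-concave with $\lambda_t = \lambda/(1+\lambda t)$ by Lemma~\ref{l:lambda}(i), so its gradient $v^\eps_t=\nabla u^\eps_t$ is a $C^1$ field and the flow $X^\eps_t$ is well-defined and $C^1$ in $t$.

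Differentiating $D$ using \eqref{e:Xeps_flow} gives
\begin{equation*}
\frac{d}{dt} D(t) = 2\bigl(X^\eps_t(y)-X^\eps_t(z)\bigr)\cdot \bigl(v^\eps_t(X^\eps_t(y))-v^\eps_t(X^\eps_t(z))\bigr).
\end{equation*}
By Lemma~\ref{lem:oneside}(i) applied to $u^\eps_t$, which is $\lambda_t$-concave, the right-hand side is bounded by $2\lambda_t D(t) = \frac{2\lambda}{1+\lambda t}D(t)$. Thus $D$ satisfies the differential inequality $\dot D(t)\le \frac{2\lambda}{1+\lambda t}D(t)$.

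Integrating from $s$ to $t$ yields $D(t)\le D(s)\bigl((1+\lambda t)/(1+\lambda s)\bigr)^2$, and taking square roots gives
\begin{equation*}
\frac{|X^\eps_t(y)-X^\eps_t(z)|}{1+\lambda t}\le \frac{|X^\eps_s(y)-X^\eps_s(z)|}{1+\lambda s}\,.
\end{equation*}
Setting $s\to 0^+$ and using $X^\eps_0=\id$ gives the second inequality. No step here looks like a serious obstacle; the only mild point to verify is the applicability of Lemma~\ref{lem:oneside}(i), which requires $\lambda_t\ge0$ (ensured by $\lambda\ge0$) and differentiability of $u^\eps_t$ at the two points, which holds globally by smoothness of the Cole-Hopf formula.
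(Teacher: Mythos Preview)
Your proof is correct and follows essentially the same approach as the paper: differentiate the squared distance between trajectories, apply the one-sided Lipschitz estimate for $\nabla u^\eps_t$ coming from its $\lambda_t$-concavity (Lemma~\ref{l:lambda} plus Lemma~\ref{lem:oneside}), and integrate the resulting differential inequality. The only cosmetic difference is that the paper rewrites the inequality as $\frac{d}{dt}\log\frac{|X_1-X_2|}{1+\lambda t}\le 0$ (after noting $y\ne z$), whereas you apply Gronwall directly to $D(t)$---the content is identical.
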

\begin{proof} 
Let $X_1=X^\eps_t(y)$, $X_2=X^\eps_t(z)$. 
By applying Lemma~\ref{lem:oneside} with $f=u^\eps_t$ 
along with Lemma~\ref{l:lambda}(ii) we deduce that 
\begin{align*}
\frac12
\frac{d}{dt}
|X_1-X_2|^2 &=(\nabla u^\eps_t(X_1)-\nabla u^\eps_t(X_2))\mdot (X_1-X_2)
\le
\frac{\lambda}{1+\lambda t} |X_1-X_2|^2.
\end{align*}
Provided $y\ne z$, the right-hand side can never vanish and we infer that
\[
\frac{d}{dt} \log \frac{|X_1-X_2|}{1+\lambda t} \le 0.
\qedhere
\]
\end{proof}

These stability estimates will allow us to justify the name ``adhesion model.''
Particle paths that become coincident 
in the limit as $\eps\to0$ at some time $s>0$ must remain coincident at all later times.

The conclusions of Proposition~\ref{p:Xepslip} ensure that the functions $(y,t)\mapsto X^\eps_t(y)$
are uniformly Lipschitz in $y$, locally for $t\ge0$.  To obtain a corresponding result in $t$,
it will be convenient to suppose the velocity potential $\vp$ is itself Lipschitz,
meaning initial velocities are bounded.
Given a constant $K\ge0$, a function $f$ on $\R^d$ is called $K$-Lipschitz if  
$|f(y)-f(z)|\le K|y-z|$ for all $y,z$. 

\begin{prop}\label{lem:Klip}
    Assume $\vp$ is $K$-Lipschitz. Then $u^\eps_t$ is $K$-Lipschitz 
    for each $\eps>0$ and $t>0$, whence $|v^\eps_t(x)|\le K$ for all $x$ and 
    $t\mapsto X^\eps_t(y)$ is $K$-Lipschitz, with 
    \begin{equation}\label{bd:DXt}
    |\D_t X^\eps_t(y)|\le K \quad\text{for all $y\in\R^d$, $t>0$.}
    \end{equation}
\end{prop}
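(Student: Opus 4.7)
The plan is to read the Lipschitz property of $u^\eps_t$ directly off the Cole-Hopf representation \eqref{e:uepsCH2}, then deduce the velocity bound and the Lagrangian bound as immediate consequences. The key observation is that $y\mapsto G(y,\eps t)$ is a probability density, so \eqref{e:uepsCH2} exhibits $-\tfrac{1}{\eps}u^\eps_t(x)$ as the logarithm of an expectation $\E[\exp(-\vp(x-Y)/\eps)]$ with $Y$ distributed according to $G(\cdot,\eps t)$. The $K$-Lipschitz assumption on $\vp$ gives the pointwise inequality $\vp(x_1-y)\ge \vp(x_2-y)-K|x_1-x_2|$ uniformly in $y$, which translates into the multiplicative bound $e^{-\vp(x_1-y)/\eps}\le e^{K|x_1-x_2|/\eps}\,e^{-\vp(x_2-y)/\eps}$.

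Integrating this against $G(y,\eps t)\,dy$ and taking $-\eps\log$ (which is decreasing and converts the factor $e^{K|x_1-x_2|/\eps}$ into $-K|x_1-x_2|$) gives $u^\eps_t(x_1)\ge u^\eps_t(x_2)-K|x_1-x_2|$; swapping $x_1$ and $x_2$ yields $|u^\eps_t(x_1)-u^\eps_t(x_2)|\le K|x_1-x_2|$, establishing the first claim. Because $u^\eps_t$ is smooth (the heat kernel smoothing in \eqref{e:uepsCH} produces $C^\infty$ dependence on $x$), the $K$-Lipschitz bound transfers to the pointwise gradient estimate $|v^\eps_t(x)|=|\nabla u^\eps_t(x)|\le K$.

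The Lagrangian estimate is then immediate: the ODE \eqref{e:Xeps_flow} gives $\D_t X^\eps_t(y) = v^\eps_t(X^\eps_t(y))$, so $|\D_t X^\eps_t(y)|\le K$ pointwise, and integrating in $t$ yields $|X^\eps_t(y)-X^\eps_s(y)|\le K|t-s|$. There is no real obstacle here; the only subtlety worth flagging is that one must verify that the integrals in \eqref{e:uepsCH2} converge for all $x$ (ensured since $\vp$ Lipschitz implies $\vp$ is at most linear at infinity, which is comfortably subquadratic) so that $u^\eps_t$ is genuinely defined and smooth, after which the Cole-Hopf comparison argument applies directly.
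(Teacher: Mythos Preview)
Your proof is correct and follows essentially the same route as the paper: both use the Cole--Hopf representation \eqref{e:uepsCH2}, exploit the pointwise bound $e^{-\vp(x_1-y)/\eps}\le e^{K|x_1-x_2|/\eps}e^{-\vp(x_2-y)/\eps}$ coming from the $K$-Lipschitz hypothesis, integrate against the heat kernel, and take $-\eps\log$. Your probabilistic framing (viewing $G$ as a density and the integral as an expectation) is just a wrapper on the same calculation, and your explicit treatment of the velocity and Lagrangian bounds fills in what the paper leaves implicit.
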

\begin{proof}
   Let $x,z\in \R^d$. Observe that for all $y$,
   \[
   e^{-\vp(x-y)/\eps} =  
   e^{-\vp(z-y)/\eps}   e^{(\vp(z-y)-\vp(x-y))/\eps} 
   \le 
   e^{-\vp(z-y)/\eps}   e^{K|x-z|/\eps}
   \]
   Using this in \eqref{e:uepsCH2} we find
   \[
   -u^\eps_t(x) \le \eps\log \int_{\R^d}
   G(y,\eps t)  e^{-\vp(z-y)/\eps}\,dy\,   e^{K|x-z|/\eps}  = 
   -u^\eps_t(z)+ K|x-z|\,.
   \]
   After interchanging $x$ and $z$ we obtain the claimed result.
\end{proof}

\subsection{Zero-viscosity limit and differential inclusion}\label{ss:Xlim}

The uniform local Lipschitz estimates of the previous subsection 
allow one to extract a local uniform limit of the smoothed Lagrangian flows 
$X^\eps$  along a subsequence of any sequence $\eps_j\to0$. 
Our goal in this subsection is to prove that actually these limits are unique, 
and the proof allows us to characterize every limiting Lagrangian path $t\mapsto X_t(y)$ 
as the unique Lipschitz solution of an initial value problem for a
{differential inclusion}. This initial value problem takes the form
\begin{equation} \label{e:DXt}
    \D_t x_t \in \D u_t(x_t) \, \quad \text{for a.e. $t>0$,}
\qquad    x_0 = y.
\end{equation}
The differential $\D u_t$ is a set-valued map that will be
well defined at every $x\in\R^d$ under the conditions of Lemma~\ref{l:lambda}, 
which ensure that $u_t$ is semi-concave. 

Recall that for any $\lambda$-concave function $f$,
the function $f(x)=\frac12\lambda|x|^2-\check f(x)$ where $\check f$ is convex.
In terms of the subgradient $\D\check f(x)$, we define the differential
$\D f(x)=\lambda x - \D\check f(x)$.  Equivalently,
\begin{equation}\label{d:Dflambda}
    \D f(x) = \{q\in\R^d: f(z)\le f(x) + q\mdot(z-x)+\tfrac12\lambda|z-x|^2 \ \text{for all $z$}\}\,.
\end{equation}
This is the set of slopes at $x$ of paraboloids that lie above the graph of $f$, 
are tangent to it at $x$, and have Hessian $\lambda I$.

Our main theorem regarding the convergence of smoothed Lagrangian flows 
is the following.

\begin{theorem}\label{t:UNIQ}
Assume $\vp$ is $K$-Lipschitz, and $\lambda$-concave with $\lambda\ge0$. 
Then:
\begin{itemize}
    \item[(a)]  The limit 
\ \ $X(y,t) = \lim_{\eps\to0} X^\eps(y,t) \quad\text{exists for each $y\in\R^d$, $t\ge0$}$,
with uniform convergence on every compact subset of $\R^d\times[0,\infty)$.
\item[(b)] For each $t\ge0$, $X_t=X(\cdot,t)$ maps $\R^d$ surjectively onto $\R^d$. 
\item[(c)]   The function $X$ is locally Lipschitz on $\R^d\times[0,\infty)$, and more precisely:
   \begin{itemize}
       \item[(i)]  For each $y\in\R^d$ 
       the Lagrangian path $t\mapsto X_t(y)$ is $K$-Lipschitz.
       \item[(ii)] Whenever $0\le s<t$, for all $y,z\in\R^d$ we have the 
       stability estimate
\begin{equation}\label{e:lambda-contraction0}
\frac{|X_t(y)-X_t(z)|}{1+\lambda t} \le 
\frac{|X_s(y)-X_s(z)|}{1+\lambda s} \le 
|y-z| \,.
\end{equation}
If $X_s(y)=X_s(z)$ for some $s\ge0$, then
$X_t(y)=X_t(z)$ for all $t\ge s$.
   \end{itemize}
\item[(d)] For each $y\in \R^d$ the map $t\mapsto X_t(y)$
is the unique Lipschitz solution to the initial value problem \eqref{e:DXt}.
\end{itemize}
\end{theorem}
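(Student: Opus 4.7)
My strategy is to obtain subsequential limits by compactness, identify any such limit as a solution of the differential inclusion \eqref{e:DXt}, and then prove uniqueness of Lipschitz solutions to that inclusion; uniqueness will upgrade subsequential convergence to full-family convergence, proving (a) and (d) together. Parts (b) and (c) will follow from the stability estimates already in hand together with the smoothness of each $X^\eps$. For the compactness setup, Proposition~\ref{p:Xepslip} makes each map $y\mapsto X^\eps_t(y)$ uniformly $(1+\lambda t)$-Lipschitz, and Proposition~\ref{lem:Klip} makes each path $t\mapsto X^\eps_t(y)$ uniformly $K$-Lipschitz with $|X^\eps_t(y)-y|\le Kt$. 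Thus $\{X^\eps\}_{\eps>0}$ is locally equibounded and equicontinuous on $\R^d\times[0,\infty)$, and Arzel\`a-Ascoli yields a locally Lipschitz subsequential limit $X$ along any sequence $\eps_j\to 0$.

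\textbf{Subsequential limits solve \eqref{e:DXt}.} By Lemma~\ref{l:lambda}, both $u^\eps_t$ and $u_t$ are $\lambda_t$-concave with the same constant $\lambda_t=\lambda/(1+\lambda t)$, so the differential $\D u_t(x)$ defined by \eqref{d:Dflambda} is nonempty, compact, and convex at every $x$. The key ingredient is the following upper semicontinuity: if $(x^\eps,t^\eps)\to(x,t)$ and $\nabla u^\eps_{t^\eps}(x^\eps)\to p$, then $p\in\D u_t(x)$. This will follow by passing to the limit in the paraboloid inequality \eqref{d:Dflambda} written for the $\lambda_{t^\eps}$-concave functions $u^\eps_{t^\eps}$, using local uniform convergence $u^\eps\to u$, which is the standard vanishing-viscosity limit for the Cole-Hopf formula \eqref{e:uepsCH}. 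To deduce $\D_t X_t(y)\in\D u_t(X_t(y))$ a.e., I will apply the convexity/integral-hull argument from Aubin-Cellina~\cite{Aubin1984}: fixing $y$ and writing $x^\eps_t=X^\eps_t(y)$, $x_t=X_t(y)$, at any Lebesgue point $\tau$ of $\D_t x_t$ and for $h>0$ the average $\frac1h\int_\tau^{\tau+h}\nabla u^\eps_s(x^\eps_s)\,ds$ lies in $\co\{\nabla u^\eps_s(x^\eps_s):s\in[\tau,\tau+h]\}$; sending $\eps\to 0$ and using upper semicontinuity together with the convexity of $\D u_\tau(x_\tau)$, then $h\to 0$, places $\D_t x_\tau\in\D u_\tau(x_\tau)$. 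This step is the principal technical obstacle, since the ``limit'' of $\nabla u^\eps_t$ is set-valued and must be handled through the convex-hull construction rather than any naive pointwise passage.

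\textbf{Uniqueness and remaining items.} Suppose $x^1,x^2$ are two Lipschitz solutions of \eqref{e:DXt} with the same initial datum $y$. At common Lebesgue points of the derivatives, with measurable selections $p^i_t\in\D u_t(x^i_t)$ satisfying $\D_t x^i_t=p^i_t$, applying \eqref{d:Dflambda} twice with the roles of $x^1_t$ and $x^2_t$ interchanged and adding yields the set-valued one-sided Lipschitz estimate
\[
(p^1_t-p^2_t)\mdot(x^1_t-x^2_t)\le \lambda_t|x^1_t-x^2_t|^2,
\]
the set-valued analogue of Lemma~\ref{lem:oneside}(i). Gr\"onwall then forces $x^1\equiv x^2$, and combining uniqueness with subsequential compactness gives (a) and (d). For (c), the estimate \eqref{e:lambda-contraction0} is obtained by passing to the limit $\eps\to 0$ in \eqref{e:lambda-contraction}; the bound $|X_t(y)-X_s(y)|\le K|t-s|$ follows from \eqref{bd:DXt} in the same way; and the sticky property is immediate from \eqref{e:lambda-contraction0}. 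For surjectivity (b), each smooth flow $X^\eps_t$ is a $C^\infty$ diffeomorphism of $\R^d$ (as the time-$t$ flow of a smooth, globally bounded velocity field), so for given $x\in\R^d$ there exists $y^\eps$ with $X^\eps_t(y^\eps)=x$; the bound $|y^\eps-x|\le Kt$ confines $\{y^\eps\}$ to a compact set, and any subsequential limit $y$ satisfies $X_t(y)=x$ by uniform convergence on compacts.
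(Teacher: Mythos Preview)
Your overall strategy---subsequential compactness via Arzel\`a--Ascoli, identification of any limit as a solution of \eqref{e:DXt}, uniqueness of such solutions via the set-valued one-sided Lipschitz estimate and Gr\"onwall, and hence full-family convergence---is precisely the paper's. Your treatments of (b), (c), and uniqueness match the paper's essentially line for line (the paper isolates your one-sided inequality and Gr\"onwall argument as lemmas stated before the theorem).

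The one substantive difference is the mechanism for showing that a subsequential limit satisfies \eqref{e:DXt}. The paper fixes $y$, passes to a subsequence along which the bounded derivatives $p^k_t=\nabla u^{\eps_k}_t(x^k_t)$ converge weakly in $L^1([0,\tau])$, invokes \emph{Mazur's theorem} to obtain convex combinations converging strongly (hence pointwise a.e.), and then at each \emph{fixed} $t$ uses only the subgradient stability $\dist\bigl(\nabla u^{\eps_k}_t(x^k_t),\D u_t(x_t)\bigr)\to0$, which it gets from standard convex-analysis results once $w^\eps_t\to w_t$ pointwise. Your time-averaging route is conceptually simpler but rests on the stronger upper-semicontinuity claim in which $t^\eps\to t$ varies together with $\eps\to0$; this in turn requires $u^\eps\to u$ locally uniformly in $(x,t)$ \emph{jointly}. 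You assert this as ``the standard vanishing-viscosity limit,'' and it is true, but it is not proved in the paper (its convergence lemmas are all at fixed $t$) and is not entirely free: uniform Lipschitz control of $t\mapsto u^\eps_t(x)$ does not follow from the one-sided Hessian bound alone, since $\tfrac\eps2\Delta u^\eps$ has no a~priori lower bound. You should either supply that joint-in-$t$ convergence explicitly (e.g., via a barrier argument or a direct estimate from the Cole--Hopf formula) or switch to the Mazur device, whose payoff is precisely that it lets one work at fixed $t$ throughout.
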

The characterization of the limit in terms of the differential inclusion in 
\eqref{e:DXt} is particularly useful for several purposes in this paper, including: 
for proving that the full limit in (a) exists; 
for comparing Lagrangian particle paths to convexified transport paths in Section~\ref{ss:consequences};
and for analyzing the examples in Section~\ref{s:example}.

Key to proving convergence for $X^\eps$ is that by Lemma~\ref{l:lambda},
the differentials $\D u_t$ satisfy a one-sided Lipschitz estimate
that extends Lemma~\ref{lem:oneside}(ii):
\begin{lemma}
    \label{lem:Dflambda}
    Assume $f\colon\R^d\to\R$ is $K$-Lipschitz, and $\lambda$-concave with $\lambda\ge0$.
    Then whenever $q\in \D f(x)$ and $\hat q\in \D f(\hat x)$,
    \[
    \text{(i)}\quad |q|\le K, \qquad\text{and \quad (ii)}
    \quad (q-\hat q)\mdot(x-\hat x)\le \lambda|x-\hat x|^2.
    \]
\end{lemma}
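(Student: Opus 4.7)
Both assertions will follow directly from the defining inequality \eqref{d:Dflambda}, combined with the $K$-Lipschitz hypothesis; there is no real obstacle here, and the only ``skill'' needed is to probe the inequality in the right directions.

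For part (ii), the plan is to apply the defining inequality at the two base points $x$ and $\hat x$. Taking $q\in\D f(x)$ with test point $\hat x$, and $\hat q\in \D f(\hat x)$ with test point $x$, yields the two supporting-paraboloid estimates
\begin{align*}
f(\hat x) &\le f(x) + q\mdot(\hat x - x) + \tfrac12\lambda|\hat x - x|^2,\\
f(x)      &\le f(\hat x) + \hat q\mdot(x - \hat x) + \tfrac12\lambda|x - \hat x|^2.
\end{align*}
Adding these cancels the $f$-values and combines the two quadratic terms (each equal to $\tfrac12\lambda|x-\hat x|^2$) to produce exactly $(q - \hat q)\mdot(x-\hat x)\le \lambda |x-\hat x|^2$. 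This is the direct generalization of the monotonicity of gradients of concave functions used in Lemma~\ref{lem:oneside}(i); note that it does not use the Lipschitz hypothesis at all.

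For part (i), I would insert the test point $z = x+t\eta$, for $t>0$ and an arbitrary unit vector $\eta\in\R^d$, into \eqref{d:Dflambda}. This gives $tq\mdot\eta \ge f(x+t\eta) - f(x) - \tfrac12\lambda t^2$. Using the $K$-Lipschitz hypothesis in the form $f(x+t\eta)-f(x)\ge -Kt$, and dividing through by $t>0$, gives $q\mdot\eta \ge -K - \tfrac12\lambda t$. Replacing $\eta$ by $-\eta$ and rearranging yields $q\mdot \eta \le K + \tfrac12\lambda t$. Sending $t\to 0^+$ and taking the supremum over unit vectors $\eta$ gives $|q|\le K$, as required.

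One sanity point worth noting is that the bound in (i) is uniform in $x$, so the set-valued velocity field $\D u_t$ has pointwise magnitude bounded by the Lipschitz constant $K$ of $\vp$, consistent with the bound $|\D_t X^\eps_t(y)|\le K$ established in Proposition~\ref{lem:Klip}. This compatibility is what will make the differential inclusion \eqref{e:DXt} a natural target for the zero-viscosity limit in Theorem~\ref{t:UNIQ}, and part (ii) provides precisely the one-sided Lipschitz estimate needed to invoke Filippov-type uniqueness for its Lipschitz solutions.
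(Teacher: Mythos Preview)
Your proof is correct and follows essentially the same approach as the paper: part (ii) is identical (add the two supporting-paraboloid inequalities), and part (i) differs only trivially in that the paper probes in the specific direction $-q$ (taking $\hat x=x-\alpha q$) rather than in an arbitrary unit direction $\eta$ followed by a supremum.
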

\begin{proof}
    Using the definition \eqref{d:Dflambda},
    for any $q\in \D f(x)$ we have 
    \begin{equation}\label{e:fzx-lambda}
    f(\hat x)-f(x)\le q\mdot(\hat x-x)+\tfrac12\lambda|\hat x-x|^2
    \end{equation}
    for all $\hat x$. Taking $\hat x=x- \alpha q$ for $\alpha>0$, 
    since $|f(\hat x)-f(x)|\le K|\hat x-x|$, upon rearranging terms we find 
    \begin{align*}
        q\mdot(x-\hat x)=\alpha|q|^2 \le K\alpha|q| + \tfrac12\lambda \alpha^2|q|^2.
    \end{align*}
    Cancelling $\alpha|q|$ and taking $\alpha\to0$ we find $|q|\le K$, proving (i).

    Supposing also that $\hat q\in\D f(\hat x)$, similar to \eqref{e:fzx-lambda} we find
    \[
    f(x)-f(\hat x) \le \hat q\mdot(x-\hat x)+\tfrac12\lambda|\hat x-x|^2.
    \]
    Adding this to \eqref{e:fzx-lambda} we obtain (ii).
\end{proof}

\begin{remark}
Though it will not matter in our application,
the definition \eqref{d:Dflambda} is independent of $\lambda$.
The reason is that whenever 
$f(z)=\tfrac12\tilde \lambda|z|^2 - \tilde f(z)$ with $\tilde f$ convex and $\tilde \lambda>\lambda$,
we have 
$\tilde f(z) = \check f(z) + \tfrac12(\tilde \lambda-\lambda)|z|^2$ 
and it is not difficult to show  that 
$\D\tilde f(x) = \D\check f(x) +(\tilde\lambda-\lambda)x,$ 
see~\cite[Prop.~A.1(ii)]{LPS19}.
\end{remark}

The use of one-sided Lipschitz estimates leads to 
a simple proof of uniqueness for Lipschitz solutions of \eqref{e:DXt}.
\begin{lemma}[Uniqueness]\label{l:uniq}
    Assume $\vp$ is $K$-Lipschitz and $\lambda$-concave with $\lambda\ge0$. 
    Then there is at most one Lipschitz solution to the initial value problem \eqref{e:DXt}.
\end{lemma}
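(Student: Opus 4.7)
The plan is to show uniqueness by a Gr\"onwall argument driven by the one-sided Lipschitz bound already established in Lemma~\ref{lem:Dflambda}(ii). Suppose $x_t$ and $\hat x_t$ are two Lipschitz maps on $[0,\infty)$ that both solve \eqref{e:DXt} with $x_0=\hat x_0=y$. Being Lipschitz, each is absolutely continuous, differentiable almost everywhere, and the difference squared $\delta(t):=|x_t-\hat x_t|^2$ is itself locally Lipschitz. On the full-measure set $E\subset(0,\infty)$ where both derivatives exist and both differential inclusions hold,
\[
 \tfrac{d}{dt}\delta(t) \;=\; 2\bigl(\D_t x_t-\D_t\hat x_t\bigr)\mdot(x_t-\hat x_t)\,.
\]

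Next I would feed in the semi-concavity of $u_t$. By Lemma~\ref{l:lambda}(ii), $u_t$ is $\lambda_t$-concave with $\lambda_t=\lambda/(1+\lambda t)$. Since $\D_t x_t\in\D u_t(x_t)$ and $\D_t \hat x_t\in\D u_t(\hat x_t)$ for $t\in E$, Lemma~\ref{lem:Dflambda}(ii) applied to $f=u_t$ gives the pointwise estimate
\[
 \bigl(\D_t x_t-\D_t\hat x_t\bigr)\mdot(x_t-\hat x_t)\;\le\;\lambda_t\,|x_t-\hat x_t|^2\;=\;\lambda_t\,\delta(t)\,.
\]
Combining, $\dot\delta(t)\le 2\lambda_t\,\delta(t)$ for a.e.\ $t>0$, with $\delta(0)=0$. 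Since $\lambda_t\le\lambda$ is bounded, the classical Gr\"onwall inequality for absolutely continuous functions yields $\delta(t)\le \delta(0)\exp\bigl(2\int_0^t\lambda_s\,ds\bigr)=0$, hence $x_t=\hat x_t$ for all $t\ge 0$.

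I do not foresee a serious obstacle: the only points requiring care are purely measure-theoretic, namely that the differential inclusion holds a.e.\ for each solution and the intersection of two full-measure sets is again full-measure, so that the one-sided estimate may be invoked pointwise along both paths simultaneously. The a priori bound $|\D_t x_t|\le K$ from Lemma~\ref{lem:Dflambda}(i) is automatic but not needed here, since Lipschitz regularity is built into the hypothesis; similarly, integrability of $\lambda_t$ on bounded intervals is trivial. The real mathematical content is the monotonicity inequality for $\D u_t$, which has already been isolated, so the uniqueness lemma follows by reduction to a scalar linear differential inequality.
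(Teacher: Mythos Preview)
Your proof is correct and follows essentially the same approach as the paper: invoke the $\lambda_t$-concavity of $u_t$ from Lemma~\ref{l:lambda}, apply the one-sided Lipschitz estimate of Lemma~\ref{lem:Dflambda}(ii) to bound $\tfrac{d}{dt}|x_t-\hat x_t|^2$, and conclude via Gronwall. The paper's version is slightly terser but identical in substance.
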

\begin{proof}
  Necessarily $u_t$ is $\lambda_t$-concave for each $t\ge0$ by Lemma~\ref{l:lambda}.
    Supposing $x$ and $\hat x$ are both Lipschitz solutions of \eqref{e:DXt},
    then $t\mapsto |x_t-\hat x_t|^2$ is locally Lipschitz, and for a.e.~$t>0$,
    due to Lemma~\ref{lem:Dflambda}
    we have 
    \[
    \tfrac12\D_t |x_t-\hat x_t|^2 = (\D_t x_t - \D_t \hat x_t)\cdot(x_t-\hat x_t) \le \lambda_t |x_t-\hat x_t|^2\,.
    \]
  Since $x_0=\hat x_0$, upon integration and use of Gronwall's lemma we infer 
  that $x_t=\hat x_t$ for all $t\ge0$.  
\end{proof}

In order to establish Theorem~\ref{t:UNIQ}, we need to study the convergence of
$u_t^\eps$ to $u_t$ and $\nabla u_t^\eps$ to $\D u_t$ as $\eps\to0$.
Results concerning the convergence of the velocity potentials $u^\eps$ 
to the function $u$ given by the Hopf-Lax formula~\eqref{e:HLOmega1} are 
well-known in the theory of viscosity solutions for Hamilton-Jacobi equations.
But here we will make use of arguments based on 
convex analysis and an elementary case of
the Laplace principle (related to large deviations, see~\cite{DupuisEllis1997}),
which are rather uncomplicated and facilitate later comparison with convexified transport maps.

It will be convenient 
(also for later use in Section~\ref{s:MAmeasures}) to define the function 
\begin{equation}\label{e:wepsueps}
    w^\eps_t(x) := \tfrac12|x|^2 - tu^\eps_t(x) \,.
\end{equation}
This can be written in the form
\begin{equation}\label{e:weps2}
   w^\eps_t(x) = \eps t \log \int_{\R^d}  e^{(x\cdot y-\psi_t(y))/\eps t}\,dy  - \eps t \log(2\pi\eps t)^{d/2}\,,
\end{equation}
where $\psi_t(y) = \tfrac12|y|^2 + t\vp(y)$
as in \eqref{e:Lagrange1}.
The function $w^\eps_t$ is smooth, and is strictly convex because sums and limits of positive log-convex functions are log-convex.
Equation~\eqref{e:weps2} has the form of a ``soft'' Legendre transform. 
By comparison, the potential $w_t=w(\cdot,t)$ in \eqref{d:w1} 
is given by a standard Legendre transform as
\begin{equation}
    w_t(x) = \pts(x) = \sup_y \left( x\mdot y -\psi_t(y)\right) \,. 
\end{equation}

Via the Laplace principle, we obtain the following convergence result. 
\begin{lemma} \label{p:lim_wueps}
Assume $\vp\colon\R^d\to\R$ is $K$-Lipschitz. Then for each $x\in\R^d$ and $t>0$,
    \[
\lim_{\eps\to0} w_t^\eps(x) = w_t(x), \qquad 
\lim_{\eps\to0} u_t^\eps(x) = u_t(x). 
\]
Also, $u_t$ is $K$-Lipschitz, and the convergence is uniform in $x$ on each compact set in $\R^d$. 
\end{lemma}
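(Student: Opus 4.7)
My plan is to note that since $w^\eps_t(x)=\tfrac12|x|^2-tu^\eps_t(x)$ by \eqref{e:wepsueps} and $w_t(x)=\tfrac12|x|^2-tu_t(x)$ by \eqref{e:uw_dual}, the two convergence statements are equivalent, and so I focus on pointwise convergence $u^\eps_t(x)\to u_t(x)$ obtained via an elementary Laplace-principle argument on the Cole--Hopf representation
\begin{equation*}
u^\eps_t(x) \;=\; -\eps\log\int_{\R^d}\frac{e^{-h_x(y)/\eps}}{(2\pi\eps t)^{d/2}}\,dy,
\qquad h_x(y)\;:=\;\frac{|x-y|^2}{2t}+\vp(y),
\end{equation*}
where the Hopf--Lax formula \eqref{e:HLOmega1} reads $u_t(x)=\inf_y h_x(y)$. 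Since $\vp(y)\ge\vp(0)-K|y|$, the function $h_x$ is coercive in $y$, so the infimum is attained at some $y_\star=y_\star(x)$; comparing $h_x(y_\star)\le h_x(x)=\vp(x)$ together with the $K$-Lipschitz bound on $\vp$ yields $|x-y_\star|\le 2Kt$.

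For the upper bound $u^\eps_t(x)\le u_t(x)+o(1)$, I would lower-bound the integral by restricting to the ball $\{|y-y_\star|\le\sqrt\eps\}$. Expanding $|x-y|^2-|x-y_\star|^2$ around $y_\star$ and using Lipschitzness of $\vp$ together with $|x-y_\star|\le 2Kt$, one obtains the local inequality
\begin{equation*}
h_x(y)\;\le\; u_t(x)+\frac{|y-y_\star|^2}{2t}+3K|y-y_\star|\;\le\; u_t(x)+\frac{\eps}{2t}+3K\sqrt\eps
\quad\text{on that ball.}
\end{equation*}
Since the ball's volume is of order $\eps^{d/2}$, which exactly cancels the prefactor $(2\pi\eps t)^{-d/2}$ up to an $O(1)$ constant, taking $-\eps\log$ gives $u^\eps_t(x)\le u_t(x)+O(\sqrt\eps)$. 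For the lower bound $u^\eps_t(x)\ge u_t(x)-o(1)$, I would exploit that $h_x\ge u_t(x)$ everywhere, so $e^{-h_x/\eps}\le e^{-u_t(x)/\eps}$ pointwise. On any fixed ball $B_R$, this flat bound contributes at most $CR^d\eps^{-d/2}e^{-u_t(x)/\eps}$; outside $B_R$, the quadratic growth $h_x(y)\ge\tfrac{|y|^2}{16t}-C'$ valid for $|y|$ large (since $\vp$ is linearly bounded) produces a Gaussian tail of total mass only $O(1)\cdot e^{-u_t(x)/\eps}$. Combining these gives $e^{-u^\eps_t(x)/\eps}\le C\eps^{-d/2}e^{-u_t(x)/\eps}$, whence $u^\eps_t(x)\ge u_t(x)+\tfrac{d}{2}\eps\log\eps-\eps\log C\to u_t(x)$.

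Local uniform convergence on $\R^d$ then follows immediately from pointwise convergence together with equicontinuity: each $u^\eps_t$ is $K$-Lipschitz by Proposition~\ref{lem:Klip}, so a standard Arzel\`a--Ascoli argument upgrades pointwise to locally uniform convergence, and $u_t$ inherits the $K$-Lipschitz bound. The only mildly delicate point is the lower bound, where a naive use of $h_x\ge u_t(x)$ on a ball of fixed radius would threaten an $O(1)$ loss; this is rescued by the observation that the actual cost is only $\tfrac{d}{2}\eps\log(1/\eps)\to 0$, thanks to the precise $(2\pi\eps t)^{-d/2}$ normalization built into the Cole--Hopf formula.
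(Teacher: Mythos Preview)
Your proposal is essentially correct and follows the same Laplace-principle strategy as the paper, though executed more explicitly. The paper's proof is terser: it works with $w^\eps_t$ rather than $u^\eps_t$, directly cites the Laplace principle in the form $\frac1p\log\int e^{pf}\to\sup f$ as $p\to\infty$ (with $p=1/(\eps t)$ and $f(y)=x\cdot y-\psi_t(y)$), and then obtains local uniform convergence not via Arzel\`a--Ascoli but from the general fact that a pointwise-convergent sequence of \emph{convex} functions converges locally uniformly. Your hands-on estimates produce the same conclusion with explicit error rates, and your Arzel\`a--Ascoli route via the equi-Lipschitz bound from Proposition~\ref{lem:Klip} is an equally valid way to upgrade to uniform convergence (and makes the $K$-Lipschitz claim for $u_t$ transparent).

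One small gap to patch: in your lower bound, the tail estimate $h_x(y)\ge\tfrac{|y|^2}{16t}-C'$ by itself only yields a contribution of order $e^{C'/\eps}$, not $O(1)\cdot e^{-u_t(x)/\eps}$. You need to combine it with the bound $h_x\ge u_t(x)$ that you already invoked: for $|y-y_\star|\ge R$ with $R$ large enough depending on $K,t$, one has $h_x(y)-u_t(x)\ge\tfrac{|y-y_\star|^2}{8t}$ (subtract the two expressions and absorb the linear term $K|y-y_\star|$ into the quadratic), and then
\[
\int_{|y-y_\star|>R}\frac{e^{-(h_x-u_t(x))/\eps}}{(2\pi\eps t)^{d/2}}\,dy
\;\le\;\int_{\R^d}\frac{e^{-|z|^2/(8t\eps)}}{(2\pi\eps t)^{d/2}}\,dz
\;=\;4^{d/2},
\]
which is the $O(1)$ you claimed. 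With this fix your argument goes through.
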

\begin{proof}  Fixing $x\in\R^d$, write $f(y)=x\cdot y-\psi_t(y)$ and $1/p=\eps t$. 
Then the claimed convergence of $w^\eps_t(x)$ follows directly from the Laplace principle,
which here is equivalent to the statement that as $p\to\infty$, 
the log of the $L^p$ norm of $e^f$ converges to the log of its $L^\infty$ norm, i.e.,
\[
\lim_{p\to\infty} \frac1p \log \int_{\R^d} e^{pf(y)}\,dy  = \sup_y f(y) \,.
\]

    Moreover, the limit is uniform in compact sets as a consequence
    of the fact that if a sequence of convex functions converges pointwise,
    then it converges locally uniformly inside the (relative) interior of its 
    domain~\cite[Theorem~B.3.1.4]{Hiriart2001}. 
\end{proof}

Next we study the convergence of gradients to subgradients.

\begin{lemma}\label{p:wepslim}
Assume $\vp$ is Lipschitz. Let $t>0$.
Then at each point $x$ where $\nabla w_t(x)$ exists,  we have 
\begin{equation}\label{e:nablalim}
\lim_{\eps\to0} \nabla w^\eps_t(x) = \nabla w_t(x),
\qquad 
\lim_{\eps\to0} \nabla u^\eps_t(x) = \nabla u_t(x).
\end{equation}
Also, for $x$ arbitrary, given sequences $\eps_k\to 0$ and $x^k\to x$ 
as $k\to\infty$ we have 
\begin{equation}\label{e:subgrad_lim}
\dist\bigl(\nabla u^{\eps_k}_t(x^k), \D u_t(x)\bigr) \to 0 \quad\text{ as $k\to\infty$}.
\end{equation}
\end{lemma}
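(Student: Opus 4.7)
The plan is to argue in three stages: establish local boundedness of the smooth gradients, identify every limit point as an element of the limiting differential, and then specialize to points of differentiability.

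First I would note that by Proposition~\ref{lem:Klip} the smooth potentials $u^\eps_t$ are $K$-Lipschitz, so $|\nabla u^\eps_t(x)|\le K$ everywhere. Through the relation $\nabla w^\eps_t(x)=x-t\nabla u^\eps_t(x)$ this gives a uniform local bound for $\nabla w^\eps_t$ on compact sets. Hence for any sequences $\eps_k\to0$ and $x^k\to x$, after passing to a subsequence we may assume $\nabla w^{\eps_k}_t(x^k)\to p$ for some $p\in\R^d$.

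The core step is to show $p\in\D w_t(x)$. Since each $w^\eps_t$ is smooth and convex, the supporting hyperplane inequality gives
\begin{equation*}
w^{\eps_k}_t(z)\ge w^{\eps_k}_t(x^k)+\nabla w^{\eps_k}_t(x^k)\mdot(z-x^k)\qquad \text{for all $z\in\R^d$.}
\end{equation*}
By Lemma~\ref{p:lim_wueps}, $w^{\eps_k}_t\to w_t$ pointwise and locally uniformly; in particular $w^{\eps_k}_t(x^k)\to w_t(x)$. Passing to the limit $k\to\infty$ yields
\begin{equation*}
w_t(z)\ge w_t(x)+p\mdot(z-x)\qquad\text{for all $z$,}
\end{equation*}
which is exactly the definition of $p\in\D w_t(x)$. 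Thus every subsequential limit of $\nabla w^{\eps_k}_t(x^k)$ lies in $\D w_t(x)$, and by the standard subsequence argument combined with local boundedness this means
\begin{equation*}
\dist\bigl(\nabla w^{\eps_k}_t(x^k),\D w_t(x)\bigr)\to 0.
\end{equation*}
Translating through $\nabla u^\eps_t(x)=(x-\nabla w^\eps_t(x))/t$ and the formula \eqref{e:Dut1} for $\D u_t(x)$ (which in turn follows by Legendre duality between $\lambda_t$-concavity of $u_t$ and the strict convexity of $w_t$ noted in Lemma~\ref{l:lambda}) delivers \eqref{e:subgrad_lim}.

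Finally, for \eqref{e:nablalim} at a point $x$ where $\nabla w_t(x)$ exists, the subgradient $\D w_t(x)$ reduces to the singleton $\{\nabla w_t(x)\}$, so the full sequence $\nabla w^\eps_t(x)$ must converge to $\nabla w_t(x)$; the companion statement for $\nabla u^\eps_t(x)$ follows immediately from \eqref{e:wepsueps}. The only delicate aspect is ensuring that the subgradient of $w_t$ is genuinely closed as a multifunction and that the limit point argument works with $x^k\to x$ rather than $x^k=x$; this is handled by the passage to the limit in the supporting plane inequality above, which uses only pointwise convergence of $w^{\eps_k}_t(z)$ at each fixed $z$ together with local uniform convergence to dispose of the basepoint term $w^{\eps_k}_t(x^k)$.
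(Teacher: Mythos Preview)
Your argument is correct and reaches the same intermediate claim the paper isolates, namely
\[
\dist\bigl(\nabla w^{\eps_k}_t(x^k),\D w_t(x)\bigr)\to 0,
\]
from which everything else follows. The paper simply invokes a black-box approximation result for subgradients (Theorem~D.6.2.7 in \cite{Hiriart2001}), whereas you write out what is essentially the proof of that result in this setting: uniform boundedness of the gradients from Proposition~\ref{lem:Klip}, compactness, then passage to the limit in the supporting-plane inequality using the locally uniform convergence of Lemma~\ref{p:lim_wueps}. Your version is more self-contained and transparent; the paper's is shorter but relies on the external reference.

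One small misattribution: you justify the formula \eqref{e:Dut1} via ``$\lambda_t$-concavity of $u_t$ \ldots\ noted in Lemma~\ref{l:lambda},'' but that lemma assumes $\vp$ is $\lambda$-concave, which is not hypothesized here. What you actually need is only that $u_t$ is $(1/t)$-concave, which follows directly from the relation $w_t=\tfrac12|x|^2-tu_t$ and the convexity of $w_t=\pts$ as a Legendre transform; then \eqref{e:Dut1} is just the definition~\eqref{d:Dflambda} unwound. This does not affect the validity of your proof.
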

\begin{proof}
All three limit claims follow from the claim that for $x$ arbitrary,
\begin{equation}
    \label{e:wseqlim}
    \dist\bigl( \nabla w^{\eps_k}_t(x^k),\D w_t(x)\bigr) \to 0 
    \quad\text{as $k\to\infty$}.
\end{equation}
    But this is a direct consequence of the approximation property
    for subgradients established in Theorem~D.6.2.7 in \cite{Hiriart2001}.
\end{proof}

We now proceed to prove Theorem~\ref{t:UNIQ}. 

\begin{proof}[Proof of Theorem~\ref{t:UNIQ}]
1. {\em(Subsequential limits)} By the results of subsection~\ref{ss:stableLagrange}, 
the functions $X^\eps$ are Lipschitz on $\R^d\times[0,\tau]$ for each $\tau>0$, 
uniformly over $\eps>0$.  
By a standard subsequence selection argument using the Arzel\`a-Ascoli theorem,
we can find a subsequence $(\eps_k)$ of any given sequence $(\eps_j)$ 
converging to $0$ such that $X^{\eps_k}$
converges uniformly on every compact subset of $\R^d\times[0,\infty)$.
Any such limit $X$ is locally Lipschitz, and naturally satisfies 
(i) the $K$-Lipschitz condition with respect to $t$, due to Proposition~\ref{lem:Klip},
as well as (ii) the stability estimate \eqref{e:lambda-contraction0},
due to Proposition~\ref{p:Xepslip}.

2. {\em(Surjectivity)} Next we show that for each such limit, 
$X_t$ is surjective for each $t\ge0$. Let $x\in\R^d$.
For each $k$, since $X^{\eps_k}$ is a Lipschitz ODE flow, solving backward 
we can find $y_k$ such that $X^{\eps_k}_t(y_k)=x$.  By the velocity bound
in Proposition~\ref{lem:Klip} we have $|y_k-x|\le Kt$, so the sequence $(y_k)$ 
is bounded and a subsequence converges to some $y\in\R^d$. By 
the stability estimate it follows $x=X^{\eps_k}_t(y_k)$ must converge to 
$X_t(y)$ along the subsequence. Thus $x=X_t(y)$.

3. 
{\em(Differential inclusion)}  Now we show that 
for any such subsequential limit $X= \lim_{k\to\infty}X^{\eps_k}$,
the function $t\mapsto X_t(y)$ satisfies the differential inclusion in \eqref{e:DXt}.
Let $y\in\R^d$ and $\tau>0$ be arbitrary, and write $x_t = X_t(y)$.
Passing to a subsequence (denoted the same), we may suppose that for any 
$\tau>0$, the (bounded) derivatives $p^k_t = \D_t X^{\eps_k}_t(y)$ 
converge weakly in $L^1([0,\tau],\R^d)$
to a function $p$ (i.e., $t\mapsto p_t$).
Writing $x^k_t=X^{\eps_k}_t(y)$ and taking $k\to\infty$ in the formula
\[
\int_a^b p^k_t \,dt = x^k_b-x^k_a 
\]
for $0\le a<b\le \tau$ arbitrary, then $p_t = \D_t x_t$ follows.  
By Mazur's theorem \cite[p.~61]{Brezis2011},  
$p$ is a {\em strong limit} in $L^1([0,\tau],\R^d)$ 
of a sequence of convex combinations of elements in 
the sequence of derivatives $(p^k)_{k\ge n}$, for each $n$.
We can then extract a sequence $(q^n)_{n\ge 1}$ of such convex combinations
such that 
\[\int_0^\tau |q^n_t - p_t|\,dt \le \frac1n \,.\]
Passing to a subsequence (denoted the same), we can assume $q^n_t$ converges
as $n\to\infty$ to $p_t$ pointwise, for all $t$ in a set $I_\tau$ of full measure in $[0,\tau]$.

Let $\delta>0$ and let ${B(x,\delta)}$ 
denote the open ball with center $x$ and radius $\delta$. 
By Lemma~\ref{p:wepslim}, for each $t>0$ there exists $N_t$ such 
that for all $k\ge N_t$, $p^k_t=\nabla u^{\eps_k}_t(x^k_t)$ lies in the closed convex set 
$\D u_t(x_t)+\overline{B(0,\delta)}$. Hence for all $n\ge N_t$,
the convex combination $q^n_t$ lies in the same convex set,
which therefore must also contain $p_t$ for each $t\in I_\tau$.  
Since $\delta$ is arbitrary, it follows $p_t=\D_t x_t\in \D u_t(x_t)$ for a.e.~$t$.

4. In view of Lemma~\ref{l:uniq}, 
each subsequential limit $X_t(y)=\lim_{k\to\infty}X^{\eps_k}_t(y)$ must be the same.
Hence the full limit $X=\lim_{\eps\to0} X^\eps$ exists, with local uniform convergence.
This concludes the proof of Theorem~\ref{t:UNIQ}.
\end{proof}

The use of Mazur's theorem as above is 
a classical technique in the theory of differential inclusions, cf.~\cite[p.~60]{Aubin1984}.
It relies on the fact that the weak closure of any convex set in a Banach
space agrees with the strong closure.

\begin{remark}[Lagrangian semiflow]\label{r:semiflow}
Let $\{X_t\}_{t\ge0}$ be given by Theorem~\ref{t:UNIQ}, and let $0\le s\le t$.
By virtue of the facts that $X_s$ is surjective and by
the stability estimate \eqref{e:lambda-contraction0}, we can define a map
$X_{t,s}\colon \R^d\to\R^d$ by
\begin{equation}
    X_{t,s}(z) = X_t(y) \quad\text{whenever $z=X_s(y)$}.
\end{equation}
The map $X_{t,s}$ is surjective and Lipschitz with Lipschitz constant $(1+\lambda t)/(1+\lambda s)$.
The family of maps $\{X_{t,s}: 0\le s\le t\}$ 
determine a {\em Lagrangian semiflow} on $\R^d$, satisfying 
\[
X_{t,s} = X_{t,r}\circ X_{r,s} \quad\text{for $0\le s\le r\le t$}, \qquad X_{t,t}={\rm id}.
\]
\end{remark}

\begin{remark}\label{rem:outersemi}
The (set-valued) inverse $X_t\inv$ is outer semi-continuous, meaning that for all $x\in\R^d$,
\begin{equation}\label{e:outersemi}
\forall \eps>0 \  \exists\delta>0  \quad  X_t\inv(B(x,\delta)) \subset X_t\inv(x)+ B(0,\eps).
\end{equation}
The reason $X_t$ has this property is that it holds for any  vector function on $\R^d$
which is a continuous, bounded perturbation of the identity, by the lemma below. 
For the same reason, 
the property also holds with $X_t\inv$ replaced by $X_{t,s}\inv$ with $0\le s\le t$.
Subdifferentials of convex functions $f:\R^d\to\R$ always have this property; see \cite[Thm.~D.6.2.4]{Hiriart2001}.
\begin{lemma} Let $F:\R^d\to\R^d$ be continuous and assume $|F(y)-y|$ is uniformly bounded.
  Then $F\inv$ is outer semi-continuous, i.e., for all $x\in\R^d$, 
  \[
\forall \eps>0 \  \exists\delta>0 \quad  F\inv(B(x,\delta)) \subset F\inv(x)+ B(0,\eps).
  \]
\end{lemma}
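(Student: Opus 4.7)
The plan is to argue by contradiction using a simple compactness argument, exploiting that $F$ is a bounded perturbation of the identity.

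Suppose the conclusion fails at some point $x\in\R^d$. Then there exist $\eps>0$ and a sequence $y_n\in\R^d$ with $F(y_n)\to x$ but $\dist(y_n,F\inv(x))\ge \eps$ for every $n$. (If $F\inv(x)=\emptyset$, interpret the distance as $+\infty$; this case will be subsumed by the argument.) The hypothesis gives a constant $M$ with $|F(y)-y|\le M$ for all $y$, so
\[
|y_n|\le |F(y_n)|+M,
\]
and since $F(y_n)$ is bounded (being convergent), the sequence $(y_n)$ is bounded.

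Next I pass to a subsequence $y_{n_k}\to y^*$ by Bolzano-Weierstrass, and invoke continuity of $F$ to obtain
\[
F(y^*)=\lim_{k\to\infty} F(y_{n_k}) = x,
\]
so in particular $y^*\in F\inv(x)$. (This also rules out the case $F\inv(x)=\emptyset$.) But then
\[
\dist(y_{n_k},F\inv(x))\le |y_{n_k}-y^*|\to 0,
\]
contradicting $\dist(y_{n_k},F\inv(x))\ge\eps$ for all $k$.

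There is no real obstacle here; the only mild subtlety is confirming that the sequence $(y_n)$ is bounded, which follows directly from the hypothesis that $|F(y)-y|$ is globally bounded together with boundedness of $F(y_n)$. The argument is essentially the observation that $F$ is a proper map (inverse images of compact sets are bounded, hence, by continuity, compact), which immediately yields outer semi-continuity of $F\inv$.
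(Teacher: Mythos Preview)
Your proof is correct and essentially identical to the paper's: both argue by contradiction, use the boundedness of $|F(y)-y|$ together with the convergence of $F(y_n)$ to bound the sequence $(y_n)$, extract a convergent subsequence, and obtain a contradiction from continuity of $F$. Your explicit handling of the case $F\inv(x)=\emptyset$ is a small addition not in the paper, but otherwise the arguments coincide.
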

\begin{proof}
   If the conclusion fails, then there exists $\eps>0$ and a sequence $y_k\in\R^d$ such that $x_k=F(y_k)\to x$
   as $k\to\infty$ but $\dist(y_k,F\inv(x))\ge\eps$.  As $|F(y)-y|$ is bounded, necessarily 
  $|y_k|=|y_k-F(y_k)+x_k|$ is uniformly bounded, so passing to a subsequence (denoted the same), $y_k$ converges
  to some $y\in\R^d$. Then $\dist(y,F\inv(x))\ge\eps$, but by continuity, $x_k=F(y_k)\to F(y)=x$.
  This contradiction establishes the result.
\end{proof}
\end{remark}

\section{Mass flow for the adhesion model}\label{s:mass}

When $\eps>0$, the mass density given by 
\begin{equation}\label{e:rhoepsdet}
\rho^\eps_t(x) = \left( \det \nabla X^\eps_t(y) \right)\inv \,,
\quad x=X^\eps_t(y),
\end{equation}
satisfies the continuity equation
\eqref{e:rhoeps} with uniform initial data $\rho^\eps_0(x)= 1$.
In the limit $\eps\to0$ we expect mass to concentrate on singular sets
where no formula analogous to \eqref{e:rhoepsdet} applies.
Studying this limit directly from the continuity equation \eqref{e:rhoeps}
is also problematic as the limiting velocity field is difficult
to define on singular sets.

Instead we will study the limit of the mass distribution
with density $\rho^\eps_t$  by 
using its characterization as the {\em pushforward} under the Lagrangian flow map $X^\eps_t$
of the  initial uniform mass distribution measure (Lebesgue measure $\leb$).
As is common, we also overload notation by writing $\rho^\eps_t$ to denote the 
mass distribution measure $f\leb$ with  density $f(x)=\rho^\eps_t(x)$ at time $t$.
Then for any Borel measurable function $g\colon\R^d\to\R$ of compact support, the
integral of $g$ with respect to $\rho^\eps_t$ is given by the change of variables formula
\begin{equation}\label{e:change_rhoeps}
\int_{\R^d} g(x)\,\rho^\eps_t(x)\,dx = \int_{\R^d} g(X^\eps(y,t))\,dy \,.
\end{equation}
In measure-theoretic notation, this pushforward is written
$\rho^\eps_t = (X^\eps_t)_\sharp \leb$.

\subsection{Zero-viscosity limit for mass flow}

Based upon our Theorem \ref{t:UNIQ} for convergence of flow maps, we obtain
the following convergence theorem for mass distributions in the adhesion model.
In the sequel we use a notion of convergence for (possibly infinite) Radon measures on $\R^d$
equivalent to convergence in the sense of distributions.  

\begin{definition}
Given a Radon measure $\mu$ and a family $(\mu^\eps)_{\eps>0}$ of such,
we say $\mu^\eps$ converges {\it locally weak-$\star$} to $\mu$
if for each continuous function $g\colon\R^d\to\R$ having compact support,
\[
\int_{\R^d} g(x)\,d\mu^\eps(x) 
\to 
\int_{\R^d} g(x)\,d\mu(x) 
\quad\text{as $\eps\to0$.} 
\]
\end{definition}

\begin{theorem}\label{t:rhodef}
Assume $\vp$ is $K$-Lipschitz, and $\lambda$-concave with $\lambda\ge0$. 
Let $X=\lim_{\eps\to0} X^\eps$ as given by Theorem~\ref{t:UNIQ}. 
For each $t\ge0$ define the Borel measure 
\[
\rho_t := (X_t)_\sharp\leb.
\]
Then 
\begin{itemize}
    \item[(i)] 
   $\rho^\eps_t$ converges locally weak-$\star$ to $\rho_t$ as $\eps\to0$,  for each $t\ge0$, and
   \item[(ii)] 
    $\rho_t$ converges locally weak-$\star$ to $\leb$ as $t\to0$.
\end{itemize}
\end{theorem}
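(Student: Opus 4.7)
The plan is to prove both parts by reducing the statements to applications of the dominated convergence theorem via the pushforward change-of-variables formula, exploiting the uniform local convergence of Lagrangian flows from Theorem~\ref{t:UNIQ} together with the uniform bound $|\D_t X^\eps_t(y)|\le K$ from Proposition~\ref{lem:Klip}.

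For part (i), let $g\colon\R^d\to\R$ be continuous with compact support, say $\mathrm{supp}(g)\subset B(0,R)$. By the change-of-variables formula \eqref{e:change_rhoeps} and the definition of pushforward,
\begin{equation*}
\int_{\R^d} g(x)\,d\rho^\eps_t(x) = \int_{\R^d} g(X^\eps_t(y))\,dy,
\qquad
\int_{\R^d} g(x)\,d\rho_t(x) = \int_{\R^d} g(X_t(y))\,dy.
\end{equation*}
The first step is a uniform support bound: since $|X^\eps_t(y)-y|\le Kt$ by Proposition~\ref{lem:Klip}, the integrand $g(X^\eps_t(y))$ vanishes whenever $|y|\ge R+Kt$, and the same bound holds in the limit because $t\mapsto X_t(y)$ is $K$-Lipschitz with $X_0(y)=y$ (Theorem~\ref{t:UNIQ}(c)(i)). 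Hence both integrals reduce to integration over the fixed compact set $\overline{B(0,R+Kt)}$. Next, by the local uniform convergence $X^\eps\to X$ on compacts (Theorem~\ref{t:UNIQ}(a)) and the uniform continuity of $g$ on its support, $g\circ X^\eps_t\to g\circ X_t$ uniformly on $\overline{B(0,R+Kt)}$. Dominated convergence (with dominating function $\|g\|_\infty\,\one_{\overline{B(0,R+Kt)}}$) then yields part (i).

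For part (ii), the same setup applies with $X^\eps_t$ replaced by $X_t$ and the limit taken as $t\to 0^+$. The uniform bound $|X_t(y)-y|\le Kt$ gives $X_t(y)\to y$ as $t\to 0^+$ for every $y$, and this convergence is in fact uniform on all of $\R^d$. For $t\le 1$, all integrands $g\circ X_t$ are supported in the fixed compact set $\overline{B(0,R+K)}$ and bounded by $\|g\|_\infty$. Pointwise continuity of $g$ gives $g(X_t(y))\to g(y)$, and dominated convergence yields
\begin{equation*}
\int_{\R^d} g(x)\,d\rho_t(x) = \int_{\R^d} g(X_t(y))\,dy \;\longrightarrow\; \int_{\R^d} g(y)\,dy = \int_{\R^d} g(x)\,d\leb(x).
\end{equation*}

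There is no real obstacle here; the argument is essentially bookkeeping. The only point that requires care is ensuring that the integrals are well defined and that the dominated convergence theorem applies uniformly over the approximating family. This is precisely what the Lipschitz-in-$t$ bound $|X^\eps_t(y)-y|\le Kt$ from Proposition~\ref{lem:Klip} (and its limiting counterpart in Theorem~\ref{t:UNIQ}(c)(i)) provides: it confines the effective support of $g\circ X^\eps_t$ and $g\circ X_t$ to a single compact set independent of $\eps$ (and, for part (ii), uniform in small $t$), which is exactly the hypothesis needed to invoke dominated convergence.
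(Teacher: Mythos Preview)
Your proof is correct and follows essentially the same approach as the paper: both use the pushforward change-of-variables formula together with dominated convergence, invoking the bound $|X^\eps_t(y)-y|\le Kt$ (and its limiting version) to control supports. Your version is in fact more explicit about confining the integrals to a fixed compact set, which the paper leaves implicit.
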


\begin{proof} Let $g\colon\R^d\to\R$ be continuous with compact support.
Due to a standard change of variables formula for pushforward measures 
\cite[Prop.~1.7]{AmbrosioBrueSemola2021}
and the dominated convergence theorem,  as $\eps\to0$ we have
   \[
  \int_{\R^d} g(x)\rho^\eps_t(x)\,dx = \int_{\R^d} g(X^\eps_t(y))\,dy \to  
  \int_{\R^d} g(X_t(y))\,dy = \int_{\R^d} g(x)\,d\rho_t(x). 
   \]
   The convergence as $t\to0$ holds since $|X_t(y)-y|\le Kt$ and $g$ is uniformly
   continuous on its support.
\end{proof}

\begin{cor} Make the assumptions of the previous theorem, and let $t>0$.
Then 
        \begin{itemize}
        \item[(i)] $\rho_t^\eps(x)\ge (1+\lambda t)^{-d}$ for all $\eps>0$ and all $x\in\R^d$.
        \item[(ii)] $\rho_t(B)\ge (1+\lambda t)^{-d}|B|$ for all Borel sets $B\subset\R^d$.
    \end{itemize}
\end{cor}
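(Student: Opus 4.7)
The plan for part (i) is to invoke Proposition~\ref{p:Xepslip} with $s=0$, which yields that the smooth flow map $X^\eps_t$ is globally $(1+\lambda t)$-Lipschitz in $y$. This bounds the operator norm of the Jacobian $\nabla X^\eps_t(y)$ by $1+\lambda t$ pointwise, and since the determinant of a $d\times d$ matrix is at most the product of its singular values, $\det\nabla X^\eps_t(y)\le(1+\lambda t)^d$. The Jacobian is strictly positive throughout because $X^\eps_t$ is the flow of a smooth ODE with Lipschitz velocity field. Inverting and substituting into the formula \eqref{e:rhoepsdet} delivers (i).

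For part (ii), my preferred approach is to pass (i) to the zero-viscosity limit via the locally weak-$\star$ convergence $\rho^\eps_t\to\rho_t$ from Theorem~\ref{t:rhodef}(i). For any nonnegative continuous $g$ of compact support, the pointwise inequality from (i) integrates to
\[
\int_{\R^d} g\,d\rho_t \;=\; \lim_{\eps\to 0}\int_{\R^d} g(x)\,\rho^\eps_t(x)\,dx \;\ge\; (1+\lambda t)^{-d}\int_{\R^d} g(x)\,dx.
\]
This establishes the measure inequality $\rho_t\ge(1+\lambda t)^{-d}\leb$ on $\R^d$; the extension to arbitrary Borel sets $B$ is then standard, obtained by approximating the indicator of any compact subset of $B$ from below by continuous bump functions and invoking inner/outer regularity of both $\rho_t$ and $\leb$.

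An alternative, more direct route avoids part (i) altogether. By Theorem~\ref{t:UNIQ}(c)(ii) with $s=0$, the limit map $X_t$ itself is $(1+\lambda t)$-Lipschitz, and by Theorem~\ref{t:UNIQ}(b) it is surjective. Then for any Borel $B$ I would argue
\[
|B| \;=\; |X_t(X_t\inv(B))| \;\le\; (1+\lambda t)^d\,|X_t\inv(B)| \;=\; (1+\lambda t)^d\,\rho_t(B),
\]
using that an $L$-Lipschitz map dilates Lebesgue (outer) measure by at most $L^d$, the surjectivity of $X_t$ to identify $X_t(X_t\inv(B))=B$, and the definition of $\rho_t$ as the pushforward $(X_t)_\sharp\leb$.

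No step here looks hard, since the stability estimates are already in hand. The only minor technical point --- in the direct route --- is the Lebesgue measurability of $X_t(X_t\inv(B))$, which is fine because $X_t\inv(B)$ is Borel by continuity of $X_t$, and Lipschitz images of Borel sets are analytic and hence Lebesgue measurable.
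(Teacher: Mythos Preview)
Your proposal is correct and your primary route matches the paper's proof closely: for (i), the paper likewise appeals to the Lipschitz bound in Proposition~\ref{p:Xepslip} to bound $\det\nabla X^\eps_t(y)$ by $(1+\lambda t)^d$ and then inverts via \eqref{e:rhoepsdet}; for (ii), the paper also passes the pointwise bound through the weak-$\star$ convergence of Theorem~\ref{t:rhodef}(i) and finishes by regularity of Radon measures. Your phrasing in (i) is actually cleaner than the paper's --- the paper speaks of ``each eigenvalue'' of $\nabla X^\eps_t$, whereas your use of singular values is the right thing to say for a Jacobian that need not be symmetric.

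Your alternative direct route for (ii) is a genuine shortcut not in the paper. By working with the limiting map $X_t$ itself and the elementary Lipschitz image estimate $|X_t(E)|\le(1+\lambda t)^d|E|$, you bypass both part (i) and the weak-$\star$ limit. The only mild cost is the measurability remark you already flag; the gain is that the argument stands on its own once Theorem~\ref{t:UNIQ} is in hand, without ever touching the approximating densities $\rho^\eps_t$.
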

\begin{proof}
By the Lipschitz bound in Proposition~\ref{p:Xepslip},
each eigenvalue of the matrix $\nabla X_t^\eps$ is no greater than 
$1+\lambda t$. Then the lower bound on $\rho_t^\eps$ follows from \eqref{e:rhoepsdet}.
The convergence result in Theorem~\ref{t:rhodef}(i) then
implies the lower bound for $\rho_t(B)$ through a standard approximation argument
using the regularity of Radon measures---see~\cite[p.~212]{Folland1999}.
\end{proof}

\subsection{Lebesgue decomposition of the mass distribution}

The structure of mass concentrations in the limiting mass distribution $\rho_t$ 
is related to differentiability properties of the Lagrangian flow map $X_t$
in a way that we describe here. 
Throughout this subsection
we assume $\vp$ is $K$-Lipschitz and $\lambda$-concave with $\lambda\ge0$.

For any $t>0$, the measure $\rho_t$ has a {Lebesgue decomposition} that 
we write 
\begin{equation}\label{e:lebrho}
\rho_t = \rhoac+\rhosg, \qquad \rhoac\ll\leb, \qquad \rhosg\perp\leb.
\end{equation}
The measure $\rhoac$ is absolutely continuous with respect to Lebesgue measure
$\leb$, and the measures $\rhosg $ and $\leb$ are mutually singular.

Since $X_t$ is Lipschitz by Theorem~\ref{t:UNIQ}, it is differentiable a.e.
Define the (Lagrangian) sets
\begin{align}
 \xinv &= \{y\in\R^d: \nabla X_t(y) \text{ exists and is invertible} \}, \\
 \xsg &=  \{y\in\R^d: \nabla X_t(y) \text{ exists and is singular} \}, \\
 \xndf &= \{y\in\R^d: \nabla X_t(y) \text{ does not exist} \}.
\end{align}
Below, $\mu\mres\calS$ denotes the restriction of a measure $\mu$ to a set $\calS$,
so $(\mu\mres\calS)(B)=\mu(\calS\cap B)$ for all $B$.

\begin{theorem}\label{t:rho_decomp}
    The mass measure $\rho_t=(X_t)_\sharp\leb$ has the Lebesgue decomposition 
    $\rho_t = \rhoac+\rhosg$ with
\begin{equation}\label{e:rho_decomp}
\rhoac = \rho_t\mres\calR_t \,, \qquad \rhosg = \rho_t\mres\calS_t\,,
\end{equation}
where $\calS_t=X_t(\xsg)$ has Lebesgue measure $|\calS_t|=0$, and 
$\calR_t =  \calS_t^c = \R^d\setminus\calS_t$.
\end{theorem}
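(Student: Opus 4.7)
My plan is to decompose the domain via the Lagrangian sets $\xinv$, $\xsg$, $\xndf$, push forward the corresponding restrictions of $\leb$, and then apply the area formula for Lipschitz maps to translate this into the image-side decomposition $\R^d=\calR_t\sqcup\calS_t$.

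First, Theorem~\ref{t:UNIQ}(c) guarantees that $X_t$ is Lipschitz, so Rademacher's theorem gives $|\xndf|=0$ and hence $(X_t)_\sharp(\leb\mres\xndf)=0$. Thus $\rho_t=(X_t)_\sharp(\leb\mres\xinv)+(X_t)_\sharp(\leb\mres\xsg)$.

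Second, I would invoke the classical area formula for Lipschitz maps \cite{EvansGariepy}, namely $\int_A |\det\nabla X_t(y)|\,dy=\int_{\R^d}\#(X_t\inv(x)\cap A)\,dx$ for Borel $A$, first with $A=\xsg$. Since $\det\nabla X_t\equiv 0$ on $\xsg$ by definition, the left side vanishes, which forces the nonnegative counting function $x\mapsto\#(X_t\inv(x)\cap\xsg)$ to vanish a.e.; equivalently $|\calS_t|=|X_t(\xsg)|=0$. Consequently $\rho_t\mres\calS_t$ is concentrated on a Lebesgue-null set, so $\rho_t\mres\calS_t\perp\leb$.

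Third, to show $\rho_t\mres\calR_t\ll\leb$, I would take any Borel set $B\subset\calR_t=\calS_t^c$ with $|B|=0$. Because $X_t(\xsg)\subset\calS_t$ is disjoint from $B$, the set $X_t\inv(B)\cap\xsg$ is empty, and combined with $\leb(\xndf)=0$ this gives $\rho_t(B)=\leb(X_t\inv(B)\cap\xinv)$. Applying the area formula again with $A=X_t\inv(B)\cap\xinv$ yields $\int_A |\det\nabla X_t(y)|\,dy=\int_B\#(X_t\inv(x)\cap\xinv)\,dx=0$; since $|\det\nabla X_t|>0$ pointwise on $\xinv$, this forces $|A|=0$, and thus $\rho_t(B)=0$. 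So $\rho_t\mres\calR_t\ll\leb$. Because trivially $\rho_t=\rho_t\mres\calR_t+\rho_t\mres\calS_t$, uniqueness of the Lebesgue decomposition identifies these two pieces as $\rhoac$ and $\rhosg$ respectively.

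No step is really hard; the only nontrivial input is the area formula, and the only place that needs a bit of care is the bookkeeping in the last step, where one exploits the image-side disjointness $B\cap\calS_t=\emptyset$ to kill the singular-Lagrangian contribution before applying the area formula on $\xinv$.
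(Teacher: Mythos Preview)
Your argument is correct and follows essentially the same route as the paper: both proofs use the area formula for Lipschitz maps to show $|\calS_t|=0$ and then to show that $\rho_t(B)=0$ for any null set $B\subset\calS_t^c$ by exploiting that $|\det\nabla X_t|>0$ on $\xinv$. The only minor differences are organizational: the paper separates off the fact $|X_t(\xsg)|=0$ into a preliminary lemma and applies the area formula to all of $X_t\inv(B)$ rather than to $X_t\inv(B)\cap\xinv$, and your use of $|\det\nabla X_t|$ throughout sidesteps the need for the paper's separate lemma establishing $\det\nabla X_t\ge 0$ a.e.
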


\begin{remark}\label{r:Rt}
For the characterization of $\rhoac$ to come in Section~\ref{ss:consequences},
it is convenient to note that the decomposition of the Theorem
holds with $\calR_t$ taken as any subset $\calR_t\subset\calS_t^c$ with the property that $|\calR_t^c|=0$.
\end{remark}

The ``sticky'' property of the maps $X_t$ that was used in Remark~\ref{r:semiflow} 
to define the Lagrangian semiflow 
$\{X_{t,s}:0\le s\le t\}$ allows us to say that,
following the flow, the singular part can only increase, via mass concentrations accumulating
from the absolutely continuous part.
\begin{cor}\label{cor:masssemiflow}
Let $X_{t,s}$, $0\le s\le t$, be the Lagrangian semiflow maps defined in Remark~\ref{r:semiflow}.
Then  $\rho_t =  (X_{t,s})_\sharp \rho_s $. 
Furthermore, $(X_{t,s})_\sharp \rho_s^{\rm sg}\perp \leb$,
and 
in terms of the  measure $\rho_{t,s}=(X_{t,s})_\sharp\rho_s^{\rm ac}$
and its Lebesgue decomposition  $\rho_{t,s}^{\rm ac} + \rho_{t,s}^{\rm sg}$,
\[
\rhoac = 
\rho_{t,s}^{\rm ac} 
\,,\qquad
\rhosg = 
\rho_{t,s}^{\rm sg} 
+ (X_{t,s})_\sharp \rho_s^{\rm sg}\,.
\]
\end{cor}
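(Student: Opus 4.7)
The plan is to derive everything from the semigroup identity $X_t = X_{t,s}\circ X_s$ together with the Lipschitz regularity of $X_{t,s}$ recorded in Remark~\ref{r:semiflow}. First, functoriality of pushforward immediately yields
\[
\rho_t = (X_t)_\sharp \leb = (X_{t,s})_\sharp (X_s)_\sharp \leb = (X_{t,s})_\sharp \rho_s.
\]
Splitting $\rho_s = \rho_s^{\rm ac} + \rho_s^{\rm sg}$ and using linearity of pushforward then gives the decomposition
\[
\rho_t = \rho_{t,s} + (X_{t,s})_\sharp \rho_s^{\rm sg},
\]
where $\rho_{t,s} = (X_{t,s})_\sharp \rho_s^{\rm ac}$.

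Next I would establish that $(X_{t,s})_\sharp \rho_s^{\rm sg} \perp \leb$. By Theorem~\ref{t:rho_decomp}, $\rho_s^{\rm sg}$ is concentrated on the set $\calS_s = X_s(\xsg)$, which has zero Lebesgue measure. Since $X_{t,s}$ is Lipschitz (with constant $(1+\lambda t)/(1+\lambda s)$), it maps Lebesgue null sets to Lebesgue null sets -- this is the one standard external fact I would invoke, and it is where the Lipschitz property of $X_{t,s}$ is essential. Consequently $(X_{t,s})_\sharp \rho_s^{\rm sg}$ is concentrated on the null set $X_{t,s}(\calS_s)$ and is therefore singular with respect to $\leb$.

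Finally, writing $\rho_{t,s} = \rho_{t,s}^{\rm ac} + \rho_{t,s}^{\rm sg}$ for its Lebesgue decomposition, I would combine the two preceding identities to obtain
\[
\rho_t = \rho_{t,s}^{\rm ac} + \bigl(\rho_{t,s}^{\rm sg} + (X_{t,s})_\sharp \rho_s^{\rm sg}\bigr).
\]
The first summand is absolutely continuous by definition; the parenthesized term is a sum of two $\leb$-singular measures, hence itself singular. By uniqueness of the Lebesgue decomposition of $\rho_t$, these summands must coincide with $\rho_t^{\rm ac}$ and $\rho_t^{\rm sg}$ respectively, yielding the stated formulas. There is no genuine obstacle in this argument; the only nontrivial input is the preservation of null sets by Lipschitz maps, which is immediate from the covering characterization of Lebesgue outer measure.
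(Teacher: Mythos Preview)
Your proposal is correct and follows essentially the same route as the paper: the semigroup identity $X_t=X_{t,s}\circ X_s$ gives $\rho_t=(X_{t,s})_\sharp\rho_s$, the Lipschitz property of $X_{t,s}$ (applied to the null set $\calS_s$ on which $\rho_s^{\rm sg}$ is concentrated) yields singularity of $(X_{t,s})_\sharp\rho_s^{\rm sg}$, and uniqueness of the Lebesgue decomposition finishes the argument. The paper's proof is slightly terser but uses exactly these ingredients.
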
 
\begin{proof} By the semiflow property, $X_t = X_{t,s}\circ X_s$.
It is straightforward to infer that $X_t\inv = X_s\inv\circ X_{t,s}\inv$
as an identity of set mappings. Now if $B\subset\R^d$ is Borel,
\[
\rho_t(B) = |X_t\inv(B)|=|X_s\inv( X_{t,s}\inv(B))| = 
(X_{t,s})_\sharp\rho_s(B).
\]
Furthermore, the Lipschitz image $X_{t,s}(\calS_s)$ of $\calS_t$ has measure zero,
so 
\[
(X_{t,s})_\sharp\rho_s^{\rm sg}(B) = \rho_t(X_{t,s}\inv (B)\cap \calS_s) = 0,
\]
if $B$ is disjoint from it.
This proves  $(X_{t,s})_\sharp \rho_s^{\rm sg}\perp \leb$, and the remaining claims
follow by considering the Lebesgue decomposition of the measure 
$(X_{t,s})_\sharp \rho_s^{\rm ac}$, which may have a singular part.
\end{proof}

    
We start the proof of Theorem~\ref{t:rho_decomp} by first developing two lemmas.
(Note the first lemma holds with $X_t$ replaced by any Lipschitz and surjective map.)

\begin{lemma} \label{lem:calSt}
The sets $\xndf$, $X_t(\xndf)$, and $X_t(\xsg)$  have zero Lebesgue measure.
Moreover, 
\[
|\xinv\cap  \{y: X_t(y)\in X_t(\xsg\cup\xndf)\}|=0.
\]
\end{lemma}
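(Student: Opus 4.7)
The plan is to rely on Rademacher's theorem and the area formula for Lipschitz maps, both applicable since Theorem~\ref{t:UNIQ}(c) shows $X_t$ is locally Lipschitz. First, Rademacher's theorem immediately yields $|\xndf|=0$. Because any Lipschitz map sends Lebesgue-null sets to Lebesgue-null sets (via the standard covering estimate $|X_t(N)|^*\le L^d|N|^*$ with $L$ a local Lipschitz constant), we also obtain $|X_t(\xndf)|=0$ by exhausting $\xndf$ by bounded pieces.

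For $|X_t(\xsg)|=0$, I would invoke the area formula for Lipschitz maps (see, e.g., \cite[Thm.~3.9]{EvansGariepy}): for any Borel $A\subset\R^d$ and any bounded Borel region,
\begin{equation}
\int_A |\det\nabla X_t(y)|\,dy = \int_{\R^d} \#\bigl(X_t\inv(x)\cap A\bigr)\,dx.
\end{equation}
Taking $A=\xsg$, the left-hand integrand vanishes identically on $\xsg$, so the nonnegative integrand on the right must vanish a.e.\ in $x$. Hence for a.e.~$x\in\R^d$, $X_t\inv(x)\cap\xsg=\emptyset$, which says $x\notin X_t(\xsg)$ for a.e.~$x$; therefore $|X_t(\xsg)|=0$. (One applies this piecewise on an exhaustion of $\R^d$ by bounded sets to avoid integrability issues.)

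For the moreover clause, set $E = X_t(\xsg\cup\xndf)$, which has zero Lebesgue measure by the preceding two steps, and let $S = \xinv \cap X_t\inv(E)$. To handle the fact that $|\det\nabla X_t|$ need not be bounded below on $\xinv$, decompose
\[
\xinv = \bigcup_{n\ge 1} A_n, \qquad A_n = \{y\in\xinv : |\det\nabla X_t(y)|\ge 1/n\}.
\]
Applying the area formula with $A = S\cap A_n$ (restricted to a bounded region, then exhausting), and noting that $X_t(S)\subset E$ by construction,
\begin{equation}
\frac{|S\cap A_n|}{n} \le \int_{S\cap A_n} |\det\nabla X_t(y)|\,dy = \int_E \#\bigl(X_t\inv(x)\cap S\cap A_n\bigr)\,dx = 0,
\end{equation}
since $|E|=0$ and the integrand is supported in $E$ (with the convention $0\cdot\infty=0$, or by observing that the counting function is $\sigma$-finite on any preimage of a bounded set because $X_t$ is proper on bounded sets). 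Hence $|S\cap A_n|=0$ for every $n$, giving $|S|=0$.

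The main obstacle, as I see it, is not a conceptual one but a bookkeeping issue: the counting integrand in the area formula can a priori take the value $+\infty$, and $X_t$ is only \emph{locally} Lipschitz, so one has to localize to bounded regions and slice $\xinv$ by level sets of $|\det\nabla X_t|$ to ensure the relevant integrals are finite and can legitimately be concluded to vanish from $|E|=0$.
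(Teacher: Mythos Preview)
Your proof is correct and follows essentially the same approach as the paper: Rademacher's theorem for $|\xndf|=0$, the Lipschitz image of a null set for $|X_t(\xndf)|=0$, and the area formula for both $|X_t(\xsg)|=0$ and the moreover clause. The only cosmetic difference is that for the moreover clause the paper argues directly that $\int_E |\det\nabla X_t|\,dy=0$ forces $|\xinv\cap E|=0$ (since a nonnegative integrand with zero integral vanishes a.e., and it is strictly positive on $\xinv$), so your level-set decomposition into the $A_n$ is correct but not needed; also $X_t$ is in fact globally $(1+\lambda t)$-Lipschitz by Theorem~\ref{t:UNIQ}(c)(ii), so the localization concern is moot here.
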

\begin{proof}
1. Evidently the set $\xndf$ has Lebesgue measure $|\xndf|=0$, so the same is true of
its Lipschitz image, i.e., $|X_t(\xndf)|=0.$
Also, since $\det\nabla X_t(y)=0$ for a.e.~$y\in \xsg$ and $X_t$ is Lipschitz, we find that
$|X_t(\xsg)|=0$, due to \cite[Lem.~2.73]{AmbrosioFuscoPallara}. 
(Note that we do not expect $|\xsg|=0$ in general, however.)

2.  According to the area formula for Lipschitz 
functions from~\cite[Thm.~2.71]{AmbrosioFuscoPallara}
or \cite[Thm.~3.2.3]{FedererGMT}, we have that
for any Lebesgue measurable set $E\subset\R^d$,
\begin{equation}\label{e:areaE}
    \int_E |\det\nabla X_t(y)|\,dy = \int_{\R^d} N_E(x)\,dx\,,
\end{equation}
where the multiplicity function $N_E$ is Lebesgue measurable and
given in terms of counting measure ($0$-dimensional Hausdorff measure) as 
\begin{equation}\label{d:NE}
N_E(x) :=  \#\bigl(X_t\inv(x)\cap E\bigr) =  \#\{y\in E: x=X_t(y)\}  . 
\end{equation}
Taking $E= \{y: X_t(y)\in X_t(\xsg\cup\xndf)\}$, 
since the determinant is non-vanishing on $\xinv$, it follows
$|\xinv\cap E| = 0$.
\end{proof}

\begin{lemma} \label{lem:detXt}
The determinant $\det\nabla X_t(y)\ge0$ for a.e.~$y\in\R^d$.
\end{lemma}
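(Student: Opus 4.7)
The plan is to obtain positivity in the limit by combining the fact that each $X^\eps_t$ is an orientation-preserving diffeomorphism with the weak continuity of Jacobian determinants along uniformly Lipschitz, uniformly convergent sequences of maps. Since $\vp$ is subquadratic at infinity, the Cole--Hopf formula \eqref{e:uepsCH} produces a $C^\infty$ potential $u^\eps$, so $v^\eps=\nabla u^\eps$ is smooth and $X^\eps_t$ is a $C^\infty$ diffeomorphism. Its Jacobian $J^\eps(y,t):=\det\nabla X^\eps_t(y)$ solves the linear ODE
\[
\D_t J^\eps(y,t) = (\nabla\mdot v^\eps)(X^\eps_t(y),t)\,J^\eps(y,t),\qquad J^\eps(y,0)=1,
\]
so $J^\eps(y,t)>0$ for every $y\in\R^d$ and $t\ge 0$. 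By Theorem~\ref{t:UNIQ} we have $X^\eps_t\to X_t$ locally uniformly, and by Proposition~\ref{p:Xepslip} the maps $X^\eps_t$ are uniformly Lipschitz in $\eps$, so $\nabla X^\eps_t\wkto\nabla X_t$ in the weak-$\star$ topology of $L^\infty_{\rm loc}(\R^d;\R^{d\times d})$.

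The key step is to upgrade this weak-$\star$ convergence of gradients to weak-$\star$ convergence of their determinants, exploiting the null-Lagrangian structure of $\det\nabla(\cdot)$. Piola's identity $\D_j(\mathrm{cof}\,\nabla F)^{ij}=0$ (sum over $j$, each $i$) lets one write
\[
\det\nabla F = \D_j\bigl(F^1\,(\mathrm{cof}\,\nabla F)^{1j}\bigr),
\]
so integration against a test function $\phi\in C_c^\infty(\R^d)$ shifts one derivative onto $\phi$ and leaves $F^1$ multiplied by a $(d-1)$-minor of $\nabla F$. Induction on the order of minors then yields the classical weak continuity result of Reshetnyak and Ball: whenever $F^\eps\to F$ uniformly on compacts with $\nabla F^\eps$ uniformly bounded, $\det\nabla F^\eps\wkto\det\nabla F$ weak-$\star$ in $L^\infty_{\rm loc}(\R^d)$. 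Applying this with $F^\eps=X^\eps_t$ gives $\det\nabla X^\eps_t\wkto\det\nabla X_t$.

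To finish, let $\phi\in C_c^\infty(\R^d)$ with $\phi\ge 0$. The first paragraph yields $\int_{\R^d}\phi\,J^\eps(\cdot,t)\,dy>0$ for every $\eps>0$; passing to the weak-$\star$ limit gives $\int_{\R^d}\phi\,\det\nabla X_t\,dy\ge 0$. Since $\phi\ge 0$ was arbitrary, $\det\nabla X_t(y)\ge 0$ for a.e.~$y\in\R^d$. The only nontrivial ingredient is the weak continuity of $\det\nabla(\cdot)$ in the second paragraph: the determinant is a genuinely nonlinear function of the gradient, and is \emph{not} weak-$\star$ continuous on general $L^\infty$ matrix fields, so the null-Lagrangian identity must be used rather than any attempt to pass to the limit componentwise. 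For $d=2$ one could execute this by hand in a few lines (the cofactor matrix is then linear in $\nabla F$); for general $d$ the cleanest route is to cite the Reshetnyak--Ball theorem. Everything else follows from the excerpt together with elementary ODE theory.
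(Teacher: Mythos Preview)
Your proof is correct and follows essentially the same approach as the paper: positivity of $\det\nabla X^\eps_t$ from smooth ODE flow, weak-$\star$ convergence of gradients from uniform Lipschitz bounds and locally uniform convergence, and then weak continuity of Jacobian determinants (the paper cites \cite[Thm.~2.16]{AmbrosioFuscoPallara}, which is precisely the Reshetnyak--Ball type result you invoke via the null-Lagrangian structure). Your write-up is somewhat more explicit about why $J^\eps>0$ and about the mechanism behind the weak continuity of the determinant, but the argument is the same.
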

\begin{proof}
For any $\eps>0$ and $t\ge0$, $\det\nabla X_t^\eps(y)>0$ for all $y$ since the flows induced
by \eqref{e:Xeps_flow} are smooth. The gradients $\nabla X_t^\eps$ are uniformly bounded and converge to 
$\nabla X_t$ weak-$\star$ in $L^\infty$ due to the local uniform convergence in Theorem~\ref{t:UNIQ}
and fact that smooth functions of compact support are dense in $L^1$.
Then by the weak-$\star$ continuity property of determinants 
stated in \cite[Thm.~2.16]{AmbrosioFuscoPallara}
(see also \cite[Def.~2.9]{AmbrosioFuscoPallara}),
for any integrable $g:\R^d\to[0,\infty)$ we have 
\[
0\le  \int_{\R^d} g(y)\det\nabla X_t^\eps(y)\,dy \to 
\int_{\R^d} g(y)\det\nabla X_t(y)\,dy \quad\text{as $\eps\to0$}.
\]
The claimed result follows.
\end{proof}

\begin{proof}[Proof of Theorem~\ref{t:rho_decomp}] 
We claim $\rho_t\mres\calS_t^c\ll\leb$.
Let $B\subset\calS_t^c$ with Lebesgue measure $|B|=0$. 
For $E=X_t\inv(B)$, noting that $N_E(x)=0$ whenever $x\notin B$,
the area formula  \eqref{e:areaE} yields
\begin{align*}
    \int_{X_t\inv(B)} \det\nabla X_t(y)\,dy = 
    \int_{\R^d} N_E(x)\,dx =
    \int_B N_E(x)\,dx = 0,
\end{align*}
since even $\int_B \infty\,dx = 0$ as a Lebesgue integral.
Then, because $X_t\inv(\calS_t^c)\subset \xinv\cup\xndf$ and  
the determinant is positive a.e.~on $\xinv$, it follows $|X_t\inv(B)|=0$.
Hence $\rho_t\mres\calS_t^c\ll \leb$, while $|\calS_t|=0$ by Lemma~\ref{lem:calSt}.
The result follows.
\end{proof}

\subsection{Unique backward images a.e.}
The next result shows that the adhesion model indeed possesses
one of the properties that motivated it,
namely that of avoiding overlapping Lagrangian images a.e.

\begin{prop}\label{p:multiplicity}
Let $t>0$. For a.e.~$x\in\R^d$, the pre-image $X_t\inv(x)$ is a singleton set
$\{y\}\subset \xinv$.
Thus the multiplicity $N_{\R^d}(x)=\# X_t\inv(x)=1$ for a.e.~$x\in\R^d$.
\end{prop}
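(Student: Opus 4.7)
My plan is to exploit the fact that the smoothed maps $X^\eps_t$ are orientation-preserving diffeomorphisms of $\R^d$, and to pass to the limit $\eps\to 0$ in a change-of-variables identity to obtain a ``degree one'' relation for $X_t$. Comparing this identity with the area formula \eqref{e:areaE} will then force the multiplicity $N_{\R^d}(x)$ to equal $1$ a.e., after which Lemma~\ref{lem:calSt} pins down the location of the unique pre-image.

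Concretely, I would first note that for each $\eps>0$ the ODE flow $X^\eps_t$ has smooth positive Jacobian (its logarithm solves a linear equation with the bounded coefficient $\nabla\cdot v^\eps$), and is a bijection of $\R^d$ onto itself. Hence for any continuous $g\colon\R^d\to\R$ of compact support, the ordinary change-of-variables formula yields
\[
\int_{\R^d} g(X^\eps_t(y))\det\nabla X^\eps_t(y)\,dy \;=\; \int_{\R^d} g(x)\,dx.
\]
Next I would pass to the limit $\eps\to 0$. The uniform bound $|X^\eps_t(y)-y|\le Kt$ from Proposition~\ref{lem:Klip} confines the integrands to a common bounded set, while $X^\eps_t\to X_t$ locally uniformly (Theorem~\ref{t:UNIQ}) together with the weak-$\star$ convergence $\det\nabla X^\eps_t \to \det\nabla X_t$ in $L^\infty_{\mathrm{loc}}$ (as established in the proof of Lemma~\ref{lem:detXt}) justifies the limit, giving
\[
\int_{\R^d} g(X_t(y))\det\nabla X_t(y)\,dy \;=\; \int_{\R^d} g(x)\,dx.
\]

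Since $\det\nabla X_t\ge 0$ a.e.\ by Lemma~\ref{lem:detXt}, the area formula \eqref{e:areaE} applied with $E=\R^d$ rewrites the left-hand side as $\int_{\R^d} g(x) N_{\R^d}(x)\,dx$. Therefore $\int g\,(N_{\R^d}-1)\,dx=0$ for every continuous $g$ of compact support, which forces $N_{\R^d}(x)=1$ for a.e.\ $x\in\R^d$; surjectivity of $X_t$ (Theorem~\ref{t:UNIQ}(b)) already ensured $N_{\R^d}\ge 1$. Finally, Lemma~\ref{lem:calSt} gives $|X_t(\xsg\cup\xndf)|=0$, so for a.e.\ $x$ the unique pre-image cannot lie in $\xsg\cup\xndf$ and must therefore lie in $\xinv$.

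The only delicate step is the limit passage in the change-of-variables identity, where one has to combine uniform convergence of $g\circ X^\eps_t$ (on a common compact support) with weak-$\star$ convergence of the Jacobians; this pairing is legitimate precisely because one factor converges strongly in $L^1$ on a bounded set and the other weak-$\star$ in $L^\infty$, and the determinant weak-$\star$ continuity needed is exactly what was invoked to prove Lemma~\ref{lem:detXt}. Everything else is bookkeeping with the area formula.
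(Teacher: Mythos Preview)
Your proof is correct and follows essentially the same route as the paper: obtain the change-of-variables identity for $X^\eps_t$, pass to the limit using uniform convergence of $X^\eps_t$ together with weak-$\star$ continuity of the Jacobian determinants (exactly as in Lemma~\ref{lem:detXt}), and compare with the area formula to conclude $N_{\R^d}=1$ a.e. The only cosmetic difference is that the paper first localizes to a bounded open set $\Omega$ (approximating $\one_\Omega$ by continuous $h$) before invoking the unweighted area formula \eqref{e:areaE} with $E=X_t^{-1}(\Omega)$, whereas you invoke the weighted form of the area formula directly with $E=\R^d$ and the weight $g\circ X_t$; both are standard and the argument is the same.
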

\begin{proof}
Let $h:\R^d\to [0,\infty)$ be continuous with compact support. 
For any $\eps>0$ the standard change of variable formula yields
\begin{equation}
    \int_{\R^d} h(x)\,dx = \int_{\R^d} h(X_t^\eps(y)) \det \nabla X_t^\eps(y)\,dy\,.
\end{equation}
Using the weak-$\star$ continuity of determinants cited in the proof of 
Lemma~\ref{lem:detXt}, we find that  as $\eps\to0$,
\[
\int_{\R^d} h(X_t(y)) \det\nabla X_t^\eps(y)\,dy
\to
\int_{\R^d} h(X_t(y)) \det\nabla X_t(y)\,dy \,.
\]
Since $|h(X_t^\eps(y))-h(X_t(y))|\to0$ uniformly as $\eps\to0$
due to Theorem~\ref{t:UNIQ}, and $\det\nabla X_t^\eps(y)$ is uniformly bounded,
it follows that 
\begin{equation}
    \int_{\R^d} h(x)\,dx 
    = \int_{\R^d} h(X_t(y)) \det \nabla X_t(y)\,dy \,.
\end{equation}
Taking $h$ along an increasing sequence converging to the characteristic function of 
any bounded open set $\Omega$, it follows 
\[
\int_\Omega dx = \int_{X_t\inv(\Omega)} \det\nabla X_t(y)\,dy.
\]
Taking $E=X_t\inv(\Omega)$, we find the multiplicity function in \eqref{d:NE}
vanishes for $x\notin\Omega$ and is simply given as
\begin{equation}\label{e:NOmega}
N_E(x) = \one_{\Omega}(x)\, \# X_t\inv(x) .
\end{equation}
Applying the area formula \eqref{e:areaE} we obtain 
\[
\int_\Omega dx = \int_\Omega \# X_t\inv(x)\,dx \,.
\]
Since $X_t$ is surjective, so $\# X_t\inv(x)\ge1$ for all $x$,
and $\Omega$ is arbitrary, the claimed result follows, 
after taking into account Lemma~\ref{lem:calSt}.
\end{proof}

\begin{remark}
For each point in the set $\{x\in \R^d:\# X_t\inv(x)=1\}$, the outer semi-continuity 
property of $X_t\inv$ described in Remark~\ref{rem:outersemi} reduces to the statement
that for the unique $y$ satisfying $X_t(y)=x$,
\[
\forall \eps>0\ \exists \delta>0 \quad X_t\inv( B(x,\delta)) \subset B(y,\eps).
\]
Thus, any right inverse $Y_t:\R^d\to\R^d$ for $X_t$ must be continuous at each such point $x$,
so continuous a.e.  A similar point has been made by Lions \& Seeger~\cite[Prop. 2.2]{LionsSeeger2024}
for a general family of backward Lagrangian flows satisfying a differential inclusion 
with one-sided Lipschitz condition.
\end{remark}

\begin{cor}\label{cor:rhoac}
 The result $\rhoac = \rho_t\mres\calR_t$ in Theorem~\ref{t:rho_decomp} holds 
 whenever 
 \[
 \calR_t \subset 
 \{x\in\R^d: \# X_t\inv(x)=1\}  \cap \calS_t^c
 \quad\text{and}\quad
 |\calR_t^c|=0.
 \]
\end{cor}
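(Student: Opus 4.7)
The plan is to combine Proposition~\ref{p:multiplicity} with the extension of Theorem~\ref{t:rho_decomp} noted in Remark~\ref{r:Rt}. The corollary is essentially a repackaging: the only new ingredient compared to the Remark is the freedom to intersect $\calR_t$ with the set $M:=\{x\in\R^d:\# X_t\inv(x)=1\}$.

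First I would invoke Proposition~\ref{p:multiplicity} to conclude $|M^c|=0$, and Lemma~\ref{lem:calSt} to conclude $|\calS_t|=0$. Hence the intersection $M\cap\calS_t^c$ has Lebesgue-null complement, so a choice of $\calR_t\subset M\cap\calS_t^c$ with $|\calR_t^c|=0$ is possible, and any such $\calR_t$ satisfies in particular $\calR_t\subset\calS_t^c$ together with $|\calR_t^c|=0$. This is exactly the hypothesis of Remark~\ref{r:Rt}, so the conclusion $\rhoac = \rho_t\mres\calR_t$ follows.

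For completeness I would spell out (once) why the Remark's extension is correct, since the original Theorem~\ref{t:rho_decomp} took $\calR_t=\calS_t^c$. By Theorem~\ref{t:rho_decomp} we already have $\rhoac=\rho_t\mres\calS_t^c$ and $\rhosg=\rho_t\mres\calS_t$, with $\rhoac\ll\leb$. Decomposing into the disjoint pieces $\calR_t$, $\calR_t^c\cap\calS_t^c$, and $\calS_t$, one writes
\[
\rho_t\mres\calR_t^c = \rho_t\mres(\calR_t^c\cap\calS_t^c) + \rho_t\mres\calS_t,
\]
using $\calS_t\subset\calR_t^c$. The first summand is bounded by $\rhoac(\calR_t^c)$, which vanishes because $\rhoac\ll\leb$ and $|\calR_t^c|=0$; the second equals $\rhosg$. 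Subtracting from $\rho_t=\rho_t\mres\calR_t+\rho_t\mres\calR_t^c$ gives $\rho_t\mres\calR_t = \rho_t-\rhosg = \rhoac$, as claimed.

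There is no real obstacle here; the statement is a convenience that isolates, within the absolutely continuous part, the set of points with a uniquely defined backward image under $X_t$, which is precisely the structural feature that will be needed when characterizing the free-streaming history of almost every particle in Section~\ref{ss:consequences}.
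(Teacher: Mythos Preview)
Your argument is correct and matches the paper's approach: the corollary is an immediate consequence of Remark~\ref{r:Rt} (once you note that the extra constraint $\calR_t\subset\{x:\#X_t^{-1}(x)=1\}$ only shrinks $\calR_t$ within $\calS_t^c$), and your explicit verification of the Remark via the decomposition $\rho_t\mres\calR_t^c=\rho_t\mres(\calR_t^c\cap\calS_t^c)+\rho_t\mres\calS_t$ is exactly the right computation. The paper in fact does not spell out a proof of the corollary at all, treating it as self-evident from the Remark and Proposition~\ref{p:multiplicity}.
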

Note, if $\calR_t$ is taken as large as possible in this result, so equality holds,
then
\begin{equation}
    \calR_t^c = \R^d\setminus\calR_t = 
    \{x\in X_t(\xinv):\# X_t\inv(x)>1\} \cup  \calS_t \, ,
\end{equation}
and indeed $|\calR_t^c|=0$. Moreover, $X_t: X_t\inv(\calR_t)\to\calR_t$ is a bijection,
and for any Borel $B\subset \calR_t$,
\[
\rhoac(B) = |X_t\inv(B)| = (\leb\mres X_t\inv(\calR_t)) (X_t\inv(B)).
\]
Thus 
\begin{equation}
\rhoac = (X_t)_\sharp(\leb\mres X_t\inv(\calR_t)) \,.
\end{equation}

\subsection{Continuity equation}

Let $\calQ=\R^d\times(0,\infty)$ denote the space-time domain, and 
let $\calB$ be the Borel $\sigma$-algebra of subsets of $\calQ$.
We wish to show that pushforward under the Lagrangian flow maps
determines a solution of a continuity equation
\begin{equation}\label{e:continuity_rho}
   \D_t \varrho + \nabla\cdot (\upsilon\varrho)  = 0, 
\end{equation}
with a single-valued, discontinuous velocity field $\upsilon:\calQ\to\R^d$ 
that also provides the Lagrangian flow.  We will discuss this 
by defining a velocity field in a manner similar to 
Bianchini \& Gloyer, who rely on some results in descriptive set theory;
see \cite[Sec.~5]{BianchiniGloyer2011}.

Recall that $X$ is locally Lipschitz on $\calQ$ and $X_t$ is surjective for each $t\ge0$. 
Also, by the ``sticky'' property enjoyed by $X$ from Theorem~\ref{t:UNIQ}(c.ii),
if $X_t(y)=X_t(z)$ then $X_s(y)=X_s(z)$ for all $s\ge t$. Thus, we may define
a bounded vector field $\upsilon:\calQ\to\R^d$ by 
\begin{equation}\label{d:vf}
\upsilon(x,t) = \D^+_t X(y,t) \quad \text{whenever\ \ } x=X(y,t) ,
\end{equation}
where $\D^+_t X$ denotes the componentwise upper right Dini derivative. 
Clearly
\begin{equation}\label{e:upsi-K}
|\upsilon(x,t)|\le K \quad  \text{for all $(x,t)\in\calQ$}.
\end{equation}
Note that for each $y\in\R^d$ we then have that 
\begin{equation}\label{e:Dtvae}
    \D_t X(y,t) = \upsilon(X(y,t),t) \quad\text{for Lebesgue-a.e. $t>0$}.
\end{equation}

It is not clear whether $\upsilon$ is Borel measurable. 
Define $\calX:\calQ\to\calQ$ by
\[
\calX(y,t) = (X(y,t),t) \,. 
\]
The function $\D^+_t X =\upsilon\circ\calX$ is Borel, and 
the pre-image under $\upsilon$ of a Borel set $E\subset \R^d$ is the forward image 
under $\calX$ of the Borel set $(\D^+_t X)\inv(E)$. Such a set may not be Borel, but may 
be characterized by results in descriptive set theory that we will mention below.

What is clear and uncomplicated is the following.
The pushforward of the Borel $\sigma$-algebra $\calB$, defined by
\begin{equation}\label{d:pushsigma}
\calX_\sharp\calB := \{ E\subset \calQ: \calX\inv(E)\in\calB\}\,,
\end{equation}
is a $\sigma$-algebra that extends $\calB$ itself,
with respect to which $\upsilon$ is measurable, 
since $(\upsilon\circ\calX)\inv=\calX\inv\circ\upsilon\inv$.
Moreover, the mass measure
in $\calQ$ is naturally defined on this $\sigma$-algebra by the pushforward formula
$\varrho= \calX_\sharp\calL^{d+1}$, i.e., $\varrho(E)=\calL^{d+1}(\calX\inv(E))$.
When $E$ is Borel, we note
\begin{equation}\label{e:rhorho_t}
\varrho(E) = \int_E \,d\varrho = \int_0^\infty\int_{\R^d} \one_E(X(y,t),t)\,dy\,dt
=\int_0^\infty \int_{\R^d} \one_E(x,t)\,d\rho_t(x)\,dt.
\end{equation}
Then for any bounded function $g:\calQ\to\R$ with compact support that is $\calX_\sharp\calB$-measurable,
\begin{equation}\label{d:varrho}
\int_\calQ g(x,t)\,d\varrho(x,t) = \int_\calQ g(X(y,t),t)\,dy\,dt.
\end{equation}

We now claim that $\varrho$ is a solution of the continuity equation~\eqref{e:continuity_rho}.

\begin{prop}
The pushforward mass measure $\varrho=\calX_\sharp\calL^{d+1}$ satisfies
the continuity equation~\eqref{e:continuity_rho} 
in the sense of distributions on the space-time domain $\calQ$. 
\end{prop}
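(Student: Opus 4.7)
The plan is to verify the distributional identity
\begin{equation*}
\int_\calQ \bigl(\D_t g + \upsilon \cdot \nabla g\bigr) \, d\varrho = 0
\end{equation*}
for every test function $g \in C_c^\infty(\calQ)$. The integrand is $\calX_\sharp\calB$-measurable, since $\D_t g$ and $\nabla g$ are Borel and $\upsilon$ is $\calX_\sharp\calB$-measurable by construction, and it is bounded with compact $t$-support (using $|\upsilon| \le K$ from \eqref{e:upsi-K}). Applying the pushforward formula \eqref{d:varrho} rewrites the integral as
\begin{equation*}
\int_{\R^d}\int_0^\infty \bigl[\D_t g(X(y,t),t) + \upsilon(X(y,t),t)\cdot \nabla g(X(y,t),t)\bigr]\,dt\,dy,
\end{equation*}
after a use of Fubini, which is legitimate because the bound $|X(y,t)-y|\le Kt$ from Theorem~\ref{t:UNIQ}(c.i) together with the compact support of $g$ confines the integrand to a bounded subset of $\R^d\times(0,\infty)$.

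For each fixed $y \in \R^d$ I will show that the inner $t$-integral vanishes. The composition $\gamma_y(t) := g(X(y,t),t)$ is Lipschitz on $[0,\infty)$, since $g$ is smooth and $t\mapsto X(y,t)$ is $K$-Lipschitz, and it vanishes outside a compact subinterval of $(0,\infty)$. Hence $\gamma_y$ is absolutely continuous, and at each $t$ where $X(y,\cdot)$ is differentiable --- a set of full measure in $(0,\infty)$ --- the classical chain rule for a $C^1$ outer map composed with a differentiable inner map, combined with the identity \eqref{e:Dtvae}, gives
\begin{equation*}
\gamma_y'(t) = \D_t g(X(y,t),t) + \nabla g(X(y,t),t) \cdot \upsilon(X(y,t),t).
\end{equation*}
The fundamental theorem of calculus for absolutely continuous functions then yields $\int_0^\infty \gamma_y'(t)\,dt = 0$, and integrating over $y$ delivers the result.

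The main subtlety to address, rather than a deep obstacle, is keeping careful track of measurability: the field $\upsilon$ is only $\calX_\sharp\calB$-measurable, so formula \eqref{d:varrho} must be applied at the level of this enlarged $\sigma$-algebra rather than to Borel integrands alone, and the chain-rule calculation takes place at those $t$ (depending on $y$) where both $X(y,\cdot)$ is differentiable and the pointwise identity \eqref{e:Dtvae} holds. Once pulled back along $\calX$, however, every function in sight is Borel in $(y,t)$ and the argument reduces to the two elementary calculus steps above.
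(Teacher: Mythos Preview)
Your proof is correct and follows essentially the same approach as the paper: apply the pushforward formula \eqref{d:varrho} to rewrite the integral in Lagrangian variables, then use the chain rule together with \eqref{e:Dtvae} and the fundamental theorem of calculus to see that the $t$-integral vanishes for each $y$. Your version is somewhat more careful than the paper's about the technical justifications (measurability under $\calX_\sharp\calB$, why Fubini applies, and why $\gamma_y$ is absolutely continuous), but the argument is the same.
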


\begin{proof}
Let
$f:\calQ\to\R$ be any smooth function with compact support. Then by \eqref{e:Dtvae}, 
\begin{align}
    \label{e:continuity_rhom}
    0 & = \int_\calQ \frac{d}{dt} f(X(y,t),t)\,dy\,dt 
    \nonumber \\&=
    \int_\calQ \Bigl(\D_t f(X(y,t),t) + \nabla f(X(y,t),t)\cdot\upsilon(X(y,t),t) \Bigr)\,dy\,dt
    \nonumber \\&=
    \int_\calQ \Bigl(\D_t f(x,t)+ \nabla f(x,t)\cdot \upsilon(x,t)\Bigr) \,d\varrho(x,t) .
\end{align}
Thus equation~\eqref{e:continuity_rho} holds in the sense of distributions on $\calQ$.
\end{proof}

\begin{remark}
Results in descriptive set theory apply as in \cite{BianchiniGloyer2011} 
to provide further information about the sets in $\calX_\sharp\calB$.  
Each set $E$ in this $\sigma$-algebra is the image under $\calX$ of a Borel set.
Since $\calX$ is Borel, $E$ is an analytic set, therefore universally measurable
\cite[Thm.~21.10]{Kechris1995}.
This means that for any $\sigma$-finite Borel measure $\mu$ on $\calQ$,
$E$ is the union of a Borel set and a $\mu$-negligible set, and  
thus $\calX_\sharp\calB$  lies in the domain of the completion of every such measure $\mu$.
In particular, the measure $\varrho$ is determined by the 
fact that \eqref{e:rhorho_t} holds for Borel sets $E$.
\end{remark}

\section{Smoothed \MA\ measures and transport maps}\label{s:smoothMA}

In a number of numerical studies of reconstruction through least action,
the mass distribution is smoothed before 
obtaining a backward transport map through solution of a \MA\ equation of the form in \eqref{e:MA1}.
In this section we study a family of \MA\ measures with smooth densities,
and associated transport maps, that are naturally related to the adhesion model. 
Our results provide a natural smooth approximation to the \MA\ measures $\mam_t$
and the associated transport maps $T_t$,
and provide insight into the dynamics and structural properties of these objects.

\subsection{Smoothed \MA\ measures}

We define \MA\ measures  $\mam^\eps_t$  with smooth densities
(written with the same notation) by
\begin{equation}\label{d:mamepst}
\mam^\eps_t(x)  = \det \nabla^2 w^\eps_t (x) \,,
\qquad
    w^\eps_t(x) = \tfrac12|x|^2 - tu^\eps_t(x)\,.
\end{equation}
Recall from Lemma~\ref{p:lim_wueps} that $w^\eps_t$ smoothly 
approximates the potential $w_t$ 
whose associated \MA\ measure $\mam_t$ is described in \eqref{d:MAint}.
We will show that in the zero-viscosity limit, 
the measures $\mam^\eps_t$ consistently approximate the measures $\mam_t$,
and describe a continuity equation that they satisfy.

\begin{prop} \label{p:MAepslim} 
Assume $\vp\colon\R^d\to\R$ is Lipschitz, let $t>0$, and let 
$\mam_t$ denote the \MA\ measure  for $w_t=\pts$.
Then as $\eps\to0$, the \MA\ measures $\mam^\eps_t$ converge locally weak-$\star$ to $\mam_t$.
I.e., for every continuous $g\colon\R^d\to\R$
having compact support,
\[
\int_{\R^d} g(x)\,\mam^\eps_t(x)\,dx \to 
\int_{\R^d} g(x)\,d\mam_t(x)
\qquad\text{as $\eps\to0$}. 
\]
\end{prop}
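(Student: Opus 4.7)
The plan is to reduce the statement to a standard stability theorem for Monge-Amp\`ere measures under locally uniform convergence of convex functions, using the pieces already assembled in the paper.

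First I would identify $\mam_t^\eps$ with the Monge-Amp\`ere measure of $w_t^\eps$ in the sense of \eqref{d:MAint}. The function $w_t^\eps$ is $C^\infty$ and strictly convex (as noted right after \eqref{e:weps2}), so its gradient $\nabla w_t^\eps$ is injective on $\R^d$: if two points had the same gradient $p$, both would be global minimizers of the strictly convex function $z\mapsto w_t^\eps(z)-p\cdot z$, contradicting strict convexity. For smooth convex $w_t^\eps$ the subdifferential reduces to $\{\nabla w_t^\eps\}$, so the change-of-variables formula for $C^1$ injective maps gives
\[
 |\partial w_t^\eps(B)| \;=\; |\nabla w_t^\eps(B)| \;=\; \int_B \det\nabla^2 w_t^\eps(x)\,dx
\]
for every Borel set $B$. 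Thus the smooth density $\det\nabla^2 w_t^\eps$ is exactly the Radon--Nikodym density of the Monge-Amp\`ere measure of $w_t^\eps$, and the integrals in the statement really are integrals against this MA measure.

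Next I would invoke the classical weak convergence theorem for Monge-Amp\`ere measures: if $\{v_k\}$ is a sequence of convex functions on $\R^d$ converging locally uniformly to a convex $v$, then their Monge-Amp\`ere measures converge locally weak-$\star$ to the Monge-Amp\`ere measure of $v$ (see, e.g., \cite{Figalli-MongeAmpere}). Lemma~\ref{p:lim_wueps} gives exactly the input needed: $w_t^\eps\to w_t$ locally uniformly as $\eps\to 0$, with all of $w_t^\eps, w_t$ convex. Applying the theorem to the family $\{w_t^\eps\}_\eps$ yields the conclusion.

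If one prefers a self-contained argument rather than citing the general stability result, the cleanest route would be to combine the outer semi-continuity of subgradients (Lemma~\ref{p:wepslim}, i.e.\ \eqref{e:subgrad_lim} transferred from $u_t$ to $w_t$ via \eqref{e:uw_dual}) with the uniform bound $|\nabla w_t^\eps(x)-x|\le Kt$ coming from Proposition~\ref{lem:Klip}. The uniform bound ensures that for any compact $K$ the masses $\mam_t^\eps(K)=|\nabla w_t^\eps(K)|$ stay uniformly bounded (they are contained in a fixed bounded set), so weak-$\star$ compactness applies; outer semi-continuity of the subgradient then forces
\[
 \limsup_{\eps\to 0} \nabla w_t^\eps(K) \subset \partial w_t(K),
\]
which yields $\limsup_{\eps\to 0}\mam_t^\eps(K)\le \mam_t(K)$ on compact sets, and a matching $\liminf$ bound on open sets, giving the required local weak-$\star$ convergence. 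The main obstacle, and the place where one must be careful, is precisely this upper semi-continuity on compact sets: one must rule out mass of $\partial w_t^\eps(B)$ escaping outside $\partial w_t(B)$ in the limit, which is what the outer semi-continuity of subgradients in Lemma~\ref{p:wepslim} is tailor-made to prevent.
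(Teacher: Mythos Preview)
Your proposal is correct and takes essentially the same approach as the paper: invoke Lemma~\ref{p:lim_wueps} for the locally uniform convergence $w_t^\eps\to w_t$ of convex functions, then apply the standard stability theorem for Monge--Amp\`ere measures (the paper cites \cite[Prop.~2.6]{Figalli-MongeAmpere}). Your preliminary identification of $\det\nabla^2 w_t^\eps$ with the Monge--Amp\`ere measure of $w_t^\eps$ is a welcome extra detail the paper takes for granted, and your sketched self-contained alternative is unnecessary here but sound in outline.
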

\begin{proof}
Recall from Lemma~\ref{p:lim_wueps} that $w^\eps_t$ converges to $w_t$
uniformly on compact sets. By applying a standard stability theorem for \MA\ measures  
(e.g., see  \cite[Prop.~2.6]{Figalli-MongeAmpere}),   
we immediately obtain the stated weak convergence of \MA\ measures.
\end{proof}

This convergence result yields the following lower bound on \MA\ mass density when
the initial velocity potential $\vp$ is $\lambda$-concave.
\begin{cor}\label{cor:lowerbd-mam}
    In addition to the assumptions of Proposition~\ref{p:MAepslim}, 
    assume $\vp$ is $\lambda$-concave with $\lambda\ge0$. Let $t>0$.
    Then 
    \begin{itemize}
        \item[(i)] $\mam_t^\eps(x)\ge (1+\lambda t)^{-d}$ for all $\eps>0$ and all $x\in\R^d$.
        \item[(ii)] $\mam_t(B)\ge (1+\lambda t)^{-d}|B|$ for all Borel sets $B\subset\R^d$.
    \end{itemize}
\end{cor}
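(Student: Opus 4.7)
The plan is to derive (i) pointwise using the semi-concavity propagation already proved for $u_t^\eps$, and then to extract (ii) from (i) via the locally weak-$\star$ convergence established in Proposition~\ref{p:MAepslim}.

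For part (i), the Cole-Hopf formula ensures $u_t^\eps$ is smooth, so differentiating the definition \eqref{d:mamepst} of $w_t^\eps$ gives
\[
\nabla^2 w_t^\eps(x) = I - t\,\nabla^2 u_t^\eps(x)
\]
everywhere. By Lemma~\ref{l:lambda}(i), $u_t^\eps$ is $\lambda_t$-concave with $\lambda_t = \lambda/(1+\lambda t)$, so Lemma~\ref{lem:oneside}(ii) bounds every eigenvalue of $\nabla^2 u_t^\eps(x)$ from above by $\lambda_t$. Therefore every eigenvalue of $\nabla^2 w_t^\eps(x)$ is bounded below by $1 - t\lambda_t = 1/(1+\lambda t)$, which is strictly positive. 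Taking the product over the $d$ eigenvalues yields
\[
\mam_t^\eps(x) = \det \nabla^2 w_t^\eps(x) \ge (1+\lambda t)^{-d},
\]
which is the claim in (i). Note that the positivity of the lower bound removes any need to invoke convexity of $w_t^\eps$ separately to handle signs in the product.

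For part (ii), fix any non-negative continuous $g\colon\R^d\to\R$ with compact support. Proposition~\ref{p:MAepslim} together with (i) gives
\[
\int_{\R^d} g\,d\mam_t \;=\; \lim_{\eps\to0} \int_{\R^d} g(x)\,\mam_t^\eps(x)\,dx \;\ge\; (1+\lambda t)^{-d}\int_{\R^d} g(x)\,dx.
\]
A standard approximation argument then upgrades this test-function inequality to the measure-theoretic bound in (ii): for any bounded open $U\subset\R^d$, select an increasing sequence $g_n\uparrow\one_U$ of non-negative continuous functions with compact support contained in $U$, apply monotone convergence on both sides to obtain $\mam_t(U)\ge(1+\lambda t)^{-d}|U|$, and then use outer regularity of the Radon measure $\mam_t$ (and of Lebesgue measure) to extend the inequality from open sets to arbitrary Borel sets $B$.

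There is no substantive obstacle in either step: the one-sided Lipschitz/eigenvalue control already developed in Section~\ref{s:lagrangian} handles (i) cleanly, and the convergence machinery of Section~\ref{s:smoothMA} handles (ii). The only item that deserves a moment of care is confirming that the lower-bound inequality survives the passage $\eps\to0$ despite weak-$\star$ convergence not respecting upper bounds on densities in general; here it does survive because we are testing against non-negative continuous functions and using the lower bound in the integrand directly, rather than attempting to pass to a pointwise limit of densities.
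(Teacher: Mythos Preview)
Your proof is correct and follows essentially the same approach as the paper's: part (i) via the eigenvalue bound $\nabla^2 w_t^\eps = I - t\nabla^2 u_t^\eps \ge (1+\lambda t)^{-1}I$ from Lemmas~\ref{l:lambda} and~\ref{lem:oneside}, and part (ii) via Proposition~\ref{p:MAepslim} plus a standard regularity argument. You spell out the approximation argument for (ii) in more detail than the paper does, which is fine.
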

\begin{proof}
    By Lemmas~\ref{l:lambda} and~\ref{lem:oneside},
    each eigenvalue of the Hessian $\nabla^2u^\eps_t$ is no greater than $\lambda_t=\lambda/(1+\lambda t)$.
    Then by the definition of $w^\eps_t$ in \eqref{d:mamepst} it follows
    each eigenvalue of $\nabla^2 w^\eps_t(x)$ is bounded below by  
    \begin{equation}\label{d:betat}
    \beta_t:= 1-t \lambda_t  = (1+\lambda t)\inv.  
    \end{equation}
    Part (i) follows.
    Then Proposition~\ref{p:MAepslim} yields part (ii) by a standard approximation argument 
    using the regularity of Radon measures, cf.~\cite[p.~212]{Folland1999}.
\end{proof}

\subsection{Smoothed transport maps}
The smooth transport maps $T_t^\eps$ defined as 
inverse to $\nabla w^\eps_t$ satisfy Lipschitz estimates that 
can be compared to those satisfied by $X_t^\eps$ as shown in Proposition~\ref{p:Xepslip}.
Recall from Lemma~\ref{l:lambda} that $u_t^\eps$ is $\lambda_t$-concave,
so similar to \eqref{e:wt-betat} we have
\begin{equation}\label{e:wtepsg}
w_t^\eps(x)= \frac{1}{1+\lambda t} \frac{|x|^2}2 + g_t^\eps(x)\,,
\end{equation}
where $g_t^\eps$ is convex, and also now smooth.

\begin{prop} \label{p:Tepslim}
Assume $\vp$ is Lipschitz, and $\lambda$-concave with $\lambda\ge0$. 
Let $t>0$.
    For any $\eps>0$, the gradient map $\nabla w^\eps_t:\R^d\to\R^d$
    is bijective, with 
    \begin{equation}\label{e:weps_lower}
    | \nabla w^\eps_t(x_1)- \nabla w^\eps_t(x_2)| \ge \frac{|x_1-x_2|}{1+\lambda t}.
    \end{equation}
   for all $x_1,x_2\in\R^d$. Also, for all $y_1,y_2\in\R^d$,
   the inverse $T^\eps_t = (\nabla w^\eps_t)\inv$ satisfies
    \begin{equation} \label{e:Teps_stable}
{|T^\eps_t(y_1)-T^\eps_t(y_2)|}\le (1+\lambda t)|y_1-y_2| .
    \end{equation}
\end{prop}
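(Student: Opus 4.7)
The plan is to exploit the decomposition \eqref{e:wtepsg}, namely
$w^\eps_t(x)= \tfrac{1}{1+\lambda t}\tfrac12|x|^2 + g^\eps_t(x)$
with $g^\eps_t$ smooth and convex, which was derived in the previous subsection from the fact that $u^\eps_t$ is $\lambda_t$-concave. Taking the gradient gives
\[
\nabla w^\eps_t(x) = \frac{x}{1+\lambda t} + \nabla g^\eps_t(x),
\]
and since $\nabla g^\eps_t$ is monotone (being the gradient of a smooth convex function), I would compute for any $x_1,x_2\in\R^d$
\[
(\nabla w^\eps_t(x_1)-\nabla w^\eps_t(x_2))\cdot(x_1-x_2) \ge \frac{|x_1-x_2|^2}{1+\lambda t}.
\]
Applying the Cauchy-Schwarz inequality on the left then yields the strong-monotonicity bound \eqref{e:weps_lower}. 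In particular, $\nabla w^\eps_t$ is injective on $\R^d$.

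For surjectivity, given any $y\in\R^d$, I would consider the smooth function $F_y(x):=w^\eps_t(x)-y\cdot x$, which using the decomposition is
\[
F_y(x) = \frac{|x|^2}{2(1+\lambda t)} + g^\eps_t(x) - y\cdot x.
\]
Since $g^\eps_t$ is convex and smooth, it is bounded below by its tangent plane at the origin, i.e., $g^\eps_t(x)\ge g^\eps_t(0)+\nabla g^\eps_t(0)\cdot x$. Hence $F_y$ grows at least quadratically at infinity and is coercive, so it attains a global minimum at some $x_\star$ where $\nabla F_y(x_\star)=0$, i.e., $\nabla w^\eps_t(x_\star)=y$. This proves surjectivity, and together with injectivity, bijectivity of $\nabla w^\eps_t$.

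The final Lipschitz estimate on $T^\eps_t$ is then immediate: setting $x_i = T^\eps_t(y_i)$, so that $y_i=\nabla w^\eps_t(x_i)$, \eqref{e:weps_lower} reads $|y_1-y_2|\ge |T^\eps_t(y_1)-T^\eps_t(y_2)|/(1+\lambda t)$, which rearranges to \eqref{e:Teps_stable}. I do not anticipate any genuinely hard step; the only thing requiring care is the coercivity argument for surjectivity, where one must justify the linear lower bound on $g^\eps_t$ before concluding quadratic growth of $F_y$. As an alternative, one could replace the coercivity argument by an application of the inverse function theorem combined with the fact that the image of $\nabla w^\eps_t$ is both open (by local invertibility, since $\nabla^2 w^\eps_t\ge \beta_t I>0$) and closed (by properness coming from \eqref{e:weps_lower}), hence is all of $\R^d$.
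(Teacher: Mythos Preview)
Your proposal is correct and follows essentially the same approach as the paper: both use the decomposition \eqref{e:wtepsg} to derive the strong monotonicity inequality via the monotonicity of $\nabla g^\eps_t$, obtain surjectivity from a minimization argument for $x\mapsto w^\eps_t(x)-y\cdot x$ (the paper cites \cite[Prop.~A.1]{LPS19} for this), and deduce \eqref{e:Teps_stable} by substitution. You have simply spelled out the coercivity/minimization step that the paper leaves to a reference.
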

\begin{proof}
  Due to the representation \eqref{e:wtepsg}, the surjectivity of $\nabla w^\eps_t$
    follows from a simple minimization argument, see \cite[Prop.~A.1]{LPS19}.
    The lower bound \eqref{e:weps_lower} follows from \eqref{e:wtepsg} and 
    the easily proved monotonicity formula
    \[
    (\nabla g_t^\eps(x_1)-\nabla g_t^\eps(x_2))\cdot(x_1-x_2)\ge 0,
    \]
    whence \eqref{e:Teps_stable} follows.
\end{proof}

Complementing the result of Proposition~\ref{p:MAepslim} regarding the convergence of the 
smoothed \MA\ measures $\mam_t^\eps$,  next we show 
$T_t^\eps=(\nabla w_t^\eps)\inv$ converges to $T_t=\nabla w_t^*$.
The key to this  is to extend the convergence $w^\eps_t\to w_t$ from Lemma~\ref{p:lim_wueps}
to convergence of Legendre transforms $w_t^{\eps*}\to w_t^*=\ptss$.
The idea of the proof is to exploit the strong convexity of $w_t$ in \eqref{e:wt-betat}.
This avoids use of the theory of Mosco convergence for Legendre transforms, 
as developed in \cite{Mosco1971,Beer1990}.

\begin{theorem}\label{t:limTeps}
Assume $\vp$ is Lipschitz, and $\lambda$-concave with $\lambda\ge0$. 
Let $t>0$. Then $w_t^*$ is $C^1$, $w_t^{\eps*}$ is smooth for all $\eps>0$, 
and  for all $y\in\R^d$,
\begin{itemize}
    \item[(i)] $w_t^{\eps*}(y) \to w_t^*(y)$ as $\eps\to0$, and
    \item[(ii)] $T_t^\eps(y)=\nabla w_t^{\eps*}(y) \to T_t(y)=\nabla w_t^*(y)$ as $\eps\to0$.
\end{itemize}
In both (i) and (ii), the convergence is uniform on each compact set in $\R^d$.
\end{theorem}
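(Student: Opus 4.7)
The plan is to exploit the uniform strong convexity of $w_t$ and $w_t^\eps$ that follows from \eqref{e:wt-betat} and \eqref{e:wtepsg}: each is the sum of a quadratic with Hessian $\beta_t I$, where $\beta_t = 1/(1+\lambda t)$, and a convex function. Classical Fenchel duality then implies that both $w_t^*$ and $w_t^{\eps*}$ are everywhere differentiable on $\R^d$ with gradients Lipschitz of constant $1+\lambda t$; in particular $w_t^*$ is $C^1$. Smoothness of $w_t^{\eps*}$ follows from the formula $w_t^{\eps*}(y) = y\cdot T_t^\eps(y) - w_t^\eps(T_t^\eps(y))$ once one notes that, by Proposition~\ref{p:Tepslim} combined with the inverse function theorem applied to the strongly convex smooth map $\nabla w_t^\eps$ (whose Jacobian $\nabla^2 w_t^\eps$ is everywhere positive definite), the inverse $T_t^\eps$ is a smooth diffeomorphism of $\R^d$.

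For pointwise convergence $w_t^{\eps*}(y) \to w_t^*(y)$, I would fix $y$ and let $x^\eps = T_t^\eps(y)$ and $x^* = T_t(y)$ be the unique maximizers in the respective Legendre transforms. The key step is to show the family $\{x^\eps\}_\eps$ lies in a fixed compact set: since $\nabla w_t^\eps(x^\eps) = y$ rearranges to $x^\eps - y = t\nabla u_t^\eps(x^\eps)$, Proposition~\ref{lem:Klip} gives $|x^\eps - y| \le tK$, and the same bound holds for $x^*$ via the $K$-Lipschitz property of $u_t$ from Lemma~\ref{p:lim_wueps}. The lower bound
\begin{equation*}
w_t^{\eps*}(y) \ge x^*\cdot y - w_t^\eps(x^*) \to x^*\cdot y - w_t(x^*) = w_t^*(y)
\end{equation*}
then follows from the uniform-on-compacts convergence $w_t^\eps\to w_t$ of Lemma~\ref{p:lim_wueps}. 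For the matching upper bound, along any subsequence on which $x^\eps\to\bar x$ (which exists by the compactness just established), $w_t^{\eps*}(y) = x^\eps\cdot y - w_t^\eps(x^\eps)\to\bar x\cdot y - w_t(\bar x) \le w_t^*(y)$.

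Convergence of gradients in~(ii) then follows quickly from strong convexity of $w_t^\eps$: the inequality
\begin{equation*}
w_t^\eps(x^*) \ge w_t^\eps(x^\eps) + y\cdot(x^*-x^\eps) + \tfrac{\beta_t}{2}|x^*-x^\eps|^2
\end{equation*}
rearranges to $\tfrac{\beta_t}{2}|x^*-x^\eps|^2 \le w_t^{\eps*}(y) - (x^*\cdot y - w_t^\eps(x^*))$, whose right-hand side tends to zero by~(i) and Lemma~\ref{p:lim_wueps}. The upgrade to local uniform convergence is routine: for $w_t^{\eps*}$ it follows from the general fact (Theorem B.3.1.4 of \cite{Hiriart2001}, already invoked in Lemma~\ref{p:lim_wueps}) that pointwise convergence of finite-valued convex functions on $\R^d$ is locally uniform; for $T_t^\eps$, Proposition~\ref{p:Tepslim} supplies the equi-Lipschitz bound $1+\lambda t$ (the same bound holds for $T_t$ by strong convexity of $w_t$), so Arzel\`a--Ascoli upgrades pointwise convergence to local uniform convergence.

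The most delicate point, I expect, is the uniform boundedness of $\{x^\eps\}_\eps$, since it is precisely what allows one to pass the limit inside the Legendre transform. This control rests essentially on the propagated velocity bound $|\nabla u_t^\eps|\le K$ from Proposition~\ref{lem:Klip}; without it, the argmax in the Legendre transform could escape to infinity as $\eps\to 0$, and local-uniform convergence of $w_t^\eps$ alone would be insufficient to conclude convergence of the transforms.
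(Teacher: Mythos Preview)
Your argument is correct, but the route differs from the paper's in two places worth noting. For part~(i), you localize the maximizer $x^\eps$ by invoking the propagated velocity bound $|\nabla u_t^\eps|\le K$ from Proposition~\ref{lem:Klip}; the paper instead uses the strong convexity of the \emph{limit} $w_t$ (the bound \eqref{e:wt_strong}) to show directly that $y_0\cdot x - w_t^\eps(x)$ is maximized within a fixed ball around $x_0=T_t(y_0)$ once $\eps$ is small. So your closing remark that compactness ``rests essentially on the propagated velocity bound'' is true of your proof but not of the statement: strong convexity of $w_t$ alone already forces the maximizer to stay put, and the Lipschitz hypothesis on $\vp$ is needed only to feed Lemma~\ref{p:lim_wueps}. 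For part~(ii), the paper simply cites the general gradient-convergence result for convex functions (Corollary~D.6.2.8 of \cite{Hiriart2001}), whereas you give a direct quantitative estimate from the $\beta_t$-strong convexity of $w_t^\eps$; your version is more self-contained and yields an explicit rate $|T_t^\eps(y)-T_t(y)|^2 \le \tfrac{2}{\beta_t}\bigl(w_t^{\eps*}(y)-x^*\!\cdot y + w_t^\eps(x^*)\bigr)$, which is a small bonus.
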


With this proof, we start to make use of some more technical concepts and results from convex analysis. 
Particularly basic is a fact from Rockafellar \cite[Thm.~23.5]{Rockafellar},
implying that for any convex function $f:\R^d\to\R$ and $y,x\in\R^d$, 
$y\in\D f(x)$ if and only if $x\in \D f^*(y)$, with $f^*$ the Legendre transform of $f$.
Also, the Young identity  $f(x)+f^*(y)=x\cdot y$ holds if and only if  $y\in\D f(x)$.

\begin{proof}
That $w_t^*=\ptss$ is $C^1$ follows  from  Proposition~\ref{p:fprop}(ii) by taking $f=\psi_t$. 
To prove (i), fix $y_0\in\R^d$  and let $x_0=\nabla w_t^*(y_0)=T_t(y_0)$. 
Then $y_0\in\D w_t(x_0)$ and 
\begin{equation}\label{d:h0}
w_t^*(y_0)  = \sup_x\bigl( y_0\mdot x - w_t(x)\bigr)  = y_0\mdot x_0 - w_t(x_0) \,,
\end{equation} 
by the Young identity.  Recall from \eqref{e:wt-betat} that $w_t(x)= \tfrac12\beta_t|x|^2 + g_t(x)$, 
where $\beta_t:=(1+\lambda t)\inv$ and $g_t$ is convex.  Then 
\begin{equation}\label{e:dwtgt}
   \D w_t(x) = \beta_t x + \D g_t(x)
\end{equation}
(see \cite[Prop.~A.1(ii)]{LPS19} for a quick proof), and it follows that 
\begin{equation}\label{e:wt_strong}
   w_t(x) \ge w_t(x_0)+y_0\mdot(x-x_0)+\tfrac12\beta_t|x-x_0|^2 
\quad\text{for all $x\in\R^d$.}
\end{equation}
(Alternatively, \eqref{e:wt-betat} implies \eqref{e:wt_strong} by \cite[Prop.~B.1.1.2 and Thm.~D.6.1.2]{Hiriart2001}.) 
Note
\begin{equation}\label{d:heps}
    w_t^{\eps*}(y_0) = \inf\{h\in\R: h> y_0\mdot x - w_t^\eps(x) \ \ \text{for all $x\in\R^d$}\}.
\end{equation}
Let $\delta>0$ and let $r>0$ satisfy $3\delta=\frac12\beta_t r^2$.
Invoking the local uniform convergence result in Lemma~\ref{p:lim_wueps}, 
there exists $\eps_0>0$ such that  for all $\eps\in(0,\eps_0)$,
\[
|x-x_0|\le r \quad\text{implies}\quad |w_t^\eps(x)-w_t(x)| <\delta .
\]
Let $\eps\in(0,\eps_0)$. Then taking $x=x_0$ in \eqref{d:heps}, by \eqref{d:h0} we have 
\[
w_t^{\eps*}(y_0) \ge y_0\mdot x_0 - w_t^\eps(x_0) = w_t^*(y_0) + w_t(x_0)-w_t^\eps(x_0) > w_t^*(y_0) -\delta.
\]
When $|x-x_0|=r$ on the other hand, by 	 
\eqref{e:wt_strong} we find
\[
y_0\mdot x - w_t^\eps(x) <y_0\mdot x - w_t(x) + \delta  
\le w_t^*(y_0) - \tfrac12\beta_t r^2 + \delta = w_t^*(y_0)-2\delta.
\]
By its concavity, the quantity $y_0\mdot x-w_t^\eps(x)\le w_t^*(y_0)-2\delta$ also whenever $|x-x_0|\ge r$. It follows
\[
w_t^{\eps*}(y_0) = \sup_{|x-x_0|\le r} \bigl(y_0\mdot x - w_t^\eps(x)\bigr) < w_t^*(y_0)+\delta.
\]
Thus $|w_t^{\eps*}(y_0)-w_t^*(y_0)|<\delta$ whenever $\eps\in(0,\eps_0)$, and this proves (i).

Next we prove (ii). Note first that $\D w_t^{\eps*}(y)$ is the singleton set $\{T_t^\eps(y)\}$ due
to the bijectivity of $\nabla w_t^\eps$ from Proposition~\ref{p:Tepslim}. 
This singleton property implies $w_t^{\eps *}$ is $C^1$ by \cite[Cor.~25.5.1]{Rockafellar}.
Since $\nabla w_t^{\eps*}=(\nabla w_t^\eps)\inv=T_t^\eps$ and this is smooth by the
inverse function theorem, $w_t^{\eps*}$ is smooth.

Then because  $w_t^{\eps*}$ and $w_t^*=\ptss$ are differentiable and convex, 
the locally uniform convergence of gradients $\nabla w_t^{\eps*}(y)\to \nabla w_t^*(y)$
follows immediately from  Corollary~D.6.2.8 of~\cite{Hiriart2001}.
\end{proof}

\subsection{Continuity equation}
The smooth \MA\ density $\mam^\eps$ for $w^\eps$ satisfies 
\begin{equation}\label{e:mamCE}
    \D_t\mam^\eps + \nabla\cdot(\mam^\eps V^\eps) = 0, 
\end{equation}
with velocity field taking the form
\begin{equation}\label{d:Veps}
    V^\eps = \nabla u^\eps - \tfrac12{\eps} (\nabla^2 w^\eps)\inv \nabla\Delta w^\eps  \,.
\end{equation}
The reason for this is the following.
By  differentiating the relation
\begin{equation}\label{e:ywX}
y = \nabla w^\eps_t( T^\eps_t(y))   
\end{equation}
in $t$ and  using $w^\eps_0(y)=\frac12|y|^2$ we find
\begin{equation}
\D_t T^\eps_t(y) = V^\eps_t(T^\eps_t(y)), 
\qquad T^\eps_0(y)=y \,,
\end{equation}
with
\begin{equation}\label{e:Veps2}
V^\eps_t = -(\nabla^2 w^\eps_t)\inv \nabla\D_t w^\eps_t\,.
\end{equation}
Differentiating \eqref{e:ywX} in $y$ and using \eqref{d:mamepst} we find 
\[
\mam^\eps_t(x) = \det \nabla T^\eps_t(y)\inv,
\]
whence we get the continuity equation~\eqref{e:mamCE} with $V^\eps$ given by \eqref{e:Veps2}.
But now, from \eqref{d:mamepst} we have that $\nabla w^\eps = x - t\nabla u^\eps$, 
    that $\nabla^2 w^\eps = I - t\nabla^2 u^\eps$,  and that
\begin{align*}
    -\nabla\D_t w^\eps &= \nabla u^\eps + t\nabla\left(\tfrac12\eps\Delta u^\eps - \tfrac12|\nabla u^\eps|^2\right)
    \\
    &= (I-t\nabla^2 u^\eps)\nabla u^\eps - \tfrac12\eps\nabla\Delta w^\eps\,.
\end{align*}
Hence \eqref{d:Veps} follows.

Similar to \eqref{e:ch_vars},  the measures $\mam_t^\eps$ relate to the 
inverse Lagrangian maps by the change-of-variables formula valid for any Borel set $B$:
\begin{equation} \label{e:ch_var2}
    \int_B \mam^\eps_t(x)\,dx = \int_B \det \nabla^2 w^\eps_t(x)\,dx = 
    \int_{\nabla w^\eps_t(B)} dy = \int_{(T^\eps_t)\inv(B)}dy.
\end{equation}
This equation shows 
$\mam^\eps_t$ is the measure-theoretic pushforward of Lebesgue measure $\leb$ under $T^\eps_t$,
written 
\[
\mam^\eps_t = (T^\eps_t)_\sharp \leb.
\]

In one space dimension ($d=1$), one has $\mam^\eps_t = \D_x^2 w^\eps_t$ and equations \eqref{e:mamCE}--\eqref{d:Veps}
combine into the single advection-diffusion equation
\begin{equation}
    \D_t \mam^\eps + \D_x(\mam^\eps\D_x u^\eps) = \tfrac12\eps\D_x^2 \mam^\eps \,.
\end{equation}

\section{\MA\ measures and convexified transport}\label{s:MAmeasures}

In this section we study the maps
$T_t=\nabla\ptss$ generated by the convexified transport potential $\ptss$
and the associated \MA\ measures  $\mam_t=(T_t)_\sharp\leb$,
focusing on results that are relevant for analysis of the adhesion model.
In particular, we establish a domain of dependence result
and a number of properties involving strict convexity and 
the ``touching set'' where $\psi_t$ agrees with its convexification $\ptss$.

\subsection{Convexified transport maps}

We begin our analysis of the transport maps $T_t$ 
by  establishing a number of their basic properties, assuming $\vp$ is semi-concave.
Recall $w_t=\pts$.

\begin{prop}\label{p:Tbasic}
Assume $\vp$ is Lipschitz, and $\lambda$-concave with $\lambda\ge0$. 
Then for each $t>0$, the function $\ptss$ is $C^1$ 
with Lipschitz gradient $T_t=\nabla\ptss$. Also:
\begin{itemize}
    \item[(i)]  $T_t:\R^d\to\R^d$ is surjective, and $T_t\inv=\D w_t$, so $T_t\circ\D w_t$
    is the identity map.
\item[(ii)] For all $y_1,y_2\in\R^d$, 
\ $
\displaystyle \frac{|T_t(y_1)-T_t(y_2)|}{1+\lambda t}\le |y_1-y_2|.
$
\end{itemize}
\end{prop}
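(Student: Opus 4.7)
The plan is to deduce every claim from the strong convexity of $w_t$, combined with the Legendre-duality identity $\ptss = w_t^*$ and the standard convex-analysis duality between strong convexity and Lipschitz-gradient smoothness of Legendre conjugates.

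First I would verify that $\psi_t = \tfrac12|\cdot|^2 + t\vp$ is $(1+\lambda t)$-concave, since
\[
\psi_t(y) - \tfrac{1+\lambda t}{2}|y|^2 = t\bigl(\vp(y) - \tfrac\lambda 2|y|^2\bigr)
\]
is concave. Applying Proposition~\ref{p:fprop}(ii) exactly as in the alternative proof of Lemma~\ref{l:lambda}(ii) then yields the representation \eqref{e:wt-betat}, namely
\[
w_t(x) = \tfrac{\beta_t}{2}|x|^2 + g_t(x), \qquad \beta_t := (1+\lambda t)^{-1}, \quad g_t \text{ convex},
\]
so $w_t$ is $\beta_t$-strongly convex. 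Since $\vp$ is $K$-Lipschitz, $\psi_t(y) \ge \tfrac12|y|^2 - Kt|y| - C$, so $w_t = \pts$ is finite on all of $\R^d$.

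Next I would invoke the standard duality in convex analysis between $\beta_t$-strong convexity of a finite-valued function on $\R^d$ and Lipschitz-gradient $C^1$-smoothness of its Legendre conjugate, with Lipschitz constant $1/\beta_t$ (the kind of result available in \cite{Hiriart2001}, already used in the proof of Theorem~\ref{t:limTeps}). Since $\ptss = w_t^*$, this immediately delivers both the $C^1$ property and the Lipschitz bound $|T_t(y_1)-T_t(y_2)| \le (1+\lambda t)|y_1-y_2|$, proving the opening assertion together with (ii).

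For (i), I would apply the fundamental Legendre--subdifferential duality $y\in\D w_t(x) \iff x\in\D w_t^*(y)$ from \cite[Thm.~23.5]{Rockafellar}. Because $w_t^*$ is differentiable, $\D w_t^*(y) = \{T_t(y)\}$, and the duality reduces to $y\in\D w_t(x) \iff T_t(y)=x$; this is exactly the set-valued identity $T_t\inv = \D w_t$, together with the equation $T_t\circ\D w_t = \id$. Surjectivity of $T_t$ reduces to showing $\D w_t(x) \ne \emptyset$ for every $x$, which follows because $w_t$ is a finite convex function on $\R^d$. The only nontrivial input is the strong-convexity representation \eqref{e:wt-betat}, already established earlier via Proposition~\ref{p:fprop}(ii); given that, the rest is a direct unwinding of Legendre duality and should present no real obstacle.
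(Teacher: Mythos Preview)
Your proof is correct and follows essentially the same route as the paper: both rely on the $\beta_t$-strong convexity of $w_t$ (obtained via Proposition~\ref{p:fprop}(ii) applied to the $(1+\lambda t)$-concave function $\psi_t$) together with Legendre duality and \cite[Thm.~23.5]{Rockafellar}. The only cosmetic difference is that for (ii) the paper carries out the monotonicity-plus-Cauchy--Schwarz computation by hand rather than invoking the black-box strong-convexity/Lipschitz-gradient duality, and for surjectivity it appeals to coercivity of $\psi_t$ directly rather than to non-emptiness of $\D w_t(x)$---these are the same arguments in slightly different packaging.
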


\begin{remark}
The stability property in part (ii) does {\em not} propagate in a way
similar to the property that $X_t$ enjoys from Theorem~\ref{t:UNIQ}(ii).
As we show in section \ref{s:example} below, the left-hand side of the inequality here in (ii)
is {\em not} a decreasing function of $t$ in general.
In fact, $|T_t(y_1)-T_t(y_2)|$ can vanish for some time, then later become positive.
\end{remark}

\begin{remark} Under the conditions of Proposition~\ref{p:Tbasic},
    the Hessian $\nabla^2\ptss=\nabla T_t$ exists a.e.~in the classical sense,
    by Rademacher's theorem. 
\end{remark}

\begin{proof}[Proof of Prop.~\ref{p:Tbasic}] 
That $\ptss$ is $C^1$ was proved in Theorem~\ref{t:limTeps} using  Proposition~\ref{p:fprop}(ii).
To prove (i), let $x\in\R^d$. 
Then because $\psi_t$ grows quadratically at $\infty$,
$x\cdot y-\psi_t(y)$ is maximized at some $y\in\D\pts(x)$,
and then necessarily
\[ x\in\D\ptss(y)=\{\nabla\ptss(y)\}.\]
This shows $T_t$ is surjective,  and $T_t\circ\D\pts$ is the identity map on $\R^d$. 
We can conclude $T_t\inv=\D w_t$ since $x\in\D\ptss(y)$ if and only if $y\in\D\pts(x)$, 
by \cite[Thm.~23.5]{Rockafellar}.

For part (ii), for $j=1,2$ let $x_j=T_t(y_j)$, so that $y_j\in\D w_t(x_j)$. 
Recall from \eqref{e:dwtgt} that $\D w_t(x)=\beta_t x+\D g_t(x)$ 
where $\beta_t=(1+\lambda t)\inv$ and $g_t$ is convex. 
Then $z_j:=y_j-\beta_t x_j\in \D g_t(x_j)$. 
The monotonicity of $\D g_t$ then implies $(x_1-x_2)\mdot(z_1-z_2)\ge0$, 
whence it follows 
\[
(x_1-x_2)\mdot(y_1-y_2) \ge \frac{|x_1-x_2|^2}{1+\lambda t}  .
\]
We can then conclude by the Cauchy-Schwarz inequality.
\end{proof}

\subsection{Domain of dependence}\label{ss:domain}

Recall $\psi_t(y)=\frac12|y|^2+t\vp(y)$, and that 
$\mam_t(B)=|\D\pts(B)|$ for every Borel set $B\subset\R^d$.
Intuitively, if the subgradient $\D\psi_t^*(x)$ has nonempty interior,
it should contain all the mass that concentrates at $x$ at time $t$.
If $t$ is small, this mass should come from a set $T_t\inv(x)$ of diameter $O(t)$, limited
by finite propagation speed. Moreover, the values of $\psi_t^*$ and 
its subgradient at $x$ should only depend on the values of $\vp$ in the same region.
In this direction we have the following result. 
Recall $B(x,r)$ denotes the open ball with center $x$ and radius $r$.
(We remark that in this subsection and the next we do {\em not} assume $\vp$ is semi-concave, 
except at the end, for Proposition~\ref{p:Sigt123}.)

\begin{prop}[Finite propagation speed and domain of dependence]\label{p:speed}
Fix $t>0$, $x\in\R^d$.
\begin{itemize}
    \item[(i)] Suppose $\vp:\R^d\to\R$ is $K$-Lipschitz.
Then $\D\pts(x)\subset\overline{B(x,Kt)}$. 
\item[(ii)] Suppose in addition that $\hat\vp:\R^d\to\R$ is $K$-Lipschitz, and 
define $\hat\psi_t(y)=\frac12|y|^2+t\hat \vp(y)$.
If  $\hat\vp=\vp$ in $B(x,Kt)$, then  $\hat\psi_t^*(x)=\pts(x)$ and  
$\D\hat\psi_t^*(x)=\D\pts(x)$.
\end{itemize}
\end{prop}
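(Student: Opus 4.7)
My plan for (i) is to exploit the representation $w_t(x)=\tfrac12|x|^2-tu_t(x)$ together with the fact (from Lemma~\ref{p:lim_wueps}) that $u_t$ is $K$-Lipschitz whenever $\vp$ is, and to recall that $\D\pts(x)=\D w_t(x)$. Given $y\in\D w_t(x)$, I would write out the subgradient inequality $w_t(x+h)-w_t(x)\ge y\mdot h$, which after substituting the formula for $w_t$ becomes $(x-y)\mdot h+\tfrac12|h|^2\ge t(u_t(x+h)-u_t(x))\ge -tK|h|$. Taking $h=-\alpha(x-y)/|x-y|$ with $\alpha>0$ small (assuming $y\ne x$) reduces this to $-\alpha|x-y|+\tfrac12\alpha^2\ge -tK\alpha$, whence $|x-y|\le Kt+\tfrac12\alpha$, and letting $\alpha\to 0^+$ yields $|x-y|\le Kt$.

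For (ii), I would first note that since $\vp$ and $\hat\vp$ are Lipschitz, hence continuous, their agreement on the open ball $B(x,Kt)$ extends to the closed ball $\overline{B(x,Kt)}$, so $\psi_t=\hat\psi_t$ on this closed ball. Because $\vp$ is Lipschitz, $\psi_t$ is coercive with quadratic growth, so the supremum in $\pts(x)=\sup_y(x\mdot y-\psi_t(y))$ is attained; by (i), the argmax set $M(x):=\{y:x\mdot y-\psi_t(y)=\pts(x)\}$ lies in $\overline{B(x,Kt)}$. The same applies to $\hat\psi_t$, and since the two functionals coincide on that closed ball, $M(x)=\hat M(x)$ and $\pts(x)=\hat\pts(x)$.

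For the subgradient equality, I would take $y\in\D\pts(x)$ and combine the Young equality $\ptss(y)+\pts(x)=x\mdot y$ with the Carath\'eodory representation of the convex envelope: there exist $\xi_1,\dots,\xi_N$ and weights $\lambda_i\ge 0$ summing to one with $y=\sum_i\lambda_i\xi_i$ and $\ptss(y)=\sum_i\lambda_i\psi_t(\xi_i)$ (this infimum is attained because $\psi_t$ is lsc and coercive). Substituting into Young gives $\sum_i\lambda_i(x\mdot\xi_i-\psi_t(\xi_i))=\pts(x)$; since each term is bounded by $\pts(x)$, every $\xi_i$ with $\lambda_i>0$ must lie in $M(x)\subset\overline{B(x,Kt)}$, where $\psi_t(\xi_i)=\hat\psi_t(\xi_i)$. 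Then for any $z\in\R^d$,
\begin{equation*}
\hat\pts(z)\ge\max_i\bigl(z\mdot\xi_i-\hat\psi_t(\xi_i)\bigr)\ge\sum_i\lambda_i\bigl(z\mdot\xi_i-\psi_t(\xi_i)\bigr)=z\mdot y-\ptss(y)=\hat\pts(x)+(z-x)\mdot y,
\end{equation*}
so $y\in\D\hat\pts(x)$; swapping the roles of $\vp$ and $\hat\vp$ gives the reverse inclusion.

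The main obstacle is precisely this last step: the pointwise identity $\pts(x)=\hat\pts(x)$ does not by itself force the subgradients to agree, because $\pts$ and $\hat\pts$ may well differ away from $x$, and the subgradient condition involves values on all of $\R^d$. The Young--Carath\'eodory representation is what converts this global condition into a local one, realizing each $y\in\D\pts(x)$ as a convex combination of maximizers $\xi_i\in M(x)$ confined to the ball $\overline{B(x,Kt)}$ where $\psi_t$ and $\hat\psi_t$ coincide.
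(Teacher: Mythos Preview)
Your argument is correct. For part (i) it is genuinely simpler than the paper's route: the paper first shows only that the subgradient points lying in the touching set $\Theta_t=\{y:\psi_t(y)=\ptss(y)\}$ satisfy $|y-x|\le Kt$ (Lemma~\ref{lem:localspeed1}(i)), and then separately proves, via Rockafellar's face theorem, that every $y\in\D\pts(x)$ is a convex combination of points in $\D\pts(x)\cap\Theta_t$ (Lemma~\ref{lem:convexcomb}). Your direct approach through the $K$-Lipschitz bound on $u_t$ and the subgradient inequality for $w_t=\tfrac12|x|^2-tu_t$ bypasses both lemmas entirely. The price is only that the paper's Lemma~\ref{lem:convexcomb} is reused later (in the proof of Proposition~\ref{p:1ltconcave}), so it is not wasted effort there; but for Proposition~\ref{p:speed} itself your argument is cleaner.

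For part (ii), the two approaches run essentially in parallel: both identify each $y\in\D\pts(x)$ as a convex combination of points $\xi_i\in M(x)=\D\pts(x)\cap\Theta_t\subset\overline{B(x,Kt)}$, where $\psi_t$ and $\hat\psi_t$ agree, and use this to transfer the subgradient inequality to $\hat\pts$. The paper obtains this representation from the face theorem applied to the epigraph of $\ptss$; you obtain it from the Carath\'eodory formula for $\ptss(y)=\min\sum\lambda_i\psi_t(\xi_i)$ combined with Young's identity. Your final chain of inequalities is somewhat more explicit than the paper's, which only establishes $\D\hat\pts(x)\cap\hat\Theta_t=\D\pts(x)\cap\Theta_t$ in Lemma~\ref{lem:localspeed1}(ii) and then appeals to Lemma~\ref{lem:convexcomb} again to pass to the full subgradients. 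One implicit step you are using is that every maximizer $y_0\in M(x)$ actually lies in $\D\pts(x)$ (so that part (i) applies to it); this follows because $\pts(x)=x\cdot y_0-\psi_t(y_0)\le x\cdot y_0-\ptss(y_0)\le\pts(x)$ forces $\psi_t(y_0)=\ptss(y_0)$ and Young's equality.
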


First we prove a lemma involving the ``touching sets'' defined for $t>0$ by 
\begin{align} \label{d:Ths}
\Theta_t &=  \{y\in\R^d:  \psi_t(y)=\ptss(y)\}\,,
\end{align}
and similarly define $\hat\Theta_t$ for $\hat\psi_t$.

\begin{lemma}\label{lem:localspeed1}
Under the assumptions of Proposition~\ref{p:speed}, 
for all $x\in\R^d$ and $t>0$ we have
\begin{itemize}
    \item[(i)] $\D\pts(x)\cap \Theta_t\subset \overline{B(x,Kt)}$.
\item[(ii)]  $\hat\psi^*_t(x)=\pts(x)$ \ \ and \ \ %
$\D\hat\psi_t^*(x)\cap\hat\Theta_t=\D\pts(x)\cap\Theta_t$.
\end{itemize} 
\end{lemma}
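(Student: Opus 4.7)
The plan is to exploit a standard duality characterization: an element $y\in\R^d$ lies in $\D\pts(x)\cap\Theta_t$ if and only if $y$ is a maximizer of $z\mapsto x\cdot z-\psi_t(z)$. I would first verify this equivalence. If $y$ is a maximizer, then $\pts(x)+\psi_t(y)=x\cdot y$; combining this with Young's inequality $\pts(x)+\ptss(y)\ge x\cdot y$ and the universal bound $\ptss\le\psi_t$ forces $\ptss(y)=\psi_t(y)$, whence $y\in\Theta_t$, and the Fenchel equality for the Legendre pair $(\pts,\ptss)$ yields $y\in\D\pts(x)$. The reverse direction runs the same implications backward. Since $\vp$ is $K$-Lipschitz, $\psi_t$ is coercive, so the sup defining $\pts(x)$ is attained and the argmax set is nonempty.

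For part (i), I would take $y\in\D\pts(x)\cap\Theta_t$ and use its characterization as a maximizer to obtain the subgradient-type inequality $\psi_t(z)\ge\psi_t(y)+x\cdot(z-y)$ for all $z\in\R^d$. Substituting $\psi_t=\tfrac12|\cdot|^2+t\vp$, testing with $z=y+hv$ along a unit direction $v$, invoking the $K$-Lipschitz bound on $\vp$, and letting $h\downarrow 0$ yields $(x-y)\cdot v\le Kt$. Sweeping $v$ over the unit sphere gives $|x-y|\le Kt$, i.e., $y\in\overline{B(x,Kt)}$.

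For part (ii), I would localize the Legendre transform using (i). Since every maximizer of $z\mapsto x\cdot z-\psi_t(z)$ lies in $\overline{B(x,Kt)}$ and the sup is attained,
\[
\pts(x)=\sup_{z\in\overline{B(x,Kt)}}\bigl(x\cdot z-\psi_t(z)\bigr),
\]
and the analogous identity holds for $\hat\psi_t^*(x)$. Continuity of $\vp$ and $\hat\vp$ extends their agreement from $B(x,Kt)$ to $\overline{B(x,Kt)}$, so $\psi_t=\hat\psi_t$ on this closed ball. The two localized sups then coincide and share the same set of maximizers. Applying the equivalence from the first paragraph to both $\psi_t$ and $\hat\psi_t$ delivers $\hat\psi_t^*(x)=\pts(x)$ and $\D\hat\psi_t^*(x)\cap\hat\Theta_t=\D\pts(x)\cap\Theta_t$.

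The main obstacle is only notational: one must be careful that membership in $\D\pts(x)\cap\Theta_t$ is genuinely equivalent to the argmax property, since the touching-set condition is precisely what upgrades the Young inequality for the Legendre pair to one involving $\psi_t$ itself. Once this equivalence is in hand, both parts reduce to a short Lipschitz computation together with the elementary localization of the sup.
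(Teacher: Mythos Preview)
Your proposal is correct and follows essentially the same approach as the paper: both hinge on the equivalence between membership in $\D\pts(x)\cap\Theta_t$ and being a maximizer of $z\mapsto x\cdot z-\psi_t(z)$ (the paper derives this implicitly and records it in Remark~\ref{r:HLoptimizers}), then use a Lipschitz perturbation argument for~(i). For~(ii) your localization of the sup to $\overline{B(x,Kt)}$ is a slightly cleaner packaging of the paper's argument, which instead runs the Young-inequality chain $\hat\psi_t^*(x)\ge x\cdot y_0-\hat\psi_t(y_0)=\pts(x)$ and then swaps $\vp\leftrightarrow\hat\vp$; both routes use the same ingredients.
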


\begin{remark}\label{r:HLoptimizers}
The set $\D\pts(x)\cap\Theta_t$ is the set of optimizers $y$ in the 
Hopf-Lax formula~\ref{e:HLOmega1}. The proof is nearly identical to that
of Lemma~6.6 of \cite{liu2025rigidly}.  
But we will not make any use of this fact in this paper. 
\end{remark}

\begin{proof}
(i) Let $y_0\in \D\pts(x)\cap\Theta_t$, so that 
$\psi_t(y_0)=\ptss(y_0)$.
From the touching and subgradient conditions we infer 
that for all $y\in \R^d$,
\[
\psi_t(y_0)-x\cdot y_0 \le \psi_t(y)-x\cdot y.
\]
Adding $\frac12|x|^2$ to both sides and recalling $\psi_t(y)=\frac12|y|^2+t\vp(y)$ we find 
\[
t\vp(y_0) +\tfrac12|y_0-x|^2
\le t\vp(y) +\tfrac12|y-x|^2
\]
Taking $y=y_0+s(x-y_0)$, for small $s>0$ we have $|y-x|=(1-s)|y_0-x|$
and
\[
\tfrac12 |y_0-x|^2 (1-(1-s)^2) \le t(\vp(y)-\vp(y_0)) \le Kts |y_0-x|,
\]
Taking $s\to0$, we infer $|y_0-x|\le Kt$ so (i) follows.

(ii) Let $x\in\R^d$. From the definition of $w_t=\pts$ and the fact that 
$\ptss=w_t^*$ is the largest convex function majorized by $\psi_t$, we have
\begin{equation} \label{e:youngpts}
\pts(x)  \ge x\cdot y  - \ptss(y)  
\ge x\cdot y  - \psi_t(y) 
\end{equation}
for all $y\in\R^d$.  Because $\psi_t(y)$ grows quadratically as $|y|\to\infty$,
both equalities hold for some $y=y_0\in\Theta_t$. 
Then $y_0\in\D\pts(x)$ also, and
indeed both equalities hold for an arbitrary $y_0\in\D\pts(x)\cap\Theta_t$.
By part (i) we infer $|y_0-x|\le Kt$.

Now, because $\vp=\hat\vp$ on $B(x,Kt)$ we have $\psi_t(y_0)=\hat\psi_t(y_0)$, 
hence similarly to \eqref{e:youngpts} we have that
\begin{align*}
\hat\psi_t^*(x) 
\ge  x\cdot y_0  - \hat\psi_t^{**}(y_0)  
\ge  x\cdot y_0  - \hat\psi_t(y_0)  
=  \pts(x) .
\end{align*}
Interchanging $\vp$ and $\hat\vp$ we deduce
$\psi_t^*(x)=\hat\psi_t^*(x)$. It follows 
$\hat\psi_t^{**}(y_0)=\hat\psi_t(y_0)$ and
$\hat\psi_t^*(x) + \hat\psi_t^{**}(x) = x\cdot y_0$ also, 
hence $y_0\in \D\hat\pts(x)\cap\hat\Theta_t$.
This finishes the proof.
\end{proof}

Proposition~\ref{p:speed} follows directly from the next result,
showing the subgradient $\D\pts(x)$ is the convex hull of its points
that lie in the touching set $\Theta_t$.

\begin{lemma}\label{lem:convexcomb}
   Under the assumptions of Proposition~\ref{p:speed}, for all $x\in\R^d$
   and $t>0$ each point $y\in \D\pts(x)$ is a convex combination of points in 
   $\D\pts(x)\cap\Theta_t$.
\end{lemma}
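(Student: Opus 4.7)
My plan is to realize $\ptss(y)$ as an attained convex combination of values of $\psi_t$ at points of $\Theta_t$ that all lie in $\D\pts(x)$. The argument is essentially the standard fact that $\ptss$ is the convex envelope of $\psi_t$, together with the Young equality satisfied by any subgradient.

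First, using the duality $y\in\D\pts(x)\iff x\in\D\ptss(y)$ from \cite[Thm.~23.5]{Rockafellar}, I translate the problem to showing that for any $y$ with $x\in\D\ptss(y)$ we can represent $y=\sum_i \lambda_i y_i$ as a convex combination of points $y_i\in\Theta_t$ with $x\in\D\ptss(y_i)$. Next I invoke the standard convex envelope representation (via Carath\'eodory applied to $\mathrm{epi}(\ptss)=\overline{\mathrm{conv}}\,\mathrm{epi}(\psi_t)$), yielding
\[
\ptss(y) = \inf\Bigl\{\sum_{i=1}^{d+1}\lambda_i \psi_t(y_i): \sum_i\lambda_i y_i=y,\ \lambda_i\ge0,\ \sum_i\lambda_i=1\Bigr\}.
\]
Since $\vp$ is $K$-Lipschitz we have $\psi_t(z)\ge \tfrac12|z|^2-Kt|z|+t\vp(0)$, so $\psi_t$ is coercive and lower semicontinuous. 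A standard compactness argument then shows that the infimum is attained by some $(\lambda_i,y_i)$.

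The key step is to use the supporting affine function $L(z):=\ptss(y)+x\cdot(z-y)$ associated to $x\in\D\ptss(y)$. By definition of the subgradient, $L(z)\le \ptss(z)\le \psi_t(z)$ for all $z$, with $L(y)=\ptss(y)$. Then, since $L$ is affine,
\[
\sum_i\lambda_i\bigl(\psi_t(y_i)-L(y_i)\bigr) = \sum_i\lambda_i\psi_t(y_i)-L(y) = \ptss(y)-\ptss(y)=0,
\]
and each summand is nonnegative. Therefore $\psi_t(y_i)=L(y_i)=\ptss(y_i)$ for every $i$ with $\lambda_i>0$, which simultaneously tells us that $y_i\in\Theta_t$ and that $L$ also supports $\ptss$ at $y_i$, so $x\in\D\ptss(y_i)$, equivalently $y_i\in\D\pts(x)$. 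This delivers the desired convex combination representation of $y$.

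The main (minor) obstacle is verifying attainment in the convex envelope formula; I expect the quadratic coercivity and lower semicontinuity of $\psi_t$, combined with Carath\'eodory's theorem, to handle this cleanly. Everything else is a direct manipulation of the supporting hyperplane at $y$, and the conclusion that each nontrivial component $y_i$ simultaneously lies in $\Theta_t$ and in $\D\pts(x)$ comes out of a single Young-type equality chain.
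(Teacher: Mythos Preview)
Your proof is correct, and it takes a somewhat different route from the paper's. The paper identifies the set $C=\{(y,\ptss(y)):y\in\D\pts(x)\}$ as a \emph{face} of the epigraph of $\ptss$, then invokes Rockafellar's Theorem~18.3 (faces of a convex hull are generated by points of the original set) together with Carath\'eodory to conclude. You instead argue directly: first obtain an attained Carath\'eodory representation $\ptss(y)=\sum_i\lambda_i\psi_t(y_i)$ from the convex envelope formula (using coercivity of $\psi_t$), and then exploit the supporting affine function $L$ at $y$ with slope $x$ to force $\psi_t(y_i)=L(y_i)=\ptss(y_i)$ for each active $y_i$, yielding both $y_i\in\Theta_t$ and $y_i\in\D\pts(x)$ in one stroke.

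Your argument is more self-contained and avoids the face machinery; the paper's is more structural but relies on a nontrivial black box from convex analysis. The one place where you wave hands a bit is the attainment of the infimum in the envelope formula: the bound $\sum_i\lambda_i|y_i|^2\le C$ does not immediately bound each $|y_i|$ when some $\lambda_i\to0$, so a cleaner justification is to note that since $\psi_t$ is continuous and coercive, $\mathrm{conv}(\mathrm{epi}\,\psi_t)$ is already closed and hence equals $\mathrm{epi}(\ptss)$, so $(y,\ptss(y))$ is an exact (not just approximate) Carath\'eodory combination of points of $\mathrm{epi}\,\psi_t$. With that detail filled in, your proof is complete.
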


\begin{proof}
1. Let $x\in\R^d$. By the Young inequality,
$x\mdot y - \pts(x) \le \ptss(y)$
for all $z\in\R^d$, with equality if and only if $y\in \D\pts(x)$
\cite[Thm.~23.5]{Rockafellar}.
The set $\D\pts(x)$ is closed and convex. 

2. Hence,
 the set $C=\{ (y,\ptss(y)): y\in \D\pts(x)\}$ in the graph of $\ptss$ constitutes a {\em face}
of the epigraph of $\ptss$. (See \cite[p.~162]{Rockafellar}.)  This epigraph is the convex hull
of the epigraph of $\psi_t$.  According to \cite[Thm.~18.3]{Rockafellar}, $C$ is the convex hull
of a set $C'\subset C$ such that $C'$ lies in the (epi)graph of $\psi_t$. Moreover, all extreme points
of $C$ lie in $C'$ by \cite[Corollary 18.3.1]{Rockafellar}. 

3. Let $(y_0,\psi_t(y_0))\in C'\subset C$. Then $y_0\in\Theta_t$.
Since every point of $C$ is a convex combination of points in $C'$
by Caratheodory's theorem on convex sets, 
we infer every point $y\in\D\pts(x)$ is a convex 
combination of points in $\Theta_t\cap\D\pts(x)$, hence the result.
\end{proof}

\subsection{Convexified transport maps redux}

In this section,
we extend  our analysis of the transport maps $y\mapsto T_t(y)$ 
by establishing backward propagation properties of 
the touching sets $\Theta_t$ defined in \eqref{d:Ths} 
and ``points of strict convexity'' for $\ptss$.
Some general properties of convexification,
strict convexity, and semi-concavity are developed in Appendix~\ref{a:convex}.
Below, we continue to assume $\vp$ is $K$-Lipschitz,
adding the hypothesis that $\vp$ is semi-concave only for the last result
of this subsection.

Given a convex function $g$ on $\R^d$ we say $g$ is {\em strictly convex at} $y$ 
if there exists $x\in\D g(y)$ such that $z\mapsto g(z)-x\mdot z$ has a strict minimum at $y$.
For each $t>0$ we define the set
\begin{equation}\label{d:Sigmat}
    \Sigma_t = \{y\in\R^d: \ptss \text{ is strictly convex at $y$}\},
\end{equation}
and recall the definition of the touching sets
\begin{equation}\label{d:Thetat}
    \Theta_t = \{y\in\R^d: \ptss(y)=\psi_t(y)\}.
\end{equation}
These sets ``propagate backwards'' in time:

\begin{prop}\label{p:back1} Assume $\vp$ is Lipschitz. Then:
    \begin{itemize}
        \item[(i)]  For each $t>0$, $\Sigma_t\subset\Theta_t$.
        \item[(ii)] $\Theta_s\supset \Theta_t$ whenever $0<s\le t$.
        \item[(iii)] $\Sigma_s\supset \Sigma_t$ whenever $0<s\le t$.
    \end{itemize}
\end{prop}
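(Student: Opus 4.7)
The plan for (i) is to show that strict convexity of $\ptss$ at $y$ forces the dual subgradient set $\D\pts(x)$ to collapse to $\{y\}$ for a particular slope $x$, and then to invoke Lemma~\ref{lem:convexcomb} to conclude $y\in\Theta_t$. Strict convexity at $y$ provides $x\in\D\ptss(y)$ for which $\ptss(\cdot)-x\cdot(\cdot)$ is strictly minimized at $y$. By the Young identity, every $y'\in\D\pts(x)$ satisfies $\ptss(y')-x\cdot y'=-\pts(x)$, hence also minimizes the same function, so $\D\pts(x)=\{y\}$. Lemma~\ref{lem:convexcomb} then expresses $y$ as a convex combination of points in $\D\pts(x)\cap\Theta_t=\{y\}\cap\Theta_t$, which is only possible if $y\in\Theta_t$.

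For parts (ii) and (iii), my approach is to exploit the convex-interpolation identity
\[
\psi_s=\bigl(1-\tfrac{s}{t}\bigr)\psi_0+\tfrac{s}{t}\,\psi_t,\qquad \psi_0(y):=\tfrac12|y|^2,
\]
valid for $0<s\le t$. Because $\psi_0$ is already convex, the function
\[
h_s:=\bigl(1-\tfrac{s}{t}\bigr)\psi_0+\tfrac{s}{t}\,\ptss
\]
is convex and satisfies $h_s\le\psi_s$; since $\psi_s$ is continuous and coercive (from the Lipschitz bound on $\vp$), its biconjugate $\psss$ is the largest convex minorant of $\psi_s$, giving $h_s\le\psss\le\psi_s$. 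If $y\in\Theta_t$ then $h_s(y)=\psi_s(y)$, which sandwiches $\psss(y)=\psi_s(y)$ and proves (ii).

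For (iii), I take $s<t$ (the case $s=t$ is trivial) and reuse the same decomposition, now tracking strictness. The function $\psi_0$ is strictly convex everywhere with gradient $y$ at the point $y$, while strict convexity of $\ptss$ at $y$ provides a slope $q\in\D\ptss(y)$ with the strict supporting-plane property $\ptss(z)>\ptss(y)+q\cdot(z-y)$ for $z\ne y$. Taking the positive convex combination of the two strict supporting inequalities yields strict convexity of $h_s$ at $y$ with slope $p':=(1-s/t)y+(s/t)q$. Combining this with $\psss\ge h_s$ and the equality $\psss(y)=h_s(y)$ from (ii) transfers the strict supporting plane to $\psss$, so $y\in\Sigma_s$.

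The main obstacle is really (i), which rests essentially on Lemma~\ref{lem:convexcomb}, i.e., on the identification of $\D\pts(x)$ with a face of the epigraph of $\ptss$ whose extreme points lie in the graph of $\psi_t$. Parts (ii) and (iii) then reduce to routine verifications around the convex-interpolation identity; the only care required is to confirm that $\psss$ is indeed the largest convex minorant of $\psi_s$, so that $h_s\le\psss$.
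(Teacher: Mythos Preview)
Your proof is correct. Parts (ii) and (iii) are essentially the paper's argument, though framed a bit more cleanly: where the paper takes an affine support for $\ptss$ at $y$ and for $\psi_0$ at $y$ and combines them, you work directly with the convex minorant $h_s=(1-s/t)\psi_0+(s/t)\ptss$ and invoke the biconjugate characterization $h_s\le\psss\le\psi_s$. The content is identical; your packaging is arguably tidier.

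Part (i) is where you genuinely diverge. The paper simply cites Proposition~\ref{p:fprop}(i), a general fact valid for any continuous coercive $f$: if $\fss$ is strictly convex at $y$ then $\fss(y)=f(y)$, proved by contradiction via a bump construction. You instead argue that strict convexity of $\ptss$ at $y$ forces $\D\pts(x)=\{y\}$ for the witnessing slope $x$, then invoke Lemma~\ref{lem:convexcomb} to conclude $\{y\}\cap\Theta_t$ is nonempty. This is a valid and nice alternative route, but note what each buys: the paper's approach is self-contained convex analysis and applies to arbitrary coercive functions, while yours leans on the face-of-epigraph machinery of Lemma~\ref{lem:convexcomb}, which is specific to this setting (and in the paper's logical order is a somewhat heavier tool developed for the domain-of-dependence result). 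Both are sound; the paper's is more portable.
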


Here and below, we will use the notation 
\[
f(z)\le_{z=y} g(z)
\]
to indicate that $f(z)\le g(z)$ for all $z$ but equality holds when $z=y$.

\begin{proof}
    (i) This follows from Proposition~\ref{p:fprop}(i) with $f=\psi_t$.

    (ii)
    Fix $0<s<t$ and let $y\in\Theta_t$. Then there exists an affine function
supporting $\ptss$ at $y$, meaning for some vector $v_t\in\R^d$ and some $h_t\in\R$,
\[
v_t\mdot z + h_t \le_{z=y} \ptss(z) 
\le_{z=y} \psi_t(z) = \tfrac12|z|^2+t\vp(z).
\]
Since the function $\psi_0(z)=\frac12|z|^2$ is convex, for some $v_0$ and $h_0$ we have
\[
v_0\mdot z + h_0 \le_{z=y} \psi_0(z).
\]
Taking a convex combination of these inequalities, we find an affine function 
supporting $\psi_s^{**}$ at $y$, with 
\[
v_s\mdot z + h_s
\le_{z=y}   \psi_0(z) + s\vp(z) = \psi_s(z),
\]
where $tv_s = s v_t + (t-s)v_0$ and $th_s = s h_t+(t-s) h_0$.
The left-hand side is affine in $z$ so must also agree at $y$ with the convexification
$(\psi_0+s\vp)^{**} = \psi_s^{**}$.
Hence $y\in\Theta_s$, proving (ii).

(iii) Fix $0<s<t$ and let $y\in\Sigma_t$ be a point of strict convexity for $\ptss$. 
\[
\psi_s(z) = \frac12|z|^2 + s\vp(z) = 
\frac{s}{t}\psi_t(z) + \frac{t-s}{2t}|z|^2
\ge_{z=y}
\frac{s}{t}\ptss(z) + \frac{t-s}{2t}|z|^2\,.
\]
Because the right-hand side is convex, and $y\in\Theta_s$ by parts (i) and (ii), it follows
\begin{equation}\label{e:psss_ge}
\psi_s^{**}(z)\ge_{z=y} 
\frac{s}{t}\ptss(z) + \frac{t-s}{2t}|z|^2\,.
\end{equation}
The right-hand side is clearly strictly convex at $y$, so $y\in\Sigma_s$.
\end{proof}

Next we establish several properties of the transport maps $T_t=\nabla\ptss$
that involve the touching sets and points of strict convexity.

\begin{prop}\label{p:back2}
Assume $\vp$ is $K$-Lipschitz. 
Then for each $t>0$, 
\begin{itemize}
    \item[(i)]  
    $|T_t(y)-y|\le Kt$ for all $y\in\R^d$ where $T_t(y)=\nabla\ptss(y)$ exists.
    \item[(ii)] 
For each $y\in\Theta_t$, 
if $\nabla\vp(y)$ exists then 
\ $ T_t(y)=\nabla\psi_t(y) = y+t\nabla\vp(y). $
\item[(iii)]
For each $y\in \Sigma_t$,  if $\nabla\vp(y)$ exists then $\nabla w_t(x)$ exists 
at $x=T_t(y)$ and we have  
\[
y=\nabla w_t(x) 
\quad\text{and}\quad
\nabla u_t(x)=\nabla\vp(y).
\]
\item[(iv)] 
For each $y\in \Sigma_t$,  if $\nabla\vp(y)$ exists then for $x=T_t(y)$,
the pre-image $T_t\inv(x)=\{y\}$, a singleton.
\end{itemize}
\end{prop}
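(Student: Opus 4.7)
The plan is to treat the four parts in order, exploiting the convex duality between $w_t=\pts$ and $\ptss$, the inclusion $\Sigma_t\subset\Theta_t$ established in Proposition~\ref{p:back1}(i), and the finite-propagation-speed estimate from Proposition~\ref{p:speed}(i). Throughout I use that $\ptss$ is $C^1$ everywhere by Proposition~\ref{p:Tbasic}, so $\D\ptss(y)=\{T_t(y)\}$ for every $y$.

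For (i), given $y$ and $x=T_t(y)=\nabla\ptss(y)$, membership $T_t(y)\in\D\ptss(y)$ is equivalent by \cite[Thm.~23.5]{Rockafellar} to $y\in\D\pts(x)$; Proposition~\ref{p:speed}(i) then gives $|y-x|\le Kt$, which is the claim. For (ii), fix $y\in\Theta_t$, so $\ptss(y)=\psi_t(y)$. Since $\ptss\le\psi_t$ globally with equality at $y$, any affine support of $\ptss$ at $y$ is also an affine support of $\psi_t$ at $y$; taking the support with slope $T_t(y)$ yields $T_t(y)\in\D\psi_t(y)$. If $\nabla\vp(y)$ exists, $\psi_t$ is differentiable at $y$ and $\D\psi_t(y)=\{y+t\nabla\vp(y)\}$, which forces the stated identity.

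For (iii), the hypothesis $y\in\Sigma_t$ supplies some $x\in\D\ptss(y)$ for which $z\mapsto\ptss(z)-x\cdot z$ attains a strict minimum at $y$; since $\D\ptss(y)=\{T_t(y)\}$, this $x$ must equal $T_t(y)$. Now $w_t=\pts$ is the Legendre dual of $\ptss$, so $y'\in\D w_t(x)$ if and only if $x\in\D\ptss(y')$, equivalently $y'$ is a minimizer of $z\mapsto\ptss(z)-x\cdot z$. The strict-minimum property therefore forces $\D w_t(x)=\{y\}$, and by \cite[Thm.~25.1]{Rockafellar} $w_t$ is differentiable at $x$ with $\nabla w_t(x)=y$. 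Differentiating $w_t(x)=\tfrac12|x|^2-tu_t(x)$ at $x$ gives $\nabla u_t(x)=(x-y)/t$, and combining this with (ii) (applicable since $\Sigma_t\subset\Theta_t$) yields $x-y=t\nabla\vp(y)$, hence $\nabla u_t(x)=\nabla\vp(y)$. Part (iv) is then immediate from Proposition~\ref{p:Tbasic}(i): $T_t\inv(x)=\D w_t(x)=\{y\}$.

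The crux of the proposition is the Legendre-duality step in (iii): translating strict convexity of $\ptss$ at $y$ into the singleton property of $\D w_t(x)$ is what unlocks both the differentiability of $w_t$ at $x$ and the uniqueness of pre-image in (iv). The remaining parts are essentially bookkeeping with the support-plane characterization of subgradients together with the already-established inclusion $\Sigma_t\subset\Theta_t$ and finite-propagation estimate.
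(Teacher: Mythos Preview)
Your proof is essentially correct and follows the same approach as the paper's.  One caveat: you invoke Proposition~\ref{p:Tbasic} to assert that $\ptss$ is $C^1$ globally, but that proposition requires $\vp$ to be $\lambda$-concave, a hypothesis not present in Proposition~\ref{p:back2}.  Fortunately the global $C^1$ claim is unnecessary: your own support-plane argument in part (ii) already shows that $\D\ptss(y)$ is a singleton whenever $y\in\Theta_t$ and $\nabla\vp(y)$ exists, and (via $\Sigma_t\subset\Theta_t$) this is all you actually use in (iii) and (iv).  The paper's proof makes exactly this move, deriving differentiability of $\ptss$ at $y$ from part (ii) rather than from a global regularity statement; similarly, for (iv) it uses only the inclusion $T_t^{-1}(x)\subset\D w_t(x)$ (immediate from \cite[Thm.~23.5]{Rockafellar}) rather than the full equality $T_t^{-1}=\D w_t$ from Proposition~\ref{p:Tbasic}(i).
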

\begin{remark} In parts (ii) and (iii) of this proposition, the stated properties 
``propagate backwards'' to hold for the same $y$ for all $s\in(0,t]$ by Proposition~\ref{p:back1}.
\end{remark}
\begin{remark}\label{r:diff}
    If we assume $\vp$ is $\lambda$-concave with $\lambda\ge0$, then $\vp$ is differentiable
    at every point $\Theta_t$, by Proposition~\ref{p:fprop}(iii), so the differentiability hypothesis
    in parts (ii) and (iii) holds automatically.
\end{remark}

\begin{proof} 
To prove (i), let $y\in\R^d$ and suppose $x=T_t(y)=\nabla\ptss(y)$ exists.
Then
$y\in\D\pts(x)$, hence $y\in \overline{B(x,Kt)}$ 
by Proposition~\ref{p:speed}.

For part (ii), let $y\in\Theta_t$ and $x\in\D\ptss(y)$. Then 
for all $z$ we have
\[
\psi_t(z)\ge \ptss(z)\ge \psi_t(y) + x\mdot(z-y).
\]
Because $\psi_t$ is differentiable at $y$, necessarily $\ptss$ is also and  
\[
x=\nabla\psi_t(y)=y+t\nabla\vp(y)=T_t(y).
\]
For part (iii), let $y\in \Sigma_t$ and recall $\Sigma_t\subset\Theta_t$.
By part (ii), $x=T_t(y)=\nabla\ptss(y)$ exists and 
$y\in\D\pts(x)$. By the Young identity 
\[
\pts(x) = x\mdot y - \ptss(y) = \sup_z \bigl(x\mdot z -\ptss(z)\bigr),
\]
and the maximum is achieved only at $y$ by strict convexity. 
Then it follows $\D\pts(x)$ is the singleton $\{y\}$.
By \cite[Thm.~25.1]{Rockafellar} we infer that
$w_t=\pts$ is differentiable at the point $x=\nabla\ptss(y)=y+t\nabla\vp(y)$, 
and using \eqref{e:uw_dual} we have
\[
y= \nabla w_t(x) = x-t\nabla u_t(x) = x - t\nabla\vp(y).
\]
This proves (iii). 

For part (iv), assume $x=T_t(y)$ for some $y\in\Sigma_t$, and suppose 
$x=T_t(\hat y)=\nabla\ptss(\hat y)$ for some $\hat y\in\R^d$. 
Then $\hat y\in\D w_t(x)$ so $\hat y=y$ by part (iii).
\end{proof}

As a final result in this section, assuming semi-concavity we establish
an analog of Lemma~\ref{lem:calSt} for sets transported by the maps $T_t$.

\begin{prop}\label{p:Sigt123}
    Assume $\vp$ is $K$-Lipschitz and $\lambda$-concave with $\lambda\ge0$.
    Let $t>0$ and let
    \begin{align*}
    \Sigma_t^{\rm in} &= \{ y\in\R^d: \nabla T_t(y) \text{\ exists and is invertible}\},
    \\
    \Sigma_t^{\rm sg} &= \{ y\in\R^d: \nabla T_t(y) \text{\ exists and is singular}\},
    \\
    \Sigma_t^{\rm nd} &= \{ y\in\R^d: \nabla T_t(y) \text{\ does not exist}\}.
    \end{align*}
    Then 
    \begin{equation}\label{e:Sig0}
    |\Sigma_t^{\rm nd}|=0, \quad
    |T_t(\Sigma_t^{\rm nd})|=0, \quad\text{and}\quad
    |T_t(\Sigma_t^{\rm sg})|=0. 
    \end{equation}
    Also,  $\Sigma_t^{\rm in}\subset \Sigma_t$, and 
   $T_t(\Sigma_t^{\rm in})$ and $T_t(\Sigma_t)$ have full Lebesgue measure in $\R^d$.
\end{prop}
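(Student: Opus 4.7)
The plan is to handle the three identities in~\eqref{e:Sig0} first, using tools already developed in the paper, then establish the inclusion $\Sigma_t^{\rm in}\subset\Sigma_t$ by a direct convexity argument, and finally deduce the full-measure claims by a simple surjectivity/partition argument. The main obstacle, if there is one, is justifying $\Sigma_t^{\rm in}\subset\Sigma_t$ carefully: one has only the existence of $\nabla T_t(y)$ at a single point $y$ (not a neighborhood), so the inverse function theorem is not available.

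First, the three null-set statements. Since $T_t=\nabla\ptss$ is Lipschitz by Proposition~\ref{p:Tbasic}, Rademacher's theorem gives $|\Sigma_t^{\rm nd}|=0$. Then $|T_t(\Sigma_t^{\rm nd})|=0$ because Lipschitz maps send Lebesgue-null sets to Lebesgue-null sets. For $|T_t(\Sigma_t^{\rm sg})|=0$, I would apply the area formula result~\cite[Lem.~2.73]{AmbrosioFuscoPallara}, exactly as in the proof of Lemma~\ref{lem:calSt}, since $\det\nabla T_t(y)=0$ on $\Sigma_t^{\rm sg}$ by definition.

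Next, the inclusion $\Sigma_t^{\rm in}\subset\Sigma_t$. Fix $y\in\Sigma_t^{\rm in}$ and set $x=\nabla\ptss(y)=T_t(y)$ (which exists since $\ptss$ is $C^1$). The convex function $h(z):=\ptss(z)-x\cdot z$ satisfies $h(z)\ge h(y)$ for all $z$. Suppose toward contradiction that $h(z_0)=h(y)$ for some $z_0\neq y$. Then by convexity $h$ is constant on the segment $[y,z_0]$, so $\nabla\ptss\equiv\nabla\ptss(y)$ on this segment; equivalently, $T_t(y+s(z_0-y))=T_t(y)$ for all $s\in[0,1]$. But the existence of an invertible derivative $\nabla T_t(y)$ forces
\[
T_t(y+s(z_0-y)) - T_t(y) = s\,\nabla T_t(y)(z_0-y) + o(s),
\]
with $\nabla T_t(y)(z_0-y)\neq 0$, contradicting the preceding display for small $s>0$. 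Hence $h$ has a strict minimum at $y$, which is precisely the condition for $y\in\Sigma_t$.

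Finally, the full-measure claims follow from the partition $\R^d=\Sigma_t^{\rm in}\cup\Sigma_t^{\rm sg}\cup\Sigma_t^{\rm nd}$ and the surjectivity of $T_t$ from Proposition~\ref{p:Tbasic}(i), which gives
\[
\R^d \;=\; T_t(\Sigma_t^{\rm in})\,\cup\,T_t(\Sigma_t^{\rm sg})\,\cup\,T_t(\Sigma_t^{\rm nd}).
\]
The last two sets are Lebesgue-null by~\eqref{e:Sig0}, so $T_t(\Sigma_t^{\rm in})$ has full measure; since $\Sigma_t^{\rm in}\subset\Sigma_t$, so does $T_t(\Sigma_t)$.
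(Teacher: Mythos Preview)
Your proof is correct and follows essentially the same route as the paper: the null-set claims via Rademacher and the area formula (as in Lemma~\ref{lem:calSt}), the full-measure claims via surjectivity and the partition of~$\R^d$, and the inclusion $\Sigma_t^{\rm in}\subset\Sigma_t$ via convexity of $\ptss$. The only difference is that where the paper tersely asserts the inclusion (noting that $\nabla T_t(y)$ is a symmetric matrix), you spell out the contrapositive argument in detail---which is arguably cleaner, since your version does not need to invoke symmetry at all.
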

\begin{proof}
    The proof of \eqref{e:Sig0} goes the same as step 1 of the proof of Lemma~\ref{lem:calSt}
    since $T_t$ is Lipschitz. (Note we do not expect $|\Sigma_t^{sg}|=0$ in general.)
    Since $T_t$ is surjective and 
    $\R^d=\Sigma_t^{\rm in}\cup \Sigma_t^{\rm sg}\cup \Sigma_t^{\rm nd}$, it follows
    the complement of $T_t(\Sigma_t^{\rm in})$ has zero Lebesgue measure.
    Finally, because $T_t$ is the gradient of the $C^1$ convex function $\ptss$, 
    whenever the symmetric matrix $\nabla T_t(y)$ exists and is invertible
    then $\ptss$ is strictly convex at $y$, so $\Sigma_t^{\rm in}\subset\Sigma_t$.
\end{proof}

\subsection{Absolutely continuous parts and \MA\ equation} 
\label{ss:consequences}

In this subsection, we show that the inverses of the convexified transport maps $T_t$ agree with
those of the Lagrangian flow maps $X_t$ at almost every (Eulerian) image point.
As a consequence, the past history of almost every Lagrangian path
is one of free streaming (ballistic motion), and 
the absolutely continuous parts of the \MA\ measure $\mam_t$ 
and the mass measure $\rho_t$ must be the same.

Moreover, we show that these parts are determined by the \MA\ equation 
\eqref{e:MA1} in which the Hessian $\nabla^2 w$ is interpreted in the sense of 
Alexandrov. We recall that Alexandrov's theorem (see  \cite[p.~242]{EvansGariepy}) 
states that if $f:\R^d\to\R$ is convex, 
then for a.e. $x\in\R^d$ a symmetric matrix $\nabla^2 f(x)$ exists 
such that 
\begin{equation}\label{e:Alex2nd}
f(x+h) = f(x)+\nabla f(x)\cdot h + \tfrac12 h\cdot \nabla^2 f(x) h + o(|h|^2)
\qquad \text{as}\quad |h|\to0.
\end{equation}

\begin{prop}\label{p:XT1}
    Assume $\vp$ is $K$-Lipschitz and $\lambda$-concave. Let $t>0$. 
    Then for each $y\in\Sigma_t$ we have 
      $X_s(y) = T_s(y) = y+ s\nabla\vp(y)$ {whenever $0\le s\le t$.}
\end{prop}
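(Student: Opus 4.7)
The plan is to show that the straight-line path $z_s := y + s\nabla\vp(y)$ is simultaneously equal to $T_s(y)$ on $[0,t]$ and a Lipschitz solution of the differential inclusion characterizing $X_s(y)$, and then invoke uniqueness from Theorem~\ref{t:UNIQ}(d).

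First I would unpack what $y \in \Sigma_t$ gives us on all of $(0,t]$. By the backward-propagation result Proposition~\ref{p:back1}(iii), we have $y \in \Sigma_s$ for every $s \in (0,t]$, and by Proposition~\ref{p:back1}(i) also $y \in \Theta_s$. Since $\vp$ is semi-concave, Remark~\ref{r:diff} guarantees $\nabla\vp(y)$ exists. Then Proposition~\ref{p:back2}(ii) identifies $T_s(y) = \nabla\psi_s(y) = y + s\nabla\vp(y) = z_s$ for all $s \in (0,t]$, proving the equality $T_s(y) = z_s$ already. Taking $s \to 0^+$ gives the boundary case $T_0(y) = y$ trivially.

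Next I would verify that $s \mapsto z_s$ solves the differential inclusion \eqref{e:DXt} with $z_0 = y$. The derivative is $\partial_s z_s = \nabla\vp(y)$, and by Proposition~\ref{p:back2}(iii) applied at each $s \in (0,t]$, $\nabla u_s$ exists at $z_s = T_s(y)$ with $\nabla u_s(z_s) = \nabla\vp(y)$. Because $u_s$ is $\lambda_s$-concave (Lemma~\ref{l:lambda}) and classically differentiable at $z_s$, the differential $\D u_s(z_s)$ in the sense of \eqref{d:Dflambda} reduces to the singleton $\{\nabla u_s(z_s)\}$. Hence $\partial_s z_s \in \D u_s(z_s)$ for every $s \in (0,t]$, so in particular for a.e.\ $s \in (0,t)$.

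To conclude, I would extend $z_s$ past $s = t$ simply by using the fact that the Gronwall-type uniqueness argument of Lemma~\ref{l:uniq} is local: it works verbatim on the interval $[0,t]$, comparing $X_s(y)$ (which satisfies the DI for a.e.\ $s>0$, hence for a.e.\ $s \in (0,t)$) with $z_s$, both of which are Lipschitz on $[0,t]$ and agree at $s=0$. The estimate
\[
\tfrac{d}{ds}\lvert X_s(y) - z_s\rvert^2 \le 2\lambda_s \lvert X_s(y) - z_s\rvert^2
\]
holds for a.e.\ $s \in (0,t)$ by Lemma~\ref{lem:Dflambda}, so Gronwall gives $X_s(y) = z_s$ on $[0,t]$, and combining with the formula for $T_s(y)$ from step one completes the proof.

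There is essentially no real obstacle here since all the machinery has been assembled: the only small subtlety is being careful that $\D u_s(z_s)$ is single-valued at the candidate path (so that the DI is in fact a classical ODE along $z_s$), and confirming that the uniqueness argument of Lemma~\ref{l:uniq}, though stated globally, localizes to the interval $[0,t]$ without modification.
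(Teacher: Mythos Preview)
Your proposal is correct and follows essentially the same route as the paper: invoke Proposition~\ref{p:back1} to get $y\in\Sigma_s\subset\Theta_s$ for $0<s\le t$, use Remark~\ref{r:diff} and Proposition~\ref{p:back2}(ii)--(iii) to identify $T_s(y)=y+s\nabla\vp(y)$ and $\nabla u_s(T_s(y))=\nabla\vp(y)$, then apply the uniqueness argument of Lemma~\ref{l:uniq} on $[0,t]$ to conclude $X_s(y)=T_s(y)$. Your added remarks that $\D u_s(z_s)$ is a singleton and that Lemma~\ref{l:uniq} localizes to $[0,t]$ are correct refinements the paper leaves implicit.
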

\begin{proof}
Let $t>0$ and $y\in\Sigma_t$. Then whenever $0<s\le t$,
$y\in\Sigma_s\subset\Theta_s$ by Proposition~\ref{p:back1},
and $\nabla\vp(y)$ exists due to Remark~\ref{r:diff}. 
Also, from Proposition~\ref{p:back2} we infer 
$T_s(y)=y+s\nabla\vp(y)$ and 
$u_s$ is differentiable at $T_s(y)$ with $\nabla u_s(T_s(y))=\nabla\vp(y)$.
    By consequence, for $0\le s\le t$  the function $s\to T_s(y)$ is Lipschitz  and satisfies 
    \[
    \D_s T_s(y) = \nabla\vp(y) = \nabla u_s(T_s(y)) \in \D u_s(T_s(y))\,, \quad T_0(y)=y\,.
    \]
    By the uniqueness argument in Lemma~\ref{l:uniq} for solutions of the differential 
    inclusion, and the uniqueness assertion in Theorem~\ref{t:UNIQ}, we conclude $T_s(y)=X_s(y)$ for all
    $s\in[0,t]$.
\end{proof}

\begin{theorem}\label{t:acMA} 
Let $t>0$. For a.e.~$x\in\R^d$ we have  $X_t\inv(x)=T_t\inv(x)=\{y\}$
and 
    \[     X_s(y) = T_s(y) = y+ s\nabla\vp(y) \quad\text{for $0\le s\le t$,} \]
where $y\in\Sigma_t$.
Moreover,
    \[
    \rhoac = \mamac\ ,
    \]
    and the density of these measures (denoted the same) satisfies the \MA\ equation
   \[
   \rhoac = \det \nabla^2 w_t \qquad\text{ Lebesgue-a.e.\ in $\R^d$,} 
   \] 
   where the Hessian of the strictly convex function $w_t=\pts$ is taken in the sense of Alexandrov.
\end{theorem}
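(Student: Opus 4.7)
The plan is to locate a single full-measure set $\calG_t\subset\R^d$ of Eulerian points on which the inverses of $X_t$ and $T_t$ coincide and are both singletons, and then to transfer this pointwise identification to the pushforward measures $\rho_t$ and $\mam_t$. I would take $\calG_t$ as the intersection of: (a) the image $T_t(\Sigma_t^{\rm in})$, of full measure by Proposition~\ref{p:Sigt123}; (b) the full-measure set $\{x : \#X_t\inv(x)=1\}$ from Proposition~\ref{p:multiplicity}; (c) the complement of the null set $\calS_t=X_t(\xsg)$ (Lemma~\ref{lem:calSt}); (d) the complement of a Lebesgue-null set carrying the singular part $\mamsg$; and (e) the full-measure set where an Alexandrov Hessian $\nabla^2 w_t(x)$ exists.

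For each $x\in\calG_t$, I would choose $y\in\Sigma_t^{\rm in}\subset\Sigma_t$ with $T_t(y)=x$. By Remark~\ref{r:diff}, $\nabla\vp(y)$ exists, and Proposition~\ref{p:XT1} gives $X_s(y)=T_s(y)=y+s\nabla\vp(y)$ for every $0\le s\le t$. In particular $X_t(y)=x$, and since $X_t\inv(x)$ is a singleton we obtain $X_t\inv(x)=\{y\}$; singleton-ness of $T_t\inv(x)=\{y\}$ follows from Proposition~\ref{p:back2}(iv). This establishes the first claim.

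For the identification of the absolutely continuous parts, I would observe that for any Borel $B\subset\calG_t$, the argument above shows every preimage under either map lies in $\Sigma_t^{\rm in}$ and $X_t$, $T_t$ agree there; thus $X_t\inv(B)=T_t\inv(B)$. Proposition~\ref{p:Tbasic}(i) gives $T_t\inv(B)=\D w_t(B)$, so
\[
\rho_t(B)=|X_t\inv(B)|=|\D w_t(B)|=\mam_t(B)
\]
by the definition~\eqref{d:MAint}. Since $\calG_t$ has full Lebesgue measure and is disjoint from the singular supports of both $\rho_t$ and $\mam_t$, Corollary~\ref{cor:rhoac} (and its transport-map analog, obtained the same way via the area formula applied to $T_t$ on $\Sigma_t^{\rm in}$, where $\det\nabla T_t>0$) gives $\rho_t\mres\calG_t=\rhoac$ and $\mam_t\mres\calG_t=\mamac$, whence $\rhoac=\mamac$.

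Finally, the \MA\ density identity $\rhoac=\det\nabla^2 w_t$ Lebesgue-a.e.\ follows by invoking the classical theorem (e.g., \cite{Figalli-MongeAmpere}) stating that for any convex function $w_t$, the absolutely continuous part of its \MA\ measure has density equal to $\det\nabla^2 w_t$ in the Alexandrov sense. The main obstacle is the measure-theoretic bookkeeping in the second step: arranging $\calG_t$ to simultaneously avoid both singular supports while retaining full measure, and establishing the transport-map analog of Corollary~\ref{cor:rhoac} that identifies $\mam_t\mres\calG_t$ with $\mamac$. The conceptual heart—the pointwise agreement of $X_t$ and $T_t$ on strict-convexity points—is delivered directly by Proposition~\ref{p:XT1}.
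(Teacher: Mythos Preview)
Your proposal is correct and follows essentially the same route as the paper: build a full-measure Eulerian set on which $X_t\inv$ and $T_t\inv$ agree as singletons via Proposition~\ref{p:XT1}, then deduce $\rho_t\mres\calG_t=\mam_t\mres\calG_t$ and identify both with the absolutely continuous parts. The paper uses $T_t(\Sigma_t)$ rather than $T_t(\Sigma_t^{\rm in})$ and omits your conditions (d) and (e), but these are immaterial. One simplification: the ``transport-map analog of Corollary~\ref{cor:rhoac}'' you flag as an obstacle is unnecessary---once $\mam_t\mres\calG_t=\rho_t\mres\calG_t=\rhoac$ is absolutely continuous and $|\calG_t^c|=0$, the restriction $\mam_t\mres\calG_t^c$ is automatically singular (being supported on a null set), so uniqueness of the Lebesgue decomposition gives $\mamac=\mam_t\mres\calG_t$ directly; the paper argues this way. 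For the density formula the paper cites McCann~\cite{McCann97} rather than Figalli, but the content is the same.
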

\begin{proof}
    1. Recall $T_t(\Sigma_t)$ has full Lebesgue measure in $\R^d$ by Proposition~\ref{p:Sigt123}.
    By invoking Corollary~\ref{cor:rhoac}, we can say that $\rhoac = \rho_t\mres\calR_t$
    where we can take the set $\calR_t$ to be 
    \[
    \calR_t =  T_t(\Sigma_t) \cap  \{x\in\R^d: \#X_t\inv(x)=1\}
    \cap  X_t(S_t^{\rm sg})^c \, ,
    \]
    since then $|\calR_t^c|=0$. 
    By Proposition~\ref{p:back2}(iv), $T_t\inv$ is single-valued on $\calR_t$, and clearly $X_t\inv$
    is also. Since for each $x\in\calR_t$ we have $x=T_t(y)$ with $y\in\Sigma_t$, 
    the results of Proposition~\ref{p:XT1} yield
    the stated conclusions regarding $X_t\inv(x)$ and $X_s(y)$ for $0\le s\le t$.

    2. Since $T_t\inv$ and $X_t\inv$ agree on $\calR_t$,
    for any Borel set $B\subset \calR_t$ we find 
    \[
    \mam_t(B)=|T_t\inv(B)| = |X_t\inv(B)|=\rho_t(B)=(\rho\mres\calR_t)(B) \,.
    \]
    This shows that $\mam_t\mres\calR_t = \rho\mres\calR_t$,  which equals $\rhoac$
    and is absolutely continuous with respect to Lebesgue measure. Since the complement
    $\calR_t^c$ has Lebesgue measure zero, it follows 
    the absolutely continuous part of $\mam_t$ is 
    $\mamac = \mam_t\mres\calR_t = \rhoac$.
    
3. According to a well-known theorem concerning the Lebesgue decomposition
of a locally finite measure \cite[Thm.~3.22]{Folland1999},
the density of the measure $\mamac$ is given a.e. by the symmetric derivative
$D\mam_t$, defined through
     \[ 
D\mam_t(x) = \lim_{r\to0} \frac{\mam_t(B(x,r))}{|B(x,r)|}, 
\]
at points where the limit exists. 
In his proof of Corollary~4.3 of \cite{McCann97},
McCann shows that for any convex function $\psi$ on $\R^d$, the absolutely
continuous part of the measure $\omega=(\nabla\psi^*)_{\sharp}\leb$ has
density $D\omega = \det\nabla^2\psi$ Lebesgue a.e.,
in terms of the Alexandrov Hessian of $\psi$.
Taking $\psi = w_t = \pts$, since $\mam_t = (\nabla \ptss)_\sharp\leb$
we obtain the claimed \MA\ equation.
\end{proof}

\subsection{Transport by collision-free modification}\label{ss:shock-free}

As promised in the introduction, here we explain how, for {\em fixed} $t>0$, 
the \MA\ measure $\mam_t=(T_t)_\sharp\leb$
can be obtained from a modified Lagrangian flow 
$(\breve X_s)_{0\le s\le t}$ whose particle paths are all straight lines
and remain {\em collision-free} for $0\le s<t$.  
These paths will correspond to a modified velocity potential $\breve u_s$ 
determined by the simple prescription that its initial data $\breve\vp$
is determined by the relation
\begin{equation}\label{d:vpbreve}
 \ptss(y)=    \tfrac12|y|^2+t\breve\vp(y), \qquad y\in\R^d.
\end{equation}
The modified velocity potential $\breve u_s$ is then given by the Hopf-Lax formula
\begin{equation}\label{e:HLmod}
\breve u_s(x) = \inf_y \frac{|x-y|^2}{2s} + \breve\vp(y)\,,
\end{equation}
which provides the viscosity solution of the initial-value problem
\begin{equation}
\D_s \breve u_s + \tfrac12|\nabla \breve u_s|^2 =0\,,\qquad \breve u_0 = \breve \vp.
\end{equation}

The key properties of the modified potentials 
are summarized as follows. Similar to the unmodified case, define
\begin{equation}\label{d:brevepsi}
        \breve\psi_s(y)=\tfrac12|y|^2+s\breve\vp(y), \quad 
        \breve w_s(x) = \breve\psi_s^*(x) = \sup_y x\mdot y-\breve\psi_s(y)\,,
\end{equation}
and note
        \[
        \breve w_s(x) = \tfrac12|x|^2 - s\breve u_s(x)\,, \quad x\in\R^d.
        \]
\begin{prop}\label{p:breve}
    Assume $\vp$ is $K$-Lipschitz and $\lambda$-concave with $\lambda\ge0$.
    Then 
    \begin{enumerate}[(i)]
        \item  $\breve\vp$ is $C^1$, $K$-Lipschitz, and $\lambda$-concave.
        \item For each $s>0$, $\breve u_s$ is $K$-Lipschitz and $\lambda_s$-concave, 
        $\lambda_s = \lambda/(1+\lambda s)$.
        Moreover $\breve w_s$ is strictly convex.
        \item For $0\le s<t$, 
       the function $\breve \psi_s$ is strictly convex, and 
       $\breve w_s$ and $\breve u_s$ are $C^1$.
        \item For all $s\ge t$, $\breve u_s = u_s$.
    \end{enumerate}
\end{prop}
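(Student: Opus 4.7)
The plan is to reduce parts (i)--(iii) to direct manipulations of the defining identity $\ptss(y)=\tfrac12|y|^2+t\breve\vp(y)$, together with results already established in earlier sections applied to $\breve\vp$ in place of $\vp$. Part (iv) is of a different character and will be the main obstacle: it requires a convexification identity that is not a simple corollary of the regularity facts used for (i)--(iii).

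For (i), I would rewrite $\breve\vp(y)=(\ptss(y)-\tfrac12|y|^2)/t$. Since $\ptss$ is $C^1$ by Proposition~\ref{p:Tbasic}, so is $\breve\vp$, with $\nabla\breve\vp(y)=(T_t(y)-y)/t$; the bound $|T_t(y)-y|\le Kt$ from Proposition~\ref{p:back2}(i) then gives $\|\nabla\breve\vp\|_\infty\le K$. For $\lambda$-concavity, Proposition~\ref{p:Tbasic}(ii) states that $T_t=\nabla\ptss$ is $(1+\lambda t)$-Lipschitz, which by the standard duality with strong convexity of $w_t$ (already recorded in \eqref{e:wt-betat}) means $\ptss$ is $(1+\lambda t)$-semi-concave; then $\breve\vp(y)-\tfrac\lambda2|y|^2=\tfrac1t\bigl(\ptss(y)-\tfrac{1+\lambda t}{2}|y|^2\bigr)$ is concave. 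With (i) in hand, part (ii) is a direct application of Lemma~\ref{l:lambda}(ii) applied to $\breve\vp$ (for $\lambda_s$-concavity of $\breve u_s$) together with the standard Hopf--Lax bound (for the $K$-Lipschitz property); the strict convexity of $\breve w_s$ then reduces to the splitting $\breve w_s(x)=\tfrac{\beta_s}{2}|x|^2+s\check u_s(x)$ with $\check u_s$ convex and $\beta_s=(1+\lambda s)^{-1}$, exactly as in \eqref{e:wt-betat}. For (iii), substituting the defining relation for $\breve\vp$ yields $\breve\psi_s(y)=\tfrac{t-s}{2t}|y|^2+\tfrac{s}{t}\ptss(y)$, which for $0\le s<t$ is a strongly convex quadratic plus the convex function $(s/t)\ptss$ and is therefore strongly convex; its Legendre transform $\breve w_s$ is then $C^1$ by the classical duality between strict convexity and differentiability of the conjugate, and $\breve u_s=(|x|^2/2-\breve w_s)/s$ inherits $C^1$ smoothness.

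The crux is (iv). Since $u_s=(|x|^2/2-w_s)/s$ with $w_s=\psi_s^*$ and the same formula holds for $\breve u_s$ with $\breve w_s=\breve\psi_s^*$, it suffices to show $\psi_s^*=\breve\psi_s^*$, or equivalently $\psi_s^{**}=\breve\psi_s^{**}$, for all $s\ge t$. The easy containment $\breve\psi_s^{**}\le\psi_s^{**}$ follows from $\breve\vp\le\vp$, itself immediate from $\ptss\le\psi_t$. For the reverse inequality I would use the algebraic identity
\[
\psi_t(y)=\alpha\,\psi_s(y)+(1-\alpha)\,\tfrac12|y|^2,\qquad \alpha:=t/s\in(0,1],
\]
verified by direct expansion from $\alpha s=t$. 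Since $\alpha,1-\alpha\ge 0$, the function $\alpha\psi_s^{**}(y)+(1-\alpha)\tfrac12|y|^2$ is convex and pointwise dominated by $\alpha\psi_s(y)+(1-\alpha)\tfrac12|y|^2=\psi_t(y)$, so by the maximality property of the convex envelope it lies below $\psi_t^{**}=\ptss$. Unwinding this inequality via the definition of $\breve\vp$ converts it into $\psi_s^{**}(y)\le\breve\psi_s(y)$, and convexity of the left side upgrades this to $\psi_s^{**}\le\breve\psi_s^{**}$. Combining both directions gives $\psi_s^{**}=\breve\psi_s^{**}$, hence $\breve w_s=w_s$ and finally $\breve u_s=u_s$ for all $s\ge t$.
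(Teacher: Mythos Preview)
Your proof is correct. It differs from the paper's in two notable ways.

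For the $\lambda$-concavity of $\breve\vp$ in (i), the paper postpones the argument and proves it via a separate proposition (Proposition~\ref{p:1ltconcave}) that $\ptss$ is $(1+\lambda t)$-concave, using a combinatorial lemma (Lemma~\ref{lem:Gineq}) about convexified minima of squared distances together with Lemma~\ref{lem:convexcomb}. Your route is much shorter: since $T_t=\nabla\ptss$ is $(1+\lambda t)$-Lipschitz by Proposition~\ref{p:Tbasic}(ii), the elementary fact that a $C^1$ function with $L$-Lipschitz gradient is $L$-semi-concave (because $y\mapsto Ly-\nabla\ptss(y)$ is monotone) immediately yields that $\ptss$ is $(1+\lambda t)$-concave. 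This renders Lemma~\ref{lem:Gineq} and Proposition~\ref{p:1ltconcave} unnecessary for the present purposes, which is a genuine simplification.

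For (iv), the paper takes a different and somewhat quicker path: it simply observes that $\breve\psi_t=\ptss$ by definition, whence $\breve w_t=(\ptss)^*=\pts=w_t$ and $\breve u_t=u_t$; the case $s>t$ then follows from the Hopf--Lax semigroup property. Your argument instead proves $\psi_s^{**}=\breve\psi_s^{**}$ directly for all $s\ge t$ via the interpolation identity $\psi_t=\alpha\psi_s+(1-\alpha)\tfrac12|\cdot|^2$ with $\alpha=t/s$, which is self-contained and avoids invoking the semigroup property (not stated elsewhere in the paper), at the cost of a few more lines. Both approaches are valid.
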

As previously, the modified potentials determine a modified Lagrangian flow 
as the solution to  the differential inclusion initial-value problem
\begin{equation}
    \D_s \breve X_s(y) \in \D \breve u_s(X_s(y))\,, \quad \breve X_0(y)=y.
\end{equation}

    The main results of this section are stated as follows.
\begin{theorem}\label{t:breve}
    Assume $\vp$ is $K$-Lipschitz and $\lambda$-concave with $\lambda\ge0$.
    Fix $t>0$ and let $\breve\vp$, $\breve u_s$, $\breve X_s$ be determined as above. 
    Then:
    \begin{enumerate}[(i)]
        \item For $0\le s<t$, the map $\breve X_s:\R^d\to\R^d$ is bijective, with 
        \[ \breve X_s(y)=y+s\nabla\breve\vp(y) \quad\text{ for all $y$}.
        \]
        \item At time $t$, $\breve X_t(y) = T_t(y)$ for all $y\in\R^d$, and 
        with $\breve \rho_s=(\breve X_s)_\sharp\leb$ we have
        \[
        \breve\rho_t = \mam_t\,.
        \]
    \end{enumerate}
\end{theorem}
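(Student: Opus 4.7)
The plan is to exploit the convex-combination identity
\[
\breve\psi_s(y) \;=\; \frac{t-s}{t}\cdot\frac{|y|^2}{2} \,+\, \frac{s}{t}\,\ptss(y),
\qquad 0\le s\le t,
\]
which is immediate from the definition $\ptss(y)=\tfrac12|y|^2+t\breve\vp(y)$. For $s\in[0,t)$ this writes $\breve\psi_s$ as the sum of a $(t-s)/t$-strongly convex quadratic and a convex function, so $\breve\psi_s$ is strongly convex with superlinear growth. Its Legendre transform $\breve w_s$ is therefore finite-valued and $C^1$, and $\nabla\breve w_s$ inverts $\nabla\breve\psi_s = \id+s\nabla\breve\vp$. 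In particular, the ballistic map $y\mapsto y+s\nabla\breve\vp(y)$ is a bijection of $\R^d$.

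For part (i), I would identify this ballistic map with $\breve X_s$ via the Hopf-Lax formula. Expanding \eqref{e:HLmod} gives $s\breve u_s(x) = \tfrac12|x|^2 - \breve w_s(x)$, and the strong convexity of $\breve\psi_s$ forces the infimum $\inf_z\frac{|x-z|^2}{2s}+\breve\vp(z)$ to be attained at the unique point $z^\star(x)=\nabla\breve w_s(x)$. Differentiating in $x$ yields $\nabla\breve u_s(x) = (x-z^\star(x))/s$; at the ballistic endpoint $x=y+s\nabla\breve\vp(y)$ one reads off $z^\star(x)=y$ and $\nabla\breve u_s(x)=\nabla\breve\vp(y)$. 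Hence $s\mapsto y+s\nabla\breve\vp(y)$ is a Lipschitz solution of $\D_s x_s\in\D\breve u_s(x_s)$ with $x_0=y$, and Lemma~\ref{l:uniq} (applicable since $\breve\vp$ is Lipschitz and $\lambda$-concave by Proposition~\ref{p:breve}(i)) identifies it with $\breve X_s(y)$.

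For part (ii), I would pass to the limit $s\to t^-$: Lipschitz continuity of $s\mapsto\breve X_s(y)$ combined with the $C^1$ regularity of $\breve\vp$ gives
\[
\breve X_t(y) \;=\; y+t\nabla\breve\vp(y) \;=\; \nabla\breve\psi_t(y) \;=\; \nabla\ptss(y) \;=\; T_t(y),
\]
where the penultimate equality uses $\breve\psi_t=\ptss$ by construction. The pushforward identity $\breve\rho_t = (\breve X_t)_\sharp\leb = (T_t)_\sharp\leb = \mam_t$ then follows from the definition of the \MA\ measure stated in Section~\ref{s:MAmeasures}.

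The main obstacle I foresee lies in the degeneration at $s=t$, where $\breve\psi_t=\ptss$ is only convex (not strictly so), the map $\nabla\ptss=T_t$ need not be injective, and the Legendre-duality argument of part (i) breaks down. The resolution is to carry out the ODE identification strictly for $s<t$ — where Proposition~\ref{p:breve}(iii) guarantees $\breve u_s\in C^1$ and the Hopf-Lax minimizer is single-valued — and then obtain both $\breve X_t$ and the identity $\breve X_t=T_t$ as a continuous limit, rather than attempting any direct argument at the endpoint.
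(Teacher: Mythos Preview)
Your proposal is correct and follows essentially the same route as the paper: both arguments use that $\breve\psi_s$ is strictly (indeed strongly) convex for $s<t$, verify that the ballistic path $y+s\nabla\breve\vp(y)$ solves the differential inclusion $\D_s x_s\in\D\breve u_s(x_s)$, invoke uniqueness (Lemma~\ref{l:uniq}) to identify it with $\breve X_s(y)$, and then pass to the limit $s\to t^-$ to obtain $\breve X_t=T_t$. The only cosmetic differences are that the paper packages the middle step by citing Proposition~\ref{p:XT1} (since $\breve\Sigma_s=\R^d$), whereas you compute the Hopf--Lax minimizer directly, and the paper obtains surjectivity of $\breve X_s$ from Theorem~\ref{t:UNIQ} rather than from Legendre duality as you do.
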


We proceed to first prove Proposition~\ref{p:breve}, except that we postpone
the proof that $\breve\vp$ is $\lambda$-concave in part (i).

\begin{proof}[Proof of Prop.~\ref{p:breve}]
(i) By Proposition~\ref{p:Tbasic}, $\breve\vp$ is $C^1$, and
by Proposition~\ref{p:back2}(i) it follows $|\nabla\breve\vp(y)|\le K$ for all $y$, so 
$\breve\vp$ is $K$-Lipschitz. This proves (i) except for the $\lambda$-concavity,
to be proved below.

The properties of $\breve u_s$ in
part (ii) follow by applying Lemmas~\ref{l:lambda} and \ref{p:lim_wueps}
with $\breve\vp$ in place of $\vp$. To see $\breve w_s$ is strictly convex, note that
\[
\breve w_s(x) = \frac1{1+\lambda s} \frac{|x|^2}2 + 
s\left(\frac{\lambda}{1+\lambda s}\frac{|x|^2}2 - \breve u_s(x)\right), 
\]
and the last expression in brackets is convex.

For part (iii), since $\ptss$ is convex it follows 
from the definition in \eqref{d:brevepsi} that 
$\breve\psi_s$ is strictly convex for $0\le s<t$.  
Then the subgradient $\D \breve w_s(x)=\D\psi_s^*(x)$ is a singleton for each
$x$, whence it follows that $\breve w_s$ (and also $\breve u_s$)
is $C^1$ by~\cite[Cor.~25.5.1]{Rockafellar}.

Finally, to prove (iv) observe that $\breve\psi_t =\ptss$. This implies
$\breve w_t=(\ptss)^*=w_t$, which entails $\breve u_t=u_t$. The Hopf-Lax
semigroup property then implies $\breve u_s=u_s$ for all $s>t$.
\end{proof}

\begin{proof}[Proof of Theorem~\ref{t:breve}] (i) For $0\le s<t$, the function
$\breve\psi_s^{**}=\breve\psi_s$ is strictly convex, so for this function,
$\breve\Sigma_s=\R^d$ is the set of points of strict convexity.
Then the formula $\breve X_s(y)=y+s\nabla\breve\vp(y)$ follows by 
Proposition~\ref{p:XT1}. Moreover, $\breve X_s = \breve T_s:=\nabla\breve\psi_s^{**}$
is injective by the strict convexity of $\breve\psi_s$, and $\breve X_s$ is surjective
by Theorem~\ref{t:UNIQ}.

To prove (ii), note that by continuity of $s\mapsto \breve X_s(y)$,
for all $y$ we get 
\[
\breve X_t(y)=y+t\nabla\breve\vp(y) = \nabla\breve\psi_t(y)=\nabla\ptss(y) = T_t(y).
\]
Then the pushforward formula $\breve\rho_t=\mam_t$ follows automatically.
\end{proof}

\begin{remark}
    The measures $\breve\rho_s$ ($0<s<t$) 
    interpolate between Lebesgue measure $\leb$ and the \MA\ measure $\mam_t$
    in a way analogous to displacement interpolants between finite measures
    in optimal transport theory.
\end{remark}

To conclude this section we note that $\lambda$-concavity of $\breve\vp$
follows by showing $\breve\psi_t = \ptss$ is $(1+\lambda t)$-concave.
For this we need a lemma.

\begin{lemma}\label{lem:Gineq}
    Let $y_1,\ldots,y_n\in\R^d$ and let $G(z)=\min_i |z-y_i|^2$. Suppose $y=\sum_i c_iy_i$ with 
    $c_i\ge0$ for all $i$ and $\sum_ic_i=1$. Then $G^{**}(z)\le|z-y|^2$ for all $z$.
\end{lemma}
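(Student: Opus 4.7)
My plan is to exploit the biconjugate $G^{**}$ as the largest convex minorant of $G$, together with a translation trick that realizes $z$ as a convex combination of points shifted from the $y_i$'s so that each shifted point sits at the same distance $|z-y|$ from its corresponding $y_i$.

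Concretely, for an arbitrary $z\in\R^d$ I would introduce the auxiliary points
\[
z_i := z - y + y_i, \qquad i=1,\dots,n,
\]
and observe two simple facts. First, by construction $z_i - y_i = z - y$, hence
\[
G(z_i) \le |z_i - y_i|^2 = |z-y|^2
\]
for every $i$. Second, since $y=\sum_i c_i y_i$ and $\sum_i c_i = 1$,
\[
\sum_i c_i z_i = z - y + \sum_i c_i y_i = z.
\]

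The conclusion then follows immediately from the convexity of $G^{**}$ and the fact that $G^{**}\le G$ pointwise. Applying Jensen's inequality and then this pointwise bound,
\[
G^{**}(z) = G^{**}\!\Bigl(\sum_i c_i z_i\Bigr)
\le \sum_i c_i\, G^{**}(z_i)
\le \sum_i c_i\, G(z_i)
\le |z-y|^2.
\]

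There is essentially no obstacle here: once one notices the translation $z_i = z - y + y_i$, everything is one line. The only thing worth noting is that one does not need $G^{**}$ to be finite-valued a priori: since each $|z-y_i|^2$ is a quadratic majorant, $G$ is majorized by any of them, and $G^{**}$ is finite and convex on $\R^d$, so the Jensen step is unconditionally valid.
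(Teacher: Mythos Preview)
Your proof is correct and essentially identical to the paper's: both introduce the translated points $z_i = z - y + y_i$, note that $\sum_i c_i z_i = z$ and $|z_i - y_i| = |z-y|$, and then use convexity of $G^{**}$ together with $G^{**}(z_i)\le |z_i - y_i|^2$.
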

\begin{proof}
    Let $z$ be arbitrary and define $w= z-y$ and  $z_i=y_i+w$, so $\sum_i c_i z_i=z$.
    Since $G^{**}$ is convex  and  $G^{**}(z_j)\le |z_j-y_i|^2$  for all $i$ 
    (hence for $i=j$), we get 
    \[
    G^{**}(z) \le  \sum_j c_j G^{**}(z_j) \le \sum_j c_j |z_j-y_j|^2 = |w|^2 = |z-y|^2.
    \qedhere
    \]
\end{proof}

\begin{prop}\label{p:1ltconcave}
    Assume $\vp$ is $K$-Lipschitz and $\lambda$-concave with $\lambda\ge0$.
    Then for each $t>0$, $\ptss$ is $(1+\lambda t)$-concave.
\end{prop}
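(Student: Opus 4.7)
The plan is to verify $M$-concavity of $\ptss$ for $M := 1+\lambda t$ at an arbitrary base point $y_0\in\R^d$, with Lemma~\ref{lem:Gineq} supplying the crucial geometric step at the end. I will first observe that $\psi_t$ itself is $M$-concave: since $\vp$ is $\lambda$-concave, the function $\psi_t(y) - \tfrac{M}{2}|y|^2 = t\bigl(\vp(y) - \tfrac\lambda2|y|^2\bigr)$ is concave. So at every point where $\psi_t$ is differentiable, it admits a tangent upper paraboloid of curvature $M$.

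Set $x_0 := \nabla\ptss(y_0)$, which exists because $\ptss$ is $C^1$ (Proposition~\ref{p:Tbasic}); equivalently, $y_0 \in \D\pts(x_0)$. By Lemma~\ref{lem:convexcomb}, write $y_0 = \sum_{i=1}^n c_i y_i^*$ with $c_i\ge0$, $\sum_i c_i = 1$, and each $y_i^* \in \D\pts(x_0)\cap\Theta_t$. By Remark~\ref{r:diff}, $\vp$ is differentiable at every $y_i^*$, so Proposition~\ref{p:back2}(ii) gives $\nabla\psi_t(y_i^*) = \nabla\ptss(y_i^*)$; since $\ptss$ is $C^1$ the subgradient $\D\ptss(y_i^*)$ is the singleton $\{x_0\}$, and hence $\nabla\psi_t(y_i^*) = x_0$ at every $y_i^*$.

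With these slopes pinned down, the $M$-concavity of $\psi_t$ at each $y_i^*$ yields $\psi_t(z) \le \psi_t(y_i^*) + x_0\cdot(z-y_i^*) + \tfrac{M}{2}|z-y_i^*|^2$; using touching, $\psi_t(y_i^*) = \ptss(y_i^*)$, and the Young identity applied to $y_0, y_i^* \in \D\pts(x_0)$ gives $\ptss(y_i^*) = \ptss(y_0) + x_0\cdot(y_i^* - y_0)$, so
\[
\psi_t(z) \le \ptss(y_0) + x_0\cdot(z-y_0) + \tfrac{M}{2}|z-y_i^*|^2 .
\]
Taking the minimum over $i$ and using $\ptss\le\psi_t$ yields $\ptss(z) \le \ptss(y_0) + x_0\cdot(z-y_0) + \tfrac{M}{2}G(z)$, where $G(z) := \min_i|z-y_i^*|^2$. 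Since $\ptss$ is convex and adding an affine function commutes with convexification, the largest convex minorant of the right-hand side still bounds $\ptss$ above, so I may replace $G$ by $G^{**}$. Finally, Lemma~\ref{lem:Gineq} applied with $y = y_0 = \sum c_i y_i^*$ gives $G^{**}(z) \le |z-y_0|^2$, and combining yields $\ptss(z) \le \ptss(y_0) + x_0\cdot(z-y_0) + \tfrac{M}{2}|z-y_0|^2$, which is exactly \eqref{d:Dflambda} witnessing that $x_0 \in \D\ptss(y_0)$ in the $M$-concave sense.

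I expect the main obstacle to be the identification in the second paragraph: pinning a \emph{single} slope $x_0$ as the common value of $\nabla\psi_t(y_i^*)$ at every touching point in the Caratheodory representation of $y_0$. This requires chaining the $C^1$ smoothness of $\ptss$ (Proposition~\ref{p:Tbasic}), the explicit gradient formula on the touching set (Proposition~\ref{p:back2}(ii)), and differentiability of $\vp$ at touching points (Remark~\ref{r:diff}). Once these align, Lemma~\ref{lem:Gineq} efficiently converts the $n$ separate upper paraboloids at the $y_i^*$ into a single upper paraboloid for $\ptss$ at $y_0$.
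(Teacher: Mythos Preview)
Your proof is correct and follows essentially the same approach as the paper's: write $y_0$ as a convex combination of touching points $y_i^*\in\D\pts(x_0)\cap\Theta_t$ via Lemma~\ref{lem:convexcomb}, use $(1+\lambda t)$-concavity of $\psi_t$ to get upper paraboloids at each $y_i^*$ with common slope $x_0$, take the minimum to introduce $G$, pass to convexifications, and invoke Lemma~\ref{lem:Gineq}. The only differences are cosmetic---you cite Proposition~\ref{p:back2}(ii) and Remark~\ref{r:diff} explicitly to justify $\nabla\psi_t(y_i^*)=x_0$, and you use the Young identity rather than the affineness of $\ptss$ on the face $\D\pts(x_0)$ to get $\ptss(y_i^*)=\ptss(y_0)+x_0\cdot(y_i^*-y_0)$, but these are equivalent routes to the same intermediate identities.
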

\begin{proof}
It suffices to prove that for all $y\in\R^d$, $\ptss(z)\le_{z=y} P(z)$ for all $z$, where
$P$ is quadratic with Hessian $\nabla^2 P= (1+\lambda t)I$.
    Let $y\in\R^d$, and let $x=\nabla\ptss(y)$. By Lemma~\ref{lem:convexcomb} we may
write $y=\sum_i c_iy_i $ where $y_i\in \D\pts(x)\cap\Theta_t$ and $c_i\ge0$ with $\sum_i c_i=1$.
Note $\nabla\ptss(y_i)=\nabla\ptss(y)$ for all $i$, and $\ptss$ is affine on the convex hull
of the $y_i$, so 
\[
\ptss(y_i) = \ptss(y)+\nabla\ptss(y)(y_i-y)
\]
for all $i$. Then since $\psi_t$ is $(1+\lambda t)$-concave,
\begin{align*}
\psi_t(z)  & \le  \psi(y_i)+\nabla\psi(y_i)\mdot(z-y_i)+\tfrac12(1+\lambda t)|z-y_i|^2.
\\ & = \ptss(y_i)+\nabla\ptss(y_i)\mdot(z-y_i)+\tfrac12(1+\lambda t)|z-y_i|^2.
\\ &= \ptss(y)+\nabla\ptss(y)\mdot(z-y)+\tfrac12(1+\lambda t)|z-y_i|^2.
\end{align*}
With $G(z)=\min_i|z-y_i|^2$, it follows 
\[
\psi_t(z)\le \ptss(y)+\nabla\ptss(y)\mdot(z-y) + \tfrac12(1+\lambda t) G(z) 
\]
for all $z$, whence we infer by passing to the convexification and using Lemma~\ref{lem:Gineq},
\begin{align*}
\ptss(z)&\le \ptss(y)+\nabla\ptss(y)\mdot(z-y)+\tfrac12(1+\lambda t) G^{**}(z) 
\\ & \le \ptss(y)+\nabla\ptss(y)\mdot(z-y)+\tfrac12(1+\lambda t) |z-y|^2 :=P(z).
\qedhere
\end{align*}

The use of Proposition~\ref{p:1ltconcave} completes the proof of part (i)
of Proposition~\ref{p:breve}.
\end{proof}

\section{Three-sector velocity in two dimensions}\label{s:example}

To see that there can be a difference between the \MA\ measures 
$\mam_t=(T_t)_\sharp\leb$  and the  limiting mass measures 
$\rho_t = (X_t)_\sharp\leb$ advected by the adhesion-model velocity, 
we study a special class of examples in $d=2$ space dimensions.
The initial velocity 
will chosen to be constant in three sectors of the plane, so that mass concentrates
immediately into filaments along the boundaries.
We shall develop a simple criterion to determine when the convexified transport flow
$T_t$ is ``sticky'' and when it is {\em not}, and show that in the latter case 
the \MA\ measures  $\mam_t$ {\em differ} from the limiting mass measures 
$\rho_t$ in their singular parts.
We expect similar examples can be constructed in higher space dimensions,
as there does not seem to be anything special about the planar case.

{\bf Initial data.}
Let $v_1, v_2, v_3$ be distinct and non-collinear vectors in $\R^2$. 
We take the initial velocity potential $\vp$ to be piecewise linear and \emph{concave}, given by 
\begin{equation}\label{d:3vp}
\vp(y) = \min_{i=1,2,3} v_i\cdot y \,.
\end{equation}
By consequence of Proposition~\ref{p:Tbasic}, the  convex function $\ptss=w_t^*$ is $C^1$ 
and its gradient  $T_t = \nabla w_t^*$ is 1-Lipschitz.

The initial velocity $v_0=\nabla\vp$ then takes the value $v_i$ in the sector 
$A_i^0$  where
\begin{equation}\label{d:Ai0}
A_i^0= \{y\in\R^2:
\ \ v_i\cdot y<v_j\cdot y \quad\text{\ for all } j\ne i \}\,.
\end{equation}
Sectors $A^0_i$ and $A^0_j$ meet along the ray $R^0_{ij}$ ($=R^0_{ji}$) for which
\begin{equation}\label{d:R0ij}
R^0_{ij} = \{y\in\R^2: 
\ \ v_i\cdot y = v_j\cdot y < v_k\cdot y\quad  \text{ for }k\ne i,j\}\,.
\end{equation}
The vector $n_{ij} = v_i-v_j$ ($=-n_{ji}$) is normal to this ray, and 
points from $A^0_i$ to $A^0_j$ since 
$n_{ij}\cdot y<0$ in $A^0_i$ and  $n_{ij}\cdot y>0$ in $A^0_j$.
The cyclic sum 
\begin{equation}\label{e:cyclicn}
\sum_{\rm cyc} n_{ij}= n_{12}+n_{23}+n_{31}=0.
\end{equation}
We take $v_1, v_2, v_3$ to traverse the vertices of the triangle $\Triang$ 
that they determine in the counterclockwise direction, relabeling if necessary.
With this orientation, whenever $i,j,k$ are increasing in cyclic order, 
we have
\[
(v_j-v_i)\times(v_k-v_j) = Jn_{ji}\cdot n_{kj}>0, \qquad 
J=\begin{pmatrix} 0&-1\\1&0 \end{pmatrix}.
\]
Then by \eqref{d:R0ij}, the vector $\tau_{ij}= Jn_{ji}=-Jn_{ij}$ lies on the ray 
$R^0_{ij}$ for $i,j$ in cyclic order, i.e., $(i,j) = (1,2)$, (2,3) or (3,1). 
Thus 
\begin{equation}
    R^0_{ij}=\{s\tau_{ij}:s>0\}, \quad \tau_{ij}=J(v_j-v_i).
\end{equation}

\subsection{Potentials, subgradient and \MA\ measure}
For given $x,t$, the quantity $\frac12|y-x|^2+tv_i\cdot y$ is minimized at
$y=x-v_i t$ and takes the value $tv_i\cdot x-\frac12|tv_i|^2$.
Hence by the Hopf-Lax formula \eqref{e:HLOmega1} we find 
\begin{align}
t u_t(x) 
& = \min_i \left(tv_i\cdot x-\tfrac12|tv_i|^2\right) \,.
\end{align}
Note that $u_t$ is concave for each $t>0$. By the relation \eqref{e:uw_dual} it follows that
\begin{equation}
    w_t(x) = \pts(x) = \tfrac12|x|^2-tu_t(x) = \max_i \tfrac12|x-tv_i|^2 \,.
\end{equation}
The maximum occurs for a single value of $i$ in three open sectors given by 
\begin{equation} \label{d:Ait}
A_i^t := \{x \in\R^2: \ \ |x-tv_i|^2>|x-tv_j|^2 \ \text{ for all $j\ne i$}\} .
\end{equation}
The set $A_i^t$ is the intersection of half-planes bounded by the
perpendicular bisector (parallel to $R^0_{ij}$)
of the line segment joining the points $tv_i$ and $tv_j$. 
The three bisectors meet at a single point $t\tripl$, where $\tripl\in\R^2$ 
is the \emph{circumcenter} of the triangle with vertices at $v_1$, $v_2$, $v_3$.
That is, $\tripl$
is the center of the circle on which the points $v_1$, $v_2$, $v_3$ lie,
the unique point where
\begin{equation}\label{e:tripl3}
    |v_1-\tripl| =  |v_2-\tripl| =  |v_3-\tripl| .
\end{equation}
As sets, we have the relation 
\begin{equation}\label{e:tAi}
A^t_i = A^0_i+t\tripl = t(A^0_i+\tripl)\,,
\end{equation}
since for $x=y+t\tripl$ the conditions in \eqref{d:Ai0} and \eqref{d:Ait} are equivalent
due to \eqref{e:tripl3}, and $A^0_i=tA^0_i$.
The common boundary of $A_i^t$ and $A_j^t$ is the set $R_{ij}^t\cup\{t\tripl\}$
where 
\begin{equation}\label{d:Rijt}
R_{ij}^t =  R^0_{ij} + t\tripl = t(R^0_{ij}+\tripl) \,. 
\end{equation}
Below, $\co S$ denotes the convex hull of the set $S$.
We write 
\begin{equation}\label{d:co}
[v_i,v_j]=\co\{v_i,v_j\} \,,\quad
\Triang=\co\{v_k:k=1,2,3\} \,, 
\end{equation}
to denote respectively
the line segment joining $v_i$ and $v_j$ and 
the triangle with vertices $v_k$, $k=1,2,3$.
Recall if $S\subset\R^2$ is Borel then $|S|$ denotes its Lebesgue measure.

\begin{prop}\label{prop:Dpts}
For any $x\in\R^2$ and $t>0$, the subgradient $\D w_t(x)$ is 
\begin{align*}
\D w_t(x) = \begin{cases}
\{x-tv_i\}\,, & x\in A^t_i\,,\\
x-t[v_i,v_j] \,,
& x\in R^t_{ij}\,,\\
x-t\Triang\,,
& x = t \tripl \,.
\end{cases}
\end{align*}
Moreover, the Monge-Amp\`ere measure $\mam_t$ associated with $w_t$
is determined by the following, for any Borel set $B$ in the plane:
\begin{align} \label{e:ma_3sector}
\mam_t(B) = \begin{cases}
|B| & 
\text{if }\ B\subset A^t_i\,, \\
t|v_i-v_j|\,\calH_1(B)
& \text{if }\  B\subset R^t_{ij}\,, \\
t^2 |\Triang|
& \text{if }\  B=\{t\tripl\}\,.
\end{cases}
\end{align}
Here $\calH_1$ is one-dimensional Hausdorff measure. 
\end{prop}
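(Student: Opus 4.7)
The plan is to exploit the explicit formula $w_t(x) = \max_i w_{t,i}(x)$ with $w_{t,i}(x) := \tfrac12|x-tv_i|^2$, so that the subgradient of $w_t$ at every point is given by the standard max rule for pointwise maxima of $C^1$ convex functions, and then to compute the \MA\ measure cell by cell on the natural partition of the plane.

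For the subgradient part, applying the max rule (e.g.\ \cite[Cor.~VI.4.3.2]{Hiriart2001}) yields
\[
\D w_t(x) = \co\{\nabla w_{t,i}(x) : i\in I(x)\} = x - t\,\co\{v_i : i\in I(x)\},
\]
where $I(x) := \{i : w_{t,i}(x) = w_t(x)\}$. From \eqref{d:Ait}--\eqref{d:Rijt} and the circumcenter identity \eqref{e:tripl3} one reads off that $I(x) = \{i\}$ on $A_i^t$, $I(x) = \{i,j\}$ on $R_{ij}^t$, and $I(x) = \{1,2,3\}$ at $x = t\tripl$, and the three asserted formulas for $\D w_t$ follow at once.

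For the \MA\ measure, I would decompose an arbitrary Borel set $B$ according to the partition
\[
\R^2 = \bigsqcup_i A_i^t \ \sqcup\ \bigsqcup_{(i,j)} R_{ij}^t \ \sqcup\ \{t\tripl\}
\]
and evaluate $\mam_t$ on each cell. On the open sector $A_i^t$, $w_t$ agrees with the smooth quadratic $w_{t,i}$, so it is $C^2$ there with Hessian $I$, and hence $\mam_t\mres A_i^t = \leb\mres A_i^t$. At $t\tripl$, $\D w_t(t\tripl) = t\tripl - t\Triang$ is a translate of $-t\Triang$, so $\mam_t(\{t\tripl\}) = t^2|\Triang|$ by the definition \eqref{d:MAint}. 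For $B\subset R_{ij}^t$ I would write $\D w_t(B) = B - t[v_i,v_j]$ as a Minkowski sum, and observe that $B$ lies along the direction $\tau_{ij} = J(v_j-v_i)$, which is orthogonal to $v_j-v_i$ and has the same length $|v_j-v_i|$; Fubini then gives $|\D w_t(B)| = t|v_i-v_j|\,\calH_1(B)$, as required.

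The one step I expect to need a little care is reassembling these cell-wise computations into the global measure, namely checking that the subgradient images $\D w_t(B\cap \mathcal{E})$ arising from different cells $\mathcal{E}$ of the partition overlap only in Lebesgue-null sets, so the Lebesgue areas add. This follows from the strict convexity of $w_t$ (inherited from the quadratic term $\tfrac12|x|^2$, since $u_t$ is concave by Lemma~\ref{l:lambda} with $\lambda=0$): on each open sector $\nabla w_t$ is a translation, hence injective, and its image can meet the subgradient images attached to the rays or to the circumcenter only along boundaries of positive codimension. Modulo this bookkeeping, the three pieces assemble into exactly the formula \eqref{e:ma_3sector}.
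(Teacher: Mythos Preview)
Your proof is correct and follows the same overall structure as the paper's, with one notable shortcut in the subgradient part. You invoke the max rule for pointwise maxima of $C^1$ convex functions to get $\D w_t(x)=\co\{x-tv_i:i\in I(x)\}$ in one stroke; the paper instead computes $\D w_t(x)$ on the rays and at the triple point by a bare-hands argument (supporting planes to show the convex hull is contained in the subgradient, then a separation argument to rule out anything outside). Your route is cleaner and cites a standard result; the paper's is fully self-contained. For the \MA\ measure the two arguments are essentially identical: the same orthogonality observation on the rays, the same direct computation at $t\tripl$, and the same disjointness of subgradients at distinct points (which the paper simply asserts, and which you correctly attribute to the strict convexity of $w_t$). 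Your concern about assembling the cell-wise pieces into the global measure is slightly more than the proposition actually demands---it only asks for $\mam_t(B)$ with $B$ contained in a single cell---but the disjointness you note handles it regardless.
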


\begin{proof}
1. In the region $A^t_i$, $w_t$ is smooth 
with $\nabla w_t(x)=x-tv_i$, hence 
$\D w_t(x)$ is the singleton set containing the point $x-tv_i$.

At a point $x$ on the ray $R^t_{ij}$, the vectors $x-tv_k$, $k\in\{i,j\}$,
are slopes of planes that respectively support the paraboloids 
$z\mapsto\frac12|z-tv_k|^2$ at $x$. 
These planes also support the maximum $w_t(z)$, therefore
the two slopes $x-t v_k$ and their convex hull are contained in $\D w_t(x)$.
On the other hand, if $p\in\R^2$ is not a point in the convex hull, 
it is necessarily separated from it by a line. This means there is a vector $q$
such that $q\cdot (x-tv_k)<q\cdot p$ for $k\in\{i,j\}.$  
For sufficiently small $r>0$ we can conclude that for $k\in\{i,j\}$,
\[
\tfrac12|x+rq-tv_k|^2 - \tfrac12|x-tv_k|^2 = rq\cdot(x-tv_k)+\tfrac12|rq|^2
< rq\cdot p,
\]
and we can conclude that that $p\notin \D w_t(x)$.

By the same argument with $\{i,j\}$ replaced by $\{1,2,3\}$ we infer
\[
\D w_t(t\tripl)=\co\{x-tv_k : k=1,2,3\}.
\]

2. Now the \MA\ measure of $w_t$  is straightforward to compute, as follows.
Observe that for any distinct points $x$, $\hat x$ in the plane,
the subgradients $\D w_t(x)$ and $\D w_t(\hat x)$ are disjoint.
Let $B$ be a Borel set in the plane. 
Since we may decompose $B$ by its intersections with the sets 
$A^t_i$, $R^t_{ij}$ and $\{\tripl t\}$, 
we assume without loss of generality that $B$ is a subset of one of these sets.

First suppose that $B\subset A_i^t$.
Note $A^t_i-tv_i\subset A^0_i$, because when
\[
|x-tv_i|>|x-tv_j|=|(x-tv_i)+t(v_i-v_j)|
\]
for all $j$, it follows $(x-tv_i)\cdot(v_i-v_j)<0$ so that $x-tv_i\in A^0_i$.
Hence $\D w_t(B)=B-tv_i$, whence $\mam_t(B)=|B-tv_i|=|B|$.

Next, suppose $B\subset R^t_{ij}$. Note that $\D w_t(x)=x-t\co\{v_i,v_j\}$
for each $x\in B$. 
Then because the line segments $\co\{v_i,v_j\}$ and the ray $R^t_{ij}$ 
have orthogonal tangents, the Lebesgue measure
\[
|\D w_t(B)| = 
t|v_i-v_j|\,\calH_1(B).
\]

Finally, for the singleton set $B=\{t\tripl \}$ we have 
\begin{equation*}
|\D w_t(B)| = t^2
|\co\{v_1,v_2,v_3\}| = 
\tfrac12 t^2 |(v_1-v_2)\times (v_2-v_3)|. 
\qedhere
\end{equation*}
\end{proof}

\subsection{Transport maps and paths}

Recall the Lagrangian paths $y\mapsto X_t(y)$ are determined as the unique 
Lipschitz solution of the differential inclusion 
\begin{equation}\tag{\ref{e:DXt}}
  \D_t x_t \in \D u_t(x_t) \, \quad \text{for a.e. $t>0$,} \qquad    x_0 = y.
\end{equation}
By Proposition~\ref{prop:Dpts} and  the formula~\eqref{e:Dut1}, 
the differential is given by
\begin{equation}\label{e:Dutex}
    \D u_t(x) = \begin{cases}
        \{v_i\}\,, & x\in A^t_i\,,\\
        [v_i,v_j]\,, & x\in R^t_{ij}\,,\\
        \Triang\,, & x=t\tripl\,.
    \end{cases}
\end{equation}
We will use this characterization to determine the Lagrangian paths, 
but first we look at the simpler transport maps and paths.

\subsubsection{Transport maps and paths}
For any $t>0$, 
the transport map $T_t=\nabla\ptss$ is the inverse of the subdifferential $\D w_t$
as given by Proposition~\ref{prop:Dpts}. Explicitly we find 
\begin{equation}\label{e:Ttex}
T_t(y) = \begin{cases}
    y+tv_i & \text{if } x\in A^t_i \text{ and } y=x-tv_i \,,
    \\
    x & \text{if }  x\in R^t_{ij} \text{ and } y\in x -t[v_i,v_j] \,, \\
    t\tripl & \text{if } y\in t(\tripl-\Triang) \,.
\end{cases}
\end{equation}
See Fig.~\ref{f:riemann} for an illustration of this map for $t=0.3$
in the case when
\[
v_1 = (a,0), \quad v_2 = (b,c) , \quad v_3 = (0,0), \quad 
(a,b,c)= (1.8,2.5,1.5).
\]
\begin{figure}[!t]
\noindent\makebox[\textwidth]{%
\includegraphics[height=2in]{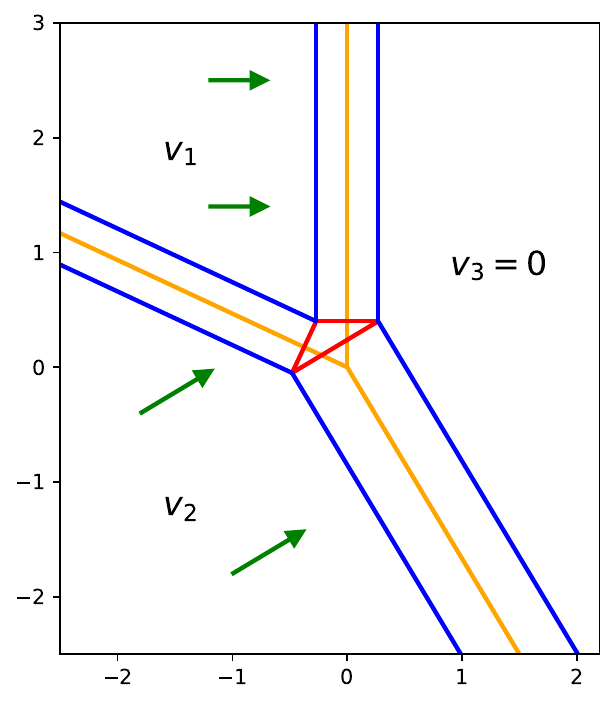}
\ \includegraphics[height=2in]{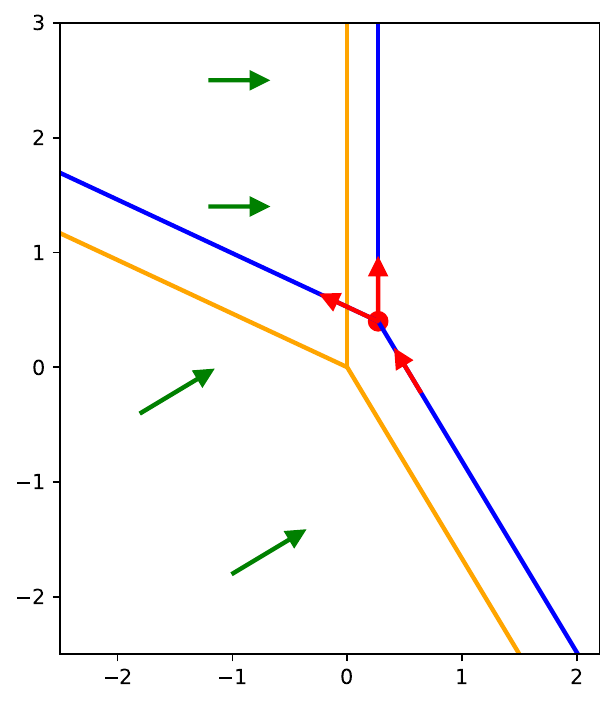}}
\caption{Transport map for 3-sector velocity. 
Left panel: Lagrangian plane. Right panel: Eulerian plane.
Red arrows indicate components of triple-point velocity
$\tripl$ along mass filaments.}  \label{f:riemann} 
\end{figure}

To describe the paths $t\mapsto T_t(y)$, observe
that by \eqref{e:tAi} and \eqref{d:Rijt} we have 
\begin{equation}\label{e:Ttcases}
    T_t(y)\in \begin{cases}
        A^t_i\\ R^t_{ij}\\ \{t\tripl\}
    \end{cases}
    \quad\text{if and only if}\quad
    \frac yt\in 
    \begin{cases}
        \tripl - v_i + A^0_i\\
        \tripl-[v_i,v_j] + R^0_{ij}\\
        \tripl-\Triang
    \end{cases}
\end{equation}
respectively. 
The seven sets to which $y/t$ may belong in \eqref{e:Ttcases}
are disjoint and convex and partition the plane.
As $y/t$ moves along a line toward the origin as $t$ increases,
this means that the set of times $t$ for which  
each of the seven options for $T_t(y)$ occurs
must be an {\em interval} (possibly empty or trivial).

\subsubsection{Velocity field} We claim that the transport paths $t\to T_t(y)$ are 
Lipschitz solutions of a 
differential equation with discontinuous right-hand side. 
\begin{lemma}\label{l:ode}
   For each $y\in\R^d$, the path $t\mapsto T_t(y)$ is continuous and piecewise linear, 
   and at each non-nodal point we have
\begin{equation}
\D_t T_t(y) = V_t(T_t(y))\,,
\end{equation}
where 
\begin{equation}
    V_t(x) = \begin{cases}
        v_i & \text{ for } x\in A^t_i\,,\\
        v_{ij}=\frac12(v_i+v_j) & \text{ for } x\in R^t_{ij}\,,\\
        \tripl & \text{ for } x=t\tripl\,.
    \end{cases}
\end{equation}
\end{lemma}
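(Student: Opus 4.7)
The plan is to exploit the explicit formulas \eqref{e:Ttex} for $T_t(y)$ together with \eqref{e:Ttcases}, which expresses the image region of $T_t(y)$ (sector $A^t_i$, ray $R^t_{ij}$, or triple point $t\tripl$) in terms of membership of $y/t$ in one of seven disjoint convex sets that partition $\R^2$. For fixed $y$, each such convex set meets the half-line $\{y/t:t>0\}$ (which follows the line through $0$ in direction $y$) in a convex subset, hence in an interval of $t$-values. Thus $(0,\infty)$ decomposes into a finite collection of open intervals separated by a finite nodal set $N_y$; on each such interval, $T_t(y)$ is given by one of the three formulas in \eqref{e:Ttex}.

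In the sector case $y/t\in\tripl-v_i+A^0_i$, \eqref{e:Ttex} immediately gives $T_t(y)=y+tv_i$, affine in $t$ with slope $v_i=V_t(T_t(y))$; in the triple-point case, $T_t(y)=t\tripl$ is affine with slope $\tripl$. In the ray case $y/t\in\tripl-[v_i,v_j]+R^0_{ij}$, write $T_t(y)=t\tripl+s(t)\tau_{ij}$, and use $T_t(y)-y=t\bar v$ with $\bar v=v_j+\alpha(v_i-v_j)\in[v_i,v_j]$: taking the inner product with $\tau_{ij}$ (which is orthogonal to $v_i-v_j$) gives
\begin{equation*}
s(t) \;=\; \frac{(y - t\tripl + tv_j)\cdot\tau_{ij}}{|\tau_{ij}|^2}\,,
\end{equation*}
affine in $t$, and hence $T_t(y)$ is affine with slope $\tripl+\dot s\,\tau_{ij}$ where $\dot s=(v_j-\tripl)\cdot\tau_{ij}/|\tau_{ij}|^2$.

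The main computation is to identify this ray-case slope with $v_{ij}=(v_i+v_j)/2$. From the circumcenter condition $|v_i-\tripl|=|v_j-\tripl|$ in \eqref{e:tripl3} one has $(v_i+v_j-2\tripl)\cdot(v_i-v_j)=0$, so $v_{ij}-\tripl\perp v_i-v_j$, and since $\tau_{ij}$ spans the orthogonal complement of $v_i-v_j$ in $\R^2$, $v_{ij}-\tripl$ is parallel to $\tau_{ij}$. Then
\begin{equation*}
\tripl+\frac{(v_j-\tripl)\cdot\tau_{ij}}{|\tau_{ij}|^2}\tau_{ij}-v_{ij}
\;=\;\frac{(v_j-v_{ij})\cdot\tau_{ij}}{|\tau_{ij}|^2}\tau_{ij}\;=\;0,
\end{equation*}
since $v_j-v_{ij}=(v_j-v_i)/2\perp\tau_{ij}$. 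Hence $\D_t T_t(y)=v_{ij}=V_t(T_t(y))$ on the ray interval.

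Continuity at each $t_0\in N_y$ is a direct check: $y/t_0$ lies on the common boundary of two adjacent convex sets in the partition, and substituting into the corresponding formulas of \eqref{e:Ttex} yields the same value of $T_{t_0}(y)$ (for a sector-to-ray transition, the boundary corresponds to $\bar v=v_i$ in the ray parametrization, giving the same endpoint on $R^{t_0}_{ij}$). The main, though mild, obstacle is the circumcenter algebra in the ray case; the rest is direct bookkeeping from the explicit formulas.
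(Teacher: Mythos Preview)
Your proof is correct and follows essentially the same approach as the paper's: both use the interval decomposition of $(0,\infty)$ coming from the seven convex regions in \eqref{e:Ttcases}, handle the sector and triple-point cases directly from \eqref{e:Ttex}, and for the ray case exploit the orthogonality $\tau_{ij}\perp(v_i-v_j)$ together with the circumcenter property $v_{ij}-\tripl\parallel\tau_{ij}$. The only difference is cosmetic: the paper argues via the pair of conditions $n_{ij}\cdot(\dot z_t-v_{ij})=0$ and $n_{ij}\times(\dot z_t-v_{ij})=0$, whereas you parametrize $T_t(y)=t\tripl+s(t)\tau_{ij}$ and solve explicitly for $s(t)$, which is a bit more concrete but amounts to the same computation.
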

\begin{proof} 
    For convenience we write $z_t=T_t(y)$ and $\dot z_t = \D_tT_t(y)$.
Consider $t$ to be in the interior of one of the intervals
that partition the times for which each option in \eqref{e:Ttcases} occurs.
For $z_t \in A^t_i$ ($i=1,2$ or $3$) then $z_t= y+tv_i$ so $\dot z_i = v_i$,
and for $z_t\in t(\tripl-\Triang)$ then $z=t\tripl$ so $\dot z_i=\tripl$. 

   It remains to consider the case $z_t\in R^t_{ij}$. 
   Because $z_t-t\tripl\in R^0_{ij}$ and $z_t-y\in t[v_i,v_j]$, recalling $n_{ij}=v_i-v_j\perp R^0_{ij}$
   we have that
   \[
    n_{ij}\cdot (z_t-t\tripl) = 0 \quad\text{and}\quad
    n_{ij}\times(z_t-y) = 
    n_{ij}\times v \quad\text{for any $v\in[v_i,v_j]$}.
   \]
   Noticing that $n_{ij}\cdot \tripl= n_{ij}\cdot v_{ij}$ (because $\tripl$ lies on the 
   perpendicular bisector of $[v_i,v_j]$) we find that 
   \[
   n_{ij}\cdot(\dot z_t - v_{ij})=0 \quad\text{and}\quad 
   n_{ij}\times(\dot z_t - v_{ij})=0.
   \]
   Hence $\dot z_t=v_{ij}$.

Thus $t\mapsto z_t$ is piecewise linear on $(0,\infty)$, and
it is straightforward to check that it is continuous at nodal points.
\end{proof}

Observe next, that if $\tripl$ lies in the interior of the triangle $\Triang$
then eventually $y/t\in \tripl-\Triang$ for large enough $t$ and $z_t=T_t(y)=t\tripl$.
In particular, when $z_t$ is on any ray $R^t_{ij}$, its velocity $v_{ij}$ 
must force it collide with the endpoint of the ray at $t\tripl$. 
Thus the component of the relative velocity vector $v_{ij}-\tripl$ 
in the direction $\tau_{ij}$ pointing along the ray away from $t\tripl$ must be {\em negative}.
That is, the (unnormalized) quantity $\xi_{ij}<0$ where
\begin{equation}\label{d:xiij}
    \xi_{ij} = (v_{ij}-\tripl)\cdot \tau_{ij} \,.
\end{equation}
What happens when the circumcenter $\tripl$ lies {\em outside} the triangle
$\Triang$ is that points along the ray $R^t_{ij}$ move {\em away} from the endpoint
$t\tripl$, and this will lead to interesting consequences.
See Fig.~\ref{f:stationary} for illustrations in the case $\tripl=0$ when $v_i$
lie on a circle centered at the origin.
\begin{figure}[!t]
\noindent\makebox[\textwidth]{%
\includegraphics[height=1.6in]{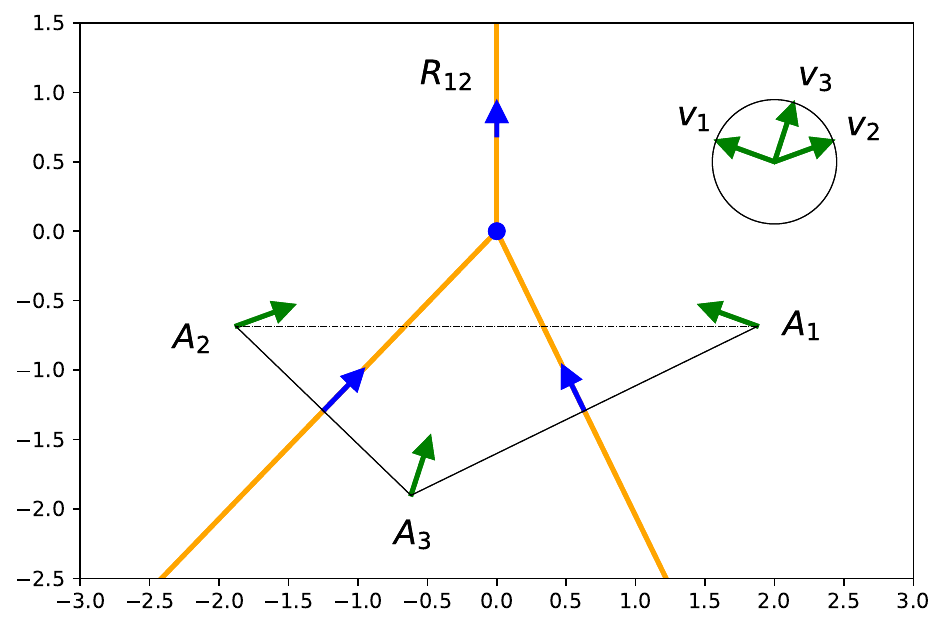}
\ \includegraphics[height=1.6in]{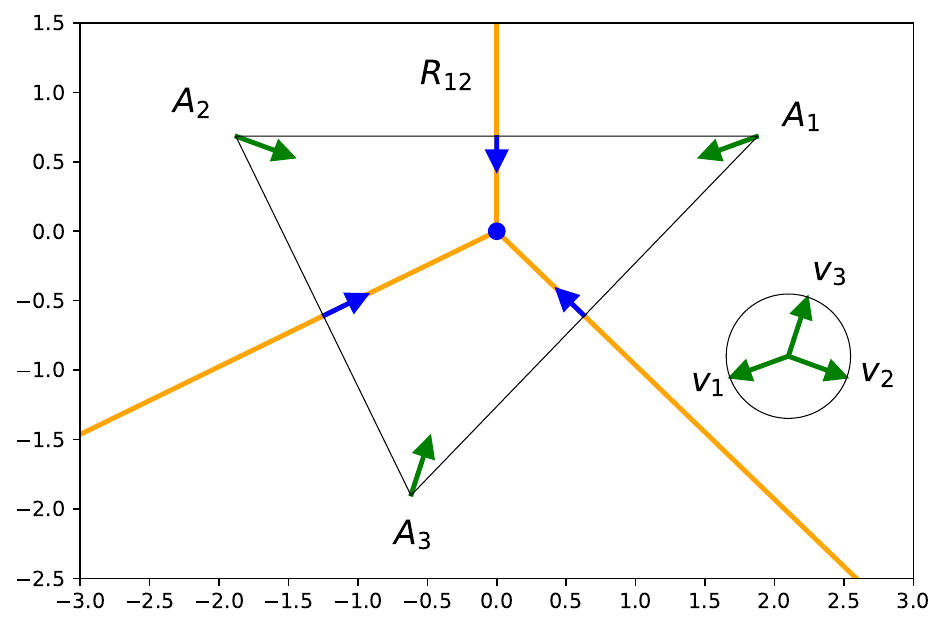}}
\caption{Flow examples.
Insets indicate $v_i$ on a circle with center $\tripl=0$.}  \label{f:stationary} 
\end{figure}

\begin{lemma}\label{l:tripl}
    If $\tripl\in\Triang$, then $\xi_{ij}\le0$ for each pair $(i,j)$ in cyclic order,
    while if $\tripl\notin\Triang$, then $\xi_{ij}>0$ for exactly one such pair,
    the pair for which  
    \[
    |v_{ij}-\tripl|=\dist(\tripl,\Triang).
    \]
\end{lemma}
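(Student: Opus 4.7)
The plan is to reduce the sign and uniqueness questions to elementary plane geometry by exploiting the special relationship between $v_{ij}-\tripl$ and $\tau_{ij}$ that is forced by the defining property of the circumcenter.

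First I would observe that since $\tripl$ is equidistant from $v_i$ and $v_j$, it lies on the perpendicular bisector of $[v_i,v_j]$, so $v_{ij}-\tripl$ is perpendicular to $v_j-v_i$ and therefore parallel to $\tau_{ij}=J(v_j-v_i)$. Because the vertices are traversed counterclockwise and $(i,j)$ is in cyclic order, $J$ rotates the edge direction $v_j-v_i$ by $+\pi/2$ into the \emph{inward} normal --- the one pointing toward the third vertex $v_k$. Consequently the sign of $\xi_{ij}=(v_{ij}-\tripl)\cdot\tau_{ij}$ simply records which closed half-plane of the line through $[v_i,v_j]$ contains $\tripl$: it is strictly positive iff $\tripl$ lies strictly on the side opposite $v_k$, zero iff $\tripl$ lies on the line, and strictly negative iff $\tripl$ lies strictly on the same side as $v_k$.

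The case $\tripl\in\Triang$ is then immediate: $\tripl$ lies in the closed half-plane containing $v_k$ for every edge, so $\xi_{ij}\le 0$ for each cyclic pair. For $\tripl\notin\Triang$, I would invoke Thales' theorem to identify when $\tripl$ is on the outward side of a given edge. Writing $d_{ij}=|v_i-v_j|/2$ and $R=|v_1-\tripl|=|v_2-\tripl|=|v_3-\tripl|$, the Pythagorean identity gives $|v_{ij}-\tripl|^2=R^2-d_{ij}^2$, and the condition that $\tripl$ lies strictly on the side of $[v_i,v_j]$ opposite $v_k$ is equivalent (by a short computation in coordinates adapted to the perpendicular bisector) to $|v_k-v_{ij}|<d_{ij}$, i.e.\ $v_k$ lies strictly inside the circle of diameter $[v_i,v_j]$. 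By Thales' theorem this is in turn equivalent to the angle at $v_k$ being obtuse. Since a triangle has at most one obtuse angle, at most one cyclic pair yields $\xi_{ij}>0$; and $\tripl\notin\Triang$ forces the triangle to be obtuse (otherwise all $\xi_{ij}\le 0$ would place $\tripl$ in $\Triang$), so exactly one does.

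For the final identification with the distance, since the foot of the perpendicular from $\tripl$ to the line through $[v_i,v_j]$ is the midpoint $v_{ij}$ itself and lies on the segment, we have $\dist(\tripl,[v_i,v_j])=|v_{ij}-\tripl|$, and hence
\[
\dist(\tripl,\Triang)=\min_{(i,j)}|v_{ij}-\tripl|=\min_{(i,j)}\sqrt{R^2-\tfrac14|v_i-v_j|^2}\,.
\]
This minimum is attained uniquely at the longest edge, which in the obtuse case is precisely the edge opposite the (unique) obtuse vertex --- the same edge identified by $\xi_{ij}>0$. The only step that demands real care is pinning down the orientation convention that makes $\tau_{ij}$ the \emph{inward} normal; once that sign is nailed, the rest reduces to classical planar geometry and presents no technical obstacle.
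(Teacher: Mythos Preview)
Your argument is correct. It covers the same ground as the paper's proof but via a slightly different geometric route, and in one respect is actually more careful.

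Both proofs begin by recognizing that the sign of $\xi_{ij}$ is a ``side of the edge'' test. The paper gets there by rewriting $\xi_{ij}=(\tripl-v_i)\times(v_j-v_i)$ as a cross product; you get there by noting $v_{ij}-\tripl\parallel\tau_{ij}$ and identifying $\tau_{ij}$ as the inward normal. These are equivalent observations. The difference appears in the case $\tripl\notin\Triang$. The paper argues directly from the nearest-point property: the closest point of $\Triang$ to $\tripl$ is some midpoint $v_{ij}$, so $\tripl$ lies to the right of that oriented edge, and then simply asserts that $\tripl$ lies to the left of the other two. You instead invoke Thales' theorem to characterize ``$\tripl$ on the outward side of $[v_i,v_j]$'' as ``the angle at $v_k$ is obtuse,'' which immediately gives both existence and uniqueness (a triangle has at most one obtuse angle, and acuteness would force $\tripl\in\Triang$). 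You then identify the distinguished edge with the one minimizing $|v_{ij}-\tripl|=\sqrt{R^2-\tfrac14|v_i-v_j|^2}$ by noting this is the longest edge, which is the one opposite the obtuse vertex. The paper's route is shorter because it takes the distance characterization as its starting point, but it leaves the claim ``$\tripl$ lies to the left of the other two edges'' unproved; your Thales-based uniqueness argument fills exactly that gap.
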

\begin{proof}
Because $\tau_{ij}=-J(v_i-v_j)$ and both $v_{ij}$ and $\tripl$ 
lie on the perpendicular bisector of the line segment $[v_i,v_j]$, we have
\begin{align*}
\xi_{ij} &= (v_{ij}-\tripl)\cdot \tau_{ij}  = (v_{ij}-\tripl)\times(v_i-v_j) 
\\ &= (v_i-\tripl)\times(v_i-v_j)  =  (\tripl-v_i)\times(v_j-v_i)  \,.
\end{align*}
If $\tripl\in\Triang$ then $\tripl$ lies on or to the {\em left} of each line containing
$[v_i,v_j]$ oriented from $v_i$ to $v_j$ adjacent in cyclic order. 
This yields $\xi_{ij}\le0$.
If $\tripl\notin\Triang$ however, the distance from $\tripl$ to any point $z\in\Triang$
is minimized at $v_{ij}$ for some $(i,j)$. 
For this pair, $\tripl$ lies to the {\em right} of the oriented line, whence $\xi_{ij}>0$.
For the other two lines in this case, $\tripl$ again lies to the {\em left}.
\end{proof}

\subsubsection{Criterion for transport flow to be sticky}
Recall that the Lagrangian flow maps $X_t$ are ``sticky'' in the sense explained in 
Theorem~\ref{t:UNIQ}.
At this point it is easy to determine when the flow maps $T_t$ agree
with the maps $X_t$ and have the ``sticky'' property, and when they do {\em not}.
\begin{prop}\label{p:sticky}
    The following are equivalent:
    \begin{enumerate}[(i)]
      \item The circumcenter $\tripl\in \Triang = \co\{v_1,v_2,v_3\}$.
      \item $V_t(x)\in \D u_t(x)$ for all $x\in\R^d$.
      \item $T_t(y)=X_t(y)$ for all $t\ge0$ and all $y\in\R^d$.
      \item The transport maps $T_t$ are ``sticky,'' meaning that
\[
T_s(y)=T_s(z) \quad\text{implies}\quad T_t(y)=T_t(z)\quad\text{whenever $0\le s\le t$.}
\]
\item For any $t>0$, $V_t$ satisfies a one-sided Lipschitz condition.
    \end{enumerate}
\end{prop}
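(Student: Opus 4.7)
The plan is to close the cycle (i)$\Rightarrow$(ii)$\Rightarrow$(iii)$\Rightarrow$(iv)$\Rightarrow$(i) and then establish (i)$\Leftrightarrow$(v) separately. The implications (i)$\Leftrightarrow$(ii), (ii)$\Rightarrow$(iii) and (iii)$\Rightarrow$(iv) are short corollaries of machinery built earlier; the return (iv)$\Rightarrow$(i) carries the only real geometric content and is the main obstacle.

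For (i)$\Leftrightarrow$(ii), I would just compare Lemma~\ref{l:ode} with \eqref{e:Dutex} case by case: on each open sector $V_t(x)=v_i=\D u_t(x)$, on each ray $V_t(x)=\tfrac12(v_i+v_j)\in[v_i,v_j]=\D u_t(x)$, and at $t\tripl$ the inclusion $V_t(t\tripl)=\tripl\in\Triang=\D u_t(t\tripl)$ holds iff (i) holds. For (ii)$\Rightarrow$(iii): Lemma~\ref{l:ode} already asserts that $t\mapsto T_t(y)$ is Lipschitz with $\D_t T_t(y)=V_t(T_t(y))$ at a.e.~$t$, so under (ii) this is a Lipschitz solution of the differential inclusion \eqref{e:DXt}, and Lemma~\ref{l:uniq} identifies it with $X_t(y)$. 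Then (iii)$\Rightarrow$(iv) is immediate from Theorem~\ref{t:UNIQ}(c.ii), which already supplies stickiness for $X_t$.

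The harder direction (iv)$\Rightarrow$(i) I would handle by contrapositive using the explicit dichotomy in \eqref{e:Ttcases}. Assume $\tripl\notin\Triang$. For $y=s(\tripl-v)$, the identity $T_t(y)=t\tripl$ is equivalent to $w(t):=(1-s/t)\tripl+(s/t)v\in\Triang$. Take $s=1$ and $v\in \mathrm{int}(\Triang)$: the curve $w(\cdot)$ starts at $v\in\Triang$ and tends to $\tripl\notin\Triang$ as $t\to\infty$, so it exits $\Triang$ at some finite time $t_v>1$ that varies continuously with $v$. Selecting $v,\tilde v\in \mathrm{int}(\Triang)$ with $t_v<t_{\tilde v}$ (easy, e.g. by taking $v$ near the edge of $\Triang$ facing $\tripl$ and $\tilde v$ near the opposite vertex), and setting $y_v=\tripl-v$, $y_{\tilde v}=\tripl-\tilde v$, one has $T_1(y_v)=T_1(y_{\tilde v})=\tripl$, but for $t\in(t_v,t_{\tilde v})$, $T_t(y_v)\ne t\tripl=T_t(y_{\tilde v})$, so stickiness fails.

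For (i)$\Leftrightarrow$(v): under (ii), $V_t$ is a selection of the set-valued map $\D u_t$ for the concave $u_t$, so Lemma~\ref{lem:Dflambda}(ii) applied with $\lambda=0$ yields $(V_t(x)-V_t(\hat x))\mdot(x-\hat x)\le0$. Conversely, if $\tripl\notin\Triang$ then Lemma~\ref{l:tripl} supplies a cyclic pair with $\xi_{ij}>0$; testing $\hat x=t\tripl$ against $x=t\tripl+\alpha\tau_{ij}/|\tau_{ij}|\in R^t_{ij}$ (where $V_t(x)=v_{ij}$ and $V_t(\hat x)=\tripl$) yields $(V_t(x)-V_t(\hat x))\mdot(x-\hat x)=\alpha\xi_{ij}/|\tau_{ij}|$ while $|x-\hat x|^2=\alpha^2$, and the resulting ratio blows up as $\alpha\to 0^+$, ruling out any one-sided Lipschitz constant. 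The only delicate point throughout is the geometric selection of $v,\tilde v$ in step (iv)$\Rightarrow$(i), but its verification is elementary planar geometry given the convex dichotomy already in place.
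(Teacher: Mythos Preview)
Your proof is correct and follows the same chain of implications as the paper: (i)$\Leftrightarrow$(ii) by case comparison, (ii)$\Rightarrow$(iii) via the uniqueness lemma for the differential inclusion, (iii)$\Rightarrow$(iv) from the stickiness of $X_t$, and (i)$\Leftrightarrow$(v) via Lemma~\ref{lem:Dflambda} in one direction and the $\xi_{ij}>0$ ray test in the other.

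The only substantive difference is in the contrapositive (iv)$\Rightarrow$(i). You parametrize points $y=\tripl-v$ for $v\in\mathrm{int}(\Triang)$ and track the exit time $t_v$ of the affine path $t\mapsto(1-1/t)\tripl+(1/t)v$ from $\Triang$, then exhibit two interior points with distinct exit times to break stickiness. The paper instead fixes $s>0$, takes $y$ to be the point of the closed triangle $s(\tripl-\Triang)$ nearest the origin and $\hat y$ any interior point; since $0\notin\tripl-\Triang$, the scaled triangles $t(\tripl-\Triang)$ move away from $y$ for $t>s$, so $T_t(y)\ne t\tripl=T_t(\hat y)$ for $t$ slightly larger than $s$. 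The paper's choice is a bit more direct (no continuity argument or selection of two interior points needed), but your construction is equally valid and arguably makes the time-dependence more transparent.
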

\begin{proof}
   That (i) is equivalent to (ii) is evident by comparing the formula for 
   $V_t$ in Lemma~\ref{l:ode} with that for $\D u_t$ in \eqref{e:Dutex}.
   Next, (ii) implies (iii), for if (ii) holds,
   then $t\mapsto T_t(y)$ is a 
   Lipschitz solution of the differential inclusion \eqref{e:DXt}, and  
   $T_t(y)=X_t(y)$ follows by the uniqueness lemma~\ref{l:uniq}. 
   Since the family $X_t$
   is ``sticky'' by Theorem~\ref{t:UNIQ}, (iii) implies (iv).
   Also (ii) implies (v) by Lemma~\ref{lem:Dflambda}, for since $\vp$ is concave,
   $u_t$ is concave. 

   It remains to show that each of (iv) and (v) imply (i).
   Suppose (i) fails, so $\tripl\notin \Triang$. First we show (iv) fails,
   i.e., the family $T_t$ ($t\ge0$) is not sticky.
Let $s>0$. The closed triangle $s(\tripl-\Triang)$ does not contain $0$.
Let $y$ be the point of this closed triangle closest to $0$,
and let $\hat y$ be any point in the interior. Then
$T_s(y)=T_s(\hat y)=s\tripl$. But $y\notin t(\tripl-\Triang)$ for any $t>s$,
so for $t-s>0$ small enough we have $t\tripl=T_t(\hat y)\ne T_t(y)$.

Finally, if (i) fails we show (v) fails. Suppose $\tripl\notin\Triang$.
By Lemma~\ref{l:tripl} let $(i,j)$ be in cyclic order such that $\xi_{ij}>0$.
Then for $\hat x=t\tripl$ and $x= t\tripl+ s\tau_{ij}$ with $s>0$ we have $x\in R^t_{ij}$
and 
\[
(V_t(x)-V_t(\hat x))\cdot(x-\hat x) = (v_{ij}-\tripl)\cdot (s\tau_{ij}) =s\xi_{ij}> 0.
\]
But for any $\lambda>0$ this cannot be bounded above by 
$\lambda|x-\hat x|^2 = \lambda s^2|\tau_{ij}|^2$ for all $s>0$.
So $V_t$ does not satisfy a one-sided Lipschitz condition.
\end{proof}

\begin{remark}
Recall that $T_t(y)\in R^t_{ij}$ if and only if $y/t\in S_{ij}$, where the set
\[ S_{ij}:= R^0_{ij}+\tripl -[v_i,v_j] \]
is a half-infinite strip 
bisected by the ray $\{ s\tau_{ij}+\tripl-v_{ij}:s>0\}$.
In case $\tripl\notin\Triang$, the strip $S_{ij}$ for which $\xi_{ij}>0$ 
contains a neighborhood of the origin (since $v_{ij}-\tripl=s\tau_{ij}$ with $s>0$). 
This has the consequence that every path
$t\mapsto T_t(y)$ eventually ends up in the ray $R^t_{ij}$ if $t$ is large enough.
\end{remark}

\subsection{Lagrangian paths and mass flow }

The main result in this subsection concerns when the limiting advected mass measures
$\rho_t = (X_t)_\sharp\leb$ agree with the \MA\ measure $\mam_t = (T_t)_\sharp\leb$,
and when they do {\em not}.

\begin{prop}i\label{p:rhomameq} The following are equivalent:
\begin{enumerate}[(i)]
    \item $\tripl\in \Triang$.
    \item $\rho_t = \mam_t$ for all $t>0$.
    \item $\rho_t = \mam_t$ for some $t>0$.
\end{enumerate}
\end{prop}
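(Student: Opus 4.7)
The plan is to establish the cycle (i) $\Rightarrow$ (ii) $\Rightarrow$ (iii) $\Rightarrow$ (i). The forward implications are immediate: (ii) $\Rightarrow$ (iii) is trivial, while (i) $\Rightarrow$ (ii) follows directly from Proposition~\ref{p:sticky}, which yields $T_t = X_t$ for all $t \ge 0$ whenever $\tripl \in \Triang$, so that $\mam_t = (T_t)_\sharp \leb = (X_t)_\sharp \leb = \rho_t$ for every $t > 0$.

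The main work is (iii) $\Rightarrow$ (i), which I establish by contraposition: assuming $\tripl \notin \Triang$, the goal is to show $\rho_t \ne \mam_t$ for every $t > 0$. By Proposition~\ref{prop:Dpts}, $\mam_t(\{t\tripl\}) = t^2|\Triang| > 0$, so it suffices to prove that $\rho_t$ carries no atom at the triple point. Writing $E_t := X_t\inv(\{t\tripl\})$, one has $\rho_t(\{t\tripl\}) = |E_t|$, and the task reduces to showing $|E_t| = 0$ for every $t > 0$.

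The proof of $|E_t| = 0$ will combine three ingredients: disjointness of $\{E_t\}_{t>0}$, self-similarity, and Tonelli's theorem. For disjointness, since $\tripl \notin \Triang$, Lemma~\ref{l:tripl} supplies a unique cyclic pair $(i,j)$ with $\xi_{ij} > 0$, corresponding to the ``outgoing'' ray $R^{s'}_{ij}$. Given $y \in E_s$, the sticky property (Theorem~\ref{t:UNIQ}(c)(ii)) together with uniqueness of Lipschitz solutions of \eqref{e:DXt} (Lemma~\ref{l:uniq}, which extends straightforwardly to initial time $s$) forces $X_{s'}(y)$ for $s' \ge s$ to coincide with the unique solution $\bar z_{s'}$ of \eqref{e:DXt} satisfying $\bar z_s = s\tripl$. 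The plan is to verify that $\bar z_{s'} := s\tripl + (s'-s)v_{ij}$ is this solution: since $v_{ij}$ and $\tripl$ both lie on the perpendicular bisector of $[v_i,v_j]$, one has $v_{ij} - \tripl = (\xi_{ij}/|\tau_{ij}|^2)\tau_{ij}$, a positive multiple of $\tau_{ij}$; thus $\bar z_{s'} - s'\tripl = (s'-s)(v_{ij} - \tripl) \in R^0_{ij}$ for all $s' > s$, placing $\bar z_{s'} \in R^{s'}_{ij}$, and $v_{ij} \in [v_i,v_j] = \D u_{s'}(\bar z_{s'})$ by \eqref{e:Dutex}. Specializing to $s' = t > s$ yields $X_t(y) = s\tripl + (t-s)v_{ij} \ne t\tripl$, because $v_{ij} \ne \tripl$. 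Hence $E_s \cap E_t = \emptyset$ whenever $s \ne t$.

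For self-similarity, positive $1$-homogeneity of $\vp$ and the Hopf-Lax formula \eqref{e:HLOmega1} yield $u_{ct}(cx) = c u_t(x)$ for every $c > 0$, whence Lemma~\ref{l:uniq} gives $X_{ct}(cy) = c X_t(y)$ and so $E_t = t E_1$, $|E_t| = t^2 |E_1|$. The set $F := \{(y,t) \in \R^2 \times (0,1) : X_t(y) = t\tripl\}$ is closed (hence Borel) by continuity of $X$, and by the disjointness established above, every $y$-slice of $F$ is either empty or a singleton and therefore has zero one-dimensional Lebesgue measure. Tonelli's theorem then yields
\[
0 \;=\; \int_{\R^2} |\{t \in (0,1) : y \in E_t\}|\,dy \;=\; \int_0^1 |E_t|\,dt \;=\; |E_1| \int_0^1 t^2\,dt \;=\; \frac{|E_1|}{3},
\]
forcing $|E_1| = 0$ and thus $|E_t| = 0$ for every $t > 0$. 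The chief technical point to execute carefully is the DI verification for the sticky trajectory $\bar z_{s'}$, which reduces to the geometric identity $v_{ij} - \tripl = (\xi_{ij}/|\tau_{ij}|^2)\tau_{ij}$ together with the explicit formula \eqref{e:Dutex} for $\D u_{s'}$.
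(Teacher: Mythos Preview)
Your proof is correct and shares its overall architecture with the paper's: the cycle (i)$\Rightarrow$(ii)$\Rightarrow$(iii)$\Rightarrow$(i), with the forward implications handled exactly as you do, and the contrapositive of (iii)$\Rightarrow$(i) reduced to showing $\rho_t(\{t\tripl\})=0$ while $\mam_t(\{t\tripl\})>0$. Your key observation---that any Lagrangian path hits $t\tripl$ for at most one value of $t$, because from the moment of arrival the unique DI-solution is $s\tripl+(s'-s)v_{ij}$ along the outflowing ray---is exactly the content of the paper's Lemma~\ref{l:tstar}.

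Where you diverge is in the final step. The paper computes $X_t^{-1}(t\tripl)$ explicitly (Lemma~\ref{l:edge}(ii)) as the union of two line segments forming the ``far'' sides of the triangle $t(\tripl-\Triang)$, which immediately has planar measure zero. You instead combine the at-most-once property with the scaling identity $E_t=tE_1$ (coming from the $1$-homogeneity of $\vp$) and Tonelli to conclude $|E_1|=0$ without ever identifying $E_t$. Your route is more economical for this proposition alone and would generalize to situations where explicit pre-image formulae are unavailable; the paper's explicit computation, on the other hand, feeds directly into the subsequent structural results (Lemma~\ref{l:lastcases} and the monotone-reconstruction impossibility in Section~\ref{ss:impossible}), so it serves a broader purpose in the paper.
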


Of course, when $\tripl\in\Triang$ then $\rho_t=\mam_t$ for all $t$ since $X_t=T_t$
by Proposition~\ref{p:sticky}. So (i) implies (ii)  and (ii) implies (iii).
Our task remains to show that if (i) fails then (iii) fails.
We will accomplish this by showing later in this section
that if $\tripl\notin\Triang$, then $\rho_t$ contains no delta mass,
while $\mam_t$ always does, by Proposition~\ref{prop:Dpts}.

\subsubsection{Lagrangian paths}
We turn first to describe the Lagrangian paths $t\mapsto X_t(y)$ in case
$\tripl\notin \Triang$.  The key is that these paths cannot ``linger''
at the point $t\tripl$ in this case, but instead are forced to move immediately
onto the ray $R^t_{ij}$ on which the velocity $v_{ij}$ pushes points away from 
the endpoint $t\tripl$.
\begin{lemma}\label{l:tstar}
    Assume $\tripl\notin\Triang$ and choose $(i,j)$ in cyclic order satisfying $\xi_{ij}>0$.
    Let $y\in\R^d$ and let $\tstar=\inf\{t\ge0: T_t(y)=t\tripl\}$.
    \begin{enumerate}[(i)]
        \item If $\tstar=\infty$ (i.e., $T_t(y)\ne t\tripl$ for all $t$)
        then $X_t(y)=T_t(y)$ for all $t\ge0$.
        \item If $\tstar<\infty$ then 
        \[
        X_t(y) = \begin{cases}
            T_t(y) & 0\le t\le \tstar\,,\\
            \tstar\tripl + (t-\tstar)v_{ij} & t>\tstar\,.
        \end{cases}
        \]
    \end{enumerate}
        In all cases, $X_t(y)=t\tripl$ for at most one value of $t$.
        Moreover, if $y/t$ lies in the interior of $\tripl-\Triang$ for some $t>0$,
        then $X_t(y)\ne T_t(y)$ for all $t>\tstar$.
\end{lemma}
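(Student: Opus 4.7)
My plan is to deduce the theorem from uniqueness of Lipschitz solutions to the differential inclusion \eqref{e:DXt}, which applies by Lemma~\ref{l:uniq} since $\vp$ is $K$-Lipschitz and concave. The starting observation is that the transport velocity $V_t$ from Lemma~\ref{l:ode} selects, at every point $x\ne t\tripl$, a value lying in $\D u_t(x)$ as listed in \eqref{e:Dutex}: on $A^t_k$ both equal $\{v_k\}$, while on $R^t_{kl}$ the average $v_{kl}$ lies in $[v_k,v_l]$. Combined with Lemma~\ref{l:ode}, this says that on any time interval during which $t\mapsto T_t(y)$ avoids $t\tripl$, the transport path is a Lipschitz solution of \eqref{e:DXt} with initial value $y$.

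For part (i), when $\tstar=\infty$ this holds on $[0,\infty)$ and uniqueness gives $X_t(y)=T_t(y)$ for all $t$. For part (ii), the same argument gives $X_t(y)=T_t(y)$ on $[0,\tstar)$, extended by continuity to $X_{\tstar}(y)=\tstar\tripl$. For $t>\tstar$, I would check that the affine curve $\hat x_t:=\tstar\tripl+(t-\tstar)v_{ij}$ satisfies
\[
\hat x_t - t\tripl = (t-\tstar)(v_{ij}-\tripl),
\]
a positive multiple of $\tau_{ij}$ since $v_{ij}-\tripl$ lies on the perpendicular bisector of $[v_i,v_j]$ (parallel to $\tau_{ij}$) and $\xi_{ij}>0$ by the choice of $(i,j)$. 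Thus $\hat x_t\in R^t_{ij}$ and $v_{ij}\in[v_i,v_j]=\D u_t(\hat x_t)$, so \eqref{e:DXt} holds; the concatenation with $T_t(y)$ on $[0,\tstar]$ is a Lipschitz solution starting from $y$, and by uniqueness it equals $X_t(y)$. The ``at most one value'' claim then follows immediately: in case (i) $X_t(y)$ never reaches $t\tripl$, and in case (ii) the displayed identity shows $X_t(y)-t\tripl$ vanishes only at $t=\tstar$, since $v_{ij}\ne\tripl$ when $\xi_{ij}\ne 0$.

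For the final claim, the hypothesis $y/t_0\in\interior(\tripl-\Triang)$ combined with convexity of $\tripl-\Triang$ and continuity of $t\mapsto y/t$ forces $\{t>0:y/t\in\tripl-\Triang\}$ to be a nondegenerate interval $[\tstar,t^{**}]$ with $\tstar<t^{**}<\infty$. For $t\in(\tstar,t^{**}]$, $T_t(y)=t\tripl\ne X_t(y)$ by the previous paragraph. For $t>t^{**}$, $T_t(y)$ lies in some open sector $A^t_m$ or on some ray $R^t_{kl}$; if it is not on $R^t_{ij}$, then it is disjoint from $X_t(y)\in R^t_{ij}\setminus\{t\tripl\}$, giving the conclusion.

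The main obstacle is the residual case $T_t(y)\in R^t_{ij}$ for $t>t^{**}$. I would resolve it by writing $T_t(y)=t\tripl+s(t)\tau_{ij}$ with $s(t^{**})=0$ and invoking $\D_tT_t(y)=v_{ij}$ from Lemma~\ref{l:ode} to obtain $s'(t)|\tau_{ij}|^2=\xi_{ij}$. Integrating, the distance of $T_t(y)$ from $t\tripl$ equals $(t-t^{**})|v_{ij}-\tripl|$, strictly less than the distance $(t-\tstar)|v_{ij}-\tripl|$ of $X_t(y)$ from $t\tripl$, because $\tstar<t^{**}$. Hence $X_t(y)\ne T_t(y)$ in this case as well, completing the proof.
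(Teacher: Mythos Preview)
Your argument is correct and follows the same strategy as the paper: verify that the candidate path satisfies the differential inclusion \eqref{e:DXt} and invoke the uniqueness Lemma~\ref{l:uniq}. Parts (i), (ii) and the ``at most one value'' claim are handled exactly as in the paper, only more explicitly. For the final ``moreover'' claim you actually give considerably more detail than the paper, whose proof is a single sentence.

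One step in that last part deserves a word of justification. Your integration from $t^{**}$ to $t$ tacitly assumes that $T_\tau(y)\in R^\tau_{ij}$ for \emph{all} $\tau\in(t^{**},t)$, i.e., that once $y/\tau$ leaves the triangle $\tripl-\Triang$ it enters the strip $S_{ij}$ immediately (not some other strip or sector first). This is true, but it is not automatic from your case split: a priori the transport path could pass through another region before reaching $R^t_{ij}$, and then $s(t^{**})=0$ would not be the right initial condition for the ODE $s'|\tau_{ij}|^2=\xi_{ij}$. The missing observation is that by Lemma~\ref{l:tripl} one has $\xi_{jk},\xi_{ki}\le0$, which says the origin lies (weakly) on the same side as the triangle's interior relative to each of the lines through $\tripl-[v_j,v_k]$ and $\tripl-[v_k,v_i]$. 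Hence the segment from $y/t_0\in\interior(\tripl-\Triang)$ to $0$ cannot exit the triangle through either of those sides, so it exits through $\tripl-[v_i,v_j]$ and $y/\tau\in S_{ij}$ for all $\tau>t^{**}$. With this, your integration is justified and the conclusion follows.
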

\begin{proof}
    If $T_t(y)\ne t\tripl$ for all $t\ge0$,
    then $t\mapsto T_t(y)$ is a Lipschitz solution of the
    differential inclusion in \eqref{e:DXt}, so $T_t(y)=X_t(y)$ for all $t$ by the 
    uniqueness lemma~\ref{l:uniq}.
    On the other hand, if $\tstar<\infty$ then the right-hand side in (ii) satisfies
    the differential inclusion, so agrees with $X_t(y)$ for the same reason.
    If ever $y/t$ is interior to $\tripl-\Triang$, then $T_t(y)=t\tripl$ for all
    $t$ in some open interval with endpoint $\tstar$, with the consequence that 
    $X_t(y)\ne T_t(y)$ for all $t>\tstar$.
\end{proof}

A striking corollary of this result is that
the Lagrangian paths can yield a different solution of the {\em same} differential equation 
with discontinuous right-hand side that is satisfied by the non-sticky transport paths 
$t\mapsto T_t(y)$. Naturally, this is only possible since the velocity field 
fails to satisfy a one-sided Lipschitz condition due to Proposition~\ref{p:sticky}(v).
\begin{prop} \label{p:ode2}
   Let $y\in\R^d$ and let $t>0$ be a point of differentiability of $t\mapsto X_t(y)$.
   Then
\begin{equation}
\D_t X_t(y) = V_t(X_t(y))\,,
\end{equation}
where $V_t(x)$ is defined for all $x$ the same as in Lemma~\ref{l:ode}.
However, if $\tripl\notin\Triang$ then $t$ is never a point of differentiability when $X_t(y)=\tripl$. 
\end{prop}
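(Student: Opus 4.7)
The plan is to use Lemma~\ref{l:tstar} to express $X_t(y)$ explicitly as a piecewise-linear path and then verify the ODE case by case. When $\tstar=\infty$, or $\tstar<\infty$ and $t\in[0,\tstar)$, the identity $X_t(y)=T_t(y)$ together with Lemma~\ref{l:ode} immediately yields $\D_t X_t(y)=V_t(T_t(y))=V_t(X_t(y))$ at every interior point of a linear piece. It remains to handle the regime $\tstar<\infty$, $t>\tstar$, and the distinguished time $t=\tstar$.

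For $t>\tstar$, Lemma~\ref{l:tstar}(ii) gives $X_t(y)=\tstar\tripl+(t-\tstar)v_{ij}$, so $\D_t X_t(y)=v_{ij}$. To see that this matches $V_t(X_t(y))$, I would compute
\[
X_t(y)-t\tripl=(t-\tstar)(v_{ij}-\tripl)
\]
and observe that $v_{ij}-\tripl$ lies on the perpendicular bisector of $[v_i,v_j]$, so it is a scalar multiple of $\tau_{ij}=J(v_j-v_i)$; the condition $\xi_{ij}=(v_{ij}-\tripl)\cdot\tau_{ij}>0$ from Lemma~\ref{l:tripl} makes the multiplier positive. Hence $X_t(y)-t\tripl\in R^0_{ij}$, so $X_t(y)\in R^t_{ij}$, and Lemma~\ref{l:ode} gives $V_t(X_t(y))=v_{ij}$ as required.

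For the second assertion I would argue by contradiction: suppose $X_t(y)=t\tripl$ with $\tripl\notin\Triang$. By Lemma~\ref{l:tstar} this forces $t=\tstar$, and the right derivative of $X$ at $\tstar$ equals $v_{ij}$ by the explicit formula just analyzed. For the left derivative I exploit the fact that the piecewise-linear decomposition of $T_s(y)$ is finite: as $s$ ranges over $(0,\infty)$, the point $y/s$ traces a ray through the origin (or is identically $0$ if $y=0$, but in that case the Chebyshev-center computation forces $T_t(0)=tv_{k\ell}\ne t\tripl$ for all $t$, so $\tstar=\infty$ and the hypothesis is vacuous), and this ray meets each of the seven convex regions indexing the alternatives in \eqref{e:Ttcases} in at most one subinterval. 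Therefore on some interval $(\tstar-\delta,\tstar)$ the path $T_s(y)$ is linear with a single velocity $w$; moreover $w\ne\tripl$ since $T_s(y)\ne s\tripl$ for $s<\tstar$, so $w\in\{v_1,v_2,v_3,v_{12},v_{23},v_{31}\}$. In every case except $w=v_{ij}$, the non-collinearity of $v_1,v_2,v_3$ gives $w\ne v_{ij}$ trivially. The only case requiring work is $w=v_{ij}$: then $T_s(y)=s\tripl+\alpha(s)\tau_{ij}/|\tau_{ij}|$ on $(\tstar-\delta,\tstar)$ with $\alpha(s)>0$, and differentiating gives $\alpha'(s)=\xi_{ij}/|\tau_{ij}|>0$, so $\alpha$ is strictly increasing; yet continuity forces $\alpha(s)\to 0^+$ as $s\to\tstar^-$, contradicting $\alpha(s)>0$. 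Hence in every case $w\ne v_{ij}$, the left and right derivatives at $\tstar$ disagree, and $X$ is not differentiable there.

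The main obstacle is the $\alpha$-monotonicity argument ruling out the scenario $w=v_{ij}$; once this is in hand, the remaining verifications reduce to direct computations using Lemmas~\ref{l:tstar} and~\ref{l:ode} together with the geometric fact $\xi_{ij}>0$ from Lemma~\ref{l:tripl}.
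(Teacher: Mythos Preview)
The paper gives no explicit proof of this proposition, stating it as a corollary of Lemma~\ref{l:tstar}; your argument fills in the details along exactly the lines the paper intends, and the $\alpha$-monotonicity step ruling out left-velocity $v_{ij}$ at $\tstar$ is correct.

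One slip: for $y=0$ you assert $\tstar=\infty$, but since $T_0(0)=0=0\cdot\tripl$ one actually has $\tstar=0$. Your conclusion survives, because Lemma~\ref{l:tstar}(ii) then gives $X_t(0)=tv_{ij}$ for all $t>0$, and since $v_{ij}\ne\tripl$ when $\tripl\notin\Triang$, the hypothesis $X_t(0)=t\tripl$ remains vacuous for $t>0$ (and the first assertion follows from your computation for the regime $t>\tstar$). Also note that your whole argument invokes Lemma~\ref{l:tstar}, which assumes $\tripl\notin\Triang$; in the complementary case $\tripl\in\Triang$ the second assertion is vacuous and the first follows at once from Proposition~\ref{p:sticky} (so $X_t=T_t$) together with Lemma~\ref{l:ode}.
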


\subsubsection{Inverse Lagrangian maps}

To understand the mass measures $\rho_t$ when $\tripl\notin\Triang$,
we need to understand the pre-image sets $X_t\inv(x)$.
First we handle the easier cases when $X_t\inv(x)=T_t\inv(x)$.
From \eqref{d:Rijt} and the role of the relative velocity $v_{ij}-\tripl$ 
in \eqref{d:xiij}
we have that 
\begin{align*}
R^t_{ij}=&R^0_{ij}+t\tripl \supset  R^0_{ij}+tv_{ij} \quad\text{if $\xi_{ij}>0$},\\
R^t_{ij}=&R^0_{ij}+t\tripl \subset  R^0_{ij}+tv_{ij} \quad\text{if $\xi_{ij}\le0$}.
\end{align*}
In every case, if $x$ is in the {\em smaller} of the sets 
$R^0_{ij}+t\tripl$ and $R^0_{ij}+tv_{ij}$, then the point $y=x-tv_{ij}\in R^0_{ij}$
and  we have $T_s(y)=y+sv_{ij}\ne s\tripl$ for $0\le s\le t$. Moreover, 
the same is true whenever
\[
y\in T_t\inv(x) = \D w_t(x)=x-t[v_i,v_j],
\]
so in this case we have $X_s(y)=T_s(y)$ for $s\in[0,t]$, 
hence $T_t\inv(x)\subset X_t\inv(x)$.
By the same reasoning, if $x\in A^t_i$ then $X_s(y)=T_s(y)$ for $y=x-tv_i$
and $0\le s\le t$, hence $T_t\inv(x)\subset X_t\inv(x)$.

\begin{lemma}\label{l:easy}
    Assume $\tripl\notin\Triang$. Then for any $x\in\R^d$,
    \[
    X_t\inv(x) = 
    \begin{cases}
       \{x-tv_i\} & \text{if\,\ $x\in A^t_i$\,,} \\
    x-t[v_i,v_j] & 
    \text{if\,\  $x\in (R^0_{ij}+t\tripl) \cap (R^0_{ij}+tv_{ij})$.}
    \end{cases}
    \]
\end{lemma}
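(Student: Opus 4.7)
The preceding discussion already provides $T_t\inv(x)\subset X_t\inv(x)$, so the plan is to prove the reverse inclusion $X_t\inv(x)\subset T_t\inv(x)$ in each of the two cases listed. Pick $\hat y\in X_t\inv(x)$ and apply Lemma~\ref{l:tstar}: either $\tstar(\hat y)=\infty$, whence $X_s(\hat y)=T_s(\hat y)$ for all $s$ and $\hat y\in T_t\inv(x)$ immediately, or $\tstar:=\tstar(\hat y)<\infty$ and for $s>\tstar$ the particle detaches from the transport path and moves with velocity $v_{ij}$, where $(i,j)$ is the unique cyclic pair with $\xi_{ij}>0$ furnished by Lemma~\ref{l:tripl}. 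The whole argument thus reduces to excluding this second branch (in the non-trivial regime $\tstar<t$; the case $\tstar=t$ merely gives $X_t(\hat y)=T_t(\hat y)=t\tripl$ and collapses into the first branch) whenever $x$ lies in the sets named in the lemma.

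In that regime I would rewrite
\[
X_t(\hat y) \;=\; \tstar\tripl + (t-\tstar)v_{ij} \;=\; t\tripl + (t-\tstar)\,c\,\tau_{ij},
\]
where $c>0$ is the scalar defined by $v_{ij}-\tripl=c\tau_{ij}$: both $v_{ij}$ and $\tripl$ lie on the perpendicular bisector of $[v_i,v_j]$, and the sign of $c$ is precisely $\xi_{ij}>0$. As $\tstar$ sweeps $(0,t)$, the endpoint traces the open segment
\[
\calL \;:=\; \{\,t\tripl + s\tau_{ij} : 0 < s < tc\,\},
\]
sitting inside the ray $R^t_{ij}$ between the triple point and the midpoint image $tv_{ij}$.

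It remains to verify $\calL$ is disjoint from each candidate location of $x$. For $x\in A^t_k$ this is immediate, since the open sector $A^t_k$ meets no ray $R^t_{\ell m}$ and omits the triple point. For $x\in(R^0_{jk}+t\tripl)\cap(R^0_{jk}+tv_{jk})$ with $(j,k)\ne(i,j)$, the rays $R^t_{jk}$ and $R^t_{ij}$ share only the endpoint $t\tripl$, which is excluded from the intersection, so $\calL\cap R^t_{jk}=\emptyset$. The delicate sub-case is $(j,k)=(i,j)$: a direct parametric check gives
\[
(R^0_{ij}+t\tripl)\cap(R^0_{ij}+tv_{ij}) \;=\; \{\,t\tripl+s\tau_{ij} : s > tc\,\},
\]
exactly complementary to $\calL=\{0<s<tc\}$ in the open ray from $t\tripl$, the two sets meeting only at the common endpoint $tv_{ij}$ which belongs to neither. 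I expect this ray bookkeeping -- confirming that the ``outer'' segment where $x$ is allowed to sit and the ``inner'' collision locus $\calL$ fit together cleanly across $tv_{ij}$ -- to be the one step requiring real care; once it is settled, every sub-case forces $X_t(\hat y)\ne x$ in the second branch, hence $\tstar(\hat y)=\infty$ and $\hat y\in T_t\inv(x)$, completing the proof.
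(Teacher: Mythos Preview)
Your proof is correct and follows essentially the same route as the paper's: both reduce to observing that if $X_t(\hat y)\ne T_t(\hat y)$ then $x=\tstar\tripl+(t-\tstar)v_{ij}$ with $\tstar\in[0,t)$ lands on the segment of $R^t_{ij}$ between $t\tripl$ and $tv_{ij}$, which is disjoint from the sets named in the statement. The paper merely compresses your ``delicate sub-case'' into the one-line check $x-tv_{ij}=\tstar(\tripl-v_{ij})\notin R^0_{ij}$; note also that your range for $\tstar$ should be $[0,t)$ rather than $(0,t)$ to include $\hat y=0$, though that endpoint yields $x=tv_{ij}$, which is still excluded.
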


\begin{proof}
We claim actually $X_t\inv(x)=T_t\inv(x)$ in each case. 
If not, then there exists $y$ with $x=X_t(y)\ne T_t(y)$.
By Lemma~\ref{l:tstar}, necessarily 
\[
\tstar\tripl  =T_{\tstar}(y)= X_{\tstar}(y)
\quad\text{ for some  $\tstar\in[0,t)$},
\]
and moreover $x=X_t(y)$ must lie on the {\em outflowing} ray $R^t_{ij}$ where $\xi_{ij}>0$.
Thus it cannot be that $x$ lies in $A^t_i$ or on a ray $R^t_{ij}$ where $\xi_{ij}\le 0$.

In the remaining case, if $x\in R^t_{ij}$ with $\xi_{ij}=(v_{ij}-\tripl)\cdot\tau_{ij}>0$, 
it follows 
\[
x-tv_{ij} = \tstar(\tripl-v_{ij}) \notin R^0_{ij} \,,
\]
contradicting the assumption $x\in R^0+tv_{ij}$ in this case. 
\end{proof}

It remains to analyze the cases when $x=t\tripl$ or $x\in R^t_{ij}$ but {\em not}
$R^0_{ij}+tv_{ij}$. Since the endpoints of these rays are 
$t\tripl$ and $tv_{ij}$ respectively, the cases remaining correspond to taking
\begin{equation}\label{e:xcases}
x\in[t\tripl,t v_{ij}], \quad\text{so}\quad x=(t-s)\tripl+sv_{ij}\,, \quad 0\le s\le t. 
\end{equation}
We first consider the endpoints, which will allow us to handle the rest.
\begin{lemma}\label{l:edge} 
Let $t>0$ and take $i,j,k$ in cyclic order with $\xi_{ij}>0$. Then
\begin{enumerate}[(i)]
    \item $X_t\inv(tv_{ij})= x - t[v_i,v_j]$
    \item $X_t\inv(t\tripl) = (x-t[v_j,v_k])\cup (x-t[v_k,v_i])$
\end{enumerate}
\end{lemma}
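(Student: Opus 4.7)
The plan is to apply Lemma~\ref{l:tstar} to split Lagrangian paths into those that avoid the triple point ($\tstar\ge t$ or $\tstar=\infty$) and those that detour through it ($\tstar<t$), then use one geometric fact to handle both parts. Since $\tripl$ and $v_{ij}$ both lie on the perpendicular bisector of $[v_i,v_j]$, the vector $v_{ij}-\tripl$ is parallel to $\tau_{ij}$; with $\xi_{ij}>0$ this gives $v_{ij}-\tripl=c\tau_{ij}$ where $c=\xi_{ij}/|\tau_{ij}|^2>0$. Consequently the edge $\tripl-[v_i,v_j]$ of the triangle $\tripl-\Triang$ lies in the hyperplane $\{z\cdot\tau_{ij}=-c|\tau_{ij}|^2\}$, while the opposite vertex $\tripl-v_k$ satisfies $(\tripl-v_k)\cdot\tau_{ij}<-c|\tau_{ij}|^2$, since by the proof of Lemma~\ref{l:tripl} the point $v_k$ lies on the opposite side of the oriented line from $v_i$ to $v_j$ from $\tripl$. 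By convexity, $\tripl-\Triang\subset\{z:z\cdot\tau_{ij}\le-c|\tau_{ij}|^2\}$, and the orthogonal projection of the origin onto the line through the edge $\tripl-[v_i,v_j]$ is the midpoint $\tripl-v_{ij}$, which is strictly interior to that edge; hence $\tripl-[v_i,v_j]$ is the unique face of $\tripl-\Triang$ visible from the origin.

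For part~(i), first note that $tv_{ij}=t\tripl+tc\tau_{ij}\in R^t_{ij}$, so $T_t\inv(tv_{ij})=tv_{ij}-t[v_i,v_j]$ by Proposition~\ref{prop:Dpts} together with $T_t\inv=\D w_t$. For any $y=tv_{ij}-tw$ with $w=(1-\alpha)v_i+\alpha v_j\in[v_i,v_j]$, a direct computation gives $y=t(\tfrac12-\alpha)(v_j-v_i)$, so $y\cdot\tau_{ij}=0$ because $\tau_{ij}=J(v_j-v_i)\perp(v_j-v_i)$. Then $(y/s)\cdot\tau_{ij}=0$ for every $s>0$, and the geometric step forces $y/s\notin\tripl-\Triang$, so $\tstar=\infty$ and Lemma~\ref{l:tstar}(i) yields $X_t(y)=T_t(y)=tv_{ij}$. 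For the converse, if $X_t(y)=tv_{ij}$ with $\tstar<\infty$, then Lemma~\ref{l:tstar}(ii) forces $\tstar\tripl+(t-\tstar)v_{ij}=tv_{ij}$, i.e.\ $\tstar(\tripl-v_{ij})=0$, so $\tstar=0$ and $y=0$, which already lies in the claimed set.

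For part~(ii), inspection of Lemma~\ref{l:tstar} shows that $X_t(y)=t\tripl$ iff $\tstar=t$: the case $\tstar=\infty$ is ruled out immediately, and the Case~(ii) formula equals $t\tripl$ only when $\tstar=t$ (using $v_{ij}\ne\tripl$). The condition $\tstar=t$ means $y/t\in\tripl-\Triang$ but $y/s\notin\tripl-\Triang$ for all $s\in(0,t)$. Since $s\mapsto y/s$ traces the ray from the origin through $y$ (moving away from the origin as $s$ decreases), the set $\{s>0:y/s\in\tripl-\Triang\}$ is a closed interval $[s_{\min},s_{\max}]$ and $\tstar=s_{\min}$; thus $\tstar=t$ precisely when $y/t$ is the farthest intersection of this ray with $\tripl-\Triang$, i.e.\ $y/t$ lies on the portion of $\partial(\tripl-\Triang)$ not visible from the origin. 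By the geometric step this invisible portion is $(\tripl-[v_j,v_k])\cup(\tripl-[v_k,v_i])$, which includes all three vertices, and multiplying by $t$ yields the asserted identity.

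The delicate point will be the vertex cases in part~(ii), namely $y/t\in\{\tripl-v_i,\tripl-v_j,\tripl-v_k\}$: at each such point one must verify that the ray from the origin past the vertex leaves $\tripl-\Triang$ rather than grazing along an adjacent edge. I plan to confirm this by checking that the outgoing direction (the vertex itself, up to scale) does not lie in the tangent cone of $\tripl-\Triang$ at that vertex, a sign calculation using $v_{ij}-\tripl=c\tau_{ij}$ with $c>0$ together with the side data for $v_k$ established in the geometric step.
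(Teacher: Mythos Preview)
Your approach is essentially the same as the paper's: both invoke Lemma~\ref{l:tstar} to split according to whether the transport path reaches the triple point, and both rest on the geometric fact that the edge $\tripl-[v_i,v_j]$ is the side of $\tripl-\Triang$ facing the origin while the other two are the ``far'' sides. The paper is terser---for (i) it simply refers back to the contradiction at the end of Lemma~\ref{l:easy}, and for (ii) it asserts without proof that the two named segments are ``the sides of the triangle farthest from the origin'' and that any other point of $t(\tripl-\Triang)$ has $\tstar<t$---whereas you make the supporting half-space geometry explicit. That extra care is worthwhile, since the paper's argument glosses over exactly the points you flag.

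One small slip in your part~(i): when $\alpha=\tfrac12$ you get $y=0$, and then $T_0(0)=0=0\cdot\tripl$, so $\tstar=0$ rather than $\infty$. The conclusion $X_t(0)=tv_{ij}$ still holds (via Lemma~\ref{l:tstar}(ii), or by your own converse step), but your forward claim ``$\tstar=\infty$'' needs the caveat $y\ne 0$. Similarly, in the converse you should note that the Case~(ii) formula applies only for $t>\tstar$; when $t\le\tstar<\infty$ you fall back to $X_t(y)=T_t(y)$ as in the $\tstar=\infty$ case.

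On the vertex worry in part~(ii): there is a shortcut that avoids tangent-cone computations. Since $|\tripl-v_m|$ equals the circumradius $R$ for every $m$, the triangle $\tripl-\Triang$ is inscribed in the circle of radius $R$ about the origin; hence every point of $\tripl-\Triang$ lies at distance at most $R$ from the origin, with equality precisely at the three vertices. Thus each vertex is automatically the farthest intersection of its ray with the triangle, and all three vertex cases give $\tstar=t$ immediately.
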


\begin{proof}
(i) The end of the proof of Lemma~\ref{l:easy} also works when $x=tv_{ij}\in R^t_{ij}$ in case $\xi_{ij}>0$,
so $X_t\inv(x)=x-t[v_i,v_j]$ in that case also.

(ii) For $x=t\tripl$, the segments 
\[
x-t[v_j,v_k]=t[\tripl-v_j,\tripl-v_k], \quad 
x-t[v_k,v_i]=t[\tripl-v_k,\tripl-v_i]
\]
form the sides of the triangle $t(\tripl-\Triang)$ farthest from the origin. 
Thus, for $y$ in either of these segments, we have $T_t(y)=t\tripl$ and $T_s(y)\ne s\tripl$
for all $s<t$. It follows $T_t(y)=X_t(y)$, so both segments are contained in $X_t\inv(x)$.

We claim no other point $y\in X_t\inv(x)$. For any such point, either $y$ lies in $T_t\inv(x)$
or it does not. If it does, then $y$ lies in the triangle $t(\tripl-\Triang)$ but not on the 
farthest sides, so $t>\tstar$ in the notation of Lemma~\ref{l:tstar}.
Then Lemma~\ref{l:tstar} yields $X_t(y)\ne T_t(y)$, 
which is also the case if $y\notin T_t\inv(x)$. 
In both cases, Lemma~\ref{l:tstar} yields that $X_t(y)$ lies on the ray $R^t_{ij}$. 
But this ray does not contain $t\tripl$, contradiction.
\end{proof}

To handle the remaining cases,
the key is to use the semiflow property from Remark~\ref{r:semiflow} to show that  
for $x$ satisfying \eqref{e:xcases},
\begin{equation}\label{e:semiXt}
X_t\inv(x) = X_{t-s}\inv( x - s[v_i,v_j])\,.
\end{equation}
We justify this formula by an argument 
that essentially comes down to using a Galilean transformation.
Notice that from the formula~\eqref{e:Dutex},
the differential $\D u_t$ enjoys the invariance property
\begin{equation}
\D u_t(x) = \D\vp(x-t\tripl).
\end{equation}
Then by invoking the uniqueness lemma for the differential inclusion, we infer that
the semiflow maps are given by 
\begin{equation}
    X_{t,r}(z) = X_{t-r}(z-r\tripl) + r\tripl, \qquad 0\le r\le t, \ \ z\in\R^d.
\end{equation}
Since $X_t = X_{t,r}\circ X_r$, we get 
\begin{equation}
X_t\inv(z) = X_r\inv\circ X_{t,r}\inv(z) =
X_r\inv(X_{t-r}\inv(z-r\tripl) + r\tripl).
\end{equation}
Taking $r=t-s$ and $x=(t-s)\tripl+sv_{ij}$ 
we get 
\begin{equation}
    X_t\inv(x) = X_{t-s}\inv( X_s\inv(sv_{ij})+(t-s)\tripl).
\end{equation}
Invoking Lemma~\ref{l:edge}(i) we have $X_s\inv(sv_{ij})=sv_{ij}-s[v_i,v_j]$,
yielding~\eqref{e:semiXt}. 

\begin{figure}
\begin{center}
\includegraphics[height=2.2in]{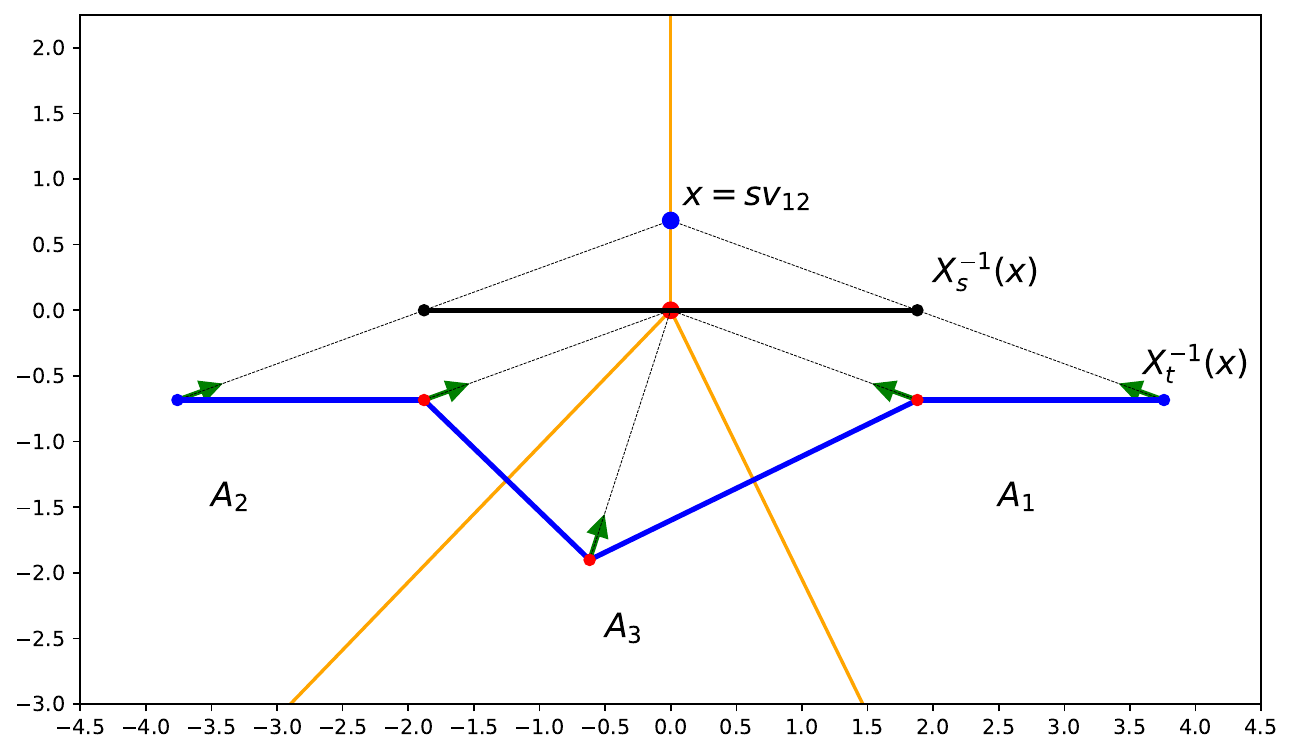}
\caption{Pre-images $X_s\inv(x)$ (black), $X_t\inv(x)$ (blue)
for $x=sv_{12}\in R_{12}$, $t>s$.
Velocities $v_j$ are as in Fig.~\ref{f:stationary}(a).}  \label{f:Xinv} 
\end{center}
\end{figure}
Finally, we obtain the following.
See Fig.~\ref{f:Xinv} for illustration of a case with $\tripl=0$ and $(i,j)=(1,2)$.
\begin{lemma}\label{l:lastcases}
    Let $x=(t-s)\tripl+s v_{ij}$ where $0<s<t$ and $\xi_{ij}>0$. Then 
    \begin{align*}
    X_t\inv(x) &= 
    \quad [x-tv_i, (t-s)(\tripl-v_i)]
    \\ & \quad \cup
    [x-tv_j,(t-s)(\tripl-v_j)]
    \\ & \quad \cup
    [(t-s)(\tripl-v_j), (t-s)(\tripl-v_k)] 
    \\ & \quad \cup
    [(t-s)(\tripl-v_k), (t-s)(\tripl-v_i)].
    \end{align*}
\end{lemma}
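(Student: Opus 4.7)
The plan is to reduce the computation via the semiflow identity~\eqref{e:semiXt},
\[
X_t\inv(x) = X_{t-s}\inv\bigl(x - s[v_i,v_j]\bigr),
\]
so the task becomes computing $X_{t-s}\inv(z)$ for each $z$ on the segment $x - s[v_i,v_j]$, using Lemmas~\ref{l:easy} and~\ref{l:edge}. I would parametrize the segment by $z_\alpha = x - s v_\alpha$ with $v_\alpha = \alpha v_i + (1-\alpha)v_j$, $\alpha\in[0,1]$, and observe that
\[
z_\alpha - (t-s)\tripl = s\bigl(\tfrac12-\alpha\bigr)(v_i-v_j),
\]
so the segment is perpendicular to the ray $R^{t-s}_{ij}$, passes through $(t-s)\tripl$ at $\alpha=1/2$, and has endpoints $z_0 = x - sv_j$ and $z_1 = x - sv_i$.

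For $\alpha=1/2$, Lemma~\ref{l:edge}(ii) applied at $(t-s)\tripl$ directly supplies the third and fourth closed segments of the statement (the two sides of the triangle $(t-s)(\tripl-\Triang)$ not parallel to $v_i-v_j$). For $\alpha\in(1/2,1]$, I would argue that $z_\alpha$ lies in the open sector $A^{t-s}_i$, so Lemma~\ref{l:easy} yields the singleton preimage
\[
X_{t-s}\inv(z_\alpha) = \{z_\alpha - (t-s)v_i\} = \bigl\{(t-s)(\tripl - v_i) + s(\alpha-\tfrac12)(v_j - v_i)\bigr\},
\]
which sweeps out the half-open segment from $(t-s)(\tripl - v_i)$ (excluded) to $x - tv_i$ as $\alpha$ runs over $(1/2,1]$; closing up via the midpoint contribution yields the first segment in the statement. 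The case $\alpha\in[0,1/2)$ is symmetric: $z_\alpha\in A^{t-s}_j$, so $X_{t-s}\inv(z_\alpha) = \{z_\alpha - (t-s)v_j\}$, producing the second segment.

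The main obstacle is the geometric verification that $z_\alpha\in A^{t-s}_i$ for $\alpha>1/2$ (and symmetrically $z_\alpha\in A^{t-s}_j$ for $\alpha<1/2$). After translating by $-(t-s)\tripl$, this reduces to showing that the vector $v_j-v_i$ lies in the interior of the cone $A^0_i = \{y:(v_m-v_i)\cdot y>0 \text{ for all }m\ne i\}$. The inequality with $m=j$ is automatic, but the inequality with $m=k$ amounts to $(v_k - v_i)\cdot(v_j - v_i) > 0$, i.e., the interior angle of $\Triang$ at the vertex $v_i$ is acute. This is precisely where the hypothesis $\xi_{ij}>0$ enters: from the proof of Lemma~\ref{l:tripl}, $\xi_{ij}>0$ places $\tripl$ strictly on the opposite side of the edge $[v_i,v_j]$ from $v_k$, which by the classical characterization of circumcenter position is equivalent to an obtuse angle at $v_k$; obtuseness at $v_k$ forces the other two angles of $\Triang$ to be acute, giving both required inequalities. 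Once this is established, taking the union of the four contributions and noting that adjacent segments share the endpoints $(t-s)(\tripl-v_i)$ and $(t-s)(\tripl-v_j)$ supplied by the midpoint case produces the claimed formula.
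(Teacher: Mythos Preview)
Your proposal is correct and follows essentially the same approach as the paper: apply the semiflow identity \eqref{e:semiXt}, split the segment $x-s[v_i,v_j]$ into the midpoint $(t-s)\tripl$ plus the two half-segments lying in $A^{t-s}_i$ and $A^{t-s}_j$, and invoke Lemmas~\ref{l:easy} and~\ref{l:edge}(ii) respectively. The one difference is that the paper simply asserts the half-segments lie in the correct sectors, whereas you supply the missing geometric verification (acuteness of the angles at $v_i$ and $v_j$ forced by $\xi_{ij}>0$); this is a genuine gap in the paper's presentation that you have filled correctly.
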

\begin{proof}
    Writing $[x,z)=[x,z]\setminus\{z\}$ and similarly for $(x,z]$, 
    since $x-sv_{ij}=(t-s)\tripl$ we can write
    \[
    x-s[v_i,v_j] = [x-sv_i,(t-s)\tripl) \cup \{(t-s)\tripl\} \cup 
    ((t-s)v_j,x-sv_j] \]
    The first and last of these sets lie in $A^{t-s}_i$ and $A^{t-s}_j$
    respectively, and their preimage under $X_{t-s}$ translates them
    by $-(t-s)v_i$ and $-(t-s)v_j$ respectively. For the singleton
    set we invoke the formula in Lemma~\ref{l:edge}(ii).
\end{proof}

\subsubsection{Mass measures that lack deltas}

Recall that the pushforward mass measures $\rho_t=(X_t)_\sharp\leb$
take the values $\rho_t(B)=|X_t\inv(B)|$ for each Borel set $B$.
By direct examination of the results of Lemmas~\ref{l:easy}, \ref{l:edge}
and \ref{l:lastcases}, it is evident that if $\tripl\notin\Triang$,
then the preimage under $X_t$ of any singleton  set $\{x\}$ is a point or a
union of line segments, with Lebesgue measure zero. Thus we find: 

\begin{lemma}\label{l:nodelta}
If $\tripl\notin\Triang$, then for each $t>0$, $\rho_t(B)=0$ for each singleton $B$.
\end{lemma}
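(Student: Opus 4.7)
My plan is to use the pushforward identity $\rho_t(\{x\})=|X_t\inv(\{x\})|$ and then run through the explicit pre-image computations already recorded in Lemmas~\ref{l:easy}, \ref{l:edge}, and \ref{l:lastcases}, checking that for every $x\in\R^2$ the set $X_t\inv(\{x\})$ is a finite union of line segments and isolated points. Any such set has planar Lebesgue measure zero, which is exactly the desired conclusion.

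The first step is to verify that these three lemmas jointly exhaust $\R^2$. By Lemma~\ref{l:tripl}, the standing hypothesis $\tripl\notin\Triang$ singles out a unique cyclically ordered pair $(i,j)$ with $\xi_{ij}>0$; for the other two pairs, $\xi_{ij}\le 0$. When $\xi_{ij}\le 0$ one has $R_{ij}^t\subset R_{ij}^0+tv_{ij}$, so the intersection hypothesis in Lemma~\ref{l:easy} is automatic along those rays. Hence Lemma~\ref{l:easy} covers $x$ in an open sector $A_k^t$, $x$ on either of the two non-outflowing rays, and $x$ on the subray $R_{ij}^0+tv_{ij}$ of the outflowing ray, yielding in each case either a singleton $\{x-tv_i\}$ or a line segment $x-t[v_i,v_j]$.

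The second step handles the residual closed segment $[t\tripl,tv_{ij}]$ along the outflowing ray, which is exactly the part of $R^t_{ij}$ not contained in $R_{ij}^0+tv_{ij}$, together with the extra point $t\tripl$. At $x=t\tripl$ I would invoke Lemma~\ref{l:edge}(ii), which expresses $X_t\inv(\{x\})$ as a union of two line segments; at $x=tv_{ij}$ I would invoke Lemma~\ref{l:edge}(i), giving a single line segment; on the open interior $(t\tripl,tv_{ij})$ I would invoke Lemma~\ref{l:lastcases}, giving a union of four explicit line segments.

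There is no real obstacle: the argument is geometric bookkeeping. The only care required is to confirm that the case split is exhaustive and that in each case the pre-image is genuinely a one-dimensional figure embedded in $\R^2$, so that its planar Lebesgue measure vanishes. Combining the cases yields $|X_t\inv(\{x\})|=0$ for every $x\in\R^2$, which proves $\rho_t(\{x\})=0$ as claimed.
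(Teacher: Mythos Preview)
Your proposal is correct and is essentially the same argument as the paper's, just spelled out in more detail: the paper simply states that by direct examination of Lemmas~\ref{l:easy}, \ref{l:edge}, and \ref{l:lastcases} every singleton pre-image under $X_t$ is a point or a union of line segments, hence has planar Lebesgue measure zero. Your added verification that the case split is exhaustive (in particular that the non-outflowing rays satisfy the intersection hypothesis of Lemma~\ref{l:easy} automatically, and that the residual piece of the outflowing ray is precisely the closed segment $[t\tripl,tv_{ij}]$) is a welcome elaboration of what the paper leaves implicit.
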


With this result, taking into account that the measure $\mam_t(\{t\tripl\})>0$ 
always, even when $\tripl\notin\Triang$, we infer that if $\tripl\notin\Triang$
then $\rho_t\ne\mam_t$ for all $t>0$.
This completes  the proof of Proposition~\ref{p:rhomameq}.

\begin{remark}
    In case $\tripl\notin\Triang$, the measures $\rho_t$ and $\mam_t$
    agree for all sets $B$ that are disjoint from the line segment
    $[t\tripl,tv_{ij}]$ in the (closure of) the outflowing ray
    $R^t_{ij}$ where $\xi_{ij}>0$. 
    The mass  of the delta at $t\tripl$ for $\mam_t$ is 
    redistributed in $\rho_t$ as a piecewise-linear excess line density 
    along this line segment. We omit details.
\end{remark}

\subsection{Impossibility of a.e.-correct monotone reconstruction}\label{ss:impossible}

In this section we use our examples when $\mam_t\ne\rho_t$ to show that 
the strategy for reconstruction implemented in the works of 
Frisch~\etal\,\cite{frisch2002reconstruction} and  Brenier~\etal\,\cite{brenier2003} 
cannot always be correct almost everywhere.

This strategy can be summarized as an approximation of the following ideal:
Given the mass measure $\rho_t$ at the current epoch $t>0$,
find a transport map $\tilde T_t$ which pushes forward Lebesgue measure $\leb$ to $\rho_t$,
just as the Lagrangian flow map $X_t$ does. That is, 
without knowing $X_t$ or the initial velocity, find $\tilde T_t$  so that
\begin{equation}\label{e:agree}
(\tilde T_t)_\sharp\leb = \rho_t = (X_t)_\sharp\leb.
\end{equation}
It is natural to say that $\tilde T_t$ {\em provides a correct reconstruction} at 
a given point $x\in\R^d$ if the pre-image of $x$ under $\tilde T_t$ agrees with that under $X_t$,
i.e., 
\begin{equation}\label{e:correct}
\tilde T_t\inv(x)=X_t\inv(x) \,,
\end{equation}
as sets.
Even ideally, one can expect to achieve correct reconstruction only for points $x$ with
unique pre-image, so for only a.e.~$x$.  As is well explained in \cite{brenier2003} and many other sources, 
when particle paths collide and mass concentrations form in conjunction with shocks in
the velocity field, different past histories of shock development may produce the same
configuration at a given time $t>0$.

The authors of \cite{frisch2002reconstruction} and \cite{brenier2003} aim 
to determine $\tilde T_t$ using  optimal transport principles, equivalent to a least action principle.
In the setup of the present paper, both measures $\leb$ and $\rho_t$ are infinite,
and it is not immediately clear how an optimal transport map should be defined 
so that \eqref{e:agree} holds globally.
But anyway it is reasonable that the measure $\rho_t$ can be determined by measurements
only in some large but finite region of $\R^d$.  So some truncation needs to be done.
E.g., numerical computations reported in \cite{frisch2002reconstruction} and \cite{brenier2003} 
were performed by restricting to a large spatially periodic box. 

No matter how truncation is performed, the use of optimal transport theory produces a map
$\tilde T_t$ that is the gradient of a convex function. Such a map $\tilde T_t$
necessarily is  a {\em monotone map}, meaning
\begin{equation}\label{e:Ttmonotone}
   (\tilde T_t(y)-\tilde T_t(\hat y) )\cdot(y-\hat y)\ge 0  \quad\text{for all $y$ and $\hat y$}.
\end{equation}

This condition does not seem excessively restrictive, but we can show that for the 
``non-sticky'' examples of the previous subsection, it is {\em impossible} for a monotone map 
to provide a correct reconstruction a.e.~in a neighborhood of the triple point at $x=t\tripl$.

\begin{prop}
    Assume $\tripl\notin\Triang=\co\{v_1,v_2,v_3\}$. 
    Let $t>0$, and assume $\tilde T_t$ is defined a.e.~in an open domain 
    that contains the (closed) triangle $t(\tripl-\Triang)$.
    Assume that \eqref{e:agree} holds when restricted to sets in some neighborhood of $t\tripl$, and assume
     $\tilde T_t$ provides a correct reconstruction for a.e.~$x$ in this neighborhood.
    Then $\tilde T_t$ cannot be monotone in any neighborhood of the triangle $t(\tripl-\Triang)$.
\end{prop}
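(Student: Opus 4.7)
The strategy is to show that monotonicity together with correct reconstruction on the three open sectors $A^t_k$ forces $\tilde T_t$ to collapse the interior of the triangle $t(\tripl-\Triang)$ to the single point $t\tripl$, which would produce a spurious delta mass at $t\tripl$ contradicting Lemma~\ref{l:nodelta}.

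Suppose for contradiction that $\tilde T_t$ is monotone on some open convex neighborhood $U$ of the closed triangle $t(\tripl-\Triang)$. By a classical result, such a monotone map is a.e.\ the gradient of a convex function $\phi\colon U\to\R$. First, I would transfer the correct-reconstruction data from the Eulerian to the Lagrangian side: since $X_t\inv(x)=\{x-tv_k\}$ for $x\in A^t_k$ near $t\tripl$ by Lemma~\ref{l:easy}, the correct reconstruction forces $\tilde T_t(y)=y+tv_k$ for a.e.~$y$ in the open set $\Omega_k\cap U$, where $\Omega_k:=A^t_k-tv_k$ is the translated sector with vertex at $y_k^*:=t(\tripl-v_k)$ (a vertex of the triangle). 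Integration and continuity then yield $\phi(y)=\tfrac12|y|^2+tv_k\mdot y+c_k$ on $\Omega_k\cap U$ for some constants $c_k$, $k=1,2,3$.

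The next step is to show that the common vector $p:=t\tripl$ is a subgradient of $\phi$ at every triangle vertex $y_k^*$. This follows because $\nabla\phi(y)=y+tv_k$ tends to $y_k^*+tv_k=t\tripl$ as $y\to y_k^*$ from within $\Omega_k$; outer semi-continuity of the subgradient (cf.~\cite[Thm.~D.6.2.4]{Hiriart2001}) then gives $p\in\D\phi(y_k^*)$. I would then invoke the convex-analytic principle that if a single vector $p$ is a subgradient at every vertex of a simplex contained in the domain of a convex function, then the function is affine with gradient $p$ on the entire simplex. For two vertices this follows by adding the supporting inequality $\phi(y_l^*)\ge\phi(y_k^*)+p\mdot(y_l^*-y_k^*)$ to its reverse, which forces equality; convex combinations then extend this affine relation to the interior. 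Consequently $\phi$ is affine on the triangle $t(\tripl-\Triang)$ with gradient $t\tripl$, and thus $\tilde T_t(y)=t\tripl$ for a.e.~$y$ in its interior.

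The contradiction follows immediately. The pre-image $\tilde T_t\inv(\{t\tripl\})$ then contains (up to a Lebesgue-null set) the full interior of $t(\tripl-\Triang)$, so by the pushforward assumption~\eqref{e:agree} applied to the Borel singleton $B=\{t\tripl\}$,
\[
\rho_t(\{t\tripl\}) = |\tilde T_t\inv(\{t\tripl\})| \ge |t(\tripl-\Triang)| = t^2|\Triang| > 0,
\]
which contradicts Lemma~\ref{l:nodelta}, asserting $\rho_t(\{t\tripl\})=0$ when $\tripl\notin\Triang$. The main technical obstacle is verifying the simplex-affinity principle: while the pairwise supporting-inequality argument is elementary, the propagation from vertices to the interior via convex combinations relies crucially on the closed triangle lying inside the convex domain $U$ of definition for $\phi$, which is guaranteed by our hypothesis on the neighborhood; no additional regularity of $\tilde T_t$ beyond monotonicity is required.
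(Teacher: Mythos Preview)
Your overall strategy matches the paper's exactly: show that monotonicity plus correct reconstruction on the sectors $A^t_k$ forces $\tilde T_t\equiv t\tripl$ on the interior of $t(\tripl-\Triang)$, then contradict Lemma~\ref{l:nodelta}. The contradiction step and the identification of the vertices $y_k^*=t(\tripl-v_k)$ as limit points are also the same.

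However, your reduction to a convex potential $\phi$ contains a genuine error. The claim that ``a monotone map is a.e.\ the gradient of a convex function'' is \emph{false} in dimension $d\ge2$. The linear map $y\mapsto Jy$ (rotation by $90^\circ$) satisfies $(Jy-J\hat y)\cdot(y-\hat y)=0$ for all $y,\hat y$, hence is monotone, but it is not a gradient even almost everywhere (its distributional curl is $2$, not $0$). What Rockafellar's theorem gives is that subdifferentials of convex functions are exactly the maximal \emph{cyclically} monotone operators; mere monotonicity is strictly weaker. Since your subsequent argument (integrating to obtain $\phi$, then invoking the ``simplex-affinity principle'') rests entirely on the existence of $\phi$, this is a real gap, not a cosmetic one.

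The paper bypasses the potential entirely and argues directly from the monotonicity inequality. For $y$ in the interior of the triangle where $x=\tilde T_t(y)$ is defined, it picks sequences $y^k_j\to y_j^*$ in the translated sectors with $\tilde T_t(y^k_j)=x^k_j\to t\tripl$, applies $(x-x^k_j)\cdot(y-y^k_j)\ge0$, and passes to the limit to obtain $(x-t\tripl)\cdot(y-y_j^*)\ge0$ for each $j=1,2,3$. Since $y$ lies in the interior of $\co\{y_1^*,y_2^*,y_3^*\}$, convex combinations give $(x-t\tripl)\cdot(y-z)\ge0$ for all $z$ in a full neighborhood of $y$, forcing $x=t\tripl$. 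This argument uses only \eqref{e:Ttmonotone} and is both shorter and correct; your proof can be repaired by replacing the $\phi$-based step with this direct one.
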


\begin{proof}
  Suppose instead that $\tilde T_t$ is monotone in some convex open set $\Omega_0$
  containing $t(\tripl-\Triang)$. We claim that  necessarily 
   \begin{equation}
   \tilde T_t(y)=t\tripl \quad\text{for almost all $y\in t(\tripl-\Triang)$.} 
   \end{equation}
   This means that for the pushforward measure $\tilde\mam_t=(\tilde T_t)_\sharp\leb$,
   necessarily 
   \begin{equation}
       \tilde\mam_t(\{t\tripl\}) \ge |t(\tripl-\Triang)|> 0 \ne 0 = \rho_t(\{t\tripl\}),
   \end{equation}
   contradicting the assumption that \eqref{e:agree} holds when restricted to sets
   contained in some neighborhood of $t\tripl$.

   Recall that the measure $\rho_t$ concentrates mass in the set $\calS_t$
   consisting of the three mass filaments together with the triple point:
   \[
   \calS_t = \{t\tripl\}\cup R^t_{12} \cup R^t_{23} \cup R^t_{31}.
   \]
   The complement of $\calS_t$ is the union $A^t=A^t_1\cup A^t_2\cup A^t_3$
   of three open sectors with $t\tripl$ as common boundary point.
   For any $x\in A^t_j$, if $\tilde T_t$ correctly reconstructs at $x$ then 
   \[
   \tilde T_t\inv(x) = \{y\} \quad \text{where} \quad y = x-tv_j\,,
   \]
   and the point $y\in A^0_j$.  

   Observe that as $x\to t\tripl$ inside $A_j$ the point
   $y=x-tv_j$ converges to the point $y_j:=t(\tripl-v_j)$, one of the vertices of the triangle $t(\tripl-\Triang)$.
   Since $\tilde T_t$ is defined a.e.~in a neighborhood of $t(\tripl-\Triang)$,
   we may find a sequence of points $x^k_j\to t\tripl$ in $A^t_j$
   such that $\tilde T_t$ is defined at the points $y^k_j = x^k_j - tv_j$, which converge to $y_j$.

   Now, fix $y$ to be a point in the interior of $t(\tripl-\Triang)$ at which $x=\tilde T_t(y)$ is defined.
   By monotonicity, we have 
   \[
    (x-x^k_j)\cdot(y-y^k_j)\ge0  \quad\text{for all $k$}
   \]
   Taking $k\to\infty$ we infer $(x-t\tripl)\cdot(y-y_j)\ge0$, and this holds for each $j=1,2,3$.
   Since any point $z\in t(\tripl-\Triang)$ can be written as a convex combination of the $y_j$ ($j=1,2,3$),
   it follows that $(x-t\tripl)\cdot(y-z)\ge0$ for all such $z$.  But this includes all points $z$
   in a full neighborhood of $y$. Therefore necessarily $x=\tilde T_t(y)=t\tripl$.
\end{proof}

\begin{remark}
    From this argument, it is clear that the obstruction to monotone reconstruction is 
    local in a neighborhood of the intersection of mass filaments of 
    a fairly generic type. This has nothing to do with the possibility that optimal transport 
    maps may be affected by boundary conditions or methods of truncation.
\end{remark}
\ifspace \vfil\pagebreak \fi

\section*{Acknowledgements}
This material is based upon work supported by the National Science Foundation
under grants DMS 2106988 (JGL) and 2106534 (RLP).
JGL is grateful to the Simons Laufer Mathematical Sciences Institute (SLMath)
for support in connection with its Fall 2025 program on Kinetic Theory.

\ifspace
\singlespacing
\fi

\bibliographystyle{siam}
\bibliography{adhesion.bib}

\appendix

\ifspace \onehalfspacing \fi

\section{Properties of convex and semi-concave functions}\label{a:convex}

In this section we note some general properties associated
with convexity and semi-concavity that are used above.
A function $f:\R^d\to (-\infty,\infty]$ is called {\em coercive}
if $f(z)/|z|\to\infty$ as $|z|\to\infty$.

\begin{prop}\label{p:fprop}
    Suppose $f:\R^d\to\R$ is continuous and coercive.
    \begin{itemize}
        \item[(i)]  If $\fss$ is strictly convex at $y$, then $\fss(y)=f(y)$.
        \item[(ii)]  If $f$ is semi-concave, then $f^*$ is strictly convex and $\fss$ is $C^1$.
        \item[(iii)]  If $f$ is semi-concave and $\fss(y)=f(y)$, then
        $f$ is differentiable at $y$ and $\nabla f(y)=\nabla\fss(y)$.
        \item[(iv)]  If $f$ is semi-concave, and $\fss$ is strictly convex at $y$,
        then $f^*$ is differentiable at $x = \nabla f(y)$, with $\nabla f^*(x)=y$.
        Moreover, if $x=\nabla\fss(\hat y)$ then $\hat y=y$.
    \end{itemize}
\end{prop}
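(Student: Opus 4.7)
The plan is to prove the four parts in sequence, with part (ii) carrying the technical weight and the others following from subgradient manipulations and duality.

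For part (i), let $x \in \D \fss(y)$ give a supporting affine function $a(z) = \fss(y) + x \cdot (z - y)$ for which $z \mapsto \fss(z) - x\cdot z$ has a strict minimum at $y$ by hypothesis. First I would argue that the nonnegative continuous function $f - a$ must vanish at some point $y_0$. Otherwise, coercivity forces $(f - a)(z) \to \infty$ as $|z| \to \infty$, so $f - a$ attains a positive minimum $\delta > 0$ and $a + \delta \le f$, contradicting the characterization $\fss(y) = \sup\{b(y): b\text{ affine}, b \le f\}$. From $a(y_0) = f(y_0)$, the sandwich $a \le \fss \le f$ yields $\fss(y_0) = a(y_0)$, so $y_0$ lies in the zero set of $\fss - a$, which coincides with the minimizer set of $z \mapsto \fss(z) - x \cdot z$. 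By strict convexity at $y$, this set is $\{y\}$, so $y_0 = y$ and $\fss(y) = f(y)$.

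For part (ii), the key step is to show that $f^*$ is $\tfrac{1}{\lambda}$-strongly convex, where $\lambda > 0$ is a constant of semi-concavity (necessarily positive since coercivity excludes $\lambda \le 0$). Given $x \in \R^d$, coercivity produces a maximizer $y^*$ with $f^*(x) = x\cdot y^* - f(y^*)$; then $x$ lies in the superdifferential $\D f(y^*)$, and $\lambda$-concavity yields $f(y) \le f(y^*) + x\cdot(y - y^*) + \tfrac{\lambda}{2}|y - y^*|^2$ for all $y$. Substituting into $f^*(x') = \sup_y(x'\cdot y - f(y))$ and using the Legendre-style quadratic supremum $\sup_u[(x' - x) \cdot u - \tfrac{\lambda}{2}|u|^2] = \tfrac{|x' - x|^2}{2\lambda}$ gives
\[
f^*(x') \ge f^*(x) + (x' - x) \cdot y^* + \frac{|x' - x|^2}{2\lambda},
\]
the desired strong convexity. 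For the $C^1$ conclusion, if distinct $x_1, x_2 \in \D \fss(y)$ existed, the duality $x \in \D \fss(y) \Leftrightarrow y \in \D f^*(x)$ from \cite[Thm.~23.5]{Rockafellar} would place $y$ in $\D f^*(x_1) \cap \D f^*(x_2)$, forcing $f^*$ to be affine on $[x_1, x_2]$ and contradicting strict convexity. So $\D \fss(y)$ is a singleton for every $y$, and continuous differentiability is automatic for convex functions on $\R^d$ by \cite[Thm.~25.5]{Rockafellar}.

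For part (iii), set $x := \nabla \fss(y)$ using (ii). Since $\fss \le f$ with equality at $y$, the subgradient inequality for $\fss$ propagates to $f(z) - f(y) \ge x\cdot(z - y)$ for all $z$. Pairing this with any supergradient $p \in \D f(y)$ from semi-concavity, testing along $z = y + \eps e$ with $\eps > 0$ small and $e$ an arbitrary unit vector gives $x\cdot e \le p \cdot e + O(\eps)$, so $x \cdot e \le p \cdot e$ on letting $\eps \to 0^+$; repeating with $-e$ forces $x = p$, and the sandwich $0 \le f(y + h) - f(y) - x\cdot h \le \tfrac{\lambda}{2}|h|^2$ yields differentiability with $\nabla f(y) = x$. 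Part (iv) then follows: (i)--(iii) give $\D \fss(y) = \{x\}$ with $x = \nabla f(y) = \nabla \fss(y)$, and any $\hat y$ with $x \in \D \fss(\hat y)$ satisfies $\fss(\hat y) - x\cdot \hat y = \fss(y) - x\cdot y$ via the two-sided subgradient inequality; strict convexity at $y$ forces $\hat y = y$. The duality $x \in \D \fss(y) \Leftrightarrow y \in \D f^*(x)$ then gives $\D f^*(x) = \{y\}$, so $f^*$ is differentiable at $x$ with $\nabla f^*(x) = y$. The main obstacle is the strong-convexity estimate in part (ii), which must combine coercivity (to extract $y^*$) and semi-concavity (to upper bound $f$ by the right paraboloid) into a single clean calculation; once that is in hand, the remaining parts follow through standard subgradient arguments.
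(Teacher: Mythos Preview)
Your proof is correct. The arguments for parts (iii) and (iv) are essentially the same as the paper's, but parts (i) and (ii) take somewhat different routes.

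For (i), the paper argues by contradiction: assuming $\fss(y) < f(y)$, it constructs a convex function $h = \max(\fss, \eta + \delta)$ (with $\eta$ the supporting affine at $y$ and $\delta>0$ small) that still lies below $f$ but strictly above $\fss$ at $y$, violating the maximality of $\fss$. This argument is local and does not invoke coercivity. Your approach instead uses coercivity directly: the supporting affine $a$ must touch $f$ somewhere (else $a+\delta$ would be a larger affine minorant), and strict convexity pins the touching point to $y$. Your argument is shorter and more transparent, at the cost of genuinely using the coercivity hypothesis, whereas the paper's version would work for any continuous $f$ with $\fss$ finite.

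For (ii), the paper represents the concave part $\hat f = f - \tfrac{\lambda}{2}|\cdot|^2$ as an infimum of affine functions and computes $f^*(x) = \sup_\alpha \tfrac{|x-p_\alpha|^2}{2\lambda} - h_\alpha$, from which $f^* - \tfrac{1}{2\lambda}|\cdot|^2$ is visibly convex. You instead pick a maximizer $y^*$ (again via coercivity) and bound $f$ from above by the touching paraboloid to obtain the same $\tfrac{1}{\lambda}$-strong convexity by a direct calculation. The two computations are dual in spirit and yield the same conclusion; your observation that coercivity forces $\lambda>0$ is a nice point the paper leaves implicit. The $C^1$ conclusion for $\fss$ is then proved identically in both via Rockafellar's singleton-subgradient criterion.
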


\begin{proof}
     (i)  Suppose not, so that $\delta_1= f(y)-\fss(y)>0$.
    Then $x\in\D\fss(y)$ exists such that with 
    \[
    g(z) = \fss(z) - \eta(z), \qquad \eta(z)=\fss(y)+x\cdot(z-y), 
    \]
    $g$ has strict minimum at $z=y$ with $g(y)=0$. 
    Since $\eta(y)+\delta_1=f(y)$,
    for some $r>0$ we have $\eta(z)+\frac12\delta_1<f(z)$ whenever $|z-y|<r$.
    Now since $g$ is convex,
    \[
    0<\delta_2 :=  \min_{|z-y|=r} g(z)= \min_{|z-y|\ge r} g(z).
    \]
    With $\delta=\min(\delta_2,\frac12\delta_1)$, it follows that for all $z\in\R^d$,
    \[
    h(z) = \max(\fss(z),\eta(z)+\delta) \le f(z).
    \]
    But $h$ is convex and this contradicts the fact that $\fss$ is the largest
    convex function below $f$.

(ii)  Supposing $f$ is semi-concave, we claim $f^*$ is strictly convex.
The function $\hat f(y)=f(y)-\frac12\lambda|y|^2$
is concave for some $\lambda>0$. Then $\hat f$ is the infimum of some family 
of affine functions $\{p_\alpha\cdot y+h_\alpha\}_{\alpha}$.  
Taking the Legendre transform we may interchange suprema to find
\begin{align*}
f^*(x) &=  \sup_\alpha \sup_y 
\Bigl( (x - p_\alpha)\cdot y - h_\alpha - \tfrac12\lambda|y|^2 \Bigr) 
= \sup_\alpha \frac{|x-p_\alpha|^2}{2\lambda} - h_\alpha\,.
\end{align*}
Then $x\mapsto f^*(x)-\tfrac1{2\lambda}|x|^2$ is convex, as the sup 
of a family of affine functions.

By \cite[Cor.~25.5.1]{Rockafellar}, it suffices to prove that the subgradient 
$\D\fss(y)$ is a singleton for every $y\in \R^d$.  But the Young identity
for each $x\in \D\fss(y)$ states 
\[
\fss(y) = \sup_z z\cdot y-f^*(z) = x\cdot y-f^*(x) \,.
\]
Because $f^*$ is strictly convex the maximizer $z=x$ is unique. Hence $\fss$ is $C^1$.

(iii) Suppose $\fss(y)=f(y)$. If $f$ is semi-concave,
then for some parabolic function $P$, we have $\fss(z)\le f(z)\le P(z)$ for all $z$,
with equality at $z=y$.
Using the fact from (ii) that $\fss$ is $C^1$,
we deduce $f$ is differentiable at $y$ with $x=\nabla\fss(y)=\nabla f(y)$. 

(iv) By parts (i)-(iii) we have
\[
f^*(x) \ge x\cdot z - \fss(z),
\]
and equality holds {\em only} at $z=y$ by strict convexity. 
Recalling that $z\in \D f^*(x)$ if and only if equality holds, we infer $\D f^*(x)$
is the singleton set $\{y\}$. Therefore by
\cite[Thm.~25.1]{Rockafellar}, $\nabla f^*(x)$ exists and equals $y$.
Moreover, if $x=\nabla f(\hat y)$ then necessarily $\hat y\in\D f^*(x)$
so $\hat y=y$.
\end{proof}

\section{Derivation of the adhesion model}\label{a:derivation}
\nwc{\bmr}{{\bm r}}
\nwc{\bmU}{{\bm U}}
\nwc{\bmu}{{\bm u}}
\nwc{\bmv}{{\bm v}}
\nwc{\bmq}{{\bm q}}
\nwc{\bmx}{{\bm x}}

For the convenience of readers,
here we sketch a derivation of the adhesion model, mostly following the presentation
in Appendix A of Brenier \etal\,\cite{brenier2003} with additional background
and some clarifications.
The system that Brenier \etal\ start with consists of a set of Euler-Poisson equations
for perturbations of an Einstein-de Sitter universe, in which the density of matter
is a function only of a time-like variable, the cosmological constant vanishes 
and the metric is spatially flat.
As discussed by Peebles~\cite{peebles1980}, in regions of space-time having (spatial) distances
small compared to $10^{28}$ cm (about $10^{10}$ light years), 
coordinates can be chosen such that a Newtonian approximation 
is valid for the gravitational potential $\phi_g$.  
The dynamics of the mass density $\varrho(\bmr,t)$
of (pressureless) cold dark matter is then given by an Euler-Poisson system as
\begin{align}
\label{e:EP1}
& \D_t\varrho + \nabla_r\cdot(\varrho\bmU) = 0 \,,
\\ 
\label{e:EP2}
& \D_t \bmU + (\bmU\cdot\nabla_r)\bmU = -\nabla_r\phi_g \,,
\\ \label{e:EP3}
& \Delta_r \phi_g = 4\pi G\varrho\,,
\end{align}
where $\bmU(\bmr,t)$ is the fluid velocity and $G$ is the gravitational constant.

The steps to get the adhesion model will be: (i) describe the Einstein-de Sitter
model of a homogeneous universe;
(ii) in the Newtonian approximation, 
 describe equations for scaled velocity and density perturbations as functions 
of co-moving spatial coordinates and the time $t$; 
(iii) describe the Zeldovich approximation which yields free-streaming, ``pressureless''
flow in these variables; and (iv) introduce a vanishing artificial viscous limit to 
prevent multi-streaming flows after the formation of singularities.

\subsection{A homogeneous universe}
In their 1932 joint paper \cite{einstein1979relation},
Einstein and de Sitter start from the field equations for general relativity in the form 
\begin{equation}
    R_{\alpha\beta}-\frac12 R g_{\alpha\beta} = \kappa T_{\alpha\beta}
\end{equation}
where $\kappa = 8\pi G/c^2$ is Einstein's constant.
For the Einstein-de Sitter universe with density $\bar\varrho(t)$ and metric 
\[
ds^2 = c^2 dt^2 - a(t)^2(dx_1^2+dx_2^2+dx_3^2) \,,
\]
for the $(\alpha,\beta)=(0,0)$ component we find
$T_{00} = \bar\varrho c^2$ and 
\[
R_{00} = -3 \frac{\ddot a}{a} \,,\quad R = -\frac{6}{c^2}\left(\frac{\ddot a}{a}+ \frac{\dot a^2}{a^2}\right),
\]
whence the corresponding component of the field equation  reduces to 
\begin{equation}\label{E00}
H(t)^2 = \frac{8\pi G\bar\varrho}3 \,,
\end{equation}
where $H(t)$ is the 
{\em Hubble parameter}, expressed in terms of the 
{\em expansion scale factor} $a(t)$ as 
\[
H(t) = \frac{\dot a(t)}{a(t)}.
\]
(Equation~\eqref{E00} is essentially the main result of Einstein and de Sitter.)
The tensor entries for $(\alpha,\beta)=(j,j)$ with $j=1,2,3$ are
\[
R_{jj} = \frac{a\ddot a+2\dot a^2}{c^2} \,, \quad T_{jj}=0,
\]
whence the field equation reduces to 
\begin{equation}\label{Eii}
2 a \ddot a + \dot a^2 = 0.
\end{equation}
We solve this equation with initial data at the present epoch $t_0$
chosen as
\[
a(t_0)=1, \quad \dot a(t_0) = H_0 >0, 
\]
which makes the metric locally approximated by the Minkowski metric.
Dividing \eqref{Eii} by $a\dot a$ we find $a \dot a^2  = H_0^2$, 
from which it is easy to deduce that
\[
\frac23(a^{3/2}-1) = H_0(t-t_0).
\]
Requiring that $a(t)\to0$ as $t\to0$ (meaning $t=0$ is consistent
with the big bang), 
we obtain 
\begin{equation}
H_0 = \frac{2}{3t_0}, 
\qquad a(t) = \left(\frac t{t_0}\right)^{2/3}
\end{equation}
Using \eqref{Eii} and differentiating \eqref{E00} yields
\[
\frac{8\pi G}3 \frac{d}{dt} (\bar\varrho a^3) = \dot a( 2 a \ddot a + \dot a^2)=0 \,,
\]
hence, with $\bar\varrho_0=\bar\varrho(t_0)$ and using \eqref{E00},
\begin{equation}\label{eq:rhoa3.1}
\bar\varrho a^3 = \bar\varrho_0 =  \frac{3H_0^2}{8\pi G} = \frac1{6\pi G t_0^2}.
\end{equation}

We note that current best estimates of the Hubble parameter say that $H_0\approx 72$ km/s/Mpc,
which yields $t_0\approx 9.07\cdot 10^9$ years. This is substantially less than
the currently accepted value of about $13.8\cdot 10^9$ years.

\subsection{Newtonian approximation}
The Einstein-de Sitter calculations above are consistent with the 
equations \eqref{e:EP1}--\eqref{e:EP3} that arise in the Newtonian approximation.
To check this consistency, we note that in the homogeneous universe, 
the fields take the following form,
in terms of the Hubble parameter $H(t)$:
\begin{equation}
\varrho(\bmr,t)=\bar\varrho(t), \quad
\bmU(\bmr,t) = H(t)\bmr . \quad
\end{equation}
The continuity equation \eqref{e:EP1} then says
\[
\D_t \bar\varrho = - 3H(t)\bar\varrho = -3\frac{\dot a}{a}\bar\varrho,
\]
and this follows from \eqref{eq:rhoa3.1}. The Poisson equation \eqref{e:EP3}
says that 
\[
\Delta_r\bar\phi_g = 4\pi G\bar\varrho = \frac32 \left(\frac{\dot a}{a}\right)^2
= -3\frac{\ddot a}{a}\,,
\]
which is satisfied by 
\[
\bar\phi_g = -\frac12 \frac{\ddot a}{a}|\bmr|^2.
\]
Finally, a fluid particle satisfying $\dot \bmr = \bmU(\bmr,t) = H(t)\bmr$
satisfies $\bmr = a(t)\bmx$ in terms of the {\em co-moving} coordinate $\bmx$
which remains constant.  
Then the momentum equation \eqref{e:EP2} reduces to say simply that 
\begin{equation}\label{e:barphig}
 \frac{\ddot a}{a}\bmr = -\nabla_r\bar\phi_g 
\,,
\end{equation}
which evidently holds.

\subsection{Dynamics of perturbations in co-moving coordinates}

Following the presentation of Brenier \& Frisch {\it et al.}~\cite{brenier2003},
we write perturbations of the Einstein-de Sitter solution of the Euler-Poisson
equations~\eqref{e:EP1}--\eqref{e:EP3}
as functions of $\tau=a(t)$ and $\bmx$, expressed in the form
\[
\varrho = \bar\varrho \,{\rho}, \quad
\bmU = \frac{\dot a}{a}\bmr + a \dot a \,{\bmv} ,
\quad \phi_g = -\frac{\ddot a}{2a}|\bmr|^2+ 4\pi G\varrho_0 \,{\varphi_g}\,.
\]
The velocity perturbation $a\dot a\,\bmv$ is known as the peculiar velocity
in the cosmology literature. 
After the change of variables, equations \eqref{e:EP1}--\eqref{e:EP3}
become equivalent to the following form of the Euler-Poisson system
which served as the starting point for \cite{brenier2003}:
\begin{align}
\label{e:EP21}
&\D_\tau \rho + \nabla_x\cdot(\rho\bmv) = 0\,,
\\
\label{e:EP22}
&\D_\tau \bmv + \bmv \cdot \nabla_x\bmv = 
{-\frac{3}{2\tau}(\bmv+\nabla_x\varphi_g)}\,,
\\
\label{e:EP23}
&\Delta_x \varphi_g = {\frac{\rho-1}{\tau}}.
\end{align}


\subsection{Zeldovich approximation and adhesion model}

A simple way to understand the Zeldovich approximation is that it provides
an {\em approximate solution for small $\tau$}
to the Euler-Poisson system \eqref{e:EP21}--\eqref{e:EP23}.
This approximation is generated by Lagrangian extrapolation from an initial velocity
that is naturally given by a potential to ensure that the right-hand side of \eqref{e:EP22}
does not blow up as $\tau\to0$.

Thus we take as {\em ansatz} that the Lagrangian flow map for the system is given by 
\begin{equation}\label{dA:X}
X(y,\tau) = y + \tau \nabla\vp(y) \,.
\end{equation}
The Eulerian velocity field is taken as the gradient $\bmv(x,\tau)=\nabla_x u(x,\tau)$ of the solution
of the Hamilton-Jacobi equation
\begin{equation}\label{eA:HJ}
\D_\tau u + \tfrac12|\nabla_x u|^2 = 0, \quad u(x,0)=\vp(x).
\end{equation}
The solution (presumed smooth) satisfies $\nabla_x u(x,\tau)=\nabla_y \vp(y)$ for $x=X(y,\tau)$, 
consistent with the vanishing of the second material derivative: 
$\D_\tau \bmv + \bmv\cdot\nabla \bmv = 0$.

We take as a further {\em ansatz} that $\vp_g(x,\tau) = -u(x,\tau)$, with the result that \eqref{e:EP22}
holds exactly. For the density, we also ensure \eqref{e:EP21} holds exactly by taking $\rho$ so that
\begin{align}
    \rho(X(y,\tau),\tau) &= \det \nabla_y X(y,\tau)\inv  = \det(I+\tau\nabla^2_y\vp(y))\inv  
    \nonumber\\ &= 1-\tau\Delta_y\vp(y) + O(\tau^2).
\end{align}
Then the Poisson equation \eqref{e:EP23} does not hold exactly, but it does hold approximately
for $\tau$ small:
Using that $\nabla_x=\nabla_y+O(\tau)$ we find that since $\nabla_x\vp_g(x)=-\nabla_y\vp(y)$ 
with $x=X(y,\tau)$,
\begin{equation}
    \Delta_x \vp_g(x)  = -\Delta_y \vp(y) + O(\tau) = \frac{\rho-1}\tau  + O(\tau).
\end{equation}

The Zeldovich approximation takes the free-streaming ansatz \eqref{dA:X} to be valid
for longer times, up to and beyond the time that particle paths collide and multi-streaming 
regions form.
Finally, to obtain the adhesion model one adds a viscosity term to the Hamilton-Jacobi equation
in \eqref{eA:HJ} as described in the introduction. As we have shown, this indeed eliminates
multi-streaming regions and results in particle paths sticking together in the limit of vanishing viscosity.
\end{document}